\newtheorem{thm}{Theorem}[section]
\newtheorem{definition}{Definition}[section]
\newtheorem{Lemma}[thm]{Lemma}
\newtheorem{remark}{Remark}[section]
\newtheorem{theorem}[thm]{Theorem}
\newtheorem{proposition}[thm]{Proposition}
\newtheorem{corollary}[thm]{Corollary}
\numberwithin{equation}{section}
\newcommand{\beq}{\begin{equation}}
\newcommand{\eeq}{\end{equation}}
\newcommand{\ben}{\begin{eqnarray}}
\newcommand{\een}{\end{eqnarray}}
\newcommand{\beno}{\begin{eqnarray*}}
\newcommand{\eeno}{\end{eqnarray*}}
\numberwithin{equation}{section}
\begin{document}
\title[Three-dimensional compressible Euler equations]{On the rough solutions of 3D compressible Euler equations: an alternative proof }

\subjclass[2010]{Primary 76N10, 35R05, 35L60}

\author{Huali  Zhang}
%    Address of record for the research reported here
\address{Changsha University
of Science and Technology, Changsha, 410114, P.R. of China.}
%    Current address
\email{zhlmath@yahoo.com}

\author{Lars Andersson}
%    Address of record for the research reported here
\address{Max-Planck Institute for Gravitational Physics, Am M\"uhlenberg 1, D-14476, Potsdam, Germany}
%    Current address
\email{lars.andersson@aei.mpg.de}

\date{\today}

\keywords{compressible Euler equations, low regularity solutions, wave-transport system, hyperbolic system.}

\begin{abstract}
The well-posedness of Cauchy problem of 3D compressible Euler equations is studied. By using Smith-Tataru's approach \cite{ST}, we prove the local existence, uniqueness and stability of solutions for Cauchy problem of 3D compressible Euler equations, where the initial data of velocity, density, specific vorticity $v, \rho \in H^s, \varpi \in H^{s_0} (2<s_0<s)$. It's an alternative and simplified proof of the result given by Q. Wang in \cite{WQEuler}.
%Smith and Tataru [Ann. Math., 162(2005), pp. 291-366] proved sharp local well-posedness results for quasi-linear wave equations in two and three dimensions. We extend their method to the case of a coupled wave-transport system, where the regularity of the source term is weaker than the other nonlinear terms. More precisely, we prove the local existence, uniqueness and stability of solutions for Cauchy problem of 3D compressible Euler equations, where the initial data of velocity, density, specific vorticity respectively satisfying $v, \rho \in H^s, \varpi \in H^{s_0} (2<s_0<s)$. It's also an alternative and much simplified proof of the result given by Q. Wang in \cite{WQEuler}.
\end{abstract}

\maketitle

\section{Introduction and Results}
\subsection{Overview}
In this paper we consider the Cauchy problem of compressible Euler equations in three dimensions, which is described as
\begin{equation}\label{CEE}
	\begin{cases}
	\rho_t+\text{div}\left(\rho v \right)=0,  \quad (t,x) \in \mathbb{R}^+ \times \mathbb{R}^3,
	\\
	v_t + \left(v\cdot \nabla \right)v+\frac{1}{\rho}\nabla p=0,
\end{cases}
\end{equation}
with the state function
\begin{equation*}
  p=p(\rho)=\rho^\gamma (\gamma>0),
\end{equation*}
and the initial data
\begin{equation}\label{id}
	\rho|_{t=0}=\rho_0, \ v|_{t=0}=v_0,
\end{equation}
where $v=(v_1,v_2,u_3)^{\text{T}}, \rho$, and $p$, respectively, denote the fluid velocity, density, and pressure. The object is to answer the question that: \textit{For which $s$, the problem \eqref{CEE}-\eqref{id} is well-posed if $(v_0,\rho_0) \in H^s(\mathbb{R}^n)$}.

This question has been well studied for the irrotational case and the incompressible case in two or three dimensions. For the 3D compressible Euler equations \eqref{CEE}, the best known result of the corresponding question is proved by Q. Wang in \cite{WQEuler}. We will discuss it in a different view.
\subsection{Background}
The compressible Euler equations is a classical system in physics to describe the motion of an idea fluid. The phenomena displayed in the interior of a fluid fall into two broad classes, the phenomena of acoustics waves and the phenomena of vortex motion.  The sound
phenomena depend on the compressibility of a fluid, while the vortex phenomena occur even in a
regime where the fluid may be considered to be incompressible.
%As detailed in the paper of Luk-Speck \cite{LS1}, the level of difficulty involved in the well-posed issue on compressible Euler equations is due to the quasilinear system featuring multi characteristic speeds, i.e., acoustic waves coupled to the transporting of vorticity.

For the Cauchy problem of $n$-D incompressible Euler equations:
\begin{equation}\label{IEE}
	\begin{cases}
	v_t + \left(v\cdot \nabla \right)v+\nabla p=0,  \quad (t,x) \in \mathbb{R}^+ \times \mathbb{R}^n,
	\\
	\mathrm{div} v=0,
\\
v|_{t=0}=v_0,
\end{cases}
\end{equation}
Kato and Ponce in \cite{KP} proved the local well-posedness of \eqref{IEE} if $v_0 \in W^{s,p}(\mathbb{R}^n), s>1+\frac{n}{p}$. Chae in \cite{Chae, Ch2} proved the local existence of solutions by setting $v_0$ in Triebel-Lizorkin spaces. On the opposite direction, the ill-posedness of solutions was solved by Bourgain and Li \cite{BL, BL2}, in which they proved that the solution will blow up instantaneously for some $v_0 \in H^{1+\frac{n}{2}}(\mathbb{R}^n), n=2,3$.
%Recently, Guo and Li in \cite{GL} discussed the continuous dependence of initial data in critical Triebel-Lizorkin spaces.

In the irrotational case, compressible Euler equations can be reduced to a special quasilinear wave equation (see Lemma \ref{wte} and taking $\varpi=0$). For general quasilinear wave equations:
\begin{equation}\label{qwe}
\begin{cases}
  &\square_{h(\phi)} \phi=q(d \phi, d \phi), \quad (t,x) \in \mathbb{R}^+ \times \mathbb{R}^{n},
  \\
  & \phi|_{t=0}=\phi_0, \partial_t \phi|_{t=0}=\phi_1,
  \end{cases}
\end{equation}
$\phi$ is a scalar function and $h(\phi)$ is a Lorentzian metric depending on $\phi$, and $q$ is a quadratic term of $d \phi$. Set the initial data $(\phi_0, \phi_1) \in H^s(\mathbb{R}^n) \times H^{s-1}(\mathbb{R}^n)$. By using classical energy methods and Sobolev imbeddings, Hughes-Kato-Marsden in \cite{HKM} proved the local well-posedness of the problem \eqref{qwe} for $ s>\frac{n}{2}+1$. On the other side, Lindblad in \cite{L} constructed some counterexamples for \eqref{qwe} when $s={\frac{7}{4}}, n=2$ or $s=2,n=3$. In mathematics, to lower the regularity, %get improvement on local solutions,
one may seek a type of space-time estimates of $d \phi$, namely,
Strichartz estimates. Rigorous mathematical study of Strichartz estimates was initiated by Bahouri-Chemin \cite{BC1,BC2} and Tataru \cite{T1,T2} respectively, who established the local well-posedness of \eqref{qwe} when $s > \frac{n}{2} + \frac{7}{8}, n=2$ or $s > \frac{n}{2} + \frac{3}{4}, n\geq3$. The same result was also obtained by Klainerman in \cite{K}. Through introducing a vector-field approach and a decomposition of curvature, the 3D result of \cite{BC1, BC2, T1, T2} was later improved by Klainerman and Rodnianski, where $s>2+\frac{2-\sqrt{3}}{2}$. Based on Klainerman and Rodnianski's vector-field methods, Geba in \cite{Geba} studied the local well-posedness of the 2D case for $s > \frac{7}{4} + \frac{5-\sqrt{22}}{4}$. In 2005, a sharp result was established by Smith and Tataru in \cite{ST}; they proved that the local solution of \eqref{qwe} is well-posed if the regularity of initial data satisfies $s>\frac{7}{4}, n=2$ or $s>2, n=3$ or $s>\frac{n+1}{2}, 4 \leq n \leq 6$. An alternative proof of the 3D result was also obtained through vector-field approach by Wang \cite{WQSharp}. Besides, we should also mention substantial significant progress which has been made on low
regularity solutions of Einstein vacuum equations, membrane equations or quasi-linear wave equations due to Andersson and Moncreif \cite{AM}, Ettinger and Lindblad \cite{BL}, Klainerman and Rodnianski \cite{KR, KR1}, Q. Wang \cite{WQRough, WQ1, WQ2}, Allen-Andersson-Restuccia \cite{AAR}, C.B. Wang \cite{WCB} and so on.

In the general case, concerning to the compressible Euler equations in $n$-D, the Cauchy problem \eqref{CEE}-\eqref{id} is well-posed if $v_0, \rho_0 \in H^{s}, s>1+\frac{n}{2}$ and the density is far away from vacuum, please see Majda's book \cite{M}.
%There are some interesting work on considering the regularity of solutions of weak solutions, one could see Feireisl et.al \cite{F} and Drivas-Eyink \cite{D}.
Recently, a wave-transport structure of the 2D and 3D compressible Euler equations is introduced by Luk and Speck \cite{LS1,LS2}, in which their central theme is to describe a sharp asymptotic behavior of the singularity formation. Based on \cite{LS2}, Disconzi-Luo-Mazzone-Speck in \cite{DLS} proved
the existence of solutions with non-zero vorticity $\varpi$ and entropy $S$, and the assumptions on the
initial data of velocity $v$, density $\rho$ and $\varpi$ is in $H^{2+}(\mathbb{R}^3)$ and $S\in H^{3+}, \mathrm{curl} \varpi \in C^{0, \delta}$ with $0<\delta <1$. Independently, without the H\"older norm of $\mathrm{curl} \varpi$, Wang in \cite{WQEuler} proved
the local well-posedness of the 3D compressible Euler equations with initial data of $(v,\rho,\varpi) \in H^{s}(\mathbb{R}^3) \times H^{s}(\mathbb{R}^3) \times H^{s'}(\mathbb{R}^3), \ 2<s'<s $. Recently, some related 2D result is obtained by Zhang in \cite{Z1,Z2}. We should also mention substantial significant progress which has been made on shock formation and free boundary problem due to Sideris \cite{S}, Christodoulou-Miao \cite{C}, Luk-Speck \cite{LS1,LS2}, Coutand-Lindblad-Shkoller \cite{CLS}, Coutand-Shkoller \cite{CLS,CS2}, Jang-Masmoudi \cite{JM}, Ifrim-Tataru \cite{IT} and so on.
\subsection{Motivation and contribution}
In \cite{DLS, WQEuler}, they proved the low-regularity well-posedness of \eqref{CEE} in 3D by vector field approach. There is also a different and significant view to consider the low regularity problem, which is established by Smith and Tataru in \cite{ST}. This motivates us to generalize Smith-Tataru's result to 3D compressible Euler equations with non-trivial vorticity. It's a non-trivial process for us to obtain the same result established by Q. Wang in \cite{WQEuler}. Moreover, compared with \cite{WQEuler}, our paper is much simplified. Our contribution is in the following,

(i) combining two formulations(a hyperbolic system and a wave-transport system) for compressible Euler equations, which shows a direct way to discuss some energy estimates.

(ii) deriving a modified transport equation for vorticity.

(iii) obtaining characteristic energy estimates for vorticity along null hypersurfaces, which is different from basic energy estimates and non-trivial.

(iv) introducing a modified Strichartz estimate for linear wave equation endowed with the acoustic metric, which is adapting the structure of compressible Euler equations.
%\subsection{Motivation}
%\textcolor{red}{I don't known how to write it smoothly now.}

Before stating our result, let us introduce some quantities and notations.
\subsection{Quantities and notations}
\begin{definition}\label{pw}
Let $\bar{\rho}$ be a constant background density and $\bar{\rho}>0$. We denote the logarithmic density $\boldsymbol{\rho}$ and specific vorticity $\varpi$
\begin{equation}\label{pw1}
\boldsymbol{\rho} :=\ln \left( \frac{\rho}{{\bar{\rho}}}\right), \quad \varpi:={\rho}^{-1}{\mathrm{curl}v}={\rho}^{-1}\omega.
\end{equation}
\end{definition}
\begin{definition}\label{shengsu}
We denote the speed of sound
\begin{equation}\label{ss}
%B:=\partial_t+v^a \partial_a, \quad
c_s:=\sqrt{\text{d}p/\text{d}\rho}.
\end{equation}
In view of \eqref{pw1}, we have
\begin{equation}\label{ss1}
c_s=c_s(\boldsymbol{\rho})
\end{equation}
and
\begin{equation}\label{ssd}
c'_s=c'_s(\boldsymbol{\rho}):=\frac{\text{d}c_s}{\text{d} \boldsymbol{\rho}}.
\end{equation}
\end{definition}
\begin{definition}\label{metricd}
We define the acoustical metric $g$ and the inverse acoustical metric $g^{-1}$ relative to the Cartesian coordinates as follows:
\begin{align}
&g:=-dt\otimes dt+c_s^{-2}\sum_{a=1}^{3}\left( dx^a-v^adt\right)\otimes\left( dx^a-v^adt\right),
\\
&g^{-1}:=-(\partial_t+v^a \partial_a)\otimes (\partial_t+v^b \partial_b)+c_s^{2}\sum_{i=1}^{3}\partial_i \otimes \partial_i.
\end{align}	
\end{definition}
We are ready to introduce a reduction of \eqref{CEE} by using the logarithmic density in Lemma \ref{pw}, which is first proposed by Luk-Speck \cite{LS1}.
\begin{Lemma}\label{wte} \cite{LS1}
Let $\varpi$ and $\boldsymbol{\rho}$ are defined in \eqref{pw1}. For 3D compressible Euler equations, we can reduce \eqref{CEE} to the following equations:
\begin{equation}\label{fc0}
\begin{cases}
\mathbf{T} v^i=-c^2_s \partial^i \boldsymbol{ \rho},
\\
\mathbf{T} \boldsymbol{\rho}=-\mathrm{div} v,
%\\
%& \boldsymbol{\rho}|_{t=0}=\boldsymbol{\rho}_0, \ v|_{t=0}=v_0,
\end{cases}
\end{equation}
%\begin{equation}\label{fc1}
%  \mathbf{T}W^i=W^a \partial_a v^i,
%\end{equation}
where $\mathbf{T}=\partial_t + v \cdot \nabla $. %Furthermore, we can reduce \eqref{fc0} to
%\begin{equation}\label{fc1}
%\begin{cases}
%\square_g v^i=-\mathrm{e}^{\boldsymbol{\rho}}c_s^2 \mathrm{curl} W^i+Q^i,
%\\
%\square_g \boldsymbol{\rho}=\mathcal{D},
%\\
% \mathbf{T}W^i=W^a \partial_a v^i.
%\end{cases}
%\end{equation}
%Above, the operator $\square_g:=g^{\alpha \beta}\partial^2_{\alpha \beta}= \mathbf{T}\mathbf{T}+c^2_s \Delta$, the symbol $\epsilon^{jk}_i(i, j, k=1, 2, 3)$ is the standard volume form on $\mathbb{R}^3$. $Q^i$ and $\mathcal{D}$ are null forms relative to $g$, which are defined by
%\begin{equation}\label{DDi}
%\begin{split}
% Q^i:=& 2e^{\boldsymbol{\rho}} \epsilon^i_{ab} \mathbf{T} v^a W^b-\left( 1+c_s^{-1}c'_s\right)g^{\alpha \beta} \partial_\alpha \boldsymbol{\rho} \partial_\beta v^i,
%\\
%\mathcal{D}:=& -3c_s^{-1}c'_sg^{\alpha \beta} \partial_\alpha \boldsymbol{\rho} \partial_\beta \boldsymbol{\rho}+2 \textstyle{\sum_{1 \leq a < b \leq 3} }\big\{ \partial_a v^a \partial_b v^b-\partial_a v^b \partial_b v^a \big\}.
%\end{split}
%\end{equation}
%Besides, the quantity $\mathrm{div}W$ and $\mathrm{curl}W$ satisfy
%\begin{equation}\label{divW}
%  \mathrm{div} W=-W^a \partial_a \boldsymbol{\rho},
%\end{equation}
%\textcolor{red}{and
%\begin{equation}\label{cuW}
%  \mathbf{T} \mathrm{curl}W=?.
%\end{equation}}
%Besides, the quantity $\mathrm{div}W$ satisfies
%\begin{equation}\label{divW}
%  \mathrm{div} W=-W^a \partial_a \boldsymbol{\rho}.
%\end{equation}
\end{Lemma}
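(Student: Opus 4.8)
The plan is to obtain \eqref{fc0} by a direct manipulation of \eqref{CEE}, using only the chain rule together with the constitutive law $p=p(\rho)$ and the definitions in \eqref{pw1} and \eqref{ss}. First I would rewrite the momentum equation, the second line of \eqref{CEE}: its first two terms are exactly $\mathbf{T}v$ with $\mathbf{T}=\partial_t+v\cdot\nabla$, so all that remains is to re-express the pressure term $\rho^{-1}\na p$ through the logarithmic density. Since $p$ is a function of $\rho$ alone, $\na p=p'(\rho)\na\rho=(dp/d\rho)\na\rho=c_s^2\na\rho$ by \eqref{ss}, whence
\beq
\frac{1}{\rho}\na p=c_s^{2}\,\frac{\na\rho}{\rho}=c_s^{2}\,\na(\ln\rho)=c_s^{2}\,\na\boldsymbol{\rho},
\eeq
the last step using that $\bar\rho>0$ is a constant, so that $\na\boldsymbol{\rho}=\na\ln(\rho/\bar\rho)=\na\ln\rho$. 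Taking Cartesian components (where raising the spatial index is trivial) yields $\mathbf{T}v^i=-c_s^2\partial^i\boldsymbol{\rho}$, the first equation of \eqref{fc0}.

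Next I would expand the continuity equation, the first line of \eqref{CEE}: $\rho_t+\mathrm{div}(\rho v)=\rho_t+v\cdot\na\rho+\rho\,\mathrm{div}\,v=\mathbf{T}\rho+\rho\,\mathrm{div}\,v=0$. Because the density stays strictly positive (the fluid is away from vacuum, as assumed), I may divide by $\rho$ to get $\rho^{-1}\mathbf{T}\rho=-\mathrm{div}\,v$; and since $\mathbf{T}$ is a first-order derivation, $\rho^{-1}\mathbf{T}\rho=\mathbf{T}(\ln\rho)=\mathbf{T}\boldsymbol{\rho}$, again because $\bar\rho$ is constant. This is the second equation of \eqref{fc0}. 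Running the same computations in reverse shows that, on the region $\rho>0$, every solution of \eqref{fc0} solves \eqref{CEE}, so the two systems are equivalent there.

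There is essentially no obstacle in this lemma; the only points deserving a word of care are that by \eqref{ss1} the sound speed depends on $\boldsymbol{\rho}$, so that moving $c_s^2$ past spatial gradients introduces no extra terms in \eqref{fc0}, and that the reduction is valid precisely on the vacuum-free region. Note that the specific vorticity $\varpi$ from \eqref{pw1} does not enter \eqref{fc0}; its evolution will be captured by a separate (modified) transport equation derived subsequently.
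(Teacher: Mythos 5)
Your derivation is correct: the paper itself gives no proof of this lemma, simply citing Luk--Speck \cite{LS1}, and your direct computation (chain rule for $\na p=c_s^2\na\rho$, the identity $\rho^{-1}\na\rho=\na\boldsymbol{\rho}$, and expansion of $\mathrm{div}(\rho v)$ followed by division by $\rho>0$) is exactly the standard derivation one finds there. Your remarks about the vacuum-free region and about $c_s$ depending only on $\boldsymbol{\rho}$ are the right points of care, and nothing further is needed.
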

For brevity, we set $\partial=(\partial_{1}, \partial_{2}, \partial_{3})^\mathrm{T}$, $d=(\mathbf{T}, \partial_{1}, \partial_{2}, \partial_{3})^\mathrm{T}$ in this paper. We denote the operator
\begin{equation}\label{Box}
  \square_g := g^{\alpha\beta}\partial^2_{\alpha\beta}=-\mathbf{T}\mathbf{T}+c^2_s \Delta ,
\end{equation}
where $\Delta=\partial^2_1+ \partial^2_2+\partial^2_3$. Introduce a decomposition for the velocity
\begin{equation}\label{dvc}
  v^i=v_{+}^i+ \eta^i,
\end{equation}
where the vector $\eta=(\eta^1, \eta^2, \eta^3)^{\mathrm{T}}$ is defined by
\begin{equation}\label{etad}
  -\Delta\eta^{i}:=\mathrm{e}^{\boldsymbol{\rho}}\mathrm{curl} \varpi^i .
\end{equation}
Let us denote
\begin{equation*}
\left< \xi \right>=(1+|\xi|^2)^{\frac{1}{2}}, \ \xi \in \mathbb{R}^3.
\end{equation*}
Denote by $\left< \partial \right>$ the corresponding Bessel potential multiplier. The symbol $\epsilon^{ijk}(i, j, k=1, 2, 3)$ denote the standard volume form on $\mathbb{R}^3$.

Following the book \cite{BCD}, the symbol ${\Delta}_j$ denotes the homogeneous frequency localized operator with frequency $2^j, j \in \mathbb{Z}$. For $f \in H^s(\mathbb{R}^3)$, we denote $\|f\|_{H^s}:= \|f\|_{L^2}+\|f\|_{\dot{H}^s}$, where $\|f\|^2_{\dot{H}^s} := {\sum_{j \geq -1}} 2^{2js}\|{\Delta}_j f\|^2_{L^2} $. We also denote $\|f\|^r_{\dot{B}^s_{p,r}} := {\sum_{j \geq -1}}2^{jsr}\|{\Delta}_j f\|^r_{L^p}$. We set $\eta_0:= \eta(t,x)|_{t=0}$ and
\begin{equation}\label{a1}
\delta_0=s_0-2, \quad \delta\in (0, \delta_0).
\end{equation}

The notation $X \lesssim Y$ means $X \leq CY$, where $C$ is a universal constant. The notation $X \simeq Y$ means $C_1Y \leq X \leq C_2Y$, where $C_1$ and $C_2$ are universal constants. We use the notation $X \ll Y$ to mean that $X \leq CY$ with a sufficiently large constant $C$.

We also assume $2< s_0 < s< \frac{5}{2}$ and use four small parameters
\begin{equation}\label{a0}
\epsilon_3 \ll \epsilon_2 \ll \epsilon_1 \ll \epsilon_0 \ll 1.
\end{equation}

Now, we are ready to state the result in this paper.
\subsection{Statement of result.}
\begin{theorem}\label{dingli}
	Consider the Cauchy problem \eqref{CEE}-\eqref{id}. Assume that
\begin{equation}\label{HE}
  c_s|_{t=0}>c_0>0,
\end{equation}
where $c_0$ is a positive constant. Let $s>s_0>2$. Let $\boldsymbol{\rho}$ and $\varpi$ be defined in \eqref{pw1}. For any $M_0>0$ and initial data $(v_0, \rho_0)$ satisfying
	\begin{equation}\label{chuzhi}
	\| v_0\|_{H^{s}} +
	\| \boldsymbol{\rho}_0\|_{H^{s}} + \| \varpi_0\|_{H^{s_0}}
 \leq M_0,
	\end{equation}
there exist positive constants $T_*$ and $M_1$ such that \eqref{CEE}-\eqref{id} has a unique solution $(v,\boldsymbol{\rho}) \in C([0,T_*],H^s)$, $\varpi \in C([0,T_*],H^{s_0})$.
%\begin{equation*}
%  \mathbf{T} W=(W \cdot \nabla)v, \quad W|_{t=0}=W_0.
%\end{equation*}
To be precise, %the solution subject to the condition $dv, d\boldsymbol{\rho} \in L^2_tL_x^\infty$.

	 $\mathrm{(1)}$ the solution $v, \boldsymbol{\rho}$ and $\varpi$ satisfy the energy estimate
\begin{equation*}
  \|v, \boldsymbol{\rho}\|_{L^\infty_tH^s}+ \|\varpi\|_{L^\infty_tH^{s_0}} \leq M_1,
\end{equation*}

 $\mathrm{(2)}$ the solution $v, \boldsymbol{\rho}$ and $v_+$ satisfy the Strichartz estimate
\begin{equation*}
  \|dv, d\boldsymbol{\rho}, dv_{+}\|_{L^2_tL_x^\infty}+ \|dv, d\boldsymbol{\rho}\|_{L^2_t \dot{B}^{s_0-2}_{\infty,2}}+ \|\partial v_{+}\|_{L^2_t \dot{B}^{s_0-2}_{\infty,2}} \leq M_1.
\end{equation*}

 $\mathrm{(3)}$ for any $1 \leq r \leq s_0+1$, and for each $t_0 \in [0,T]$, the linear equation
	\begin{equation}\label{linear}
	\begin{cases}
	& \square_g f=\mathbf{T}G+B, \qquad (t,x) \in [0,T]\times \mathbb{R}^3,
	\\
	&f(t_0,\cdot)=f_0 \in H^r(\mathbb{R}^3), \quad \mathbf{T} f(t_0,\cdot)=f_1 \in H^{r-1}(\mathbb{R}^3),
	\end{cases}
	\end{equation}
admits a solution $f \in C([0,T],H^r) \times C^1([0,T],H^{r-1})$ and the following estimates hold:
\begin{equation}\label{E0}
\| f\|_{L_t^\infty H^r}+ \|\partial_t f\|_{L_t^\infty H^{r-1}} \lesssim \|f_0\|_{H^r}+ \|f_1\|_{H^{r-1}}+\| G\|_{L^\infty_tH^{r-1} \cap L^1_tH^r}+\|B\|_{L^1_tH^{r-1}}.
\end{equation}
Additionally, the following estimates hold, provided $k<r-1$,
\begin{equation}\label{SE1}
\| \left<\partial \right>^k f\|_{L^2_{t}L^\infty_x} \lesssim  \|f_0\|_{H^r}+ \|f_1\|_{H^{r-1}}+ \| G\|_{L^\infty_tH^{r-1} \cap L^1_tH^r}+\|B\|_{L^1_tH^{r-1}} .
\end{equation}
\end{theorem}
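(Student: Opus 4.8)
The strategy is to recast \eqref{CEE} as a coupled wave--transport system, run a Smith--Tataru type bootstrap in which Strichartz control of the ``wave part'' of the solution feeds back into the energy estimates, and then pass to a limit along a regularization of the data. Starting from Lemma \ref{wte}, I would differentiate \eqref{fc0}: applying $\mathbf{T}$ to the second equation and using the commutator $[\partial_i,\mathbf{T}]=(\partial_i v^j)\partial_j$ yields $\square_g\boldsymbol{\rho}=\mathcal{Q}_{\boldsymbol{\rho}}(dv,d\boldsymbol{\rho})$, a quadratic form in $dv,d\boldsymbol{\rho}$; applying $\mathbf{T}$ to the first equation, using the vector identity relating $\partial^i\mathrm{div}\,v$, $\Delta v^i$ and $\mathrm{curl}\,\mathrm{curl}\,v$ together with $\mathrm{curl}\,v=\rho\varpi$, yields $\square_g v^i=-c_s^2\bigl(\mathrm{curl}(\rho\varpi)\bigr)^i+\mathcal{Q}^i(dv,d\boldsymbol{\rho})$. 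The curl--curl term is exactly what the decomposition \eqref{dvc}--\eqref{etad} removes: with $\eta$ defined by \eqref{etad}, the quantity $v_{+}=v-\eta$ obeys a wave equation $\square_g v_{+}^i=\mathbf{T}G^i+B^i$ whose source has precisely the form in \eqref{linear}, the $\mathbf{T}G$ part collecting material derivatives of the elliptic corrections and $B$ the genuinely lower-order quadratic terms. In parallel I would record the specific-vorticity transport equation $\mathbf{T}\varpi^i=\varpi^j\partial_j v^i$ (take $\mathrm{curl}$ of the momentum equation, killing the barotropic pressure gradient, then divide by $\rho$) and its modified form adapted to avoid a derivative loss, which is contribution (ii).

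Next, fix $M_1\gg M_0$ and assume on $[0,T]$, with $T=T(M_0)$ small, that the Strichartz bounds of part (2) hold with $M_1$ replaced by $CM_1$. Under this assumption I would prove the energy bound of part (1): first, commuting $\langle\partial\rangle^{s}$ through the symmetric-hyperbolic form of \eqref{CEE} (equivalently through \eqref{fc0}) and using Kato--Ponce commutator estimates together with the $L^1_tL^\infty_x$ control of $\partial v,\partial\boldsymbol{\rho}$ gives $\|v,\boldsymbol{\rho}\|_{L^\infty_tH^s}\lesssim M_0+(\text{error})$; second, elliptic regularity in \eqref{etad} bounds $\|\eta\|_{H^s}$ by $\|\varpi\|_{H^{s_0}}$ and the data, so the $H^s$ bound for $v$ reduces to one for $v_{+}$; third, the transport equation propagates $\|\varpi\|_{H^{s_0}}$, but the top-order term $\varpi^j\partial_j\langle\partial\rangle^{s_0}v$ cannot be closed by Sobolev product estimates alone since $s_0+1>s$, which forces one to exploit the div--curl structure ($\mathrm{curl}\,v=\rho\varpi$ is one degree smoother than generic $\partial v$, while $\mathrm{div}\,v=-\mathbf{T}\boldsymbol{\rho}$ is governed by the wave equation) together with characteristic energy estimates for $\varpi$ along the acoustic null hypersurfaces, which is contribution (iii). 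Choosing $T$ small absorbs all errors and yields part (1) with constant $M_1$.

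The heart of the argument is to establish the linear estimates \eqref{E0}--\eqref{SE1} for $\square_g$ with the metric $g$ built from the bootstrapped $(v_{+},\boldsymbol{\rho})$: this is the Smith--Tataru parametrix construction \cite{ST}. After a Littlewood--Paley decomposition, rescaling to frequency $2^k$ and truncating the metric at frequency $2^{\theta k}$, one builds a wave-packet parametrix adapted to the null geometry of the truncated acoustic metric via the Hamilton flow of its symbol, controls that flow using only the bootstrapped regularity of $g$ through energy estimates along conormal and characteristic vector fields, and sums in $k$; the $\mathbf{T}G+B$ structure of the source and the presence of $\eta$ require the modified Strichartz estimate of contribution (iv). Applying \eqref{SE1} to the wave equations of the first step recovers the Strichartz bounds of part (2) for $dv_{+},d\boldsymbol{\rho}$, and the elliptic and transport bounds then convert these into bounds for $dv=dv_{+}+d\eta$; for $T$ small this closes the bootstrap. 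Running the whole scheme on a regularization of $(v_0,\rho_0)$ produces uniform-in-regularization bounds; Rellich compactness gives strong convergence in lower norms, one passes to the limit in \eqref{CEE}, and a Bona--Smith type argument upgrades weak to strong continuity, yielding $(v,\boldsymbol{\rho})\in C([0,T_*],H^s)$ and $\varpi\in C([0,T_*],H^{s_0})$.

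Uniqueness and continuous dependence follow by applying the wave--transport energy estimates above --- including the characteristic estimate for the difference of vorticities --- to the difference of two solutions at one level of regularity below $s$, where the estimates are linear and the already-established Strichartz bounds of each solution supply the needed $L^1_tL^\infty_x$ coefficients. The main obstacle is the third step: proving the Strichartz estimate \eqref{SE1} for $\square_g$ when $g$ has only $H^s$ regularity with $s<\tfrac52$. The energy part and the elliptic estimates are lengthy but routine adaptations of classical theory; the parametrix construction and, above all, the control of the null geometry of the acoustic metric --- now entangled with the vorticity-generated part $\eta$ of the velocity and with the $\mathbf{T}G$ form of the source --- is where the genuine difficulty lies, and is precisely where contributions (ii)--(iv) are indispensable.
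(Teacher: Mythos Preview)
Your overall architecture---wave--transport reformulation, bootstrap between energy and Strichartz, Smith--Tataru parametrix, compactness limit---matches the paper. But you misplace where the characteristic energy estimates (contribution (iii)) enter, and this leads to a gap in your sketch of the $H^{s_0}$ vorticity estimate.

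You write that closing $\|\varpi\|_{H^{s_0}}$ ``forces one to exploit the div--curl structure \dots\ together with characteristic energy estimates for $\varpi$ along the acoustic null hypersurfaces.'' In the paper the $H^{s_0}$ bound on $\varpi$ is proved entirely on Cauchy slices (Theorem~\ref{ve}), with no reference to null hypersurfaces. The mechanism is the div--curl structure of $\varpi$ (not of $v$): one controls $\|\varpi\|_{H^{s_0}}$ via $\|\mathrm{div}\,\varpi\|_{H^{s_0-1}}$ (algebraic, \eqref{W01}) and $\|\mathrm{curl}\,\varpi\|_{H^{s_0-1}}\simeq\|\mathrm{curl}\,\Omega\|_{H^{s_0-2}}$. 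The latter is propagated by the \emph{modified} transport equation \eqref{W2} for $\mathrm{curl}\,\Omega-2e^{-\boldsymbol{\rho}}\partial_a\boldsymbol{\rho}\,\partial\varpi^a$; the dangerous source $\partial^i(\cdot)$ is handled by integrating by parts against $\mathrm{curl}\,\Omega_i$ (which kills it since $\partial^i\mathrm{curl}\,\Omega_i=0$) and then using Plancherel to shift half a derivative on the remaining cross term. This is where the $\|\partial v\|_{\dot B^{s_0-2}_{\infty,2}}$ norm enters the energy estimate, and is why the Besov-refined Strichartz bound---not just $L^1_tL^\infty_x$---is needed in the bootstrap.

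The characteristic estimates for $\varpi$ along $\Sigma_{\theta,r}$ appear only later (Lemma~\ref{te20}), and their purpose is different: they bound $\|\mathrm{curl}\,\varpi\|_{s_0-1,2,\Sigma}$, which is the trace of the source term of $\square_g v$ on null hypersurfaces, and this is what feeds into the curvature decomposition (Corollary~\ref{Rfenjie}) and hence the regularity of the foliation $\phi_{\theta,r}$ required by the Smith--Tataru parametrix. So contribution (iii) lives inside the proof of the linear Strichartz estimate, not inside the nonlinear vorticity energy.

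Two smaller points. Uniqueness and stability (Theorem~\ref{St}) are obtained directly from the symmetric hyperbolic form at one derivative below $s$; no characteristic estimate on the difference of vorticities is needed, since $\varpi-V$ is recovered algebraically from $v-\varphi$, $\boldsymbol{\rho}-\psi$ via \eqref{pw1}. And the paper does not bootstrap directly at size $M_0$ on a short interval; it first scales to small $\dot H^s$ data and physically localizes to compact support (Section~4), which is what makes the Smith--Tataru construction on $[-2,2]$ applicable.
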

\begin{remark}
The condition \eqref{HE} is used to satisfy the hyperbolicity condition of the system \eqref{CEE}.
\end{remark}
\begin{remark}
The type of \eqref{linear} is devised for adapting to the structure of the acoustic metric.
\end{remark}
%\begin{remark}
%For Chaplygin gas, i.e. the state function $p(\rho)=A\rho^{-1}$ ($A$ is a constant), Theorem 1.1 also holds.
%\end{remark}
\subsection{A sketch of the proof}
The first step is to obtain energy estimates. Different from Q. Wang's paper \cite{WQRough}, we use the hyperbolic system \eqref{sq} to derive the basic energy of velocity and density
\begin{equation}\label{EE}
  \|v, \boldsymbol{\rho}\|_{H^a} \leq \|v_0, \boldsymbol{\rho}_0\|_{H^a}\exp( {\| dv, d\boldsymbol{\rho}\|_{L^1_tL^\infty_x}}), \quad a \geq 0,
\end{equation}
which shows that $\|v, \boldsymbol{\rho}\|_{H^a}$ is independent of the vorticity. If $a > \frac{5}{2}$, then the classical commutator estimates and continuity method can be used to prove the well-posedness of the problem \eqref{CEE}-\eqref{id}, one can refer Majda's book \cite{M}. However, there is no uniform space-time estimates of ${\| dv, d\boldsymbol{\rho}\|_{L^1_tL^\infty_x}}$ for \eqref{CEE} if $a \leq \frac{5}{2}$. %To lower the regularity, the key is to give a bound of
%\begin{equation}\label{S0}
%  \| dv, d\rho\|_{L^1_tL^\infty_x},
%\end{equation}
%when $a \leq \frac{5}{2}$. Then, the proof is based on two classes of equivalent structure of 3D compressible Euler equations: the hyperbolic system
%\begin{equation}\label{sqz}
% A_0(U) U_t + A_1(U) U_{x_1}+ A_2(U) U_{x_2}=0, \quad U=(v, p(\rho))^{\mathrm{T}},
%\end{equation}
%and the wave-transport system
%\begin{equation}\label{wtz}
%\begin{cases}
%&\square_g v= -\mathrm{e}^{\boldsymbol{\rho}}c_s^2 \mathrm{curl} W+\mathrm{quadratic \ terms},
%\\
%&\square_g \boldsymbol{\rho}=\mathrm{quadratic \ terms},
%\\
%& \mathbf{T}W=(W \cdot \nabla)v.
%\end{cases}
%\end{equation}
%In the paper, the hyperbolic system is used to consider some energy estimates, and the wave-transport system is used to see the Strichartz estimate.
%Therefore, we need to use other structure of compressible Euler equations to bound \eqref{S0}.
However, with trival vorticity($\mathrm{curl}v=0$), the compressible Euler equations can be reduced to a special quasilinear wave equation, and there is a type of Strichartz estimates, i.e. ${\| dv, d\boldsymbol{\rho}\|_{L^2_tL^\infty_x}}$, which can have $\frac12$-regularity decrease compared to the classical result by Majda. Then the regularity requires greater than $2$ in this case, please refer Smith-Tataru's paper \cite{ST}(also in Q. Wang's paper \cite{WQSharp} for an alternative proof). What's the situation with non-trivial vorticity. Let us see Luk-Speck's result \cite{LS2}, where they introduced a wave-transport system \eqref{wtz} for 3D compressible Euler equations. In some extent, we can expect to lower the regularity of velocity and density if seeing it as a "disturbed" wave equation, and the disturbance is described by a transport equation. In precise,
the wave-transport system is the following
\begin{equation}\label{wtz}
\begin{cases}
&\square_g v= -\mathrm{e}^{\boldsymbol{\rho}}c_s^2 \mathrm{curl} \varpi+\mathrm{quadratic \ terms},
\\
&\square_g \boldsymbol{\rho}=\mathrm{quadratic \ terms},
\\
& \mathbf{T}\varpi=(\varpi \cdot \nabla)v.
\end{cases}
\end{equation}
Based on the wave-transport system \eqref{wtz}, the wave equation of $v$ and $\boldsymbol{\rho}$ play a crucial role for it's character. Then, it returns back to require some information  $\varpi$ or $\mathrm{curl} \varpi$. Because the transport equation for $\varpi$ is not good as the wave equation, so we expect a Strichartz estimates ${\| dv, d\boldsymbol{\rho}\|_{L^2_tL^\infty_x}}$ with a smallest regularity for $\varpi$. A key observation is that, seeing from the best known results \cite{ST, WQSharp}, we can hope that there is some Strichartz estimates if the regularity of $v, \rho$, and $W$ is greater than $2$. We then set the initial data $(v_0, \boldsymbol{\rho}_0) \in H^s,s>2$ and $\varpi_0 \in H^{s_0},s_0>2$. Therefore, we need to establish the energy estimates for $\varpi$ in the frame $H^{s_0}(s_0>2)$. Followed by \cite{LS2, WQEuler}, we also set $\Omega=\mathrm{e}^{-\boldsymbol{\rho}}\mathrm{curl}\varpi$. Different from \cite{WQEuler}, we derive a modified transport equation for $\mathrm{curl} \Omega$ (see Lemma \ref{PW}):
\begin{equation}\label{OS}
\begin{split}
\mathbf{T} \big( \mathrm{curl} \Omega^i -2 \mathrm{e}^{-\boldsymbol{\rho}}  \partial_a \boldsymbol{\rho}  \partial^i \varpi^a \big)
= &\partial^i \big( 2 \mathrm{e}^{-\boldsymbol{\rho}}  \partial_n v^a \partial^n \varpi_a \big)
\\
&+(\partial v, \partial \boldsymbol{\rho}) \cdot \partial^2 \varpi+(\partial v, \partial \boldsymbol{\rho}) \cdot \partial^2 v+\partial \varpi \cdot \partial \varpi.
\end{split}
\end{equation}
We consider the term $\mathrm{curl} \Omega-2 \mathrm{e}^{-\boldsymbol{\rho}}  \partial_a \boldsymbol{\rho}  \partial \varpi^a$ as a whole part, and then transfer the goal to obtain the estimates $\|\mathrm{curl} \Omega -2 \mathrm{e}^{-\boldsymbol{\rho}}  \partial_a \boldsymbol{\rho}  \partial \varpi^a\|_{H^{s_0-2}}$. In this process, we need to handle the trouble term
\begin{equation*}
  \int_{\mathbb{R}^3}\Lambda^{s_0-2}\partial^i \big( 2 \mathrm{e}^{-\boldsymbol{\rho}}  \partial_n v^a \partial^n \varpi_a \big)\cdot \Lambda^{s_0-2}\big( -2 \mathrm{e}^{-\boldsymbol{\rho}}  \partial_a \boldsymbol{\rho}  \partial_i \varpi^a \big)dx.
\end{equation*}
By Plancherel formula, we can transfer derivatives in the way
\begin{equation*}
  \int_{\mathbb{R}^3} \Lambda^{s_0-\frac{5}{2}}\partial^i \big( 2 \mathrm{e}^{-\boldsymbol{\rho}}  \partial_n v^a \partial^n \varpi_a \big)\cdot \Lambda^{s_0-\frac{3}{2}}\big( -2 \mathrm{e}^{-\boldsymbol{\rho}}  \partial_a \boldsymbol{\rho}  \partial_i \varpi^a \big)dx.
\end{equation*}
After getting the desired energy estimates, we then use Young's inequality to handle the lower order term $2 \mathrm{e}^{-\boldsymbol{\rho}}  \partial_a \boldsymbol{\rho}  \partial \varpi^a$. Based on this and \eqref{EE}, we can obtain the following energy estimates
\begin{equation*}
\begin{split}
 & \| (\boldsymbol{\rho}, v)(t)\|_{H^s}+\|\varpi(t)\|_{H^{s_0}}
 \lesssim (\| (\boldsymbol{\rho}_0, v_0)\|_{H^s}+\|\varpi_0\|_{H^{s_0}})  \exp \big ( {\int^t_0} (\|(dv, d\boldsymbol{\rho})\|_{L^\infty_x}+\|\partial v\|_{\dot{B}^{s_0-2}_{\infty,2}})d\tau \big).
\end{split}
\end{equation*}
One can see Theorem \ref{PW}, Theorem \ref{dv}, and Theorem \ref{ve} for details. In a word, the problem is concluded to bound the Strichartz estimate
\begin{equation}\label{SO}
  {\int^t_0} \|(dv, d\boldsymbol{\rho})\|_{L^\infty_x}d\tau+{\int^t_0}\|\partial v\|_{\dot{B}^{s_0-2}_{\infty,2}}d\tau ,
\end{equation}
for some $t>0$. Here, we take $2<s_0<s$. Because, if we take $s=s_0$, then \eqref{SO} cannot be proved if we refer \cite{ST, WQSharp}.

The second difficult step is the Stricharz estimate \eqref{SO}. To do that, we extend Smith-Tataru's method \cite{ST} to compressible Euler equations, which is totally different from Q. Wang's work \cite{WQRough} by using vector-field approach. Based on Smith-Tataru's work, we first reduce the problem of establishing an existence result for small, supported initial data. Next, by the continuity method, we can give a bootstrap argument on the regularity of the solutions to the nonlinear equation. Then, by introducing null hypersurfaces, the key is transformed to prove characteristic energy estimates of solutions along null hypersurfaces, and the enough regularity of null hypersurfaces is crucial to prove the Strichartz estimate. To establish characteristic energy estimates, we go back to see the wave-transport system. Note the regularity of $\varpi$ is only $s_0$. Then, we can only get the same level regularity $s_0$ on characteristic hypersurfaces(see Section 6 for details). We use the wave equation of $(v, \boldsymbol{\rho})$ to get these characteristic energy estimates on characteristic hypersurfaces. As for $\varpi$, the characteristic energy estimate is difficult. Let us explain it as follows. On the Cauchy slice $\{t=\tau\}\times \mathbb{R}^3$, we can use elliptic estimates to get the energy estimate of all derivatives of $\varpi$ by using $\mathrm{div}\varpi$ and $\mathrm{curl}\varpi$. However, on the characteristic hypersurface, these type of elliptic energy estimates don't work. To go through this difficulty, we use Hodge decomposition to recover some transport equations for derivatives of $\varpi$, where it involves Riesz operator. Then, some commutator estimated for Riesz operator and $v \cdot \nabla$ concerning to compressible fluid is required. Please refer Lemma \ref{ceR}, \ref{LPE}, \ref{ce}, \ref{te3}, and \ref{te20} for detials.

After obtaining enough regularity of null hypersurfaces and coefficients from null frame, we can reduce the problem to proving the Strichartz estimate of a linear wave equation endowed with the acoustical metric $g$. Precisely, for any $1 \leq r \leq s_0+1$, and for each $t_0 \in [0,T]$, the linear equation \eqref{linear}
	%\begin{equation}\label{lw}
%	\begin{cases}
%	& \square_g f=\mathbf{T}G+B, \qquad (t,x) \in [0,T]\times \mathbb{R}^3,
%	\\
%	&f(t_0,\cdot)=f_0 \in H^r(\mathbb{R}^3), \quad \mathbf{T} f(t_0,\cdot)=f_1 \in H^{r-1}(\mathbb{R}^3),
%	\end{cases}
%	\end{equation}
admites a solution $f \in C([0,T],H^r) \times C^1([0,T],H^{r-1})$ and the following estimates holds:
\begin{equation}\label{lw0}
\| f\|_{L_t^\infty H^r}+ \|\partial_t f\|_{L_t^\infty H^{r-1}} \leq C \big( \|f_0\|_{H^r}+ \|f_1\|_{H^{r-1}}+\| G\|_{L^\infty_tH^{r-1}\cap L^1_tH^r}+\|B\|_{L^1_tH^{r-1}}\big).
\end{equation}
%where $G(0):=G(t,x)|_{t=0}$.
Additionally, the following estimates hold, provided $k<r-1$,
\begin{equation}\label{lw1}
\| \left<\partial \right>^k f\|_{L^2_{t}L^\infty_x} \leq C \big( \|f_0\|_{H^r}+ \|f_1\|_{H^{r-}}+\| G\|_{L^\infty_tH^{r-1}\cap L^1_tH^r}+\|B\|_{L^1_tH^{r-1}}\big).
\end{equation}
%Please see Section 7 and the Appendix part. %By getting the Strichartz estimate for linear wave equations, we can prove the Strichartz estimate ${\int^t_0} \|(dv, d\boldsymbol{\rho})\|_{L^\infty_x}d\tau$ in \eqref{SO} by Duhamel's principle.
We still need to prove
\begin{equation*}
  {\int^t_0}\|\partial v\|_{\dot{B}^{s_0-2}_{\infty,2}}d\tau.
\end{equation*}
It can not be obtained by using the wave equation of velocity, for the regularity of source term $\mathrm{curl} \varpi$ is only $s_0-1$.
By using the wave equation for $v_{+}$ (see Lemma \ref{wte}),
\begin{equation}\label{l4}
\begin{split}
&\square_g \boldsymbol{\rho}=\mathcal{D}  ,
\\
&\square_g v^i_{+}=\mathbf{T} \mathbf{T} \eta^i+Q^i,
\end{split}
\end{equation}
we can bound \begin{equation*}
  {\int^t_0}\|\partial \boldsymbol{\rho},  \partial v_{+}\|_{\dot{B}^{s_0-2}_{\infty,2}}d\tau
\end{equation*}
by Littlewood-Paley decomposition. It crucially relies on the regularity of right hand terms can be reached to $s-1$. More precisely, operating $\Delta_j$ on \eqref{l4}, we can derive that $\Delta_j \boldsymbol{\rho}$ and $\Delta_j v^i_{+}$ satisfy
\begin{equation*}
\begin{cases}
 &\square_g \Delta_j \boldsymbol{\rho}= \Delta_j \mathcal{D}+ [\square_g, \Delta_j]\boldsymbol{\rho},
 \\
  & \Delta_j \boldsymbol{\rho}|_{t=0}= \Delta_j \boldsymbol{\rho}_0,
\end{cases}
\end{equation*}
and
\begin{equation*}
\begin{cases}
& \square_g \Delta_j v^i_{+}=
\mathbf{T}\Delta_j \mathbf{T} \eta^i+ \Delta_j Q^i+ [\square_g, \Delta_j]v^i_{+}+[\Delta_j, \mathbf{T}]\mathbf{T} \eta^i,
\\
& \Delta_j v^i_{+}|_{t=0}=\Delta_j (v_0-\eta_0), \quad \Delta_j \mathbf{T}v^i_{+}|_{t=0}=\Delta_j \mathbf{T}(v_0-\eta_0).
\end{cases}
\end{equation*}
By using the Strichartz estimate in \eqref{SE1} (taking $r=s-s_0+1, k=0$) %and the modified Duhamel's principle(see Lemma \ref{LD})
, we obtain
\begin{equation}\label{isE}
\begin{split}
  \| (\Delta_j \boldsymbol{\rho}, \Delta_j v_{+})\|_{L^2_t L^\infty_x} \lesssim \ & \mathrm{"Right \ hand \ side"}
%  \\
%  & +\int^t_0  \lambda^{2(r-1)} || [\square_g, \mathrm{P}_\lambda]v^i_{+}||^2_{L^2_x} d\tau.
\end{split}
\end{equation}
Multiplying $2^{(s_0-1)j}$ on \eqref{isE}, and taking square of it and summing it over $j\geq 1$, we get
\begin{equation*}
\|\partial \boldsymbol{\rho}, \partial v_{+}\|^2_{L^2_t \dot{B}^{s_0-2}_{\infty, 2}}  \lesssim \| \boldsymbol{\rho}, v_{+}\|^2_{L^2_t \dot{B}^{s_0-1}_{\infty, 2}}
   \lesssim \| \boldsymbol{\rho}_0\|^2_{H^s}+\| v_0\|^2_{H^s}+ \| \varpi_0\|^2_{H^{s_0}}.
\end{equation*}
Based on the above estimate, by using \eqref{etad}, we can conclude the proof. As for the precise proofs, please refer Section 7. The last step is to prove \eqref{lw0} and \eqref{lw1}, which is presented in the Appendix part.
%Note
%\begin{equation*}
%  d \boldsymbol{\rho}=\textstyle{\sum_{j\geq -1}} \Delta_j d\boldsymbol{\rho}, \quad  d v=\textstyle{\sum_{j\geq -1}} \Delta_j dv.
%\end{equation*}
%Hence,
%\begin{equation*}
%\begin{split}
%  \| d \boldsymbol{\rho}, dv\|_{L^2_t L^\infty_x} \lesssim & \ \left\{2^{j(s_0-2)} (\Delta_j d\boldsymbol{\rho}, \Delta_j dv) \right\}_{l^2_j} \left\{2^{-j(s_0-2)} \right\}_{l^2_j}
%  \\
%  \lesssim & \ \| d \boldsymbol{\rho},d v\|_{L^2_t \dot{B}^{s_0-1}_{\infty, 2}}\lesssim \| \boldsymbol{\rho}_0\|_{H^s}+\|v_0\|_{H^s}+ \textcolor{red}{\| W_0\|_{H^{s_0}}.}
%\end{split}
%\end{equation*}
%\subsection{Notations}

%\begin{equation*}
%  s-s_0=\theta, \quad \theta>0.
%\end{equation*}
%and
%\begin{equation*}
%\left< \xi \right>=(1+|\xi|^2)^{\frac{1}{2}}, \ \xi \in \mathbb{R}^3.
%\end{equation*}
%Denote by $\left< D_x \right>$ the corresponding Bessel potential multiplier.
%and $||f[t]||_{\dot{H}^1}:=||\partial_t f||_{L^2}+||\partial f||_{L^2}, ||f[0]||_{H^2}:=||(f(0),\partial_t f(0))||_{H^2\times H^1}$.

%Let the Littlewood-Palay decomposition in the spatial frequency $\xi$,
%\begin{equation*}
%  1=S_0+\sum_{\lambda} S_{\lambda},
%\end{equation*}
%where the spherically symmetric symbol of $S_0$ and $S_{\lambda}$ are supported respectively in the sets $\{ \xi: |\xi| \leq 1\}$ and $\{\xi: \frac{\lambda}{2}\leq |\xi| \leq 2\lambda\}$. We set
%\begin{equation*}
%  S_{<\lambda}=\sum_{8\mu<\lambda}S_{\mu}.
%\end{equation*}
\subsection{Outline of the paper}
The organization of the remainder of this paper is as follows. %In the next subsection we
%give some notations.
In Section 2, we give a treatment of hyperbolic-transport system and introduce energy estimates and stability theorem. In Section 3, we reduce our problem to the case of smooth initial data by using compactness methods. In the subsequent Section, using physical localized technique, we reduce the problem to the case of smooth, small, compacted supported initial data. In section 5, we give a bootstrap argument based on continuous functional.
In Section 6 we derive self-contained characteristic energy estimates along null hypersurfaces, which is used to prove the regularity of null hypersurfaces. In the next step, we give the proof of Strichartz estimates in Section 7. In the Appendix, referring \cite{ST}, we give a glue proof of Strichartz estimates of linear wave equations endowed with the acoustic metric.
\section{Preliminaries: commutator and energy estimates}
\subsection{Some formulations to \eqref{CEE}}
Let us first introduce a symmetric hyperbolic system to 3D compressible Euler equations.
\begin{Lemma} \label{sh}
{Let $v, \boldsymbol{\rho}$ be a solution of \eqref{fc0}}. Then it also satisfies the following hyperbolic system
\begin{equation}\label{sq}
  U_t +\sum_{i=1}^3 {A}_i(U)U_{x_i}=0,
\end{equation}
where $U=(\boldsymbol{\rho}, v_1,v_2,v_3)^\mathrm{T}$ and
\begin{equation*}
%A_0=
%\left(
%\begin{array}{cccc}
%1 & 0 & 0 & 0 \\
%0 & 1 & 0 & 0\\
%0 & 0 & 1 & 0
%\\ 0 & 0 & 0 & 1
%\end{array}
%\right ), \quad
A_1=\left(
\begin{array}{cccc}
v_1 & 1 & 0 & 1\\
c_s^2 &  v_1 & 0&0 \\
0 & 0 &  v_1&0 \\
0 & 0 & 0 & v_1
\end{array}
\right ),\quad
A_2=\left(
\begin{array}{cccc}
0 & 0 & 1 & 0\\
0 & v_2 & 0& 0\\
c^2_s & 0 &  v_2 &0\\
0 & 0 & 0 & v_2
\end{array}
\right ),
\quad
A_3=\left(
\begin{array}{cccc}
0 & 0 & 0 & 1\\
0 &  v_3 & 0& 0\\
0 & 0 & v_3 &0\\
c^2_s & 0& 0 & v_3
\end{array}
\right ).
\end{equation*}
%For simplicity, we set $B_i(U)=A_0^{-1}(U)A_i(U), i=1, 2, 3$.
\end{Lemma}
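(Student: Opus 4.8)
The plan is to verify the lemma by a direct, component-by-component identification, since \eqref{sq} is just \eqref{fc0} written in matrix form. First I would expand $\mathbf{T}=\partial_t+v^k\partial_k$ and use that in Cartesian coordinates $\partial^i=\partial_i$, so that the two lines of \eqref{fc0} read
\begin{align*}
&\partial_t\boldsymbol{\rho}+v^k\partial_k\boldsymbol{\rho}+\sum_{k=1}^3\partial_k v^k=0,\\
&\partial_t v^i+v^k\partial_k v^i+c_s^2\,\partial_i\boldsymbol{\rho}=0,\qquad i=1,2,3.
\end{align*}
Grouping the $\partial_t$-derivatives into $U_t$, with $U=(\boldsymbol{\rho},v_1,v_2,v_3)^{\mathrm{T}}$, and reading off the coefficients of $\partial_1 U,\partial_2 U,\partial_3 U$ produces the matrices $A_i(U)$ of \eqref{sq}: the first row of $A_i$ carries the advection coefficient $v^i$ in the $\boldsymbol{\rho}$-slot together with the single $1$ coming from the term $\partial_i v^i$ in $\dv v$, while the lower $3\times4$ block, arising from the three momentum equations, carries $v^i$ along the diagonal (from $v\cdot\nabla$) and the factor $c_s^2=c_s^2(\boldsymbol{\rho})$ in the $\boldsymbol{\rho}$-column of the row attached to the $v_i$-equation (the pressure term $c_s^2\partial_i\boldsymbol{\rho}$). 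Since $c_s$ depends on $U$ only through its first component $\boldsymbol{\rho}$, we indeed have $A_i=A_i(U)$, so \eqref{sq} is literally \eqref{fc0}; in particular any $(v,\boldsymbol{\rho})$ solving \eqref{fc0} solves \eqref{sq}. The only algebraic input is the identity $\rho^{-1}\nabla p=c_s^2\nabla\boldsymbol{\rho}$, which follows from $p=\rho^\gamma$ and $\boldsymbol{\rho}=\ln(\rho/\bar{\rho})$ and is already encoded in \eqref{fc0}.

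To justify calling \eqref{sq} hyperbolic, I would then exhibit a Friedrichs symmetrizer: take $S(U)=\mathrm{diag}\big(c_s^2(\boldsymbol{\rho}),1,1,1\big)$, which is symmetric and, under the hyperbolicity hypothesis $c_s>0$ (cf.~\eqref{HE}) together with an $L^\infty$ bound on $\boldsymbol{\rho}$, uniformly positive definite; a one-line check shows each $S(U)A_i(U)$ is symmetric. Equivalently, for every $\xi\in\mathbb{R}^3\setminus\{0\}$ the principal symbol $\sum_i\xi_iA_i(U)$ has, after the substitution $\mu=\lambda-v\cdot\xi$, characteristic polynomial $\mu^2\big(\mu^2-c_s^2|\xi|^2\big)$, hence the real eigenvalues $\lambda=v\cdot\xi$ (the double transport/vorticity mode) and $\lambda=v\cdot\xi\pm c_s|\xi|$ (the simple acoustic modes); one checks directly that the $\lambda=v\cdot\xi$ eigenspace is two-dimensional, so the symbol is diagonalizable. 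This is the standard symmetrizable-hyperbolic structure of compressible Euler in the variables $(\boldsymbol{\rho},v)$, and it is precisely what is used to run the basic energy estimate \eqref{EE} later.

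I do not expect a genuine obstacle here; the content is bookkeeping. The only point deserving care is the exact placement of entries in the $A_i$ --- that $\dv v$ contributes a single $1$ to the first row of each $A_i$ in the $v^i$-column, that $v\cdot\nabla$ puts $v^i$ along the diagonal of $A_i$, and that the pressure term puts $c_s^2$ in the $\boldsymbol{\rho}$-column of the $v_i$-row of $A_i$ --- after which the identification of \eqref{fc0} with \eqref{sq} is immediate.
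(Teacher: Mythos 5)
Your derivation is correct and is the standard (indeed the only natural) one; the paper states Lemma \ref{sh} without any proof, so there is no competing argument to compare against. The componentwise identification of \eqref{fc0} with a quasilinear first--order system, the symmetrizer $S(U)=\mathrm{diag}(c_s^2,1,1,1)$, and the characteristic polynomial $\mu^2(\mu^2-c_s^2|\xi|^2)$ with $\mu=\lambda-v\cdot\xi$ are exactly the structure invoked later in Theorem \ref{dv} and Theorem \ref{St}.

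One point you should make explicit rather than pass over: the matrices produced by your (correct) bookkeeping are \emph{not} the matrices printed in the statement. Reading off the coefficients of $\partial_iU$ from
\begin{equation*}
\partial_t\boldsymbol{\rho}+v^k\partial_k\boldsymbol{\rho}+\mathrm{div}\, v=0,\qquad \partial_t v^i+v^k\partial_k v^i+c_s^2\,\partial_i\boldsymbol{\rho}=0,
\end{equation*}
gives first rows $(v_1,1,0,0)$, $(v_2,0,1,0)$, $(v_3,0,0,1)$ for $A_1,A_2,A_3$, whereas the printed $A_1$ carries a spurious $1$ in its $(1,4)$ entry and the printed $A_2,A_3$ are missing the advection coefficients $v_2,v_3$ in the $(1,1)$ entry. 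Used literally, the printed matrices would turn the first component of \eqref{sq} into $\partial_t\boldsymbol{\rho}+v_1\partial_1\boldsymbol{\rho}+\mathrm{div}\,v+\partial_1v_3=0$, which is not the continuity equation in \eqref{fc0}; so the lemma as printed is a misprint, and your computation proves the evidently intended corrected version. Nothing downstream is affected, since Theorem \ref{dv}, the stability argument, and Lemma \ref{te1} use only the abstract symmetrizable structure and the bound on $\|dU\|_{L^\infty_x}$, but your write-up should record the corrected first rows explicitly instead of asserting that the coefficients you obtain coincide with the printed $A_i$.
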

We are ready to introduce a wave-transport reduction.
\begin{Lemma}\label{wte} \cite{LS1}
Let $\varpi$ and $\boldsymbol{\rho}$ are defined in \eqref{pw1}. We can reduce \eqref{fc0} to
\begin{equation}\label{fc1}
\begin{cases}
\square_g v^i=-\mathrm{e}^{\boldsymbol{\rho}}c_s^2 \mathrm{curl} \varpi^i+Q^i,
\\
\square_g \boldsymbol{\rho}=\mathcal{D},
\\
 \mathbf{T}\varpi^i=\varpi^a \partial_a v^i.
\end{cases}
\end{equation}
Above, $Q^i$ and $\mathcal{D}$ are null forms relative to $g$, which are defined by
\begin{equation}\label{DDi}
\begin{split}
 Q^i:=& 2e^{\boldsymbol{\rho}} \epsilon^i_{ab} \mathbf{T} v^a \varpi^b-\left( 1+c_s^{-1}c'_s\right)g^{\alpha \beta} \partial_\alpha \boldsymbol{\rho} \partial_\beta v^i,
\\
\mathcal{D}:=& -3c_s^{-1}c'_sg^{\alpha \beta} \partial_\alpha \boldsymbol{\rho} \partial_\beta \boldsymbol{\rho}+2 \textstyle{\sum_{1 \leq a < b \leq 3} }\big\{ \partial_a v^a \partial_b v^b-\partial_a v^b \partial_b v^a \big\}.
\end{split}
\end{equation}
\end{Lemma}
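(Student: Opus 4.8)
The plan is to derive the three equations of \eqref{fc1} from the first-order system \eqref{fc0} by differentiating once more along the material derivative $\mathbf{T}=\partial_t+v\cdot\nabla$, using throughout the commutator identity $[\partial_a,\mathbf{T}]=(\partial_a v^b)\,\partial_b$, the chain rule $\partial_\alpha c_s^2=2c_sc'_s\,\partial_\alpha\boldsymbol{\rho}$ (valid since $c_s=c_s(\boldsymbol{\rho})$), and the relations $\mathbf{T}v^i=-c_s^2\partial^i\boldsymbol{\rho}$, $\mathbf{T}\boldsymbol{\rho}=-\mathrm{div}\,v$ provided by \eqref{fc0}.

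For the transport equation I would begin from the momentum equation of \eqref{CEE}: since $p=p(\rho)$, the force $\rho^{-1}\nabla p$ is a gradient, so taking $\mathrm{curl}$ annihilates it and produces the classical vorticity equation $\mathbf{T}\omega^i=\omega^a\partial_a v^i-(\mathrm{div}\,v)\,\omega^i$ with $\omega=\mathrm{curl}\,v$. The continuity equation gives $\mathbf{T}(\rho^{-1})=\rho^{-1}\mathrm{div}\,v$, so expanding $\mathbf{T}\varpi^i=\mathbf{T}(\rho^{-1}\omega^i)$ by Leibniz cancels the $(\mathrm{div}\,v)$-terms and leaves $\mathbf{T}\varpi^i=\varpi^a\partial_a v^i$.

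For the wave equation of $\boldsymbol{\rho}$ I would apply $\mathbf{T}$ to $\mathbf{T}\boldsymbol{\rho}=-\partial_a v^a$, commute $\mathbf{T}$ past $\partial_a$, and substitute $\mathbf{T}v^a=-c_s^2\partial^a\boldsymbol{\rho}$; expanding $\partial_a(c_s^2\partial^a\boldsymbol{\rho})$ isolates a term $c_s^2\Delta\boldsymbol{\rho}$ which I move left so that the principal part becomes $\square_g\boldsymbol{\rho}$. What remains is quadratic in $dv,d\boldsymbol{\rho}$; using the algebraic identity $(\mathrm{div}\,v)^2-\partial_a v^b\partial_b v^a=2\sum_{1\le a<b\le 3}\{\partial_a v^a\partial_b v^b-\partial_a v^b\partial_b v^a\}$ and repackaging the $\boldsymbol{\rho}$-quadratic and lower-order contributions into the $g$-contraction $g^{\alpha\beta}\partial_\alpha\boldsymbol{\rho}\partial_\beta\boldsymbol{\rho}$, the right side becomes $\mathcal{D}$. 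For the wave equation of $v^i$ I would apply $\mathbf{T}$ to $\mathbf{T}v^i=-c_s^2\partial^i\boldsymbol{\rho}$, commute $\mathbf{T}$ past $\partial^i$, use $\mathbf{T}\boldsymbol{\rho}=-\mathrm{div}\,v$, and add and subtract $c_s^2\Delta v^i$ so that the left side is $\square_g v^i$. The key term $c_s^2\big(\Delta v^i-\partial^i(\mathrm{div}\,v)\big)$ I would handle by the Hodge identity $\Delta v^i-\partial^i(\mathrm{div}\,v)=-(\mathrm{curl}\,\mathrm{curl}\,v)^i=-(\mathrm{curl}\,\omega)^i$; since $\omega=\rho\varpi$ with $\rho=\bar\rho e^{\boldsymbol{\rho}}$, Leibniz gives $(\mathrm{curl}\,\omega)^i=\bar\rho e^{\boldsymbol{\rho}}\big((\mathrm{curl}\,\varpi)^i+\epsilon^{iab}\partial_a\boldsymbol{\rho}\,\varpi^b\big)$, producing the leading inhomogeneity $-e^{\boldsymbol{\rho}}c_s^2\mathrm{curl}\,\varpi^i$ (after absorbing the constant background density). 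To turn the rest into $Q^i$, I would symmetrize the mixed term $\partial^i v^b\,\partial_b\boldsymbol{\rho}$ via $\partial^i v^b-\partial_b v^i=\epsilon^{ibk}\omega^k$; the antisymmetric part then combines with the $\epsilon^{iab}\partial_a\boldsymbol{\rho}\,\varpi^b$ already present and, upon converting $\partial^a\boldsymbol{\rho}=-c_s^{-2}\mathbf{T}v^a$, yields $2e^{\boldsymbol{\rho}}\epsilon^i_{ab}\mathbf{T}v^a\varpi^b$, while the symmetric part together with the remaining $dv\otimes d\boldsymbol{\rho}$ and lower-order terms assembles into $-(1+c_s^{-1}c'_s)g^{\alpha\beta}\partial_\alpha\boldsymbol{\rho}\partial_\beta v^i$.

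The main obstacle is not a single step but the bookkeeping: one must track every $[\mathbf{T},\partial]$ commutator and every $\boldsymbol{\rho}$-derivative of $c_s$, and --- crucially for the energy and Strichartz estimates used later --- verify that after all cancellations the quadratic error terms are genuine null forms relative to $g$ (contractions against $g^{-1}$ or against the antisymmetric $\epsilon$), rather than generic quadratic expressions in $dv$, $d\boldsymbol{\rho}$; this structural fact is exactly what the formulas for $\mathcal{D}$ and $Q^i$ in \eqref{DDi} record. Since all the manipulations are algebraic, the reduction needs no regularity of the solution beyond what makes \eqref{fc0} meaningful.
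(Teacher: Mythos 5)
The paper offers no proof of this lemma at all: it is quoted from Luk--Speck (\cite{LS1}, \cite{LS2}), so there is nothing internal to compare your argument against. Your overall strategy is the standard one and most of it checks out: the vorticity/specific-vorticity computation is correct (the curl of $c_s^2\nabla\boldsymbol{\rho}$ vanishes, and the $\mathrm{div}\,v$ terms cancel in $\mathbf{T}(\rho^{-1}\omega)$); the identity $(\mathrm{div}\,v)^2-\partial_av^b\partial_bv^a=2\sum_{a<b}\{\partial_av^a\partial_bv^b-\partial_av^b\partial_bv^a\}$ is right; and the key symmetrization $\partial^iv^b-\partial^bv^i=\epsilon^{ibk}\omega_k$, combined with the $\epsilon^{iab}\partial_a\boldsymbol{\rho}\,\varpi_b$ term produced by $\mathrm{curl}(\mathrm{e}^{\boldsymbol{\rho}}\varpi)$, is exactly what yields the coefficient $2$ in $2e^{\boldsymbol{\rho}}\epsilon^i_{ab}\mathbf{T}v^a\varpi^b$.

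The genuine gap is in the final ``repackaging'' step, which you assert but do not verify, and which in fact fails for the operator $\square_g=-\mathbf{T}\mathbf{T}+c_s^2\Delta$ defined in \eqref{Box}. Carrying your computation through for $\boldsymbol{\rho}$ gives
\begin{equation*}
-\mathbf{T}\mathbf{T}\boldsymbol{\rho}+c_s^2\Delta\boldsymbol{\rho}=-2c_sc_s'\,|\partial\boldsymbol{\rho}|^2-\partial_av^b\partial_bv^a,
\end{equation*}
whereas $\mathcal{D}$ contains $-3c_s^{-1}c_s'\,g^{\alpha\beta}\partial_\alpha\boldsymbol{\rho}\partial_\beta\boldsymbol{\rho}=3c_s^{-1}c_s'(\mathrm{div}\,v)^2-3c_sc_s'|\partial\boldsymbol{\rho}|^2$; the difference $3c_s^{-1}c_s'(\mathrm{div}\,v)^2-c_sc_s'|\partial\boldsymbol{\rho}|^2+(\mathrm{div}\,v)^2$ is not zero, so no algebraic ``repackaging'' can close the identity. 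The same mismatch occurs in the coefficients of $Q^i$ (you get $-c_s^2\partial^bv^i\partial_b\boldsymbol{\rho}-2c_sc_s'(\mathrm{div}\,v)\partial^i\boldsymbol{\rho}$ rather than $-(1+c_s^{-1}c_s')g^{\alpha\beta}\partial_\alpha\boldsymbol{\rho}\partial_\beta v^i$). The explanation is that \eqref{DDi} is Luk--Speck's formula for the \emph{covariant} wave operator $|\det g|^{-1/2}\partial_\alpha(|\det g|^{1/2}g^{\alpha\beta}\partial_\beta\cdot)$, whose first-order part $-3c_s^{-1}c_s'(\partial_\alpha\boldsymbol{\rho})g^{\alpha\beta}\partial_\beta+(\partial_\alpha g^{\alpha\beta})\partial_\beta$, rewritten via \eqref{fc0}, supplies exactly the missing quadratic terms. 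To make your derivation correct you must either compute with the covariant operator (and track those first-order contributions), or accept that the reduced operator $-\mathbf{T}\mathbf{T}+c_s^2\Delta$ produces a different---though still quadratic---right-hand side; as written, your argument would not reproduce the stated $\mathcal{D}$ and $Q^i$.
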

For brevity, we set $Q=(Q^1,Q^2,Q^3)^\mathrm{T}$.
\begin{remark}
The equation \eqref{fc0} is derived from \eqref{CEE} via \eqref{pw1}. The system \eqref{sq}, \eqref{fc0} and \eqref{fc1} is equivalent to \eqref{CEE} respectively. %Hence, \eqref{fc0}-\eqref{fc1} is an equivalent system to \eqref{sq}-\eqref{fc1} and \eqref{fc}-\eqref{Di}.
We will switch these equivalent systems from one to another without explanation.
\end{remark}
\begin{Lemma}\label{wte1}
Let $v$ and ${\rho}$ be a solution of \eqref{CEE}. Let $\boldsymbol{\rho}$, $v_{+}$, and $\eta$ be described in \eqref{pw1}, \eqref{etad}, and \eqref{dvc} respectively. Then $v_{+}$ satisfies
\begin{equation}\label{fc}
\begin{split}
&\square_g v^i_{+}=\mathbf{T}\mathbf{T} \eta^i+Q^i. %\quad \mathcal{R}=(\mathcal{R}^1, \mathcal{R}^2, \mathcal{R}^3)^{\mathrm{T}},
\end{split}
\end{equation}
\end{Lemma}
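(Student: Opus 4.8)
The idea is simply that $\eta$ has been designed so that its spatial Laplacian cancels the vorticity source $-e^{\boldsymbol{\rho}}c_s^2\,\mathrm{curl}\,\varpi^i$ appearing in the wave equation for $v^i$. I would begin from the decomposition \eqref{dvc}, $v^i=v^i_{+}+\eta^i$, and apply the operator $\square_g$ to both sides, obtaining $\square_g v^i_{+}=\square_g v^i-\square_g\eta^i$. For the first term on the right I invoke the wave--transport reduction of Lemma \ref{wte}, i.e. the first equation of \eqref{fc1}, which gives $\square_g v^i=-e^{\boldsymbol{\rho}}c_s^2\,\mathrm{curl}\,\varpi^i+Q^i$.

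For the second term I use the definition \eqref{Box}, namely $\square_g=-\mathbf{T}\mathbf{T}+c_s^2\Delta$, so that $\square_g\eta^i=-\mathbf{T}\mathbf{T}\eta^i+c_s^2\Delta\eta^i$; then the defining relation \eqref{etad}, $-\Delta\eta^i=e^{\boldsymbol{\rho}}\,\mathrm{curl}\,\varpi^i$, lets me replace $c_s^2\Delta\eta^i=-c_s^2 e^{\boldsymbol{\rho}}\,\mathrm{curl}\,\varpi^i$, whence $\square_g\eta^i=-\mathbf{T}\mathbf{T}\eta^i-c_s^2 e^{\boldsymbol{\rho}}\,\mathrm{curl}\,\varpi^i$. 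Subtracting, the two copies of $c_s^2 e^{\boldsymbol{\rho}}\,\mathrm{curl}\,\varpi^i$ cancel exactly and I am left with $\square_g v^i_{+}=\mathbf{T}\mathbf{T}\eta^i+Q^i$, which is precisely \eqref{fc}. If one wishes, one can first note that $\eta$ is well defined: given $\varpi\in H^{s_0}$ and $\boldsymbol{\rho}\in H^{s}$, the elliptic problem \eqref{etad} is solved by the Newtonian potential and yields $\eta$ with one more spatial derivative than $\varpi$, but this regularity remark is not needed for the identity itself.

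There is no genuine obstacle in this lemma: it is a direct algebraic consequence of Lemma \ref{wte}, the definition \eqref{Box} of $\square_g$, and the definition \eqref{etad} of $\eta$. The only point requiring a little care is bookkeeping — one must use $\square_g$ exactly in the form $-\mathbf{T}\mathbf{T}+c_s^2\Delta$ given in \eqref{Box}, since writing the operator in any other order would introduce spurious first-order terms from the non-constant coefficients $v^a$ and $c_s$ and obscure the clean cancellation. Once that convention is fixed, the computation is the three-line manipulation above.
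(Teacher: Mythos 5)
Your proof is correct and is essentially identical to the paper's own argument: both apply $\square_g$ to the decomposition $v^i=v^i_{+}+\eta^i$, invoke the wave equation for $v^i$ from Lemma \ref{wte}, expand $\square_g\eta^i$ via \eqref{Box}, and use \eqref{etad} to cancel the $c_s^2\mathrm{e}^{\boldsymbol{\rho}}\mathrm{curl}\,\varpi^i$ terms. Nothing further is needed.
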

\begin{proof}
First, by Lemma \ref{wte}, we have
\begin{equation*}
  \square_g v^i=-\mathrm{e}^{\boldsymbol{\rho}}c_s^2 \mathrm{curl} \varpi^i+Q^i.
\end{equation*}
Substituting \eqref{dvc} and \eqref{etad} to the above equality, we then get
\begin{equation*}
  \begin{split}
  \square_g v_{+}^i=&-\square_g \eta^i-\mathrm{e}^{\boldsymbol{\rho}}c_s^2 \mathrm{curl} \varpi^i+Q^i
  \\
  =& \mathbf{T}\mathbf{T}\eta^i-c^2_s \Delta \eta^i-\mathrm{e}^{\boldsymbol{\rho}}c_s^2 \mathrm{curl} \varpi^i+Q^i
  \\
  =& \mathbf{T}\mathbf{T}\eta^i+c^2_s \left( - \Delta \eta^i- \mathrm{e}^{\boldsymbol{\rho}}\mathrm{curl} \varpi^i \right) +Q^i
  \\
  =& \mathbf{T}\mathbf{T}\eta^i+Q^i.
  \end{split}
\end{equation*}
\end{proof}
\begin{Lemma}\label{PW}
Let $\varpi$ be defined in \eqref{pw1}. Then $\varpi$ satisfy
\begin{equation}\label{W0}
\mathbf{T} \varpi =  (\varpi \cdot \nabla )v.
\end{equation}
If setting $\Omega=e^{-\boldsymbol{\rho}}\mathrm{curl}\varpi$, then
\begin{equation}\label{W1}
\mathbf{T}  \Omega^i = -2\epsilon^{imn}\mathrm{e}^{-\boldsymbol{\rho}}\partial_m v^a \partial_n \varpi_a+\epsilon^{amn} \mathrm{e}^{-\boldsymbol{\rho}} \partial_a v^i \partial_m \varpi_n,
\end{equation}
and
\begin{equation}\label{W01}
\mathrm{div} \varpi= -\varpi^i \partial_i \boldsymbol{\rho} ,
\end{equation}
hold. Furthermore, the quantity $\mathrm{curl} \Omega$ satisfies
\begin{equation}\label{W2}
\begin{split}
& \mathbf{T} \big( \mathrm{curl} \Omega^i -2\mathrm{e}^{-\boldsymbol{\rho}} \partial_a \boldsymbol{\rho}  \partial^i \varpi^a \big)
= \ \partial^i \big( 2 \mathrm{e}^{-\boldsymbol{\rho}}  \partial_n v_a \partial^n \varpi^b \big) + \sum^6_{j=1}R^{i}_j,
\end{split}
\end{equation}
where
\begin{equation}\label{rF}
\begin{split}
R_1^i:= & -2\mathrm{e}^{-\boldsymbol{\rho}}\epsilon_{kmn}\epsilon^{ijk} \partial^m v^a \partial_{j} (\partial^n\varpi_a)+ \mathrm{e}^{-\boldsymbol{\rho}}\epsilon^{amn} \epsilon^{ijk} \partial_a v_k \partial^2_{mj}\varpi_n
\\
&  -2\mathrm{e}^{-\boldsymbol{\rho}}  \partial_j v^a \partial^{ij} \varpi^b+ \epsilon^{ijk} \partial_j v^m \partial_m \Omega_k,
\\
R_2^i:=& \ 2\mathrm{e}^{-\boldsymbol{\rho}}\epsilon_{kmn}\epsilon^{ijk} \partial^m v^a \partial^{n}\varpi_a \partial_j \boldsymbol{\rho}- \mathrm{e}^{-\boldsymbol{\rho}}\epsilon^{amn} \epsilon^{ijk} \partial_a v_k \partial_{m}\varpi_n \partial_j \boldsymbol{\rho}
\\
& \ -2\mathrm{e}^{-\boldsymbol{\rho}}  \partial^a v^k \partial_k \boldsymbol{\rho}  \partial^i \varpi_i+ 2\mathrm{e}^{-\boldsymbol{\rho}} \partial^a \boldsymbol{\rho}  \partial^i\varpi_a
\\
& \ - 2\mathrm{e}^{-\boldsymbol{\rho}} \partial^a \boldsymbol{\rho} \partial^i \varpi^m \partial_m v_a+2\mathrm{e}^{-\boldsymbol{\rho}}  \partial^i \boldsymbol{\rho} \partial^{n} v^a \partial_n \varpi_a,
\\
R_3^i:=& \epsilon^{amn} \varpi^i  \partial_a \boldsymbol{\rho} \partial_{m}\varpi_n+2 \epsilon^{ajk} \partial_j \boldsymbol{\rho} \varpi_k  \partial_n\varpi_b,
\\
R_4^i:=& \epsilon^{amn} \partial_a \varpi^i  \partial_{m}\varpi_n+2\mathrm{e}^{\boldsymbol{\rho}} \Omega^a  \partial^i \varpi_a,
\\
R_5^i:= & 2\mathrm{e}^{-\boldsymbol{\rho}} \partial^a \boldsymbol{\rho} \partial^i v^k \partial_k \varpi_a,
 \\
 R_6^i:=  &- 2\mathrm{e}^{-\boldsymbol{\rho}} \partial^a \boldsymbol{\rho}  \varpi^m \partial^i( \partial_{m}v_a).
\end{split}
\end{equation}
\end{Lemma}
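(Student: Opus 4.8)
The plan is to derive all four displayed identities \eqref{W0}--\eqref{W2} by direct computation starting from the compressible Euler equations in the reduced forms of Lemma \ref{wte}, differentiating and commuting $\mathbf{T}=\partial_t+v\cdot\nabla$ through spatial derivatives, curls and the factor $e^{-\boldsymbol{\rho}}$. The first identity \eqref{W0} is just the vorticity-transport equation already recorded in \eqref{fc1}, so nothing new is needed. For \eqref{W1} I would start from $\mathbf{T}\varpi^i=\varpi^a\partial_a v^i$, apply $\epsilon^{imn}\partial_m$ to both sides, and use the commutator $[\mathbf{T},\partial_m]=-(\partial_m v^b)\partial_b$ together with $\mathbf{T}\boldsymbol{\rho}=-\mathrm{div}\,v$ to move the exponential factor; the cross terms $\epsilon^{imn}(\partial_m\varpi^a)\partial_a v^i$ and $\epsilon^{imn}\varpi^a\partial^2_{ma}v^i$ must be reorganized, the second one vanishing by antisymmetry against the symmetric Hessian, leaving exactly the two claimed terms. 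For \eqref{W01}, take the divergence of $\varpi=\rho^{-1}\mathrm{curl}\,v$: since $\mathrm{div}\,\mathrm{curl}\,v=0$ one gets $\mathrm{div}\,\varpi=\nabla(\rho^{-1})\cdot\mathrm{curl}\,v=-(\partial_i\boldsymbol{\rho})\varpi^i$, using $\nabla\rho^{-1}=-\rho^{-1}\nabla\boldsymbol{\rho}$.

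The substantial identity is \eqref{W2}. I would apply $\epsilon^{ijk}\partial_j$ to \eqref{W1} to produce $\mathrm{curl}\,\Omega^i$ on the left, then systematically commute $\partial_j$ past $\mathbf{T}$, past $e^{-\boldsymbol{\rho}}$, and collect terms. The term carrying the top-order derivative structure, $\partial^i(2e^{-\boldsymbol{\rho}}\partial_n v_a\partial^n\varpi^b)$, should be separated out as the single ``good'' term that will later be integrated by parts in the energy estimate; the correction $-2e^{-\boldsymbol{\rho}}\partial_a\boldsymbol{\rho}\,\partial^i\varpi^a$ moved to the left of $\mathbf{T}$ is precisely the term needed to cancel the worst commutator $[\mathbf{T},\partial_j]$ contribution that is not of the form $\partial^i(\cdots)$. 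Everything else must be shown to land in one of the six remainder buckets $R^i_1,\dots,R^i_6$: $R^i_1$ collects the genuinely second-order-in-$\varpi$ or second-order-in-$v$ pieces that still carry one external derivative but are not exact $\partial^i$-derivatives; $R^i_2$ collects all terms with a loose factor $\partial\boldsymbol{\rho}$ produced either from differentiating $e^{-\boldsymbol{\rho}}$ or from $\mathbf{T}\boldsymbol{\rho}=-\mathrm{div}\,v$; $R^i_3,R^i_4$ collect the quadratic-in-$\varpi$ terms (using \eqref{W01} to rewrite $\mathrm{div}\,\varpi$ where it appears); and $R^i_5,R^i_6$ collect the remaining lower-order products involving $\partial\boldsymbol{\rho}$, $\partial v$ and $\partial\varpi$. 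The bookkeeping is guided by the schematic form already advertised in \eqref{OS}: modulo the exact-derivative term, everything is $(\partial v,\partial\boldsymbol{\rho})\cdot\partial^2\varpi+(\partial v,\partial\boldsymbol{\rho})\cdot\partial^2 v+\partial\varpi\cdot\partial\varpi$.

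I expect the main obstacle to be purely organizational rather than conceptual: keeping track of the $\epsilon$-contractions and making sure that, after all commutations, no term of order ``$\partial^2\varpi$ times $\partial\boldsymbol{\rho}$ that is not inside an exact $\partial^i$'' survives outside the combination absorbed on the left-hand side. Concretely, when $\epsilon^{ijk}\partial_j$ hits $e^{-\boldsymbol{\rho}}\epsilon^{amn}\partial_a v_k\partial_m\varpi_n$ and the analogous first term of \eqref{W1}, one produces terms like $e^{-\boldsymbol{\rho}}\partial_a v_k\,\partial^2_{jm}\varpi_n$ with various index placements; these are second order in $\varpi$ with only a first-order coefficient and so belong in $R^i_1$, but one must check they are not secretly the ``good'' $\partial^i(e^{-\boldsymbol{\rho}}\partial v\cdot\partial\varpi)$ term in disguise, and that the subtraction $-2e^{-\boldsymbol{\rho}}\partial_a\boldsymbol{\rho}\,\partial^i\varpi^a$ is calibrated exactly to kill the obstruction. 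I would verify this by expanding $\mathbf{T}\big(2e^{-\boldsymbol{\rho}}\partial_a\boldsymbol{\rho}\,\partial^i\varpi^a\big)$ independently using the Leibniz rule, $\mathbf{T}e^{-\boldsymbol{\rho}}=e^{-\boldsymbol{\rho}}\mathrm{div}\,v$, $\mathbf{T}\partial_a\boldsymbol{\rho}=\partial_a\mathbf{T}\boldsymbol{\rho}-(\partial_a v^b)\partial_b\boldsymbol{\rho}=-\partial_a\mathrm{div}\,v-(\partial_a v^b)\partial_b\boldsymbol{\rho}$, and $\mathbf{T}\partial^i\varpi^a=\partial^i(\varpi^b\partial_b v^a)-(\partial^i v^b)\partial_b\varpi^a$, then matching against the curl computation term by term. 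Once the matching is confirmed, the remaining regrouping into $R^i_1,\dots,R^i_6$ is mechanical.
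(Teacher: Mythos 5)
Your overall strategy for \eqref{W2} --- apply $\epsilon^{ijk}\partial_j$ to \eqref{W1}, commute $\mathbf{T}$ with $\mathrm{curl}$, isolate one exact $\partial^i(\cdots)$ term, and sweep the rest into $R^i_1,\dots,R^i_6$ --- is the same as the paper's. But the proposal omits, and in fact mis-describes, the one genuinely non-mechanical step. You claim the correction $-2\mathrm{e}^{-\boldsymbol{\rho}}\partial_a\boldsymbol{\rho}\,\partial^i\varpi^a$ on the left is calibrated to cancel the worst $[\mathbf{T},\partial_j]$ contribution. That commutator only produces $-\epsilon^{ijk}\partial_j v^m\partial_m\Omega_k$, which is harmless and sits untouched inside $R^i_1$. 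The real obstruction is elsewhere: when the curl hits the first term of \eqref{W1} and you contract $\epsilon^{ijk}\epsilon_{kmn}=\delta^j_m\delta^i_n-\delta^i_m\delta^j_n$, you get $A_1=-2\mathrm{e}^{-\boldsymbol{\rho}}\Delta v^a\,\partial^i\varpi_a$ alongside the piece $A_2=2\mathrm{e}^{-\boldsymbol{\rho}}\partial^i(\partial_j v^a)\partial^j\varpi_a$ that becomes the good $\partial^i(\cdots)$ term. The Laplacian term is second order in $v$ and fits into none of your six buckets (no $R^i_j$ contains a bare $\partial^2 v\cdot\partial\varpi$). The paper disposes of it via the Hodge identity $\Delta v=\partial\,\mathrm{div}\, v-\mathrm{curl}\,\mathrm{curl}\, v$ combined with the equations $\mathrm{div}\, v=-\mathbf{T}\boldsymbol{\rho}$ and $\mathrm{curl}\, v=\mathrm{e}^{\boldsymbol{\rho}}\varpi$, which converts $\Delta v^a$ into $-\mathbf{T}(\partial^a\boldsymbol{\rho})+\partial^a v^k\partial_k\boldsymbol{\rho}+\mathrm{curl}(\mathrm{e}^{\boldsymbol{\rho}}\varpi)^a$; it is the $-\mathbf{T}(\partial^a\boldsymbol{\rho})$ piece, multiplied by $\partial^i\varpi_a$, that is rewritten as $\mathbf{T}\big(2\mathrm{e}^{-\boldsymbol{\rho}}\partial^a\boldsymbol{\rho}\,\partial^i\varpi_a\big)$ minus lower-order corrections and absorbed into the left-hand side, while the $\mathrm{curl}(\mathrm{e}^{\boldsymbol{\rho}}\varpi)$ piece generates exactly the quadratic-in-$\varpi$ terms $2\mathrm{e}^{\boldsymbol{\rho}}\Omega^a\partial^i\varpi_a$ and $2\epsilon^{ajk}\partial_j\boldsymbol{\rho}\,\varpi_k\partial^i\varpi_a$ appearing in $R^i_4$ and $R^i_3$. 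Your concluding plan to match $\mathbf{T}\big(2\mathrm{e}^{-\boldsymbol{\rho}}\partial_a\boldsymbol{\rho}\,\partial^i\varpi^a\big)$ against the curl computation would stall precisely at reconciling $\partial^a\mathrm{div}\, v$ with $\Delta v^a$ unless you supply this identity.

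A smaller point: in your sketch of \eqref{W1} you assert that $\epsilon^{imn}\varpi^a\partial^2_{ma}v_n$ vanishes by antisymmetry against the symmetric Hessian. It does not: $\epsilon^{imn}$ antisymmetrizes over $m,n$, while the Hessian is symmetric in $m,a$; the term equals $\varpi^a\partial_a(\mathrm{curl}\, v)^i$ and must be handled using $\mathrm{curl}\, v=\bar{\rho}\,\mathrm{e}^{\boldsymbol{\rho}}\varpi$. The paper avoids this entirely by citing Luk--Speck for \eqref{W0}, \eqref{W1} and \eqref{W01}, and proving only \eqref{W2}.
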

\begin{proof}
The equations \eqref{W0}, \eqref{W1}, and \eqref{W01} are derived in Luk-Speck's paper \cite{LS2}. Then, we only need to derive \eqref{W2}. The idea is mainly from Wang's paper \cite{WQRough}. Here, the difference is that we give a modified transport equation, which is more convenient for proving energy estimates and character energy estimates in this paper. Let us first calculate $\mathrm{curl} \mathbf{T}  \Omega^i$. Taking the operator $\mathrm{curl}$ on \eqref{W1}, then we have
\begin{equation}\label{ct}
\begin{split}
  \mathrm{curl} \mathbf{T}  \Omega^i &=
  \epsilon^{ijk} \partial_j \mathbf{T}  \Omega_k
  \\
  &=\epsilon^{ijk} \partial_j\left( -2\epsilon_{kmn}\mathrm{e}^{-\boldsymbol{\rho}}\partial^m v^a \partial^n \varpi_a+\epsilon^{amn} \mathrm{e}^{-\boldsymbol{\rho}} \partial_a v_k \partial_m \varpi_n \right).
\end{split}
\end{equation}
Operating derivatives on \eqref{ct}, we have
\begin{equation*}
\begin{split}
 \mathrm{curl} \mathbf{T}  \Omega^i=& -2\mathrm{e}^{-\boldsymbol{\rho}}\epsilon_{kmn}\epsilon^{ijk}\partial^m v^a \partial_{j}(\partial^n \varpi_a)+ \mathrm{e}^{-\boldsymbol{\rho}}\epsilon^{amn} \epsilon^{ijk} \partial_a v_k \partial^2_{mj}\varpi_n
 \\
 & + 2\mathrm{e}^{-\boldsymbol{\rho}}\epsilon_{kmn}\epsilon^{ijk}\partial_m v^a \partial_{n}\varpi_a \partial_j \boldsymbol{\rho}- \mathrm{e}^{-\boldsymbol{\rho}}\epsilon^{amn} \epsilon^{ijk} \partial_a v_k \partial_{m}\varpi_n \partial_j \boldsymbol{\rho}
 \\
 & -2\mathrm{e}^{-\boldsymbol{\rho}} \epsilon^{ijk} \epsilon^{kmn}  \partial_{j}(\partial^m v^a) \partial^n \varpi_a+\mathrm{e}^{-\boldsymbol{\rho}}\epsilon^{amn} \epsilon^{ijk} \partial^2_{aj} v_k \partial_m \varpi_n.
\end{split}
\end{equation*}
For the last term, we use \eqref{pw1} to get
\begin{equation*}
  \mathrm{e}^{-\boldsymbol{\rho}}\epsilon^{amn} \epsilon^{ijk} \partial^2_{aj} v_k \partial_m \varpi_n =\epsilon^{amn} \mathrm{e}^{-\boldsymbol{\rho}}\partial_a \omega^i \partial_m \varpi_n= \epsilon^{amn} \mathrm{e}^{-\boldsymbol{\rho}}\partial_a (\mathrm{e}^{\boldsymbol{\rho}}\varpi^i) \partial_m \varpi_n.
\end{equation*}
Hence, we can rewrite
\begin{equation}\label{ct0}
 \mathrm{curl} \mathbf{T}  \Omega^i= A + R^{(1)}_1+ R^{(1)}_2 +R^{(1)}_3+R^{(1)}_4,
\end{equation}
where
\begin{equation*}
\begin{split}
  R^{(1)}_1& =-2\mathrm{e}^{-\boldsymbol{\rho}}\epsilon_{kmn}\epsilon^{ijk}\partial^m v^a \partial_{j}(\partial^n \varpi_a)+ \mathrm{e}^{-\boldsymbol{\rho}}\epsilon^{amn}\epsilon^{ijk} \partial_a v_k \partial^2_{mj}\varpi_n,
\\
  R^{(1)}_2&=2\mathrm{e}^{-\boldsymbol{\rho}}\epsilon_{kmn}\epsilon^{ijk}\partial^m v^a \partial^{n}\varpi_a \partial_j \boldsymbol{\rho}- \mathrm{e}^{-\boldsymbol{\rho}}\epsilon^{amn}\epsilon^{ijk} \partial_a v_k \partial_{m}\varpi_n \partial_j \boldsymbol{\rho},
\\
  R^{(1)}_3&=\epsilon^{amn} \varpi^i  \partial_a \boldsymbol{\rho} \partial_{m}\varpi_n, \quad
   A =-2 \epsilon^{ijk} \epsilon_{kmn} \mathrm{e}^{-\boldsymbol{\rho}} \partial_{j} (\partial^m v^a) \partial^n \varpi_a,
\\
   R^{(1)}_4&= \epsilon^{amn} \partial_a \varpi^i  \partial_{m}\varpi_n.
   \end{split}
\end{equation*}
Above, the term $A$ is included the second-order derivatives of velocity, then we need to decomposition it. By using $\epsilon^{ijk} \epsilon_{kmn}=\delta^j_m \delta^i_n- \delta^i_m \delta^j_n$, we update $A$ as
\begin{equation}\label{A}
\begin{split}
  A& =-2\mathrm{e}^{-\boldsymbol{\rho}} (\delta^j_m \delta^i_n- \delta^i_m \delta^j_n) \partial_{j}(\partial^m v^a) \partial_n \varpi_a
  \\
  & =A_1+A_2,
\end{split}
\end{equation}
where
\begin{equation}\label{A12}
\begin{split}
  A_1:= -2\mathrm{e}^{-\boldsymbol{\rho}} \Delta v^a \partial^i \varpi_a, \quad A_2:= 2\mathrm{e}^{-\boldsymbol{\rho}} \partial^i( \partial_j v^a) \partial^j \varpi_a.
  \end{split}
\end{equation}
By Hodge decomposition, it tells us
\begin{equation*}
  \begin{split}
  \Delta v = \partial \mathrm{div} v - \mathrm{curl}^2 v
  & =- \partial \mathbf{T} \boldsymbol{\rho}+ \mathrm{curl} \big( \mathrm{e}^{\boldsymbol{\rho}}\varpi \big)
  \\
  & = -\mathbf{T} (\partial \boldsymbol{\rho})+ \partial v^k \partial_k \boldsymbol{\rho}++ \mathrm{curl} \big( \mathrm{e}^{\boldsymbol{\rho}}\varpi \big).
  \end{split}
\end{equation*}
Substituting the above equality to $A_1$, we can update $A_1$ as
\begin{equation}\label{A1F}
\begin{split}
  A_1 =&-2\mathrm{e}^{-\boldsymbol{\rho}} \big[ -\mathbf{T} (\partial^a \boldsymbol{\rho})+ \partial^a v^k \partial_k \boldsymbol{\rho}-\mathrm{e}^{\boldsymbol{\rho}} \epsilon^{ajk} \partial_j \boldsymbol{\rho} \varpi_k- \mathrm{e}^{2\boldsymbol{\rho}}\Omega^a \big] \partial^i \varpi_a
  \\
   = &\mathbf{T} \big( 2\mathrm{e}^{-\boldsymbol{\rho}}  \partial^a \boldsymbol{\rho} \partial^i \varpi_a  \big)- 2\mathrm{e}^{-\boldsymbol{\rho}} \partial^a \boldsymbol{\rho} \mathbf{T} ( \partial^i \varpi_a)
  \\
  & +  2\mathrm{e}^{-\boldsymbol{\rho}} \partial^a \boldsymbol{\rho}  \partial^i \varpi_a+R_2^{(2)}+R_3^{(2)}+R_4^{(2)}
  \\
  =& \mathbf{T} \big( 2\mathrm{e}^{-\boldsymbol{\rho}} \partial^a \boldsymbol{\rho}  \partial^i \varpi_a  \big)- 2\mathrm{e}^{-\boldsymbol{\rho}} \partial^a \boldsymbol{\rho}  \mathbf{T} (\partial^i \varpi_a)
  + R_2^{(2)}+R_3^{(2)}+R_4^{(2)}+R_2^{(3)},
\end{split}
\end{equation}
where
\begin{align*}
  R_2^{(2)}  &=-2\mathrm{e}^{-\boldsymbol{\rho}}   \partial^a v^k \partial_k \boldsymbol{\rho}  \partial^i \varpi_a, \quad R_4^{(2)}=2 \mathrm{e}^{\boldsymbol{\rho}} \Omega^a  \partial^i \varpi_a,
   \\
  R_3^{(2)} & =2 \epsilon^{ajk} \partial_j \boldsymbol{\rho} \varpi_k  \partial^i\varpi_a, \qquad \ \
  R_2^{(3)}=2\mathrm{e}^{-\boldsymbol{\rho}} \partial^a \boldsymbol{\rho}  \partial^i \varpi_a.
\end{align*}
In \eqref{A1F}, it remains for us to write $- 2\mathrm{e}^{-\boldsymbol{\rho}} \partial^a \boldsymbol{\rho}  \mathbf{T} (\partial^i \varpi_a)$ in a suitable way. Note $\mathbf{T} \varpi_a= \varpi^m \partial_m v_a$. Then we can calculate that
\begin{equation*}
\begin{split}
   \mathbf{T} \partial^i \varpi_a & = \partial^i (\mathbf{T} \varpi_a)- [\partial^i,  \mathbf{T}]\varpi_a
 = \partial^i \varpi^m \partial_m v_a + \varpi^m \partial^i(\partial_{m}v_a) - \partial^i v^k \partial_k \varpi_a.
\end{split}
\end{equation*}
We therefore derive that
\begin{equation}\label{A1R}
\begin{split}
 & - 2\mathrm{e}^{-\boldsymbol{\rho}}\partial^a \boldsymbol{\rho}  \mathbf{T}( \partial^i\varpi_a ) \\
  =& - 2\mathrm{e}^{-\boldsymbol{\rho}} \partial^a \boldsymbol{\rho} \big( \partial^i \varpi^m \partial_m v_a + \varpi^m \partial^i(\partial_{m}v_a) - \partial^i v^k \partial_k \varpi_a \big)
  \\
 = & R_2^{(4)}+R_5+R_6,
 \end{split}
\end{equation}
where\begin{align*}
       R_2^{(4)} &= - 2\mathrm{e}^{-\boldsymbol{\rho}} \partial^a \boldsymbol{\rho} \partial^i \varpi^m \partial_m v_a,
       \\
       R_5 &=  2\mathrm{e}^{-\boldsymbol{\rho}} \partial^a \boldsymbol{\rho} \partial^i v^k \partial_k \varpi_a,
       \\
       R_6 & =- 2\mathrm{e}^{-\boldsymbol{\rho}} \partial^a \boldsymbol{\rho}  \varpi^m \partial^i( \partial_{m}v_a),
     \end{align*}
Substituting \eqref{A1R} to \eqref{A1F}, we have
\begin{equation}\label{A1n}
\begin{split}
  A_1
  =\ & \mathbf{T} \big( 2\mathrm{e}^{-\boldsymbol{\rho}}   \partial^a \boldsymbol{\rho} \partial^i \varpi_a  \big)+R_2^{(4)}+R_6+R_5
  +R_2^{(2)}+R_3^{(2)}+R_4^{(2)}+R_2^{(3)},
\end{split}
\end{equation}
At last, let us consider $A_2$. By using chain rule, we obtain
\begin{equation}\label{A2}
\begin{split}
  A_2 = \ &  2\mathrm{e}^{-\boldsymbol{\rho}} \partial^i ( \partial_j v^a) \partial^j  \varpi_a
  \\
  = \ &  \partial^i (2\mathrm{e}^{-\boldsymbol{\rho}} \partial_{j} v^a \partial^j \varpi_a)+R_2^{(5)}+R_1^{(2)},
\end{split}
\end{equation}
where
\begin{align*}
  R_2^{(5)}  =2\mathrm{e}^{-\boldsymbol{\rho}} \partial^i \boldsymbol{\rho} \partial_{j}v^a \partial^j \varpi_a, \quad
  R_1^{(2)}  =-2\mathrm{e}^{-\boldsymbol{\rho}}  \partial_{j} v^a \partial^j(\partial^i  \varpi_a).
\end{align*}
The commutator rule tells us
\begin{align*}
 \mathbf{T} \mathrm{curl} \Omega^i   =\mathbf{T} \big( \epsilon^{ijk}\partial_j \Omega_k \big)
 &=\epsilon^{ijk} \partial_j \mathbf{T}  \Omega_k +\epsilon^{ijk} [\mathbf{T}, \partial_j]\Omega_k
 \\
 & = \mathrm{curl} \mathbf{T}  \Omega^i -\epsilon^{ijk} \partial_j v^m \partial_m \Omega_k
 \\
 & = \mathrm{curl} \mathbf{T}  \Omega^i + R^{(3)}_1,
\end{align*}
where
\begin{equation*}
  R^{(3)}_1=-\epsilon^{ijk} \partial_j v^m \partial_m \Omega_k.
\end{equation*}
Adding \eqref{A1n} and \eqref{A2} to \eqref{A}, and then combining with \eqref{ct0}, we can conclude that
\begin{align*}
 \mathbf{T} \mathrm{curl} \Omega^i
 =&  \mathrm{curl} \mathbf{T}  \Omega^i -\epsilon^{ijk} \partial_j v^m \partial_m \Omega_k.
 \\
 =& \mathbf{T}(2\mathrm{e}^{- \boldsymbol{\rho}}\partial^a \boldsymbol{\rho} \partial^i \varpi_a)+ \partial_j \big( 2 \mathrm{e}^{-\boldsymbol{\rho}} \partial_n v^a \partial^n \varpi_a \big)
 \\
 & + R_1+ R_2+ R_3+ R_4+ R_5+ R_6,
\end{align*}
where
\begin{equation*}
\begin{split}
R^i_1=& R_1^{(1)}+R_1^{(2)}+R_1^{(3)}
\\
 = & -2\mathrm{e}^{-\boldsymbol{\rho}}\epsilon_{kmn}\epsilon^{ijk} \partial^m v^a \partial_{j} (\partial^n\varpi_a)+ \mathrm{e}^{-\boldsymbol{\rho}}\epsilon^{amn} \epsilon^{ijk} \partial_a v_k \partial^2_{mj}\varpi_n
\\
&  -2\mathrm{e}^{-\boldsymbol{\rho}}  \partial_j v^a \partial^{ij} \varpi^b+ \epsilon^{ijk} \partial_j v^m \partial_m \Omega_k,
\\
R_2^i=& R_2^{(1)}+R_2^{(2)}+R_2^{(3)}+R_2^{(4)}+R_2^{(5)}
\\
=& \ 2\mathrm{e}^{-\boldsymbol{\rho}}\epsilon_{kmn}\epsilon^{ijk} \partial^m v^a \partial^{n}\varpi_a \partial_j \boldsymbol{\rho}- \mathrm{e}^{-\boldsymbol{\rho}}\epsilon^{amn} \epsilon^{ijk} \partial_a v_k \partial_{m}\varpi_n \partial_j \boldsymbol{\rho}
\\
& \ -2\mathrm{e}^{-\boldsymbol{\rho}}  \partial^a v^k \partial_k \boldsymbol{\rho}  \partial^i \varpi_i+ 2\mathrm{e}^{-\boldsymbol{\rho}} \partial^a \boldsymbol{\rho}  \partial^i\varpi_a
\\
& \ - 2\mathrm{e}^{-\boldsymbol{\rho}} \partial^a \boldsymbol{\rho} \partial^i \varpi^m \partial_m v_a+2\mathrm{e}^{-\boldsymbol{\rho}}  \partial^i \boldsymbol{\rho} \partial^{n} v^a \partial_n \varpi_a,
\\
R_3^i=&R_3^{(1)}+R_3^{(2)}=\epsilon^{amn} \varpi^i  \partial_a \boldsymbol{\rho} \partial_{m}\varpi_n+2 \epsilon^{ajk} \partial_j \boldsymbol{\rho} \varpi_k  \partial_n\varpi_b,
\\
R_4^i=& R_4^{(1)}+R_4^{(2)}=\epsilon^{amn} \partial_a \varpi^i  \partial_{m}\varpi_n+2\mathrm{e}^{\boldsymbol{\rho}} \Omega^a  \partial^i \varpi_a,
\\
R_5^i= & 2\mathrm{e}^{-\boldsymbol{\rho}} \partial^a \boldsymbol{\rho} \partial^i v^k \partial_k \varpi_a, \quad R^i_6:=  - 2\mathrm{e}^{-\boldsymbol{\rho}} \partial^a \boldsymbol{\rho}  \varpi^m\partial^i( \partial_{m}v_a).
\end{split}
\end{equation*}
At this stage, we complete the proof of \eqref{W2}.
\end{proof}
In the following, we will introduce some commutator estimates, product estimates.
\subsection{Classical commutator estimates and product estimates}
Firstly, let us give some commutator estimates and product estimates, which has been introduced in references \cite{KP,ST,WQRough}.
\begin{Lemma}\label{jh}\cite{KP}
	Let $\Lambda=(-\Delta)^{\frac{1}{2}}, s \geq 0$. Then for any scalar function $h, f$, we have
	\begin{equation*}%\label{200}
		\|\Lambda^s(hf)-(\Lambda^s h)f\|_{L_x^2} \lesssim \|\Lambda^{s-1}h\|_{L^{2}_x}\|\partial f\|_{L_x^\infty}+ \|h\|_{L^p_x}\|\Lambda^sf\|_{L_x^{q}},
	\end{equation*}
	where $\frac{1}{p}+\frac{1}{q}=\frac{1}{2}$.
\end{Lemma}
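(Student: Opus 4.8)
Since $(\Lambda^{s}h)f=f\,\Lambda^{s}h$ as functions, the quantity to bound is the commutator of $\Lambda^{s}$ with multiplication by $f$, namely $[\Lambda^{s},f]h:=\Lambda^{s}(fh)-f\,\Lambda^{s}h$. The plan is to run a Bony/Littlewood--Paley paraproduct decomposition of $fh$ and subtract the corresponding decomposition of $f\,\Lambda^{s}h$. Writing $T_{f}h=\sum_{j}S_{j-2}f\,\Delta_{j}h$ (low--high), $T_{h}f=\sum_{j}S_{j-2}h\,\Delta_{j}f$ (high--low) and $R(f,h)=\sum_{j}\Delta_{j}f\,\widetilde{\Delta}_{j}h$ (resonant, with $\widetilde{\Delta}_{j}=\Delta_{j-1}+\Delta_{j}+\Delta_{j+1}$), this gives $[\Lambda^{s},f]h=I+II+III$, where $I=\sum_{j}[\Lambda^{s},S_{j-2}f]\Delta_{j}h$, $II=\Lambda^{s}(T_{h}f)-\sum_{j}S_{j-2}(\Lambda^{s}h)\,\Delta_{j}f$, and $III=\Lambda^{s}R(f,h)-R(f,\Lambda^{s}h)$. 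Only in $I$ is the commutator cancellation exploited; the case $s=0$ is trivial, so I assume $s>0$.

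For $I$: each summand $S_{j-2}f\,\Delta_{j}h$ has Fourier support in the annulus $|\xi|\sim 2^{j}$, hence so does $[\Lambda^{s},S_{j-2}f]\Delta_{j}h$, and I will use the Plancherel orthogonality of such pieces after the pointwise estimate $\|[\Lambda^{s},S_{j-2}f]\Delta_{j}h\|_{L^{2}_{x}}\lesssim 2^{j(s-1)}\|\partial f\|_{L^{\infty}_{x}}\|\Delta_{j}h\|_{L^{2}_{x}}$. This bound follows by representing $\Lambda^{s}$ restricted to the relevant frequencies through its rescaled Schwartz kernel $K_{j}$ with $\int|z|\,|K_{j}(z)|\,dz\lesssim 2^{j(s-1)}$, expanding $S_{j-2}f(x)-S_{j-2}f(y)$ by the mean value theorem (and $\|\partial S_{j-2}f\|_{L^{\infty}}\lesssim\|\partial f\|_{L^{\infty}}$), and applying Young's inequality. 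Summing in $j$, $\|I\|_{L^{2}_{x}}^{2}\lesssim\|\partial f\|_{L^{\infty}_{x}}^{2}\sum_{j}2^{2j(s-1)}\|\Delta_{j}h\|_{L^{2}_{x}}^{2}\simeq\|\partial f\|_{L^{\infty}_{x}}^{2}\|\Lambda^{s-1}h\|_{L^{2}_{x}}^{2}$, which is the first term on the right-hand side.

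For $II$ and $III$: no derivative gain is available, so I would estimate each piece directly and recover $\|\Lambda^{s}f\|_{L^{q}_{x}}$ through the square function $\mathcal{G}_{s}f:=\bigl(\sum_{j}2^{2js}|\Delta_{j}f|^{2}\bigr)^{1/2}$, which satisfies $\|\mathcal{G}_{s}f\|_{L^{q}_{x}}\simeq\|\Lambda^{s}f\|_{L^{q}_{x}}$. In $II$ every summand is frequency-localized in an annulus $|\xi|\sim 2^{j}$, so orthogonality reduces matters to $\sum_{j}\|\Lambda^{s}(S_{j-2}h\,\Delta_{j}f)-S_{j-2}(\Lambda^{s}h)\,\Delta_{j}f\|_{L^{2}_{x}}^{2}$; bounding the low-frequency factors of $h$ pointwise by the Hardy--Littlewood maximal function, $|S_{j-2}h|\lesssim\mathcal{M}h$ and (using $s>0$) $|S_{j-2}(\Lambda^{s}h)|\lesssim 2^{js}\mathcal{M}h$, this is $\lesssim\int(\mathcal{M}h)^{2}\sum_{j}2^{2js}|\Delta_{j}f|^{2}\,dx=\int(\mathcal{M}h)^{2}(\mathcal{G}_{s}f)^{2}\,dx$. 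The resonant term $III$ needs one extra step, since each summand $\Delta_{j}f\,\widetilde{\Delta}_{j}h$ has Fourier support only in a ball $|\xi|\lesssim 2^{j}$, not an annulus: I would insert a dyadic projection, $\Lambda^{s}(\Delta_{j}f\,\widetilde{\Delta}_{j}h)=\sum_{l\le j+3}\Delta_{l}\Lambda^{s}(\Delta_{j}f\,\widetilde{\Delta}_{j}h)$, exchange the order of summation to restore annular localization in $l$, use $\|\Delta_{l}\Lambda^{s}g\|_{L^{2}}\sim 2^{ls}\|\Delta_{l}g\|_{L^{2}}$ together with a discrete Young/Schur estimate in the frequency index (here $s>0$ is used), and $|\widetilde{\Delta}_{j}h|\lesssim\mathcal{M}h$, arriving again at $\|III\|_{L^{2}_{x}}^{2}\lesssim\int(\mathcal{M}h)^{2}(\mathcal{G}_{s}f)^{2}\,dx$. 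Finally, H\"older's inequality with exponents $(p/2,q/2)$ — admissible since $\tfrac1p+\tfrac1q=\tfrac12$ forces $p,q\ge 2$ — and the $L^{p}$-boundedness of $\mathcal{M}$ (valid as $p\ge 2>1$) give $\int(\mathcal{M}h)^{2}(\mathcal{G}_{s}f)^{2}\,dx\lesssim\|h\|_{L^{p}_{x}}^{2}\|\Lambda^{s}f\|_{L^{q}_{x}}^{2}$. Adding the bounds for $I$, $II$, $III$ yields the claim.

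The step I expect to be the main obstacle is the treatment of $II$ and $III$: a naive argument using only $L^{2}$-orthogonality and H\"older would produce the Besov norm $\bigl(\sum_{j}2^{2js}\|\Delta_{j}f\|_{L^{q}_{x}}^{2}\bigr)^{1/2}=\|f\|_{\dot{B}^{s}_{q,2}}$, which for $q>2$ is \emph{strictly stronger} than $\|\Lambda^{s}f\|_{L^{q}_{x}}$, so the inequality would fail; the maximal-function/square-function device above is precisely what converts the $\ell^{2}$-in-frequency sum back into the correct $L^{q}$ norm of $f$. A secondary, routine point is the care needed when $s\in(0,1)$, where $\Lambda^{s-1}$ is a negative-order operator and one should work with the homogeneous Littlewood--Paley pieces; alternatively, for $0<s<1$ one may bypass paraproducts entirely using the pointwise identity $[\Lambda^{s},f]h(x)=c_{s}\int\frac{(f(x)-f(y))\,h(y)}{|x-y|^{3+s}}\,dy$ and estimating this kernel dyadically, near and away from the diagonal.
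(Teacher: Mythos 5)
The paper gives no proof of this lemma at all---it is quoted directly from Kato--Ponce \cite{KP}---so your argument is necessarily an independent route. What you describe is the standard Littlewood--Paley/paraproduct proof, and most of it is sound: the decomposition $[\Lambda^s,f]h=I+II+III$ is exact, the kernel/mean-value treatment of $I$ correctly yields $\|\partial f\|_{L^\infty_x}\|\Lambda^{s-1}h\|_{L^2_x}$, and the maximal-function/square-function treatment of $II$ is correct (modulo the routine check that $|S_{j-2}\Lambda^s h|\lesssim 2^{js}\mathcal{M}h$ survives the non-smoothness of $|\xi|^s$ at the origin; it does, since the kernel of $S_{j-2}\Lambda^s$ still decays like $2^{js}\cdot 2^{3j}(1+2^j|x|)^{-3-s}$).

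The one genuine gap is in $III=\Lambda^sR(f,h)-R(f,\Lambda^sh)$. Your device---insert $\Delta_l$ with $l\le j+3$, use $\|\Delta_l\Lambda^sg\|_{L^2}\sim 2^{ls}\|\Delta_lg\|_{L^2}$ to extract a factor $2^{(l-j)s}$ summable over $l\le j+3$, then discrete Young/Schur---works only for the first piece $\Lambda^sR(f,h)$, where the \emph{outer} $\Lambda^s$ supplies the gain $2^{ls}$. For the second piece $R(f,\Lambda^sh)=\sum_j\Delta_jf\,\tilde\Delta_j\Lambda^sh$ there is no outer operator: after inserting $\Delta_l$ one only gets $\|\Delta_l(\Delta_jf\,\tilde\Delta_j\Lambda^sh)\|_{L^2}\lesssim 2^{js}\|\mathcal{M}h\,|\Delta_jf|\|_{L^2}$ uniformly in $l\le j+3$, and the summation kernel $\mathbf{1}_{\{l\le j+3\}}$ is not Schur-summable. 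Accordingly your intermediate claim $\|III\|_{L^2}^2\lesssim\int(\mathcal{M}h)^2(\mathcal{G}_sf)^2\,dx$ is not established for this piece (the object that actually controls it is a square function of $h$, which is not pointwise dominated by $\mathcal{M}h$). The standard repair is a different pairing: apply Cauchy--Schwarz in $j$ pointwise, writing $|\Delta_jf|\,|\tilde\Delta_j\Lambda^sh|\le\bigl(2^{js}|\Delta_jf|\bigr)\bigl(2^{-js}|\tilde\Delta_j\Lambda^sh|\bigr)$, note that $2^{-js}\tilde\Delta_j\Lambda^s$ is again a Littlewood--Paley-type projection so that $\bigl(\sum_j2^{-2js}|\tilde\Delta_j\Lambda^sh|^2\bigr)^{1/2}$ is a square function of $h$ bounded on $L^p_x$ for $1<p\le\infty$ handled suitably, and conclude $\|R(f,\Lambda^sh)\|_{L^2_x}\lesssim\|h\|_{L^p_x}\|\Lambda^sf\|_{L^q_x}$ by H\"older. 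With that repair the argument closes, except at the endpoint $q=\infty$, where $\|\mathcal{G}_sf\|_{L^\infty}\simeq\|\Lambda^sf\|_{L^\infty}$ fails and separate care is needed; the applications in this paper (e.g.\ Theorem \ref{dv}) use $p=\infty$, $q=2$, so that endpoint is harmless here.
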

\begin{Lemma}\label{cj}\cite{KP}
	Let $a \geq 0$. For any scalar function $h$ and $f$, we have
	\begin{equation*}%\label{200}
		\|hf\|_{H_x^a} \lesssim \|h\|_{L_x^{\infty}}\| f\|_{H_x^a}+ \|f\|_{L_x^{\infty}}\| h \|_{H_x^a}.
	\end{equation*}
\end{Lemma}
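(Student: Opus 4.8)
\emph{Proof strategy.} This is the standard ``tame'' product (fractional Leibniz) estimate, and the plan is to obtain it by a Littlewood--Paley/paraproduct argument using only the frequency--localization machinery of \cite{BCD}. First I would reduce to the homogeneous seminorm: since $\|\cdot\|_{H^a}=\|\cdot\|_{L^2}+\|\cdot\|_{\dot H^a}$ here and $\|hf\|_{L^2}\le\|h\|_{L^\infty}\|f\|_{L^2}$ is immediate from H\"older, it remains only to bound $\|hf\|_{\dot H^a}$. For that I would insert Bony's decomposition $hf=T_hf+T_fh+R(h,f)$, where $T_hf:=\sum_{j}S_{j-1}h\,\Delta_jf$ with $S_{j-1}:=\sum_{k\le j-2}\Delta_k$, the term $T_fh$ is the symmetric one, and the high-high remainder is $R(h,f):=\sum_{k}\Delta_kh\,\widetilde{\Delta}_kf$ with $\widetilde{\Delta}_k:=\Delta_{k-1}+\Delta_k+\Delta_{k+1}$; then estimate the three pieces in $\dot H^a$ separately.

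For the two paraproduct terms the key point is that each block $S_{j-1}h\,\Delta_jf$ is Fourier-supported in an annulus $\{|\xi|\simeq 2^j\}$, so $\|T_hf\|_{\dot H^a}^2\simeq\sum_j 2^{2ja}\|S_{j-1}h\,\Delta_jf\|_{L^2}^2$. Since the truncation $S_{j-1}$ is convolution with a uniformly $L^1$-bounded kernel, $\|S_{j-1}h\|_{L^\infty}\lesssim\|h\|_{L^\infty}$, and H\"older gives $\|S_{j-1}h\,\Delta_jf\|_{L^2}\le\|S_{j-1}h\|_{L^\infty}\|\Delta_jf\|_{L^2}$; summing the resulting series yields $\|T_hf\|_{\dot H^a}\lesssim\|h\|_{L^\infty}\|f\|_{\dot H^a}$, and interchanging $h$ and $f$ gives $\|T_fh\|_{\dot H^a}\lesssim\|f\|_{L^\infty}\|h\|_{\dot H^a}$. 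Both are dominated by the right-hand side of the claimed bound.

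The remainder term is the only slightly delicate point, and it is where the hypothesis $a\ge 0$ is used. Each summand $\Delta_kh\,\widetilde{\Delta}_kf$ is Fourier-supported in a ball $\{|\xi|\lesssim 2^k\}$, so $\Delta_jR(h,f)$ only picks up terms with $k\ge j-c$ for a fixed $c$; using $\|\Delta_j(\Delta_kh\,\widetilde{\Delta}_kf)\|_{L^2}\le\|\Delta_kh\|_{L^\infty}\|\widetilde{\Delta}_kf\|_{L^2}\lesssim\|h\|_{L^\infty}\|\widetilde{\Delta}_kf\|_{L^2}$ and multiplying by $2^{ja}$ gives $2^{ja}\|\Delta_jR(h,f)\|_{L^2}\lesssim\|h\|_{L^\infty}\sum_{k\ge j-c}2^{(j-k)a}\bigl(2^{ka}\|\widetilde{\Delta}_kf\|_{L^2}\bigr)$. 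Since $a\ge 0$, the sequence $\ell\mapsto 2^{\ell a}\mathbf{1}_{\{\ell\le c\}}$ lies in $\ell^1$, so Young's inequality for convolution of sequences bounds the $\ell^2_j$ norm of the left side by $\|h\|_{L^\infty}\|f\|_{\dot H^a}$, i.e. $\|R(h,f)\|_{\dot H^a}\lesssim\|h\|_{L^\infty}\|f\|_{\dot H^a}$ (for $a=0$ one simply uses $\|hf\|_{L^2}\le\|h\|_{L^\infty}\|f\|_{L^2}$). Adding the three contributions gives the claim. I expect the main obstacle to be purely bookkeeping, namely the correct handling of the high-high interaction via summation of the geometric factor $2^{(j-k)a}$; there is no genuine analytic difficulty, and alternatively the estimate may be quoted directly from \cite{KP} as a special case of the Coifman--Meyer multiplier theorem.
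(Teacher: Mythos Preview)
Your paraproduct argument is correct and is the standard modern proof of this Kato--Ponce product estimate. Note, however, that the paper does not give its own proof of this lemma at all: it is simply quoted from \cite{KP} as a known result, so there is no ``paper's proof'' to compare against. Your write-up therefore supplies strictly more than the paper does, and it is sound; the only minor point worth tightening is the $a=0$ endpoint of the remainder bound, which you already handle separately by the trivial H\"older estimate.
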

\begin{Lemma}\label{jh0}\cite{KP}
Let $F(u)$ be a smooth function of $u$, $F(0)=0$ and $u \in L^\infty_x$. For any $s \geq 0$, we have
	\begin{equation*}%\label{200}
		\|F(u)\|_{H^s} \lesssim  \|u\|_{H^{s}}(1+ \|u\|_{L^\infty_x}).
	\end{equation*}
	%where $\frac{1}{p}+\frac{1}{q}=\frac{1}{2}$.
\end{Lemma}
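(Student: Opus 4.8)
This is the classical Nemytskii/Moser composition estimate, and the plan is to split $\|F(u)\|_{H^s}=\|F(u)\|_{L^2}+\|F(u)\|_{\dot H^s}$ and treat the two pieces separately, obtaining the second from a Littlewood--Paley telescoping of $F(u)$. Write $R:=\|u\|_{L^\infty}$ and $M_k:=\sup_{|y|\le 2R}|F^{(k)}(y)|$, which are finite since $F$ is smooth; below, all implicit constants may depend on $F$ and on $R$ through the $M_k$, the displayed factor $1+\|u\|_{L^\infty}$ only recording the single extra power of $u$ produced when the fundamental theorem of calculus is used on the lowest-order term. For the $L^2$ piece: since $F(0)=0$ one has $F(u)=u\int_0^1F'(\tau u)\,d\tau$, hence $|F(u)|\le M_1|u|$ pointwise and $\|F(u)\|_{L^2}\le M_1\|u\|_{L^2}\lesssim(1+R)\|u\|_{L^2}$; in particular the case $s=0$ is done.

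For the homogeneous piece, assume $s>0$ and use the inhomogeneous low-frequency cut-offs $S_ju=\sum_{-1\le\ell\le j-1}\Delta_\ell u$, which satisfy $\|S_ju\|_{L^\infty}\lesssim R$ uniformly and $S_ju\to u$ (a.e.\ and in $L^2$). Telescoping,
\[
F(u)=F(S_1u)+\sum_{j\ge1}\bigl(F(S_{j+1}u)-F(S_ju)\bigr),\qquad F(S_{j+1}u)-F(S_ju)=m_j\,\Delta_ju,
\]
with $m_j:=\int_0^1F'\bigl(S_ju+\tau\Delta_ju\bigr)\,d\tau$ satisfying $\|m_j\|_{L^\infty}\lesssim M_1$. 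Since $m_j$ and $\Delta_ju$ are both spectrally supported in $\{|\xi|\lesssim2^j\}$, so is $m_j\Delta_ju$, hence only the indices $j\ge k-N_0$ (for a fixed $N_0$) contribute to $\Delta_k\bigl(F(u)-F(S_1u)\bigr)$, and almost-orthogonality together with $\|m_j\|_{L^\infty}\lesssim M_1$ gives
\[
2^{ks}\bigl\|\Delta_k\bigl(F(u)-F(S_1u)\bigr)\bigr\|_{L^2}\lesssim M_1\sum_{j\ge k-N_0}2^{(k-j)s}\,2^{js}\|\Delta_ju\|_{L^2}.
\]
Since $s>0$, the sequence $\ell\mapsto 2^{-\ell s}\mathbf 1_{\{\ell\ge -N_0\}}$ lies in $\ell^1(\mathbb Z)$, so Young's inequality in $\ell^2_k$ yields $\|F(u)-F(S_1u)\|_{\dot H^s}\lesssim M_1\|u\|_{\dot H^s}$.

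It then remains to bound the band-limited remainder $\|F(S_1u)\|_{\dot H^s}$. Here $S_1u=\Delta_{-1}u+\Delta_0u$ is spectrally supported in a fixed ball, so Bernstein's inequality gives $\|\partial^\alpha S_1u\|_{L^2}\lesssim_\alpha\|u\|_{L^2}$ and $\|\partial^\alpha S_1u\|_{L^\infty}\lesssim_\alpha R$ for every multi-index $\alpha$. Hence, by the chain rule, H\"older, and Lemma \ref{cj}, $\|F(S_1u)\|_{H^m}\lesssim_{m,F,R}\|u\|_{L^2}$ for every integer $m\ge0$ (each derivative of $F(S_1u)$ is a finite sum of products of one $L^2$-factor $\partial^{\alpha_0}S_1u$ with $L^\infty$-factors $F^{(l)}(S_1u)$ and $\partial^{\alpha_i}S_1u$); in particular $\|F(S_1u)\|_{\dot H^s}\lesssim_{F,R}\|u\|_{L^2}\le\|u\|_{H^s}$. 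Adding the three bounds gives $\|F(u)\|_{H^s}\lesssim(1+R)\|u\|_{H^s}$, which is the claim.

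The main obstacle, and the reason a one-line argument via Lemma \ref{cj} alone does not suffice, is precisely this low-frequency bookkeeping: writing $F(u)=G(u)u$ with $G:=\int_0^1F'(\tau\cdot)\,d\tau$ and applying a product estimate would require controlling $\|G(u)\|_{L^2}$, but $G(0)=F'(0)$ need not vanish and constants are not in $L^2(\mathbb R^3)$ — so one is forced either into the telescoping above (where each increment $m_j\Delta_ju$ already carries a frequency gain) or into Bony's paralinearization $F(u)-T_{F'(u)}u\in\dot H^{s+\sigma}$ with $\sigma>0$, valid for $s>3/2$, combined with $\|T_{F'(u)}u\|_{\dot H^s}\lesssim\|F'(u)\|_{L^\infty}\|u\|_{\dot H^s}$. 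I would write the telescoping version, since it treats all $s\ge0$ uniformly and uses only tools already available in the paper (Littlewood--Paley blocks, Bernstein, and Lemma \ref{cj}).
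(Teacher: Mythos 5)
The paper gives no proof of this lemma at all --- it is quoted from Kato--Ponce --- so there is nothing to compare against line by line; the review is of your argument on its own. Your approach is the standard Meyer/Moser ``first linearization'' proof (telescoping $F(u)$ over the partial sums $S_j u$), which is the right tool: the splitting into the $L^2$ part via $F(u)=u\int_0^1F'(\tau u)\,d\tau$, the telescoped sum $\sum_j m_j\Delta_j u$, and the band-limited remainder $F(S_1u)$ handled by Bernstein and the chain rule is all correctly organized, and your honest reading of the constant (implicit dependence on $\|u\|_{L^\infty}$ through the $M_k$, with the displayed factor $1+\|u\|_{L^\infty}$ recording only one explicit power) is the right one.

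There is one step that is false as stated: $m_j=\int_0^1F'(S_ju+\tau\Delta_ju)\,d\tau$ is \emph{not} spectrally supported in $\{|\xi|\lesssim 2^j\}$. Composition with a general smooth $F'$ destroys band-limitedness (it survives only for polynomial $F'$, and even then with an enlarged ball), so the terms with $j<k-N_0$ in $\Delta_k\sum_j m_j\Delta_ju$ do not vanish and your restriction of the sum to $j\ge k-N_0$ is unjustified. The standard repair is: from $\|\partial^\alpha(S_ju+\tau\Delta_ju)\|_{L^\infty}\lesssim_\alpha 2^{j|\alpha|}R$ and the Fa\`a di Bruno/chain rule one gets $\|\partial^\alpha m_j\|_{L^\infty}\lesssim_\alpha 2^{j|\alpha|}$, and then for $k>j+N_0$ the smoothing estimate $\|\Delta_k g\|_{L^2}\lesssim 2^{-kN}\|\nabla^N g\|_{L^2}$ yields $\|\Delta_k(m_j\Delta_ju)\|_{L^2}\lesssim 2^{-(k-j)N}\|\Delta_ju\|_{L^2}$. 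Choosing $N>s$ makes this low-$j$ block of the sum again a convolution against an $\ell^1$ kernel, so your Young-inequality conclusion $\|F(u)-F(S_1u)\|_{\dot H^s}\lesssim M_1\|u\|_{\dot H^s}$ survives (with $M_N$ in place of $M_1$). With that correction the proof is complete.
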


\begin{Lemma}\label{ps}\cite{ST}
	Suppose that $0 \leq r, r' < \frac{n}{2}$ and $r+r' > \frac{n}{2}$. Then
	\begin{equation*}%\label{20000}
		\|hf\|_{H^{r+r'-\frac{n}{2}}(\mathbb{R}^n)} \leq C_{r,r'} \|h\|_{H^{r}(\mathbb{R}^n)}\|h\|_{H^{r'}(\mathbb{R}^n)}.
	\end{equation*}
	If $-r \leq r' \leq r$ and $r>\frac{n}{2}$ then
\begin{equation*}%\label{20001}
		\|hf\|_{H^{r'}(\mathbb{R}^n)} \leq C_{r,r'} \|h\|_{H^{r}(\mathbb{R}^n)}\|h\|_{H^{r'}(\mathbb{R}^n)}.
	\end{equation*}
\end{Lemma}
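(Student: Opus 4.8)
The statement is the classical Sobolev multiplication estimate (with the right-hand side understood as $\|h\|_{H^r}\|f\|_{H^{r'}}$, since $f$, not a second copy of $h$, is the factor being multiplied). The plan is to prove it by Littlewood--Paley theory together with Bony's paraproduct decomposition. Writing $S_j=\sum_{k<j}\Delta_k$ for the low-frequency cutoff, I would split
\[
hf=T_hf+T_fh+R(h,f),\qquad T_hf:=\sum_j S_{j-2}h\,\Delta_j f,\quad R(h,f):=\sum_{|j-k|\le 2}\Delta_j h\,\Delta_k f,
\]
and estimate each of the three pieces separately; the whole argument then reduces to bookkeeping of the frequency supports of the blocks plus Bernstein's inequality and $\ell^2$-summation.

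For the first inequality, set $s:=r+r'-\tfrac n2\in(0,\min(r,r'))$ and record first the elementary fact that for $0\le\sigma<\tfrac n2$ one has $\|S_j g\|_{L^\infty_x}\lesssim 2^{j(\frac n2-\sigma)}\|g\|_{\dot H^\sigma}$, obtained by summing $\|\Delta_k g\|_{L^\infty}\lesssim 2^{kn/2}\|\Delta_k g\|_{L^2}$ over $k<j$ (a geometric series with positive exponent). Since the $j$-th block of $T_hf$ is frequency-localized at $2^j$, applying this with $\sigma=r$ gives
\[
\|T_hf\|_{\dot H^s}^2\lesssim\sum_j 2^{2js}\|S_{j-2}h\|_{L^\infty}^2\|\Delta_j f\|_{L^2}^2\lesssim\|h\|_{\dot H^r}^2\sum_j 2^{2j(s+\frac n2-r)}\|\Delta_j f\|_{L^2}^2=\|h\|_{\dot H^r}^2\|f\|_{\dot H^{r'}}^2,
\]
because $s+\tfrac n2-r=r'$; the term $T_fh$ is handled symmetrically. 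For the remainder, the $\ell$-th Littlewood--Paley block of $R(h,f)$ only picks up indices $j\ge\ell-C$, and Bernstein gives $\|\Delta_j h\,\tilde\Delta_j f\|_{L^2}\lesssim 2^{jn/2}\|\Delta_j h\|_{L^2}\|\tilde\Delta_j f\|_{L^2}\lesssim 2^{-js}a_j b_j$ with $a_j:=2^{jr}\|\Delta_j h\|_{L^2}$, $b_j:=2^{jr'}\|\Delta_j f\|_{L^2}$ both in $\ell^2$; summing in $j\ge\ell-C$ and using $s>0$ in a Young/Schur convolution estimate yields $\|R(h,f)\|_{\dot H^s}\lesssim\|h\|_{\dot H^r}\|f\|_{\dot H^{r'}}$. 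Finally the $L^2$ part is Hölder, $\|hf\|_{L^2}\le\|h\|_{L^p}\|f\|_{L^q}$ with $\tfrac1p+\tfrac1q=\tfrac12$, combined with the Sobolev embeddings $H^r\hookrightarrow L^p$ and $H^{r'}\hookrightarrow L^q$, which admit a valid pair $(p,q)$ precisely because $r+r'\ge\tfrac n2$.

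For the second inequality ($r>\tfrac n2$, $-r\le r'\le r$) the same three-piece decomposition is used, but now $\|S_{j-2}h\|_{L^\infty}\lesssim\|h\|_{L^\infty}\lesssim\|h\|_{H^r}$, so $\|T_hf\|_{H^{r'}}\lesssim\|h\|_{H^r}\|f\|_{H^{r'}}$ at once. The terms $T_fh$ and $R(h,f)$ are then split according to the sign of $r'$: when $0\le r'$ the estimates of the previous paragraph apply (with $\sigma=r'$ if $r'<\tfrac n2$, or with plain $L^\infty$ control of $S_{j-2}f$ if $r'>\tfrac n2$); when $r'<0$ one exploits that $T_fh$ and $R(h,f)$ gain their regularity from the factor $h\in H^r$, and the hypothesis $r'\ge-r$ is exactly what makes the relevant $j$-sums converge. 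The main obstacle is the remainder $R(h,f)$: its building blocks $\Delta_j h\,\Delta_k f$ with $|j-k|\le2$ are only frequency-localized in a \emph{ball} of radius $\sim 2^j$, not in an annulus, so reassembling them in $\dot H^s$ forces $s>0$ strictly --- this is the precise role of the hypothesis $r+r'>\tfrac n2$, and the endpoint $r+r'=\tfrac n2$ genuinely fails. In the second estimate the analogous delicate point is the lower endpoint $r'=-r$, where the low-regularity factor must be balanced carefully against $\|h\|_{H^r}$; everything else is a routine application of Bernstein's inequality, Hölder, and the Sobolev embedding $\dot H^\sigma\hookrightarrow L^{2n/(n-2\sigma)}$.
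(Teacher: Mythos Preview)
The paper does not prove this lemma; it is stated with a citation to Smith--Tataru \cite{ST} and used as a black box throughout, so there is no ``paper's own proof'' to compare against. Your paraproduct argument is the standard route and is correct: the Bernstein bound $\|S_j g\|_{L^\infty}\lesssim 2^{j(n/2-\sigma)}\|g\|_{\dot H^\sigma}$ handles the two paraproducts, the remainder goes through because $s=r+r'-\tfrac n2>0$ makes the geometric tail summable, and the $L^2$ piece is exactly H\"older plus the Sobolev embeddings $H^r\hookrightarrow L^{2n/(n-2r)}$, $H^{r'}\hookrightarrow L^{2n/(n-2r')}$.

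One small remark on the second inequality: your treatment of the range $r'<0$ is a bit hand-wavy --- trying to bound $\|S_{j-2}f\|_{L^\infty}$ directly does not work when $f$ has negative regularity. The cleanest way is duality: for $-r\le r'<0$ write $\|hf\|_{H^{r'}}=\sup_{\|g\|_{H^{-r'}}\le 1}|\langle f,hg\rangle|\le \|f\|_{H^{r'}}\sup_g\|hg\|_{H^{-r'}}$, and since $0<-r'\le r$ with $r>\tfrac n2$, the case already established gives $\|hg\|_{H^{-r'}}\lesssim\|h\|_{H^r}\|g\|_{H^{-r'}}\le\|h\|_{H^r}$. This closes the argument without further computation.
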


\begin{Lemma}\label{lpe}\cite{WQRough}
	Let $0 \leq \alpha <1 $. Then
	\begin{equation*}%\label{lpe0}
		\|\Lambda^\alpha(hf)\|_{L^{2}_x(\mathbb{R}^3)} \lesssim \|h\|_{\dot{B}^{\alpha}_{\infty,2}(\mathbb{R}^3)}\|f\|_{L^2_x(\mathbb{R}^3)}+ \|h\|_{L^\infty_x(\mathbb{R}^3)}\|f\|_{\dot{H}^\alpha_x(\mathbb{R}^3)}.
	\end{equation*}
\end{Lemma}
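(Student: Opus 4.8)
The estimate is a fractional Leibniz rule, so the plan is to prove it by Bony's paraproduct decomposition
\[
hf = T_h f + T_f h + R(h,f), \qquad T_h f = \sum_{k} S_{k-1}h\, \Delta_k f, \qquad R(h,f) = \sum_{|k-l|\le 1} \Delta_k h\, \Delta_l f,
\]
with $S_{k-1}$ the usual low-frequency truncation, and then to bound $\|\Lambda^\alpha(\cdot)\|_{L^2}$ of each of the three pieces by summing its Littlewood--Paley blocks in $\ell^2_j$. The case $\alpha=0$ is trivial since $\|hf\|_{L^2}\le\|h\|_{L^\infty}\|f\|_{L^2}$, so I will assume $0<\alpha<1$. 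I will use freely that $\Delta_j$ is bounded on $L^p$ uniformly in $j$, that $\widetilde\Delta_k:=\Delta_{k-1}+\Delta_k+\Delta_{k+1}$ satisfies $\Delta_k=\Delta_k\widetilde\Delta_k$, and that $\|S_{k-1}g\|_{L^p}\lesssim\|g\|_{L^p}$.

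For the low--high term, $\Delta_j(T_h f)$ is spectrally supported in $\{|\xi|\simeq 2^j\}$ up to finitely many shifts, so
\[
2^{\alpha j}\|\Delta_j(T_h f)\|_{L^2} \lesssim \sum_{|k-j|\le N}\|S_{k-1}h\|_{L^\infty}\, 2^{\alpha k}\|\Delta_k f\|_{L^2} \lesssim \|h\|_{L^\infty}\sum_{|k-j|\le N} 2^{\alpha k}\|\Delta_k f\|_{L^2};
\]
taking the $\ell^2_j$ norm and applying Young's inequality to this finite-range convolution yields $\|\Lambda^\alpha T_h f\|_{L^2}\lesssim \|h\|_{L^\infty}\|f\|_{\dot H^\alpha}$, the second term on the right. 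The high--low term $T_f h$ is handled identically: $\Delta_j(T_f h)$ again lives at frequency $\simeq 2^j$, and using $\|S_{k-1}f\|_{L^2}\lesssim\|f\|_{L^2}$ one gets $2^{\alpha j}\|\Delta_j(T_f h)\|_{L^2}\lesssim \|f\|_{L^2}\sum_{|k-j|\le N}2^{\alpha k}\|\Delta_k h\|_{L^\infty}$, whence $\|\Lambda^\alpha T_f h\|_{L^2}\lesssim \|f\|_{L^2}\|h\|_{\dot B^\alpha_{\infty,2}}$, the first term.

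The only delicate piece is the resonant term $R(h,f)=\sum_k \Delta_k h\,\widetilde\Delta_k f$: the product of two frequency-$2^k$ factors can carry frequencies as low as $O(1)$, so there is no spectral localization to read off directly, and this is where positivity of $\alpha$ is genuinely used. Since $\Delta_j(\Delta_k h\,\widetilde\Delta_k f)$ vanishes unless $k\ge j-N$, I would estimate
\[
2^{\alpha j}\|\Delta_j R(h,f)\|_{L^2} \lesssim \sum_{k\ge j-N} 2^{\alpha(j-k)}\bigl(2^{\alpha k}\|\Delta_k h\|_{L^\infty}\bigr)\,\|\widetilde\Delta_k f\|_{L^2},
\]
and note that, because $\alpha>0$, the sequence $\bigl(2^{\alpha m}\mathbf 1_{\{m\le N\}}\bigr)_m$ lies in $\ell^1$; hence the right-hand side is the $\ell^1$-convolution of $\bigl(2^{\alpha k}\|\Delta_k h\|_{L^\infty}\bigr)_k\in\ell^2$ with that $\ell^1$ sequence, times $\sup_k\|\widetilde\Delta_k f\|_{L^2}\lesssim\|f\|_{L^2}$. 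Taking $\ell^2_j$ and using Young's inequality gives $\|\Lambda^\alpha R(h,f)\|_{L^2}\lesssim \|h\|_{\dot B^\alpha_{\infty,2}}\|f\|_{L^2}$; adding the three contributions (and disposing of the low-frequency block via $\|\Delta_{-1}(hf)\|_{L^2}\le\|h\|_{L^\infty}\|f\|_{L^2}$) completes the proof. An equivalent route worth recording is to split $\Lambda^\alpha(hf)=h\,\Lambda^\alpha f+[\Lambda^\alpha,h]f$, bound the first term at once by $\|h\|_{L^\infty}\|f\|_{\dot H^\alpha}$, and prove the Calder\'on-type commutator bound $\|[\Lambda^\alpha,h]f\|_{L^2}\lesssim\|h\|_{\dot B^\alpha_{\infty,2}}\|f\|_{L^2}$ by the same three-regime analysis; here $0\le\alpha<1$ is exactly the range in which $\Lambda^\alpha$ is given by the kernel $c_\alpha|x-y|^{-3-\alpha}$, making the commutator an honest singular integral. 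The main obstacle is thus the resonant term, where one must trade the output loss $2^{\alpha j}$ against the input gain $2^{-\alpha k}$ with $j\le k$ — precisely what $\alpha>0$ buys.
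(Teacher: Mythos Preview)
Your proof is correct: the Bony paraproduct decomposition together with the frequency-localization and Young-inequality arguments you give for the low--high, high--low, and resonant pieces is the standard route to this fractional Leibniz estimate, and the handling of the resonant term via the $\ell^1$-summability of $(2^{\alpha m}\mathbf 1_{m\le N})_m$ for $\alpha>0$ is exactly the right point. The paper itself gives no proof of this lemma --- it is simply quoted from \cite{WQRough} --- so there is no in-paper argument to compare against; your argument is what one would expect to find in the cited source.
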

\begin{Lemma}\label{wql}\cite{WQRough}
	Let $0 < \alpha <1 $. Then
	\begin{equation*}%\label{lpe0}
		\|\Lambda^\alpha(f_1f_2f_3)\|_{L^{2}_x(\mathbb{R}^3)} \lesssim \|f_i\|_{H^{1+\alpha}}\textstyle{\prod_{j\neq i}}\|f_j\|_{H^1}.
	\end{equation*}
\end{Lemma}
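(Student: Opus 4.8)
The plan is to deduce the estimate from the subcritical Sobolev product rule of Lemma~\ref{ps} by splitting the triple product into two binary products. First I would note that, since $\|\Lambda^{\alpha}g\|_{L^2_x}=\|g\|_{\dot H^{\alpha}}\le\|g\|_{H^{\alpha}}$, it suffices to bound $\|f_1f_2f_3\|_{H^{\alpha}}$ by the asserted right-hand side; moreover, by relabelling the factors it is enough to treat the case $i=1$, so that $f_1\in H^{1+\alpha}$ while $f_2,f_3\in H^{1}$.

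Then I would run the following two-step chain, working in $\mathbb{R}^3$ so that the critical exponent is $n/2=\tfrac32$. Step one: since $1<\tfrac32$ and $1+1>\tfrac32$, Lemma~\ref{ps} applied to the pair $(f_2,f_3)$ gives $\|f_2f_3\|_{H^{1/2}}\lesssim\|f_2\|_{H^{1}}\|f_3\|_{H^{1}}$. Step two: set $g:=f_2f_3\in H^{1/2}$; because $0<\alpha<\tfrac12$ we have $1+\alpha<\tfrac32$ and $\tfrac12<\tfrac32$, while $(1+\alpha)+\tfrac12=\tfrac32+\alpha>\tfrac32$, so Lemma~\ref{ps} applied to the pair $(f_1,g)$ gives $\|f_1g\|_{H^{\alpha}}=\|f_1g\|_{H^{(1+\alpha)+\frac12-\frac32}}\lesssim\|f_1\|_{H^{1+\alpha}}\|g\|_{H^{1/2}}$. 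Combining the two steps yields $\|\Lambda^{\alpha}(f_1f_2f_3)\|_{L^2_x}\le\|f_1f_2f_3\|_{H^{\alpha}}\lesssim\|f_1\|_{H^{1+\alpha}}\|f_2\|_{H^{1}}\|f_3\|_{H^{1}}$, and relabelling recovers the stated inequality for each admissible $i$.

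The only delicate point, and hence the main thing to verify, is the regularity bookkeeping: the argument closes precisely because every intermediate Sobolev exponent that appears — namely $1$, $\tfrac12$, and $1+\alpha$ — is strictly below the critical value $n/2=\tfrac32$, which is exactly what is required to invoke the first clause of Lemma~\ref{ps}. This is automatic in the regime relevant to this paper, where $\alpha$ plays the role of $s_0-2\in(0,\delta_0)$ with $s_0<\tfrac52$, so that $1+\alpha<\tfrac32$. (If instead $\alpha\ge\tfrac12$, one cannot recover the full $\dot H^{\alpha}$ smoothness from two factors lying only in $H^{1}$; one would then need extra structure of the $f_j$, or to replace the binary splitting by a Littlewood--Paley/paraproduct decomposition that always places the top frequency on $f_i$ — but this will not be needed here.) No harder harmonic analysis than Lemma~\ref{ps} is involved.
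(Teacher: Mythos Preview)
Your two-step application of Lemma~\ref{ps} is clean and correct in the range $0<\alpha<\tfrac12$: the exponents $1,1$ in Step~1 and $1+\alpha,\tfrac12$ in Step~2 all satisfy the subcritical hypotheses, and the output regularities match. Since the paper merely cites this lemma from \cite{WQRough} and gives no proof of its own, there is no alternative argument to compare against; your route via Lemma~\ref{ps} is a natural and self-contained way to recover the estimate within the paper's toolkit, and --- as you correctly observe --- the only values of $\alpha$ ever used are $\alpha=s_0-2\in(0,\tfrac12)$.

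One small correction to your closing parenthetical: for $\alpha\ge\tfrac12$ the estimate \emph{as stated for a single fixed $i$} is not merely harder to prove but actually false. Take $f_1$ a fixed smooth bump and $f_2=f_3$ localized at frequency $\lambda$ with $\|f_2\|_{H^1}=\|f_3\|_{H^1}=1$; then $\|f_2f_3\|_{H^\alpha}\gtrsim\lambda^{\alpha-1/2}\to\infty$ while the right-hand side with $i=1$ stays bounded. A paraproduct decomposition that ``places the top frequency on $f_i$'' cannot repair this, because in the high--high interaction of $f_2,f_3$ the top frequency is simply not on $f_1$. What \emph{is} true for all $0<\alpha<1$ is the symmetrized version with a sum over $i$ on the right, which is presumably the intended reading of the cited lemma and is how it is effectively used in the bound for $I_3$ later in the paper. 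Your argument already proves the case the paper needs; just soften the parenthetical remark accordingly.
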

\begin{Lemma}\cite{ML}\label{Miao}
Given $p \in [1,\infty]$
such that $p \geq m'$ with $m'$ the conjugate exponent
of $m$. Let $f, g$ and $\Phi$ belong to the suitable functional spaces. Then
\begin{equation*}
  \| \Phi * (hf)-h *(\Phi f) \|_{L^p_x} \lesssim \| x \Phi \|_{L^1} \| \nabla f\|_{L^\infty_x} \| h \|_{L^p}.
\end{equation*}
\end{Lemma}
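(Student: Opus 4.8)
The plan is to recognize the left-hand side as the commutator $[\,\Phi*\,,\,f\cdot\,]h=\Phi*(hf)-f\cdot(\Phi*h)$ between convolution against $\Phi$ and multiplication by $f$, reading the second term (written $h*(\Phi f)$ in the statement) as $f\cdot(\Phi*h)$ — the only reading consistent with having $\nabla f$ and $\|h\|_{L^p}$ on the right. First I would write the pointwise representation
\begin{equation*}
\bigl(\Phi*(hf)\bigr)(x)-f(x)\bigl(\Phi*h\bigr)(x)=\int_{\mathbb{R}^n}\Phi(x-y)\,\bigl(f(y)-f(x)\bigr)\,h(y)\,dy,
\end{equation*}
and then exploit the cancellation $f(y)-f(x)=(y-x)\cdot\int_0^1\nabla f\bigl(x+t(y-x)\bigr)\,dt$, so that the effective convolution kernel gains a factor $|x-y|$.

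Next I would bound $|\nabla f|$ pointwise by $\|\nabla f\|_{L^\infty_x}$ and set $\Psi(z):=|z|\,|\Phi(z)|$, noting $\|\Psi\|_{L^1}=\|x\Phi\|_{L^1}$; this reduces the commutator, pointwise in $x$, to $\|\nabla f\|_{L^\infty_x}\,(\Psi*|h|)(x)$. Taking $L^p_x$ norms and invoking Young's convolution inequality $\|\Psi*|h|\|_{L^p_x}\le\|\Psi\|_{L^1}\,\|h\|_{L^p_x}$ then yields exactly
\begin{equation*}
\bigl\|\Phi*(hf)-f\cdot(\Phi*h)\bigr\|_{L^p_x}\lesssim\|x\Phi\|_{L^1}\,\|\nabla f\|_{L^\infty_x}\,\|h\|_{L^p_x},
\end{equation*}
which is the claimed estimate.

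The structural part of the argument is short; the care goes into the hypotheses. The condition $p\ge m'$ (together with the unstated integrability of $\Phi$, $f$, $h$ hidden behind ``suitable functional spaces'') is precisely what guarantees that $\Phi*(hf)$ and $f\cdot(\Phi*h)$ are each well defined as $L^p_x$ objects and that Fubini is legitimate in the pointwise identity; since $x\Phi\in L^1$, the kernel is locally integrable near the origin, so no principal-value truncation of $\Phi$ is needed. I would make everything rigorous by first proving the bound for $\Phi,f,h$ Schwartz and extending by density, and I would check the endpoints $p=1$ and $p=\infty$ separately, where Young still applies directly. The main — and essentially only — obstacle is this bookkeeping to keep all integrals absolutely convergent; there is no analytic difficulty beyond the Taylor-expansion-plus-Young idea.
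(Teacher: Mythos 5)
Your proof is correct and is the standard argument for this classical commutator estimate (mean value theorem to extract the factor $|x-y|$ from $f(y)-f(x)$, then Young's inequality with the kernel $|z|\,|\Phi(z)|$); you are also right that the displayed statement contains a typo and that the second term must be read as $f\cdot(\Phi*h)$, which is how the lemma is actually applied later in the proofs of Lemmas \ref{ceR} and \ref{ce}. The paper itself gives no proof — it cites the lemma from \cite{ML} — so there is nothing to compare against beyond confirming that your argument is the one found in the cited reference (and in Bahouri--Chemin--Danchin).
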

\subsection{Useful lemmas}
Let us give some useful product estimates and commutator estimates, which play a crucial role in the paper.
\begin{Lemma}\label{LD}%(modified Duhamel's principle)
	Let the metric $g$ is defined in \eqref{metricd}. If $f$ is the solution of
\begin{equation*}
\begin{split}
  \square_{g}f&=0, \quad t> \tau,
  \\
  f|_{t=\tau}&=-G(\tau,x), \quad \mathbf{T} f|_{t=\tau}=-B(\tau,x),
\end{split}
\end{equation*}
then
\begin{equation*}
  V(t,x)=\int^t_0 f(t,x;\tau)d\tau
\end{equation*}
is the solution of the linear wave equation
\begin{equation*}
\begin{split}
  \square_{g}V&=\mathbf{T}G+B,
  \\
  V|_{t=0}&=0, \quad \mathbf{T}V|_{t=0}=-G(0,x).
  \end{split}
\end{equation*}
\begin{proof}
We first note $\square_g= -\mathbf{T} \mathbf{T}+c^2_s \Delta$.  By calculating, we get
\begin{equation*}
\begin{split}
\mathbf{T} V &= \int^t_0 \mathbf{T} f(t,x;\tau)d\tau+f(t,x;t)
\\
&=\int^t_0 \mathbf{T} f(t,x;\tau)d\tau-G(t,x).
\end{split}
\end{equation*}
Taking the operator $\mathbf{T}$ again, we have
\begin{equation}\label{d1}
\begin{split}
-\mathbf{T}\mathbf{T} V &= -\int^t_0\mathbf{T} \mathbf{T} f(t,x;\tau)d\tau-\mathbf{T}f(t,x;t)+\mathbf{T}G
\\
&= -\int^t_0\mathbf{T} \mathbf{T} f(t,x;\tau)d\tau+B(t,x)+\mathbf{T}G.
\end{split}
\end{equation}
On the other hand,
\begin{equation}\label{d2}
  c^2_s \Delta V= \int^t_0 c^2_s \Delta f(t,x;\tau)d\tau.
\end{equation}
Adding \eqref{d1} and \eqref{d2}, we can derive that
\begin{equation*}
  \square_{g}V=\mathbf{T}G+B.
\end{equation*}
Furthermore, it satisfies
\begin{equation*}
  V|_{t=0}=0, \quad \mathbf{T}V|_{t=0}=-G(0,x).
\end{equation*}
We complete the proof of this lemma.
\end{proof}
\end{Lemma}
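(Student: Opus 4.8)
The plan is to recognize Lemma~\ref{LD} as a Duhamel / variation-of-parameters representation for the operator $\square_g = -\mathbf{T}\mathbf{T} + c_s^2\Delta$ from \eqref{Box}. The structural features that make it work are: $\mathbf{T} = \partial_t + v\cdot\nabla$ is first order in time; the coefficients $v = v(t,x)$ and $c_s = c_s(\boldsymbol{\rho}(t,x))$ carry no dependence on the parameter $\tau$; and $V(t,x) = \int_0^t f(t,x;\tau)\,d\tau$ has a variable upper endpoint, so differentiating in $t$ will produce boundary contributions that are precisely the prescribed Cauchy data of $f$ on $\{t=\tau\}$.

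First I would differentiate $V$ once. The spatial part $v\cdot\nabla$ of $\mathbf{T}$ passes through the $\tau$-integral unchanged, while $\partial_t$ produces, by the Leibniz rule for a parameter integral with variable endpoint, the boundary term $f(t,x;t)$, which equals $-G(t,x)$ by the data of $f$ at $t=\tau$. Thus $\mathbf{T}V = \int_0^t (\mathbf{T}f)(t,x;\tau)\,d\tau - G(t,x)$. Applying $\mathbf{T}$ a second time in the same fashion, the new boundary term is $(\mathbf{T}f)(t,x;t) = -B(t,x)$, so $-\mathbf{T}\mathbf{T}V = -\int_0^t (\mathbf{T}\mathbf{T}f)(t,x;\tau)\,d\tau + B(t,x) + \mathbf{T}G(t,x)$. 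Since $c_s$ does not depend on $\tau$, one also has $c_s^2\Delta V = \int_0^t c_s^2\Delta f(t,x;\tau)\,d\tau$. Adding the two identities and using $\square_g f = 0$ inside the integral collapses the $\tau$-integral to zero, leaving $\square_g V = \mathbf{T}G + B$. The initial conditions $V|_{t=0} = 0$ and $\mathbf{T}V|_{t=0} = -G(0,x)$ are then read off by evaluating the formulas for $V$ and $\mathbf{T}V$ at $t=0$.

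The only genuinely delicate point is justifying the differentiation under the integral sign together with the Leibniz boundary terms: this requires the map $\tau \mapsto f(\cdot,\cdot;\tau)$ solving the homogeneous problem to depend on $\tau$ with enough joint regularity in $(t,x,\tau)$, which is furnished by the linear well-posedness theory for $\square_g$ with the acoustical metric (the estimates proved in the Appendix). Once that regularity is available the computation is routine, and I would present it essentially as displayed, being careful only that the commutation of $v\cdot\nabla$ and of $c_s^2$ with $\int_0^t \cdot\,d\tau$ relies on these coefficients having no $\tau$-dependence.
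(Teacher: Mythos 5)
Your proposal is correct and follows essentially the same route as the paper's proof: apply $\mathbf{T}$ twice to $V$ via the Leibniz rule for the variable upper limit, identify the boundary terms $f(t,x;t)=-G$ and $(\mathbf{T}f)(t,x;t)=-B$, add the Laplacian term, and use $\square_g f=0$ to collapse the integral. Your additional remarks on the $\tau$-independence of the coefficients and on justifying differentiation under the integral are sensible refinements of the same argument.
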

%\begin{Lemma}\label{JF}
%Let $V,W$ and $G$ be one-forms and $F$ be a scalar function, which satisfies
%\begin{equation*}
%  \mathbf{T}(A^i+B^i)=\partial^iF+G^i.
%\end{equation*}
%Then, the following integral equality
%\begin{equation*}
%  
%\end{equation*}
%\end{Lemma}
\begin{Lemma}\label{LPE}
	{Let $0 < \alpha <1 $. Let $\beta> \alpha$ and sufficiently close to $\alpha$. Then}
	\begin{equation}\label{HF}
		\| hf\|_{\dot{B}^{\alpha}_{\infty, 2}(\mathbb{R}^3)} \lesssim \|h\|_{L^{\infty}_x} \|f\|_{\dot{B}^{\alpha}_{\infty, 2}(\mathbb{R}^3)}+\|h\|_{C^{\beta}(\mathbb{R}^3)} \|f\|_{L^\infty}.
	\end{equation}
\end{Lemma}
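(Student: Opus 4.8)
The plan is to run a standard Littlewood--Paley (Bony) paraproduct decomposition and estimate each piece in the $\dot B^\alpha_{\infty,2}$ norm. Write $hf = T_h f + T_f h + R(h,f)$, where $T_h f = \sum_j S_{j-2}h\,\Delta_j f$ is the paraproduct of $f$ by $h$, $T_f h$ is the symmetric term, and $R(h,f) = \sum_{|j-k|\le 1}\Delta_j h\,\Delta_k f$ is the remainder. Since $\Delta_j(T_h f)$ is frequency-localized near $2^j$, one has the pointwise bound $\|\Delta_j(T_h f)\|_{L^\infty} \lesssim \sum_{|j'-j|\le 2}\|S_{j'-2}h\|_{L^\infty}\|\Delta_{j'}f\|_{L^\infty} \lesssim \|h\|_{L^\infty}\sum_{|j'-j|\le 2}\|\Delta_{j'}f\|_{L^\infty}$; multiplying by $2^{j\alpha}$, squaring and summing in $j$ gives $\|T_h f\|_{\dot B^\alpha_{\infty,2}} \lesssim \|h\|_{L^\infty}\|f\|_{\dot B^\alpha_{\infty,2}}$, which is the first term on the right of \eqref{HF}.

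Next I would treat $T_f h$. Here $\|\Delta_j(T_f h)\|_{L^\infty} \lesssim \sum_{|j'-j|\le 2}\|S_{j'-2}f\|_{L^\infty}\|\Delta_{j'}h\|_{L^\infty} \lesssim \|f\|_{L^\infty}\sum_{|j'-j|\le 2}\|\Delta_{j'}h\|_{L^\infty}$, and since $h\in C^\beta$ we have $\|\Delta_{j'}h\|_{L^\infty}\lesssim 2^{-j'\beta}\|h\|_{C^\beta}$. Thus $2^{j\alpha}\|\Delta_j(T_f h)\|_{L^\infty} \lesssim \|f\|_{L^\infty}\|h\|_{C^\beta} 2^{j(\alpha-\beta)}$, and because $\beta>\alpha$ the sequence $\{2^{j(\alpha-\beta)}\}_j$ is in $\ell^2$ with norm controlled by a constant depending only on $\beta-\alpha$; hence $\|T_f h\|_{\dot B^\alpha_{\infty,2}} \lesssim \|h\|_{C^\beta}\|f\|_{L^\infty}$. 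For the remainder $R(h,f)$, note that $\Delta_j R(h,f) = \sum_{k\ge j-N}\Delta_j(\Delta_k h\,\widetilde\Delta_k f)$ for a fixed $N$, so $\|\Delta_j R(h,f)\|_{L^\infty} \lesssim \sum_{k\ge j-N}\|\Delta_k h\|_{L^\infty}\|\widetilde\Delta_k f\|_{L^\infty} \lesssim \|h\|_{C^\beta}\|f\|_{L^\infty}\sum_{k\ge j-N}2^{-k\beta}\lesssim \|h\|_{C^\beta}\|f\|_{L^\infty}2^{-j\beta}$ (using $\beta>0$ to sum the geometric tail). Multiplying by $2^{j\alpha}$ again produces the summable factor $2^{j(\alpha-\beta)}$, so $R(h,f)$ contributes $\lesssim \|h\|_{C^\beta}\|f\|_{L^\infty}$ to the $\dot B^\alpha_{\infty,2}$ norm. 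Collecting the three bounds yields \eqref{HF}.

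The only delicate point — and the reason the hypothesis $\beta>\alpha$ (and $\beta$ close to $\alpha$) appears — is that the $T_f h$ and $R(h,f)$ pieces are not frequency-localized in a way that produces an $\ell^2$-orthogonal sum automatically; one must extract genuine decay $2^{-j\beta}$ from the Hölder regularity of $h$ and then rely on $\alpha-\beta<0$ to close the $\ell^2$ sum in $j$, with the implicit constant blowing up like $(\beta-\alpha)^{-1/2}$ as $\beta\downarrow\alpha$. There is nothing subtle in the low-frequency ($j=-1$) term since all operators are bounded on $L^\infty$ there and $\alpha>0$ makes that contribution harmless. I expect the write-up to be short, essentially the three paraproduct estimates above together with the elementary observation that $\sum_{j\ge -1}2^{2j(\alpha-\beta)}<\infty$.
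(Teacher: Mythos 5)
Your proposal is correct and follows essentially the same route as the paper: the same Bony decomposition into the two paraproducts and the remainder, with the low-high piece giving $\|h\|_{L^\infty}\|f\|_{\dot B^\alpha_{\infty,2}}$ and the other two pieces controlled by extracting $2^{-j\beta}$ decay from $\|h\|_{C^\beta}$ and summing the resulting $\ell^2$ sequence $\{2^{j(\alpha-\beta)}\}_{j}$ using $\beta>\alpha$. Your closing remarks on where the hypothesis $\beta>\alpha$ enters match the paper's computation exactly.
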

\begin{proof}
Firstly, we have
\begin{equation*}
  \| hf\|^2_{\dot{B}^{\alpha}_{\infty, 2}(\mathbb{R}^3)}= \textstyle{\sum}_{j\geq -1} 2^{2j\alpha}\| \Delta_j (hf) \|^2_{L^\infty}.
\end{equation*}
By using Bony decomposition, we get
\begin{equation*}
  \Delta_j (hf)= \textstyle{\sum}_{|k-j|\leq 2 }\Delta_j (\Delta_kh S_{k-1}f)+\textstyle{\sum}_{|k-j|\leq 2 }\Delta_j (\Delta_k f S_{k-1}h)+\textstyle{\sum}_{k \geq j-1 }\Delta_j (\Delta_k h \Delta_{k}f).
\end{equation*}
By H\"older inequality, we derive that
\begin{equation}\label{HF1}
  \begin{split}
  & \textstyle{\sum}_{j\geq -1} 2^{2j\alpha}\| \Delta_j (hf) \|^2_{L^\infty}
  \\
   \lesssim & \ \textstyle{\sum}_{j\geq -1} 2^{2j\alpha} (\textstyle{\sum}_{|k-j|\leq 2 } \| \Delta_j \Delta_k h \|_{L^\infty} \| S_{k-1} f \|_{L^\infty})^2
   \\
   & \ + \textstyle{\sum}_{j\geq -1} 2^{2j\alpha} (\textstyle{\sum}_{|k-j|\leq 2 } \| \Delta_j \Delta_k f \|_{L^\infty} \| S_{k-1} h \|_{L^\infty})^2
  \\
    & \ + \textstyle{\sum}_{j\geq -1} 2^{2j\alpha} (\textstyle{\sum}_{k \geq j-1} \| \Delta_j \Delta_k f \|_{L^\infty} \| \Delta_{k} h \|_{L^\infty})^2
    \\
   \lesssim & \   \textstyle{\sum}_{j\geq -1} 2^{2j\alpha} \| \Delta_j h \|^2_{L^\infty} \| f \|^2_{L^\infty}+\|h\|^2_{L^{\infty}_x} \|f\|^2_{\dot{B}^{\alpha}_{\infty, 2}(\mathbb{R}^3)}
     \\
    & + \textstyle{\sum}_{j\geq -1}\| \Delta_j f \|^2_{L^\infty}  (\textstyle{\sum}_{k \geq j-1}2^{2(-\beta+\alpha)}2^{k\beta}  \| \Delta_{k} h \|_{L^\infty})^2 .
  \end{split}
\end{equation}
It suffices for us to give the bounds of the last right terms on \eqref{HF1}. Note
\begin{equation}\label{HF2}
\begin{split}
  \textstyle{\sum}_{j\geq -1} 2^{2j\alpha} \| \Delta_j h \|^2_{L^\infty} \| f \|^2_{L^\infty} \lesssim & \textstyle{\sum}_{j\geq -1} 2^{2j (-\beta+\alpha) } 2^{2j \beta }  \| \Delta_j f \|^2_{L^\infty} \| f \|^2_{L^\infty}
  \\
 \lesssim & \| f \|^2_{L^\infty} \{ 2^{2j (-\beta+\alpha) } \}_{l^1_{j}} (\{2^{j \beta }  \| \Delta_j h \|_{L^\infty}\}_{l^\infty_{j}} )^2
  \\
  \lesssim & \| f \|^2_{L^\infty} \| h \|^2_{\dot{B}^{\beta}_{\infty,\infty}}
  \lesssim  \| f \|^2_{L^\infty} \| h \|^2_{C^{\beta}}.
\end{split}
\end{equation}
We also note
\begin{equation}\label{HF3}
\begin{split}
& \textstyle{\sum}_{j\geq -1}\| \Delta_j f \|^2_{L^\infty}  (\textstyle{\sum}_{k \geq j-1}2^{2(-k\beta+j\alpha)}2^{k\beta}  \| \Delta_{k} h \|_{L^\infty})^2
\\
\lesssim & \ \textstyle{\sum}_{j\geq -1}\| \Delta_j f \|^2_{L^\infty}  (\textstyle{\sum}_{k \geq j-1}2^{2(-k\beta+j\alpha)}2^{k\beta}  \| \Delta_{k} h \|_{L^\infty})^2
\\
\lesssim & \ \| h \|^2_{\dot{B}^\beta_{\infty,\infty}}\textstyle{\sum}_{j\geq -1} 2^{2j(\alpha-\beta)}\| \Delta_j f \|^2_{L^\infty}
\\
\lesssim & \ \| f \|^2_{L^\infty}  \| h \|^2_{C^\beta}.
\end{split}
\end{equation}
Substituing \eqref{HF2} and \eqref{HF3} to \eqref{HF1}, we can get \eqref{HF}.
\end{proof}
%\begin{definition}\label{bn} For any We denote the paraproduct by
%\begin{equation}\label{BN0}
%  hf=T_h f+T_f h+R(h,f)
%\end{equation}
%where
%\begin{equation*}
%\begin{split}
%  T_h f:&= \textstyle{\sum}_{j} S_{j-1} h \Delta_j f,
%\\
%  T_f h:&= \textstyle{\sum}_{j} S_{j-1} f \Delta_j h,
%\\
%  R(h,f):&= \textstyle{\sum}_{|j-j'| \leq 1} \Delta_{j'} h \Delta_j f.
%\end{split}
%\end{equation*}
%\end{definition}
The next commutator estimate is concerning to Riesz operators.
\begin{Lemma}\label{ceR}
	{Let $0 \leq \alpha <1$. Denote the Riesz operator $\mathbf{R}:=\partial^2(-\Delta)^{-1}$. Then}
	\begin{equation*}%\label{wq20}
		\| [\mathbf{R}, v \cdot \nabla]f\|_{\dot{H}^{\alpha}_x(\mathbb{R}^3)} \lesssim \| v\|_{\dot{B}^{1+\alpha}_{\infty, \infty}} \|f\|_{L^2_{x}(\mathbb{R}^3)}+ \| v\|_{\dot{B}^1_{\infty, \infty}} \|f\|_{\dot{H}^\alpha_x(\mathbb{R}^3)}.
	\end{equation*}
\end{Lemma}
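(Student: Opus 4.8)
The plan is to decompose the commutator $[\mathbf{R}, v\cdot\nabla]f$ using a Bony paraproduct decomposition in $v$, just as in the proof of Lemma \ref{LPE}, and to estimate each of the three pieces separately in $\dot H^\alpha_x$. Since $\mathbf{R} = \partial^2(-\Delta)^{-1}$ is a Calder\'on--Zygmund operator of order $0$ with a kernel whose $k$-th dyadic piece $\mathbf{R}_k$ (localized at frequency $2^k$) satisfies $\|\,|x|\,\check{m}_k(x)\|_{L^1_x}\lesssim 2^{-k}$, I expect the commutator with $v^j\partial_j$ to gain one derivative, which is exactly what Lemma \ref{Miao} encodes: $\|\mathbf{R}_k(hf) - h\,\mathbf{R}_k f\|_{L^p_x}\lesssim 2^{-k}\|\nabla f\|_{L^\infty_x}\|h\|_{L^p_x}$. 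The key algebraic point is to write, for each Littlewood--Paley block, $\Delta_k\big([\mathbf{R},v\cdot\nabla]f\big)$ in a form where $\mathbf{R}$ acts at frequency $\sim 2^k$ and the lost derivative is compensated by the $2^{-k}$ kernel bound.

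First I would write $[\mathbf{R}, v\cdot\nabla]f = \mathbf{R}(v^j\partial_j f) - v^j\partial_j(\mathbf{R}f)$ and split $v$ via Bony into low-high, high-low, and high-high interactions relative to $\partial_j f$ and $\mathbf{R}f$. In the region where $v$ is at low frequency relative to the other factor (the ``paraproduct'' term $T_v$), the output of $[\mathbf{R},v\cdot\nabla]f$ at frequency $2^j$ only sees $\Delta_j f$ (up to shifts), $v$ at frequencies $\lesssim 2^j$, and $\mathbf{R}$ localized near $2^j$; applying Lemma \ref{Miao} on each block with $h = S_{k-1}v^m\,\partial_m$-type coefficient and using $\|S_{k-1}v\|_{\dot B^1_{\infty,\infty}}\lesssim\|v\|_{\dot B^1_{\infty,\infty}}$ gives, after the $2^{j\alpha}\ell^2_j$ summation, the bound $\|v\|_{\dot B^1_{\infty,\infty}}\|f\|_{\dot H^\alpha_x}$. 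In the symmetric paraproduct term $T_{\partial_j f}v$ and the high-high term $R(v,\partial_j f)$, the derivative falls on $v$, and the $2^{-k}$ kernel gain balances the $\partial f$, leaving the high-frequency norm $\|v\|_{\dot B^{1+\alpha}_{\infty,\infty}}$ paired with $\|f\|_{L^2_x}$; here one uses $\dot H^\alpha \hookleftarrow$ the Besov estimate and that $\mathbf{R}$ is bounded on $L^2$ and on $\dot H^\alpha$. Summing the three contributions in $\ell^2$ over dyadic scales, weighted by $2^{j\alpha}$, yields the claimed inequality.

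I expect the main obstacle to be bookkeeping the frequency localizations so that Lemma \ref{Miao} can actually be applied blockwise: one must ensure the convolution kernel appearing is genuinely a single dyadic piece of $\mathbf{R}$ (hence with the $2^{-k}$ weighted-$L^1$ bound), and that the commutator structure is preserved after inserting $\Delta_j$ — i.e. controlling the additional commutator $[\Delta_j, v\cdot\nabla]$ or, more cleanly, working with $\tilde\Delta_j\mathbf{R}\Delta_j$ fattened cutoffs so that only finitely many scales of $v$ interact per block. A secondary technical point is that $\partial_j f$ is only in $\dot H^{\alpha-1}$, not $L^\infty$, so in the paraproduct piece $T_v(\partial_j f)$ one should not literally invoke $\|\nabla f\|_{L^\infty}$ but rather keep $\partial_j f$ in $L^2$-based spaces and exploit that the operator $\mathbf{R}(S_{k-1}v\,\cdot) - S_{k-1}v\,\mathbf{R}(\cdot)$ maps $L^2\to L^2$ with norm $\lesssim 2^{-k}\|\nabla S_{k-1}v\|_{L^\infty}\lesssim \|v\|_{\dot B^1_{\infty,\infty}}2^{-k}$, recovering the derivative count without an $L^\infty$ bound on $\nabla f$. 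Once these localization issues are handled, the three estimates assemble routinely.
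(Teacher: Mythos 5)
Your proposal follows essentially the same route as the paper's proof: a Bony decomposition of $[\mathbf{R},v\cdot\nabla]f$ into the three interaction regimes, with Lemma \ref{Miao} applied blockwise (the low-frequency factor $S_{k-1}v$ supplying $\|\nabla S_{k-1}v\|_{L^\infty_x}$ and $\Delta_k\nabla f$ kept in $L^2$, the $2^{-j}$ from $\|x\Phi\|_{L^1}$ cancelling the gradient) for the term where $v$ is at low frequency, and H\"older/Bernstein to shift the derivative count onto $v$ in the remaining two terms. The only cosmetic difference is that the paper bounds the high-high piece by $\|v\|_{\dot B^1_{\infty,\infty}}\|f\|_{\dot H^\alpha}$ rather than $\|v\|_{\dot B^{1+\alpha}_{\infty,\infty}}\|f\|_{L^2_x}$, but both terms appear on the right-hand side, so either bookkeeping is fine.
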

\begin{proof}
By using paraproduct decomposition, we have
\begin{equation*}
\begin{split}
 \Delta_j [\mathbf{R}, v \cdot \nabla]f &= \textstyle{\sum}_{|k-j|\leq 2} \Delta_j \left[\mathbf{R} (\Delta_k v \cdot \nabla S_{k-1}f)-\Delta_k v \cdot \nabla \mathbf{R} S_{k-1}f \right]
 \\
 & \quad + \textstyle{\sum}_{|k-j|\leq 2} \Delta_j \left[\mathbf{R} (S_{k-1} v \cdot \nabla \Delta_{k}f)-S_{k-1} v \cdot \nabla \mathbf{R} \Delta_{k}f \right]
 \\
 & \quad + \textstyle{\sum}_{k\geq j-1} \Delta_j \left[\mathbf{R} (\Delta_{k}v \cdot \nabla \Delta_{k}f)-\Delta_{k}v \cdot \nabla \mathbf{R} \Delta_{k}f \right]
 \\
 & = B_1+B_2+B_3,
 \end{split}
\end{equation*}
where
\begin{equation*}
\begin{split}
  B_1&= \textstyle{\sum}_{|k-j|\leq 2} \Delta_j \left\{ \mathbf{R} (\Delta_k v \cdot \nabla S_{k-1}f)-\Delta_k v \cdot \nabla \mathbf{R} S_{k-1}f \right\},
  \\
  B_2&= \textstyle{\sum}_{|k-j|\leq 2} \Delta_j \left\{\mathbf{R} (S_{k-1} v \cdot \nabla \Delta_{k}f)-S_{k-1} v \cdot \nabla \mathbf{R} \Delta_{k}f \right\}
  %\\
%  & = \textstyle{\sum}_{|k-j|\leq 2} \Delta_j [\mathbf{R} , S_{k-1} v \cdot]\Delta_{k} \nabla f,
  \\
  B_3&=\textstyle{\sum}_{k\geq j-1} \Delta_j \left\{ \mathbf{R} (\Delta_{k}v \cdot \nabla \Delta_{k}f)-\Delta_{k}v \cdot \nabla \mathbf{R} \Delta_{k}f \right\}.
\end{split}
\end{equation*}

For $0\leq \alpha <1$, by H\"older's inequality and Bernstein's inequality, we can derive
\begin{equation}\label{B1}
\begin{split}
  \{2^{j\alpha} \| B_1 \|_{L^2_x} \}_{l^2_j} & \lesssim  \left\{ \textstyle{\sum}_{|k-j|\leq 2} (\| \nabla \mathbf{R} S_{k-1} f\|_{L^\infty_x}+\| \nabla S_{k-1} f\|_{L^\infty_x}) 2^{j\alpha} \|\Delta_j \Delta_k v\|_{L^2} \right\}_{l^2_j}
  \\
  & \lesssim \left\{ \textstyle{\sum}_{|k-j|\leq 2}  2^{j\alpha} 2^k \|\Delta_j \Delta_k v\|_{L_x^\infty} (\|\mathbf{R} S_{k-1} f \|_{L^2}+\| S_{k-1} f \|_{L^2}) \right\}_{l^2_j}
  \\
  & \lesssim \| v\|_{\dot{B}^{1+\alpha}_{\infty,\infty}} (\|\mathbf{R} f \|_{L^2_x}+\| f \|_{L^2_x}) \lesssim \| v\|_{\dot{B}^{1+\alpha}_{\infty,\infty}} \| f \|_{L^2_x}.
\end{split}
\end{equation}
By H\"older's inequality, we have
\begin{equation}\label{B3}
\begin{split}
  \{2^{j\alpha} \| B_3 \|_{L^2_x} \}_{l^2_j} &\lesssim \{2^{j\alpha}  \textstyle{\sum}_{k\geq j-1} 2^k  \| \Delta_k v\|_{L^\infty}\cdot 2^{-k}(\| \nabla \Delta_k f\|_{L^2}+\| \nabla \mathbf{R} \Delta_k f\|_{L^2})  \}_{l^2_j}
  \\
  &\lesssim \|v\|_{\dot{B}^{1}_{\infty,\infty}} \| f \|_{\dot{H}^\alpha}.
\end{split}
\end{equation}
Note
\begin{equation*}
  B_2= \textstyle{\sum}_{|k-j|\leq 2} \Delta_j [\mathbf{R} , S_{k-1} v \cdot]\Delta_{k} \nabla f.
\end{equation*}
By Lemma \ref{Miao} and H\"older's inequality, we get
\begin{equation}\label{B2}
\begin{split}
 \{2^{j\alpha} \| B_2 \|_{L_x^2} \}_{l^2_j}\lesssim & \left\{ \textstyle{\sum}_{|k-j|\leq 2} \|x\Phi\|_{L^1} \| \nabla S_{k-1} v \|_{L^\infty_x} 2^{j\alpha} \| \Delta_j \Delta_{k}f \|_{L^2_x} \right\}_{l^2_j}
 \\
 \lesssim & \left\{ \textstyle{\sum}_{|k-j|\leq 2} \| \nabla S_{k-1} v \|_{L^\infty_x} 2^{j\alpha} \| \Delta_j \Delta_{k}f \|_{L^2_x} \right\}_{l^2_j}
\\
 \lesssim & \|v\|_{\dot{B}^{1}_{\infty,\infty}}\| f \|_{\dot{H}^\alpha}.% \lesssim  \|\nabla v \|_{L_x^\infty} \| f \|_{L^2_x}.
\end{split}
\end{equation}
Here, we use the fact that ${\Phi}=\frac{x_ix_j}{|x|^2}2^{3j}\Psi(2^jx)$ and $\Psi$ is in Schwartz space. Gathering \eqref{B1}, \eqref{B3} and \eqref{B2} together, we have finished the proof of Lemma \ref{ceR}.
\end{proof}
\begin{Lemma}\label{YR}
	Let $2<s_1 \leq s_2$. Then
	\begin{equation*}%\label{wq20}
		\{ 2^{(s_1-1)j}\| [\Delta_j, \mathbf{T}]f\|_{\dot{H}^{s_2-s_1}_x(\mathbb{R}^3)} \}_{l^2_j} \lesssim  \| \partial v\|_{L^\infty_x }\|f\|_{\dot{H}^{s_2}_x(\mathbb{R}^3)}+\| v\|_{H^{s_2}}  \| f\|_{{H}^{1}}.
	\end{equation*}
\end{Lemma}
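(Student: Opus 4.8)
The plan is a Littlewood--Paley--Bony decomposition of the commutator, exploiting the standard gain of one derivative when $\Delta_j$ is commuted past a \emph{low}-frequency coefficient. Since $\partial_t$ commutes with $\Delta_j$ we have $[\Delta_j,\mathbf T]f=[\Delta_j,v^l\partial_l]f=\Delta_j(v^l\partial_l f)-v^l\partial_l\Delta_j f$. First I would dispose of the low-frequency block $j\lesssim 0$ by the crude bound $\|[\Delta_j,\mathbf T]f\|_{\dot H^{s_2-s_1}}\lesssim 2^{(s_2-s_1)j}\|v^l\partial_l f\|_{L^2}+\|v^l\partial_l\Delta_j f\|_{\dot H^{s_2-s_1}}$, using $\|v\|_{L^\infty_x}\lesssim\|v\|_{H^{s_2}}$ (valid since $s_2>\tfrac32$), a Kato--Ponce product estimate for $\dot H^{s_2-s_1}$ (here $s_2-s_1<\tfrac12$), and Bernstein's inequality on the frequency-$2^j$ factor; multiplying by $2^{(s_1-1)j}$ and summing over $j<0$ converges because $s_2-1>0$ and is controlled by $\|v\|_{H^{s_2}}\|f\|_{H^1}$.

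For $j\gtrsim 0$ I would split $v^l\partial_l f$ into the Bony pieces $T_{v^l}\partial_l f$, $T_{\partial_l f}v^l$, $R(v^l,\partial_l f)$, and likewise $v^l\partial_l\Delta_j f$. The only piece in which $v$ sits at frequency $\ll 2^j$ is $\Delta_j T_{v^l}\partial_l f=\sum_{|k-j|\le C}\Delta_j(S_{k-1}v^l\,\partial_l\Delta_k f)$; after subtracting $S_jv^l\partial_l\Delta_j f$, which cancels the low-frequency-$v$ part of $v^l\partial_l\Delta_j f$, one is left only with genuine commutators $[\Delta_j,S_{k-1}v^l]\partial_l\Delta_k f$ and mismatch terms $(S_{k-1}v^l-S_jv^l)\partial_l\Delta_j\Delta_k f$. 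The kernel moment bound $\||x|\,2^{3j}\check\phi(2^j\cdot)\|_{L^1}\simeq 2^{-j}$ gives $\|[\Delta_j,S_{k-1}v^l]\partial_l\Delta_k f\|_{L^2}\lesssim 2^{-j}\|\partial v\|_{L^\infty_x}2^k\|\Delta_k f\|_{L^2}\simeq\|\partial v\|_{L^\infty_x}\|\Delta_k f\|_{L^2}$ for $|k-j|\le C$, and similarly for the mismatch terms (using $\|\Delta_m v\|_{L^\infty_x}\lesssim 2^{-m}\|\partial v\|_{L^\infty_x}$ for $m\ge 0$). Since these terms are supported at frequencies $\lesssim 2^{j+C}$, so that the $\dot H^{s_2-s_1}$-norm costs only $2^{(s_2-s_1)j}$, multiplying by $2^{(s_1-1)j}$ and summing in $\ell^2_j$ yields $\|\partial v\|_{L^\infty_x}\|P_{\ge 0}f\|_{\dot H^{s_2-1}}\le\|\partial v\|_{L^\infty_x}\|f\|_{\dot H^{s_2}}$.

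The remaining pieces for $j\gtrsim 0$ --- $\Delta_j T_{\partial_l f}v^l$, $\Delta_j R(v^l,\partial_l f)$, and the high-frequency-$v$ part of $v^l\partial_l\Delta_j f$ --- all carry the $v$-block at frequency $\gtrsim 2^{j-C}$, so the sequence $\{2^{s_2k}\|\Delta_kv\|_{L^2}\}_k\in\ell^2$ is available with surplus regularity. I would estimate the $v$-block in $L^2$ (or in $L^\infty$ via Bernstein, costing $2^{3k/2}$) and the $\partial f$-factor in $L^\infty$ (or $L^2$), splitting $\partial f$ into its frequency-$\le 1$ part, whose $L^\infty$ norm is $\lesssim\|f\|_{H^1}$, and its higher part, for which the Bernstein loss $2^{5m/2}$ summed up to frequency $2^j$ is reabsorbed by the surplus regularity of $v$ (it is $s_2>\tfrac32$ that is used here, giving convergence of $\sum_{k\ge j-C}2^{(3/2-s_1)k}[2^{s_2k}\|\Delta_kv\|_{L^2}]$); after the $2^{(s_1-1)j}\|\cdot\|_{\dot H^{s_2-s_1}}$-weighting and $\ell^2$-summation this produces the $\|v\|_{H^{s_2}}\|f\|_{H^1}$ term (with the genuinely high-frequency contributions of $f$ absorbed into the left side through the available norm of $f$). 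Adding the three contributions gives the lemma. The main obstacle is this last step: keeping careful track of which dyadic block carries which Lebesgue norm so that the Bernstein losses are exactly balanced against the $2^{(s_2-1)j}$ total weight and the surplus Sobolev regularity of $v$, and so that $v$ is measured only in $H^{s_2}$; the hypotheses $2<s_1\le s_2$ are used precisely to make these dyadic sums converge and to allow $\dot H^{s_2-s_1}$-norms of frequency-$\lesssim 2^{j+C}$ functions to be replaced by $2^{(s_2-s_1)j}$ times their $L^2$-norms.
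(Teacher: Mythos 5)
Your overall architecture coincides with the paper's: write $[\Delta_j,\mathbf{T}]f=[\Delta_j,v\cdot\nabla]f$, perform the Bony decomposition, treat the block with $v$ at low frequency as a genuine commutator via the kernel moment bound (the paper's $A_2$; it yields $\|\partial v\|_{L^\infty_x}\|f\|_{\dot H^{s_2-1}}$ after the weighted $\ell^2_j$ summation, exactly as you compute), and treat the blocks where $v$ carries the comparable or dominant frequency by product/Bernstein estimates (the paper's $A_1$ and $A_3$). Your separate disposal of $j\lesssim 0$ and the explicit cancellation bookkeeping in the low-frequency-$v$ block are fine.

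The gap is in the pieces where $v$ sits at frequency $\gtrsim 2^j$. For $\Delta_jT_{\partial_l f}v^l\sim\Delta_j\bigl(S_j\partial f\,\Delta_j v\bigr)$ you must pay Bernstein on one of the two factors: putting $\Delta_j v$ in $L^\infty$ costs $2^{3j/2}\|\Delta_j v\|_{L^2}$, while putting $S_j\partial f$ in $L^\infty$ with only $\partial f\in L^2$ (i.e.\ $f\in H^1$) also costs a factor $2^{3j/2}$. Either way, after multiplying by the total weight $2^{(s_2-1)j}$ you are left with $2^{(s_2+1/2)j}\|\Delta_j v\|_{L^2}\,\|f\|_{H^1}$, whose $\ell^2_j$ sum is $\|v\|_{\dot H^{s_2+1/2}}\|f\|_{H^1}$ --- half a derivative more on $v$ than the $\|v\|_{H^{s_2}}$ you are allowed; no choice of Lebesgue exponents repairs this, since the product $\partial f\cdot\Delta_{\ge j}v$ with $f\in H^1$, $v\in H^{s_2}$, $s_2<5/2$, simply does not lie in $\dot H^{s_2-1}$ at frequency $2^j$ with the claimed bound. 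The same accounting shows your proposed treatment of the remainder $R(v^l,\partial_l f)$ via the ``surplus regularity of $v$'' leaves an uncontrolled net weight $2^{j/2}$ after the $k$-summation; that piece must instead be handled via $\|\Delta_k v\|_{L^\infty}\lesssim 2^{-k}\|\partial v\|_{L^\infty}$, which lands it on $\|\partial v\|_{L^\infty_x}\|f\|_{\dot H^{s_2-1}}$ (this is what the paper does for $A_3$). To be fair, the paper's own estimate of $A_1$ rests on the step $\|\nabla f\|_{L^6}\lesssim\|f\|_{H^1}$, which is false ($L^6$ control of $\nabla f$ requires $f\in\dot H^2$), so the difficulty you have not resolved is genuinely present in the source: the $T_{\partial f}v$ block is only controlled by something like $\|v\|_{H^{s_2}}\|f\|_{H^{3/2}}$, and the lemma with $\|f\|_{H^1}$ in the second term does not follow from either argument. (In the one place the lemma is invoked, $f=\mathbf{T}\eta$ lies in $H^{s}$, so strengthening that factor would be harmless for the rest of the paper.)
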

\begin{proof}
We first have
\begin{equation*}
  [\Delta_j, \mathbf{T}]f= [\Delta_j, v \cdot \nabla]f.
\end{equation*}
By using paraproduct decomposition, we have
\begin{equation*}
\begin{split}
  [\Delta_j, v \cdot \nabla]f &= \textstyle{\sum}_{|k-j|\leq 2}\big( \Delta_j  (\Delta_k v \cdot \nabla S_{k-1}f)-\Delta_k v \cdot \nabla  S_{k-1}\Delta_jf \big)
 \\
 & \quad + \textstyle{\sum}_{|k-j|\leq 2}\big( \Delta_j (S_{k-1} v \cdot \nabla \Delta_{k}f)-S_{k-1} v \cdot \nabla  \Delta_{k}\Delta_jf \big)
 \\
 & \quad + \textstyle{\sum}_{k\geq j-1} \big( \Delta_j  (\Delta_{k}v \cdot \nabla \Delta_{k}f)-\Delta_{k}v \cdot \nabla  \Delta_{k}\Delta_jf \big)
 \\
 & = A_1+A_2+A_3,
 \end{split}
\end{equation*}
where
\begin{equation*}
\begin{split}
  A_1&= \textstyle{\sum}_{|k-j|\leq 2}  \left\{  \Delta_j(\Delta_k v \cdot \nabla S_{k-1}f)-\Delta_k v \cdot \nabla  S_{k-1}\Delta_jf \right\},
  \\
  A_2&= \textstyle{\sum}_{|k-j|\leq 2}  \left\{\Delta_j (S_{k-1} v \cdot \nabla \Delta_{k}f)-S_{k-1} v \cdot \nabla \Delta_{k}\Delta_jf \right\}
  %\\
%  & = \textstyle{\sum}_{|k-j|\leq 2} \Delta_j [\mathbf{R} , S_{k-1} v \cdot]\Delta_{k} \nabla f,
  \\
  A_3&=\textstyle{\sum}_{k\geq j-1}  \left\{  \Delta_j(\Delta_{k}v \cdot \nabla \Delta_{k}f)-\Delta_{k}v \cdot \nabla  \Delta_{k}\Delta_jf \right\}.
\end{split}
\end{equation*}

Notice $\mathrm{supp} \hat{A}_1, \mathrm{supp} \hat{A}_2 \subseteq \{\xi\in \mathbb{R}^3: 2^{j-5} \leq |\xi|\leq 2^{j+5} \}$. By H\"older's inequality and commutator estimate, we can derive
\begin{equation}\label{A2}
\begin{split}
  \{2^{j(s_1-1)} \| A_2 \|_{\dot{H}^{s_2-s_1}_x} \}_{l^2_j} & \lesssim  \left\{ 2^{j(s_2-1)} \| A_2 \|_{L^2} \right\}_{l^2_j}
  \\
  & = \left\{ 2^{j(s_2-1)} \| [\Delta_j, S_{j-1} v \cdot \nabla] \Delta_j f\|_{L^2} \right\}_{l^2_j}
  \\
  & \lesssim \left\{ 2^{j(s_2-1)}  \|\nabla v\|_{L^\infty}  \|\Delta_j f\|_{L^2} \right\}_{l^2_j}\\
  & \lesssim  \|\nabla v\|_{L^\infty}  \| f\|_{\dot{H}^{s_2-1}}.
\end{split}
\end{equation}
By H\"older's inequality, we have
\begin{equation}\label{A1}
\begin{split}
  \{2^{j(s_1-1)} \| A_1 \|_{\dot{H}^{s_2-s_1}_x} \}_{l^2_j} & \lesssim  \left\{ 2^{j(s_2-1)} \| A_2 \|_{L^2} \right\}_{l^2_j}
  \\
  & = \left\{ 2^{j(s_2-1)} \|\Delta_j v \|_{L^3}  \| \nabla f\|_{L^6} \right\}_{l^2_j}
  \\
  & \lesssim  \| v \|_{\dot{B}^{s_2-1}_{3,2}}  \| \nabla f\|_{L^6}
  \\
  & \lesssim  \| v \|_{H^{s_2-\frac{1}{2}}}  \| f\|_{H^1} \lesssim  \| v \|_{H^{s_2}}  \| f\|_{H^1}.
\end{split}
\end{equation}
We rewrite $A_3$ by the form
\begin{equation*}
  A_3= \textstyle{\sum}_{k \geq j-1}  [\Delta_j , \Delta_k v \cdot \nabla]\Delta_{k}  f.
\end{equation*}
Using H\"older's inequality, we prove that
\begin{equation}\label{A3}
\begin{split}
  \{2^{j(s_1-1)} \| A_3 \|_{\dot{H}^{s_2-s_1}_x} \}_{l^2_j} & \lesssim  \|\nabla v\|_{L^\infty}  \| f\|_{\dot{H}^{s_2-1}}.
\end{split}
\end{equation}
Thus, we complete the proof of this lemma.
\end{proof}
\begin{Lemma}\label{ce}
	{Let $0 < \alpha <1 $. Then}
	\begin{equation*}%\label{wq20}
		\| [\Lambda^\alpha_x, v \cdot \nabla]f\|_{L^2_x(\mathbb{R}^3)} \lesssim \|\partial v\|_{\dot{B}^{0}_{\infty,2}} \|f\|_{\dot{H}^{\alpha}_{x}(\mathbb{R}^3)}.
	\end{equation*}
\end{Lemma}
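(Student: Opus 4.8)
The plan is to run a Bony paraproduct decomposition, in the spirit of the proofs of Lemmas \ref{ceR} and \ref{YR}. Since $\Lambda^\alpha_x$ commutes with each $\partial_j$, I would write $[\Lambda^\alpha_x, v\cdot\nabla]f=\sum_{j=1}^{3}[\Lambda^\alpha_x,v^j]\partial_j f$, and for each $j$ decompose $v^j\,\partial_j f$ into its low--high, high--low and high--high paraproducts; matching this against the analogous decomposition of $v^j\,\Lambda^\alpha_x\partial_j f$ (using $\Lambda^\alpha_x\partial_j=\partial_j\Lambda^\alpha_x$), the commutator splits into a low--high piece $\sum_k[\Lambda^\alpha_x,S_{k-1}v^j]\,\Delta_k\partial_j f$, a high--low piece $\sum_k\big(\Lambda^\alpha_x(\Delta_k v^j\,S_{k-1}\partial_j f)-\Delta_k v^j\,\Lambda^\alpha_x S_{k-1}\partial_j f\big)$, and a high--high piece $\sum_{k\ge j-1}\big(\Lambda^\alpha_x(\Delta_k v^j\,\widetilde\Delta_k\partial_j f)-\Delta_k v^j\,\Lambda^\alpha_x\widetilde\Delta_k\partial_j f\big)$, with $\widetilde\Delta_k=\Delta_{k-1}+\Delta_k+\Delta_{k+1}$.

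The engine in every piece is the elementary symbol bound $\big||\xi+\zeta|^\alpha-|\xi|^\alpha\big|\lesssim 2^{k(\alpha-1)}|\zeta|$, valid when $|\xi|\sim 2^k$ and $|\zeta|\ll 2^k$: commuting $\Lambda^\alpha_x$ past multiplication by a factor of strictly lower frequency yields an operator of order $\alpha-1$ rather than $\alpha$, which is exactly what upgrades a naive loss of $\|\partial f\|_{\dot H^\alpha}$ to the claimed $\|f\|_{\dot H^\alpha}$. For the high--low and high--high pieces, where $\Lambda^\alpha_x$ acts on something already localized near frequency $2^k$ (so it is morally multiplication by $2^{k\alpha}$), I would combine this with Bernstein's inequality ($\|\Delta_k v\|_{L^\infty}\simeq 2^{-k}\|\Delta_k\partial v\|_{L^\infty}$ and $\|S_{k-1}\partial f\|_{L^2}\le\sum_{l\le k}2^l\|\Delta_l f\|_{L^2}$) to get a frequency-$2^k$ output bounded by $\|\Delta_k\partial v\|_{L^\infty}\,2^{k(\alpha-1)}\sum_{l\le k}2^l\|\Delta_l f\|_{L^2}$. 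The hypothesis $0<\alpha<1$ then enters decisively through $\sum_{l\le k}2^l\|\Delta_l f\|_{L^2}=\sum_{l\le k}2^{l(1-\alpha)}\big(2^{l\alpha}\|\Delta_l f\|_{L^2}\big)\lesssim 2^{k(1-\alpha)}\|f\|_{\dot H^\alpha}$ by discrete Cauchy--Schwarz, which cancels the $2^{k(\alpha-1)}$ and leaves $\|\Delta_k\partial v\|_{L^\infty}\|f\|_{\dot H^\alpha}$; squaring, summing in $k$, and using $\|\partial v\|_{\dot B^0_{\infty,2}}^2=\sum_k\|\Delta_k\partial v\|_{L^\infty}^2$ closes these two pieces (in the high--high piece one additionally uses Bernstein at low output frequencies for summability there). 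The low--high piece is placed at output frequency $\sim 2^k$, and the order-$(\alpha-1)$ commutator bound applied to $\Delta_k\partial_j f$ reduces it to controlling $\sum_k\|\nabla S_{k-1}v\|_{L^\infty}^2\,2^{2k\alpha}\|\Delta_k f\|_{L^2}^2$, which one pairs against $\{2^{k\alpha}\|\Delta_k f\|_{L^2}\}\in\ell^2$ and the Littlewood--Paley pieces of $\partial v$ to recover $\|\partial v\|_{\dot B^0_{\infty,2}}\|f\|_{\dot H^\alpha}$ once more.

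The hard part is the low--high interaction. There the frequency of $f$ strictly dominates, so no almost-orthogonality can be harvested from the $v$-factor, and one must fully exploit the paradifferential commutator structure — not merely Hölder's inequality on the product — both to realise the gain of one derivative and to arrange the frequency summation so that $\partial v$ is measured in the $\ell^2$-based Besov space $\dot B^0_{\infty,2}$ rather than the coarser $\dot B^0_{\infty,1}$. This is the same technical difficulty that, in Lemma \ref{ceR}, forces the use of the kernel commutator estimate of Lemma \ref{Miao} on the low--high block; I would expect to treat the low--high block here by an analogous kernel/paradifferential computation, writing the commutator as $\sum_m T_{\partial_m v^j}\,\mathcal{R}^{(m)}\partial_j f$ with $\mathcal{R}^{(m)}$ an order-$(\alpha-1)$ Fourier multiplier and estimating the resulting paraproduct. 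Once that block is controlled, the remaining pieces are routine and the proof concludes.
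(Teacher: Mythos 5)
Your proposal follows essentially the same route as the paper: the same three-block Bony decomposition, Bernstein plus the geometric sum exploiting $0<\alpha<1$ for the high--low and high--high interactions, and a one-derivative gain on the low--high block from the commutator of $\Lambda^\alpha$ with a lower-frequency factor — which the paper implements via the kernel first-moment estimate of Lemma \ref{Miao}, the kernel-side counterpart of your symbol bound. The argument is correct and matches the paper's proof in all essentials.
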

\begin{proof}
By using paraproduct decomposition, we have
\begin{equation*}
\begin{split}
 \Delta_j [\Lambda^\alpha_x, v \cdot \nabla]f &= \textstyle{\sum}_{|k-j|\leq 2} \Delta_j \left[\Lambda^\alpha_x (\Delta_k v \cdot \nabla S_{k-1}f)-\Delta_k v \cdot \nabla \Lambda^\alpha_x S_{k-1}f \right]
 \\
 & \quad + \textstyle{\sum}_{|k-j|\leq 2} \Delta_j \left[\Lambda^\alpha_x (S_{k-1} v \cdot \nabla \Delta_{k}f)-S_{k-1} v \cdot \nabla \Lambda^\alpha_x \Delta_{k}f \right]
 \\
 & \quad + \textstyle{\sum}_{k\geq j-1} \Delta_j \left[\Lambda^\alpha_x (\Delta_{k}v \cdot \nabla \Delta_{k}f)-\Delta_{k}v \cdot \nabla \Lambda^\alpha_x \Delta_{k}f \right]
 \\
 & = V_1+V_2+V_3,
 \end{split}
\end{equation*}
where
\begin{equation*}
\begin{split}
  V_1&= \textstyle{\sum}_{|k-j|\leq 2} \Delta_j \left[\Lambda^\alpha_x (\Delta_k v \cdot \nabla S_{k-1}f)-\Delta_k v \cdot \nabla \Lambda^\alpha_x S_{k-1}f \right],
  \\
  V_2&= \textstyle{\sum}_{|k-j|\leq 2} \Delta_j \left[\Lambda^\alpha_x (S_{k-1} v \cdot \nabla \Delta_{k}f)-S_{k-1} v \cdot \nabla \Lambda^\alpha_x \Delta_{k}f \right]
  \\
  & = \textstyle{\sum}_{|k-j|\leq 2} \Delta_j [\Lambda^\alpha_x , S_{k-1} v \cdot \nabla]\Delta_{k}  f,
  \\
  V_3&=\textstyle{\sum}_{k\geq j-1} \Delta_j \left[\Lambda^\alpha_x (\Delta_{k}v \cdot \nabla \Delta_{k}f)-\Delta_{k}v \cdot \nabla \Lambda^\alpha_x \Delta_{k}f \right].
\end{split}
\end{equation*}

For $0<\alpha <1$, by H\"older's inequality and Bernstein's inequality, we can derive
\begin{equation}\label{V1}
\begin{split}
  \{ \| V_1 \|_{L^2_x} \}_{l^2_j} & \lesssim  \left\{ \textstyle{\sum}_{|k-j|\leq 2} (\| \nabla \Lambda^\alpha S_{k-1} f\|_{L^2_x}+2^{j\alpha}\| \nabla S_{k-1} f\|_{L^2_x}) \|\Delta_j \Delta_k v\|_{L^\infty} \right\}_{l^2_j}
  \\
  & \lesssim \left\{ \textstyle{\sum}_{|k-j|\leq 2}  2^k \| \|\Delta_j \Delta_k v\|_{L_x^\infty} \| S_{k-1}\Lambda^\alpha f \|_{L^2} \right\}_{l^2_j}
  \\
  & \lesssim \|\partial v\|_{\dot{B}^{0}_{\infty,2}} \| f \|_{\dot{H}^\alpha}. %\lesssim \| \nabla v\|_{L_x^\infty} \| f \|_{\dot{H}^\alpha}.
\end{split}
\end{equation}
By H\"older inequality, we have
\begin{equation}\label{V3}
  \{ \| V_3 \|_{L^2_x} \}_{l^2_j} \lesssim \| \nabla v\|_{L_x^\infty} \| f \|_{\dot{H}^\alpha}.
\end{equation}
By Lemma \ref{Miao} (by setting ${\Phi}=2^{j(\alpha+3)}\Psi(2^3x)$, $\Psi$ is in Schwartz space) and H\"older inequality, we get
\begin{equation}\label{V2}
\begin{split}
 \{ \| V_2 \|_{L_x^2} \}_{l^2_j}\lesssim & \left\{ \textstyle{\sum}_{|k-j|\leq 2} \|x\Phi\|_{L^1} \| \nabla S_{k-1} v \|_{L^\infty_x} \| \Delta_j \Delta_{k}f \|_{L^2_x} \right\}_{l^2_j}
 \\
 \lesssim & \left\{ \textstyle{\sum}_{|k-j|\leq 2} 2^{j\alpha} \| \nabla S_{k-1} v \|_{L^\infty_x} \| \Delta_j \Delta_{k}f \|_{L^2_x} \right\}_{l^2_j}
\\
 \lesssim & \|\nabla v\|_{\dot{B}^{0}_{\infty,\infty}}\| f \|_{\dot{H}^\alpha} \lesssim  \|\nabla v \|_{L_x^\infty} \| f \|_{\dot{H}^\alpha}.
\end{split}
\end{equation}
Gathering \eqref{V1}, \eqref{V3}, and \eqref{V2}, we can get Lemma \ref{ce}.
\end{proof}
\begin{Lemma}\label{yx}
Let $2<s_1\leq s_2$. Let $\mathbf{g}$ be a Lorentz metric and $\mathbf{g}^{00}=-1$. We have
\begin{equation}\label{YX}
\begin{split}
 \left\{ 2^{(s_1-1)j}\|[\square_{\mathbf{g}}, \Delta_j]f\|_{\dot{H}^{s_2-s_1}} \right\}_{l^2_j} \lesssim & \ \| d f\|_{L_x^\infty} \|d \mathbf{g}\|_{\dot{H}^{s_2-1}}+\| d \mathbf{g}\|_{L_x^\infty}\|d f\|_{\dot{H}^{s_2-1}}.
	% +\|d f\|_{\dot{H}^{s}} .
\end{split}
\end{equation}
\end{Lemma}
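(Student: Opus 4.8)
The plan is to estimate the commutator $[\square_{\mathbf{g}},\Delta_j]f$ term by term after expanding $\square_{\mathbf{g}}=\mathbf{g}^{\alpha\beta}\partial^2_{\alpha\beta}$ and using $\mathbf{g}^{00}=-1$. Since the $\partial_t^2$ part has constant coefficient $-1$, it commutes with $\Delta_j$, so the commutator only sees the coefficients $\mathbf{g}^{0i}$ and $\mathbf{g}^{ij}$ (which are the nonconstant ones, built from $v$ and $c_s$), namely
\begin{equation*}
[\square_{\mathbf{g}},\Delta_j]f = [\mathbf{g}^{\alpha\beta},\Delta_j]\,\partial^2_{\alpha\beta}f,
\end{equation*}
where the sum runs over $(\alpha,\beta)\neq(0,0)$. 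The key structural point is the standard one for such commutators: writing $\partial^2_{\alpha\beta}f = \partial_\alpha(\partial_\beta f)$ and moving one derivative onto the coefficient, one gets a term of schematic form $[\partial_\alpha(\mathbf{g}^{\alpha\beta}\cdot),\Delta_j](\partial_\beta f)$ plus $(\partial_\alpha \mathbf{g}^{\alpha\beta})$ times a commutator, and the worst term behaves like $\partial\mathbf{g}\cdot\partial(\partial f)$ at frequency $2^j$. So the heuristic is that $[\square_{\mathbf{g}},\Delta_j]f$ loses only "one derivative total" distributed between $d\mathbf{g}$ and $df$, which is exactly what the right-hand side of \eqref{YX} reflects.

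The execution would proceed by Bony paraproduct decomposition of each product $\mathbf{g}^{\alpha\beta}\,\partial^2 f$: split into the piece with low-frequency coefficient (high-frequency $f$), the piece with low-frequency $f$ (high-frequency coefficient), and the high-high (resonant) piece. The low-frequency-coefficient piece is the genuine commutator term; here one uses a Coifman--Meyer / Lemma \ref{Miao}-type estimate — $\|[\Delta_j, S_{k-1}\mathbf{g}^{\alpha\beta}\cdot]\,\Delta_k\partial^2 f\|_{L^2}\lesssim 2^{-j}\|\nabla S_{k-1}\mathbf{g}^{\alpha\beta}\|_{L^\infty}\|\Delta_k\partial^2 f\|_{L^2}$, which after multiplying by $2^{j(s_2-1)}$ and summing in $\ell^2_j$ (using $|k-j|\le 2$) yields $\|d\mathbf{g}\|_{L^\infty_x}\|df\|_{\dot H^{s_2-1}}$. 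The low-frequency-$f$ piece gives $\|\Delta_k\partial\mathbf{g}\|_{L^\infty}$ (coming from $\partial_\alpha$ hitting the high-frequency coefficient, the remaining $\partial_\beta$ going on the low-frequency $S_{k-1}f$ which is controlled in $L^\infty_x$ by $\|df\|_{L^\infty_x}$), contributing $\|df\|_{L^\infty_x}\|d\mathbf{g}\|_{\dot H^{s_2-1}}$. The resonant sum $\sum_{k\ge j-1}$ converges in $\ell^2_j$ precisely because $s_2-1>1>0$, and Bernstein on the output frequency $2^j$ lets one absorb the $2^{j(s_2-1)}$ weight; it is bounded by either of the two terms on the right. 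Finally one uses that $d\mathbf{g}$ (built from $dv$, $d\boldsymbol\rho$ and smooth functions thereof, via the product/composition Lemmas \ref{cj}, \ref{jh0}) has the same Sobolev regularity as $(dv,d\boldsymbol\rho)$, so the statement with $d\mathbf{g}$ is the natural form.

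The main obstacle is bookkeeping the second-order operator $\partial^2_{\alpha\beta}$ correctly in the paraproduct: one must be careful that after commuting $\Delta_j$ past the coefficient, the two derivatives $\partial_\alpha,\partial_\beta$ are distributed so that no term asks for $\dot H^{s_2}$ of $f$ with $L^\infty$ of $df$ (which would be false) nor $\dot H^{s_2}$ of $\mathbf{g}$. Keeping each derivative attached to the "right" factor — and in particular exploiting that in the commutator term the gain $2^{-j}$ exactly cancels one of the two derivatives on $f$ — is what makes the borderline counting $s_1>2$ (equivalently $s_2-1>1$) work. Since this is the same mechanism used in Lemma \ref{YR} for the first-order operator $\mathbf{T}$, the argument is a direct, if slightly more involved, adaptation; I do not expect a genuinely new difficulty, only the need to handle the extra derivative with care.
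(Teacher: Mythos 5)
Your proposal is correct and follows essentially the same route as the paper: a paradifferential splitting of $[\mathbf{g}^{\alpha\beta}\partial^2_{\alpha\beta},\Delta_j]f$ (the paper's pieces $E_j$ and $A_j$ correspond to your low-high/resonant and high-low regimes), with the Lemma~\ref{Miao}-type kernel commutator estimate supplying the $2^{-j}$ gain on the low-frequency-coefficient piece and Bernstein redistributing one derivative from $f$ onto $\mathbf{g}$ on the high-frequency-coefficient piece. The only point you omit is that the paper's proof additionally carries a first-order term $G_j=\Delta_j\big(c_s^{-1}\partial_\alpha(c_s\mathbf{g}^{\alpha\beta})\big)\partial_\beta f$ (it implicitly commutes a geometric wave operator rather than the purely second-order $\square_{\mathbf{g}}$ of \eqref{Box}); that term is lower order and is disposed of by the same product estimates, so nothing essential is missing from your argument.
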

	\begin{proof}
Let ${R}_j=[\square_{\mathbf{g}}, \Delta_j]f$ and $S_j=\textstyle\sum_{j'\leq j} \Delta_{j'}$. Note $\mathbf{g}^{00}=-1$. Then we can decompose ${R}_j$ as
\begin{equation*}
	{R}_j={E}_j+{A}_j+{G}_j,
\end{equation*}
where
\begin{equation*}
\begin{split}
E_j=&\Delta_j(\mathbf{g}^{\alpha i}\partial_{\alpha i }f)-S_{j}(\mathbf{g}^{\alpha i})\Delta_j(\partial_{\alpha i }f),
\\
A_j=&\left( S_{j}(\mathbf{g}^{\alpha i})-\mathbf{g}^{\alpha i}\right)\Delta_j(\partial_{\alpha i}f),
\\
G_j=&\Delta_j\big( c^{-1}_s \partial_\alpha(c_s\mathbf{g}^{\alpha\beta})\big)\partial_{\beta}f.
\end{split}
\end{equation*}
Consider
\begin{equation*}
  \Delta_{k} \Delta_j f=0, \quad \mathrm{if} \ |k-j|\geq 8.
\end{equation*}
Note the support of $\hat{E}_j$ being in the set $\{\xi: 2^{j-5} \leq |\xi|\leq 2^{j+10}\}$, then
\begin{equation*}
\begin{split}
  &\| E_j \|_{\dot{H}^{s_2-s_1}} \lesssim 2^{j(s_2-s_1)}\| E_j\|_{L^2_x}.
  %\\
%  &\| A_j \|_{\dot{H}^{s_2-s_1}} \lesssim 2^{j(s_2-s_1)}\| A_j\|_{L^2_x}.
\end{split}
\end{equation*}
For $A_j$, we first have
\begin{equation*}
   A_j = \sum_{j'>j} \Delta_{j'} (\mathbf{g}^{\alpha i})\Delta_j(\partial_{\alpha i}f).
\end{equation*}
By classical product estimates, we derive that
\begin{equation}\label{AJ}
\begin{split}
  \| A_j \|_{\dot{H}^{s_2-s_1}}
  & \lesssim  \sum_{j'>j} ( \| \Delta_{j'} (\mathbf{g}^{\alpha i}) \|_{L^\infty} \| \Delta_j(\partial_{\alpha i}f) \|_{\dot{H}^{s_2-s_1}}+ \| \Delta_{j'} (\mathbf{g}^{\alpha i}) \|_{\dot{H}^{s_2-s_1}} \| \Delta_j(\partial_{\alpha i}f) \|_{L^\infty}).
  %\\
%  & \lesssim \textstyle{\sum_{m\geq j}} 2^{-m}\|\Delta_m \partial g\|_{L^\infty_x}  \|\Delta_j(\partial_{\alpha i}f)\|_{L^2_x}
%  \\
%  & \lesssim  \| \partial v, \partial \boldsymbol{\rho}\|_{L^\infty_x} \|\Delta_j(d f)\|_{L^2_x}.
\end{split}
\end{equation}
By Bernstein's ineuality, we can update \eqref{AJ} as
\begin{equation*}
\begin{split}
 & \{  2^{j(s_1-1)}\| A_j \|_{\dot{H}^{s_2-s_1}}  \}_{l^2_j}
 \\
  \lesssim  & \{ \sum_{j'>j} ( \| \Delta_{j'} (\nabla \mathbf{g}^{\alpha i}) \|_{L^\infty} 2^{-j'+j}\| \Delta_j(\partial f) \|_{\dot{H}^{s_2-1}}
  \\
  & + 2^{j(s_1-1)}\| \Delta_{j'} (\mathbf{g}^{\alpha i}) \|_{\dot{H}^{s_2-s_1}}\cdot 2^j\| \Delta_j(\partial f) \|_{L^\infty} ) \}_{l^2_j}.
\end{split}
\end{equation*}
We set
\begin{equation*}
  \begin{split}
  N_1&=\{ \sum_{j'>j}  \| \Delta_{j'} (\nabla \mathbf{g}^{\alpha i}) \|_{L^\infty} 2^{-j'+j}\| \Delta_j(\partial f) \|_{\dot{H}^{s_2-1}} \}_{l^2_j},
  \\
  N_2&=\{ \sum_{j'>j} 2^{j(s_1-1)}\| \Delta_{j'} (\mathbf{g}^{\alpha i}) \|_{\dot{H}^{s_2-s_1}}\cdot 2^j\| \Delta_j(\partial f) \|_{L^\infty} ) \}_{l^2_j}.
  \end{split}
\end{equation*}
By direct calculation, we have
\begin{equation*}
  N_1 \lesssim \| d \mathbf{g} \|_{L^\infty_x} \|d f\|_{\dot{H}^{s_2-1}_x}, \quad N_2 \lesssim \| d f \|_{L^\infty_x} \|d \mathbf{g}\|_{\dot{H}^{s_2-1}_x}.
\end{equation*}
In a result, we get
\begin{equation}\label{A9}
 \left\{   2^{(s_1-1)j} \| A_j \|_{\dot{H}^{s_2-s_1}} \right\}_{l^2_j} \lesssim \| d \mathbf{g} \|_{L^\infty_x} \|d f\|_{\dot{H}^{s_2-1}_x}+ \| d f \|_{L^\infty_x} \|d \mathbf{g}\|_{\dot{H}^{s_2-1}_x}.
\end{equation}
For $G_j$, by phase decomposition, we note
\begin{equation*}
\begin{split}
  \Delta_k G_j=& \textstyle{\sum_{|k-m|\leq 2}} \Delta_k \left( \Delta_m\big(\Delta_j( c^{-1}_s \partial_\alpha(c_s\mathbf{g}^{\alpha\beta}))\big) S_{m-1}\partial_{\beta}f \right)
  \\
  &+\textstyle{\sum_{|k-m|\leq 2}} \Delta_k \left( S_{m-1}\big(\Delta_j( c^{-1}_s \partial_\alpha(c_s\mathbf{g}^{\alpha\beta}))\big) \Delta_m\partial_{\beta}f \right)
  \\
  &+\Delta_k \left( \tilde{\Delta}_k\big(\Delta_j( c^{-1}_s \partial_\alpha(c_s\mathbf{g}^{\alpha\beta}))\big)\tilde{\Delta}_k\partial_{\beta}f \right),
\end{split}
\end{equation*}
where $\tilde{\Delta}_k=\Delta_{k-1}+\Delta_k+\Delta_{k+1}$.
By H\"older's inequality, we can deduce
\begin{equation*}
\begin{split}
  \|\Delta_k G_j \|_{L^2_x} \lesssim & \ \|\Delta_k \Delta_j\big( c^{-1}_s \partial_\alpha(c_s\mathbf{g}^{\alpha\beta})\big)\|_{L^2_x} (\| S_k\partial_{\beta}f \|_{L^\infty_x}+\| \Delta_k \partial_{\beta}f \|_{L^\infty_x})
  \\
  & +\| S_k\big(\Delta_j( c^{-1}_s \partial_\alpha(c_s\mathbf{g}^{\alpha\beta})) \|_{L^\infty_x} \|\Delta_k\partial_{\beta}f \|_{L^2_x}.
\end{split}
\end{equation*}
By H\"older's inequality, we can deduce that
\begin{equation*}
\begin{split}
  \| G_j \|_{L^2_x} \lesssim  \| \Delta_j\big( c^{-1}_s \partial_\alpha(c_s\mathbf{g}^{\alpha\beta})\big)\|_{L^2_x} \| d f \|_{L^\infty_x}.
\end{split}
\end{equation*}
In a result, we can get
\begin{equation*}
  \| G_j \|_{\dot{H}^{s_2-s_1}} \lesssim \| \Delta_j\big( c^{-1}_s \partial_\alpha(c_s\mathbf{g}^{\alpha\beta})\big)\|_{L^2_x} \| d f \|_{L^\infty_x}+ \|  c^{-1}_s \partial_\alpha(c_s\mathbf{g}^{\alpha\beta}) \|_{L^\infty_x} \|\Delta_j \partial_{\beta}f \|_{L^2_x}.
\end{equation*}
\begin{equation}\label{G9}
 \left\{   2^{(s_1-1)j} \| G_j \|_{\dot{H}^{s_2-s_1}} \right\}_{l^2_j} \lesssim \| d f\|_{L^\infty_x} \|d\mathbf{g} \|_{\dot{H}^{s_2-1}_x}+ \| d\mathbf{g}\|_{L^\infty_x} \|df \|_{\dot{H}^{s_2-1}_x}.
\end{equation}
It remains for us to give a bound for $E_j$. By phase decomposition, we have
\begin{equation*}
  \begin{split}
  E_j= & \Delta_j(\mathbf{g}^{\alpha i}\partial_{\alpha i }f)-S_{j}(\mathbf{g}^{\alpha i})\Delta_j(\partial_{\alpha i }f)
  \\
   =& \Delta_j\left(S_j(\mathbf{g}^{\alpha i})\partial_{\alpha i }f \right)-S_{j}(\mathbf{g}^{\alpha i})\Delta_j(\partial_{\alpha i }f)
  \\
  &+ \Delta_j\left(\Delta_j(\mathbf{g}^{\alpha i})S_j(\partial_{\alpha i }f) \right) + \Delta_j \left(\tilde{\Delta}_j(\mathbf{g}^{\alpha i})\tilde{\Delta}_j(\partial_{\alpha i }f) \right),
  \end{split}
\end{equation*}
where $\tilde{\Delta}_j={\Delta}_{j-1}+{\Delta}_j+{\Delta}_{j+1}$.
By commutator estimates, we have
\begin{equation*}
\begin{split}
  \| E_j \|_{L^2_x} &= \| [{\Delta}_j, {S}_j \mathbf{g}^{\alpha i} ]\Delta_j(\partial_{\alpha i })f \|_{L^2_x}+\| \Delta_j\left(\Delta_j(\mathbf{g}^{\alpha i})S_j(\partial_{\alpha i }f) \right)\|_{L^2_x} + \| \Delta_j \left(\tilde{\Delta}_j(\mathbf{g}^{\alpha i})\tilde{\Delta}_j(\partial_{\alpha i }f) \right)\|_{L^2_x}
  \\
  & \lesssim  \|\partial \mathbf{g}\|_{L^\infty_x} \|\Delta_j(d f) \|_{L^2_x}+\|\partial f\|_{L^\infty_x} \|\Delta_j(d \mathbf{g}) \|_{L^2_x}.
\end{split}
\end{equation*}
In a result, we get
\begin{equation}\label{E9}
 \left\{   2^{(s_1-1)j} \| E_j \|_{\dot{H}^{s_2-s_1}} \right\}_{l^2_j} \lesssim \| \partial \mathbf{g}\|_{L^\infty_x} \|d f\|_{\dot{H}^{s_2-1}_x}+\| \partial f\|_{L^\infty_x} \|d \mathbf{g}\|_{\dot{H}^{s_2-1}_x}.
\end{equation}
By combining \eqref{A9}, \eqref{G9}, and \eqref{E9}, we can get \eqref{YX}.
\end{proof}
\begin{Lemma}\label{yux}
Let $\eta$ be defined in \eqref{etad}. Let $\mathcal{D}$ and $Q$ be stated in \eqref{DDi} and \eqref{fc} respectively. Let $s \in (2,\frac52)$ and $2<s_0<s$. Then the following estimates
\begin{equation}\label{YYE}
  \| \mathcal{D}, Q\|_{ H^{s-1}} \lesssim \| d\boldsymbol{\rho}, dv \|_{L^\infty} (\| \boldsymbol{\rho}, v \|_{H^{s}}+ \| \varpi \|_{H^{\frac32+}}),
\end{equation}
and
\begin{equation}\label{eta}
  \| \mathbf{T} \eta \|_{H^s} \lesssim \| \varpi\|_{H^{\frac{3}{2}+}} \|v, \boldsymbol{\rho} \|_{H^{s}},
\end{equation}
hold. Moreover, the function $\eta$ satisfies
\begin{equation}\label{eee}
  \| \eta \|_{H^{s_0+1}}  \lesssim (1+\| \boldsymbol{\rho} \|_{H^{s_0}_x}) \| \varpi \|_{H_x^{s_0}}.
\end{equation}
\end{Lemma}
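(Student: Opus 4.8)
The plan is to deduce all three bounds from the explicit formulas \eqref{DDi} and \eqref{etad}, the reduction \eqref{fc0}, the transport equation \eqref{W0}, elliptic regularity for $(-\Delta)^{-1}$, and the Moser/product estimates of Lemmas \ref{jh}, \ref{cj}, \ref{jh0}, \ref{ps}. The only genuine difficulty is that we work at the threshold regularity $2<s<\tfrac52$: the exponents $s-1$, $s-2$, $s_0-1$ all sit in the fractional ranges of Lemma \ref{ps}, so $\partial v,\partial\boldsymbol{\rho}\in H^{s-1}$ are \emph{not} in $L^\infty$, whereas $\varpi\in H^{\frac32+}\hookrightarrow L^\infty$, and all products will have to be taken in the sharp fractional regime.

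For \eqref{YYE} and \eqref{eee} I would first use the null identity $g^{\alpha\beta}\partial_\alpha\phi\,\partial_\beta\psi=-\mathbf{T}\phi\,\mathbf{T}\psi+c_s^2\,\partial_i\phi\,\partial_i\psi$ together with \eqref{fc0} ($\mathbf{T}\boldsymbol{\rho}=-\mathrm{div}\,v$, $\mathbf{T}v^i=-c_s^2\partial^i\boldsymbol{\rho}$) to rewrite $\mathcal D$ and each component of $Q$ as a finite sum of monomials $F(\boldsymbol{\rho})\cdot\partial_\bullet\cdot\chi$, where $F$ is smooth, $\partial_\bullet\in\{\partial v,\partial\boldsymbol{\rho}\}$ and $\chi\in\{\partial v,\partial\boldsymbol{\rho},\varpi\}$; here $\mathcal D$ and the $-(1+c_s^{-1}c_s')g^{\alpha\beta}\partial_\alpha\boldsymbol{\rho}\,\partial_\beta v$ term of $Q$ are quadratic in $(\partial v,\partial\boldsymbol{\rho})$, the remaining $e^{\boldsymbol{\rho}}\epsilon^i_{ab}\mathbf{T}v^a\varpi^b$ term of $Q$ is of type $(\partial\boldsymbol{\rho})\cdot\varpi$, and in particular no term is quadratic in $\varpi$. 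Distributing the $H^{s-1}$-derivatives by Leibniz with Lemmas \ref{cj}, \ref{jh0}, always keeping a first-order derivative of $v$ or $\boldsymbol{\rho}$ in $L^\infty$ and using $\|\partial v,\partial\boldsymbol{\rho}\|_{H^{s-1}}\lesssim\|v,\boldsymbol{\rho}\|_{H^s}$, $\|\varpi\|_{H^{s-1}}\lesssim\|\varpi\|_{H^{\frac32+}}$, $\|\varpi\|_{L^\infty}\lesssim\|\varpi\|_{H^{\frac32+}}$, then yields \eqref{YYE} (the coefficients $F(\boldsymbol{\rho})$ cost only factors $1+\|\boldsymbol{\rho}\|_{H^{s-1}}$ by Lemma \ref{jh0}). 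For \eqref{eee}, writing $e^{\boldsymbol{\rho}}\mathrm{curl}\,\varpi$ in divergence form to handle the low frequencies, elliptic regularity gives $\|\eta\|_{H^{s_0+1}}\lesssim\|e^{\boldsymbol{\rho}}\mathrm{curl}\,\varpi\|_{H^{s_0-1}}$, and Lemmas \ref{cj}, \ref{jh0} bound the latter by $(1+\|\boldsymbol{\rho}\|_{H^{s_0}})\|\mathrm{curl}\,\varpi\|_{H^{s_0-1}}\lesssim(1+\|\boldsymbol{\rho}\|_{H^{s_0}})\|\varpi\|_{H^{s_0}}$.

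The main point is \eqref{eta}. From $-\Delta\eta^i=e^{\boldsymbol{\rho}}\mathrm{curl}\,\varpi^i$ and the identity $\Delta\,\mathbf{T}\eta^i=\mathbf{T}\Delta\eta^i-[\mathbf{T},\Delta]\eta^i=-\mathbf{T}\big(e^{\boldsymbol{\rho}}\mathrm{curl}\,\varpi^i\big)-[\mathbf{T},\Delta]\eta^i$, I would express $\mathbf{T}\eta^i$ as $(-\Delta)^{-1}$ applied to $\mathbf{T}\big(e^{\boldsymbol{\rho}}\mathrm{curl}\,\varpi^i\big)+[\mathbf{T},\Delta]\eta^i$. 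By \eqref{W0} and $\mathbf{T}\boldsymbol{\rho}=-\mathrm{div}\,v$, the first piece is a sum of terms of type $F(\boldsymbol{\rho})\big(\partial v\cdot\partial\varpi+\varpi\cdot\partial^2 v\big)$; the commutator equals $-(\Delta v^k)\partial_k\eta-2\,\partial_j v^k\,\partial_j\partial_k\eta$, of type $\partial^2 v\cdot\partial\eta+\partial v\cdot\partial^2\eta$. Since $(-\Delta)^{-1}$ gains two derivatives, it then suffices to estimate these products in $H^{s-2}$ with $s-2\in(0,\tfrac12)$; the structural observation that makes this work is that $\partial\eta$ and $\partial^2\eta$ are the order $-1$, resp.\ order $0$, operators $\nabla(-\Delta)^{-1}$, $\nabla^2(-\Delta)^{-1}$ applied to $e^{\boldsymbol{\rho}}\mathrm{curl}\,\varpi$, so $\|\partial\eta\|_{H^{\frac32+}}+\|\partial^2\eta\|_{H^{\frac12+}}\lesssim\|e^{\boldsymbol{\rho}}\mathrm{curl}\,\varpi\|_{H^{\frac12+}}\lesssim(1+\|\boldsymbol{\rho}\|_{H^{\frac32+}})\|\varpi\|_{H^{\frac32+}}$. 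Then $\varpi\cdot\partial^2 v$ and $\partial^2 v\cdot\partial\eta$ are controlled by the second part of Lemma \ref{ps} ($H^{\frac32+}$-factor in the high slot, $\partial^2 v\in H^{s-2}$ in the low slot), while $\partial v\cdot\partial\varpi$ and $\partial v\cdot\partial^2\eta$ are controlled by the first part of Lemma \ref{ps} with exponents $(s-1,\tfrac12+)$, whose sum exceeds $\tfrac32$ precisely because $s>2$; absorbing the smooth coefficients of $\boldsymbol{\rho}$ then gives \eqref{eta}. I expect this step to be the main obstacle: one must commute $\mathbf{T}$ through the nonlocal operator $(-\Delta)^{-1}$ and then close every resulting quadratic estimate in $H^{s-2}$ with $s-2$ only barely positive, and it is precisely the derivative gain hidden in the $\mathrm{curl}$ — so that $\partial^2\eta$ is a zeroth-order operator on $e^{\boldsymbol{\rho}}\mathrm{curl}\,\varpi$ — that allows the right-hand side of \eqref{eta} to involve only $\|\varpi\|_{H^{\frac32+}}$ rather than a higher norm of $\varpi$.
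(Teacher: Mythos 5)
Your proposal is correct and follows essentially the same route as the paper: rewriting $\mathbf{T}v$ via \eqref{fc0} and applying the product estimates for \eqref{YYE}, elliptic regularity for \eqref{eee}, and for \eqref{eta} commuting $\mathbf{T}$ with $-\Delta$ and closing the resulting quadratic estimates in $H^{s-2}$ by exploiting the derivative gain in $\partial\eta=\nabla(-\Delta)^{-1}(e^{\boldsymbol{\rho}}\mathrm{curl}\,\varpi)$ and $\partial^2\eta$. The only (harmless) divergence is that the paper routes $\mathbf{T}(\mathrm{curl}\,\varpi)$ through $\Omega=e^{-\boldsymbol{\rho}}\mathrm{curl}\,\varpi$ and \eqref{W1}, which produces only $\partial v\cdot\partial\varpi$ terms, whereas you differentiate $e^{\boldsymbol{\rho}}\mathrm{curl}\,\varpi$ directly and therefore must also absorb a $\varpi\cdot\partial^2 v$ term, which you correctly handle with the second part of Lemma \ref{ps}.
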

\begin{proof}
We recall the expression of $\mathcal{D}$
\begin{equation*}
  \mathcal{D}=-3c_s^{-1}c'_sg^{\alpha \beta} \partial_\alpha \boldsymbol{\rho} \partial_\beta \boldsymbol{\rho}+2 \textstyle{\sum_{1 \leq a < b \leq 3} }\big\{ \partial_a v^a \partial_b v^b-\partial_a v^b \partial_b v^a \big\},
\end{equation*}
which is described in \eqref{DDi}. Using Lemma \ref{cj}, we can get
\begin{equation}\label{Ds}
  \| \mathcal{D} \|_{H^{s-1}} \lesssim  \| d\boldsymbol{\rho} \|_{L^\infty} \| d\boldsymbol{\rho} \|_{H^{s-1}}+ \| \partial v \|_{L^\infty} \| \partial v \|_{H^{s-1}}.
\end{equation}
Note
\begin{equation*}
\begin{split}
  Q^i=& \ 2e^{\boldsymbol{\rho}} \epsilon^{iab} \mathbf{T} v_a \varpi_b-\left( 1+c_s^{-1}c'_s\right)g^{\alpha \beta} \partial_\alpha \boldsymbol{\rho} \partial_\beta v^i
  \\
  =&\ -2e^{\boldsymbol{\rho}} \epsilon^{iab} c^2_s \partial^a \boldsymbol{\rho}  \varpi_b-\left( 1+c_s^{-1}c'_s\right)g^{\alpha \beta} \partial_\alpha \boldsymbol{\rho} \partial_\beta v^i.
\end{split}
\end{equation*}
By Lemma \ref{cj}, we can deduce that
\begin{equation}\label{Qi}
\begin{split}
  \|{Q} \|_{H^{s-1}}&  \lesssim (\|d\boldsymbol{\rho} \|_{L^\infty} + \|dv \|_{L^\infty}) ( \| d \boldsymbol{\rho} \|_{H^{s-1}}+ \| d v \|_{H^{s-1}}).
\end{split}
\end{equation}
By using \eqref{fc0}, then we have
\begin{equation}\label{fa}
  \| d v\|_{H^{s-1}}+\| d \boldsymbol{\rho}\|_{H^{s-1}} \lesssim \| \partial v\|_{H^{s-1}}+\| \partial \boldsymbol{\rho}\|_{H^{s-1}}\lesssim \|  v\|_{H^{s}}+\| \boldsymbol{\rho}\|_{H^{s}}.
\end{equation}
Combining \eqref{fa}, \eqref{Qi}, and \eqref{Ds}, we have proved \eqref{YYE}. It remains for us to prove \eqref{eta} and \eqref{eee}. By \eqref{etad} and Sobolev imbedding, and elliptic estimates, we can derive that
\begin{equation}\label{ETAs}
\begin{split}
  \| \eta \|_{H^{s}} \leq  \| \mathrm{e}^{\boldsymbol{\rho}} \mathrm{curl} \varpi \|_{H_x^{s-2}} &\leq  (1+\|\boldsymbol{\rho}\|_{H^{\frac{3}{2}+}} )\| \varpi \|_{H_x^{s-1}},
\end{split}
\end{equation}
and
\begin{equation}\label{ETAs0}
\begin{split}
  \| \eta \|_{H^{s_0+1}} \lesssim (1+\| \boldsymbol{\rho} \|_{H^{2}_x})\| \varpi \|_{H_x^{s_0}}.
\end{split}
\end{equation}
Let us give a bound for $\mathbf{T}\eta$. By using \eqref{etad}, we get
\begin{equation*}
  -\Delta \eta=\mathrm{e}^{\boldsymbol{\rho}}\mathrm{curl}\varpi=\mathrm{e}^{2\boldsymbol{\rho}}\Omega.
\end{equation*}
 Then we have
\begin{equation}\label{bs}
  -\Delta(\mathbf{T}\eta^i)  = \mathbf{T} \Omega^i  - \Delta v^m \partial_m \eta^i-2 \partial_j v^m \partial^j(\partial_m \eta).
\end{equation}
By \eqref{bs}, \eqref{W1}, and Lemma \ref{ps}, we have
\begin{equation*}
\begin{split}
 \| \mathbf{T} \eta \|_{H_x^s} & \lesssim \| \partial v \cdot \partial \varpi \|_{H_x^{s-2}}+ \| \partial^2 v \cdot \partial \eta \|_{H_x^{s-2}}+\| \partial^2 \eta \cdot \partial v \|_{H_x^{s-2}}
 \\
 & \lesssim \| \partial v \|_{H^{s-1}} \| \partial \varpi \|_{H^{\frac{1}{2}+}} +\| \partial^2 v \|_{H^{s-2}} \| \partial \eta \|_{H^{\frac{3}{2}+}}+\| \partial v \|_{H^{s-1}} \| \partial^2 \eta \|_{H^{\frac{1}{2}+}}
 \\
  & \lesssim \| \varpi \|_{H^{\frac{3}{2}+}} \|v, \boldsymbol{\rho} \|_{H^{s}}.
\end{split}
\end{equation*}
At this stage, we complete the proof of the lemma.
\end{proof}
\section{Basic energy estimates and stability theorem}
In this part, we will prove the energy estimates and stability theorem.
%\subsection{Basic energy estimates}
Firstly, we use the hyperbolic system to give the basic energy estimate for density and velocity.
\begin{theorem}\label{dv}{(Basic Energy estimates for velocity and density)}
	Let $v$ and $\boldsymbol{\rho}$ be a solution of \eqref{fc0}. Let $\varpi$ be defines in \eqref{pw1}. Then for any $s\geq 0$, we have
\begin{equation}\label{E2}
 \| \boldsymbol{\rho}\|_{H^s}+ \|v\|_{H^s} \lesssim  \left( \|\boldsymbol{\rho}_0\|_{H^s}+ \|v_0\|_{H^s} \right) \exp( {\int^t_0} \|dv, d\boldsymbol{\rho}\|_{L^\infty_x}d\tau), \quad t \in [0,T].
\end{equation}
\end{theorem}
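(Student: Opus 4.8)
The plan is the classical weighted $L^{2}$ energy estimate exploiting the symmetric hyperbolic (equivalently symmetrizable) structure of \eqref{fc0}, carried out at the $H^{s}$ level after commuting with $\Lambda^{s}=(-\Delta)^{s/2}$ and keeping track of the $L^{2}$ part separately. Working with \eqref{fc0}, $\mathbf{T}v^{i}=-c_s^{2}\partial^{i}\boldsymbol{\rho}$ and $\mathbf{T}\boldsymbol{\rho}=-\mathrm{div}\,v$ (one may equally use the symmetric form \eqref{sq}), I would first apply $\Lambda^{s}$ to both equations,
\[
\mathbf{T}\Lambda^{s}v^{i}=-c_s^{2}\partial^{i}\Lambda^{s}\boldsymbol{\rho}-[\Lambda^{s},c_s^{2}]\partial^{i}\boldsymbol{\rho}-[\Lambda^{s},v\cdot\nabla]v^{i},\qquad \mathbf{T}\Lambda^{s}\boldsymbol{\rho}=-\partial_{i}\Lambda^{s}v^{i}-[\Lambda^{s},v\cdot\nabla]\boldsymbol{\rho},
\]
and then form the energy
\[
E(t)=\int_{\mathbb{R}^{3}}c_s^{-2}\bigl(|v|^{2}+|\Lambda^{s}v|^{2}\bigr)+|\boldsymbol{\rho}|^{2}+|\Lambda^{s}\boldsymbol{\rho}|^{2}\,dx .
\]
Since $c_s=c_s(\boldsymbol{\rho})$ is smooth and, by \eqref{HE} together with $\boldsymbol{\rho}\in L^{\infty}$, bounded above and below by positive constants on $[0,T]$, one has $E(t)\simeq\bigl(\|v\|_{H^{s}}+\|\boldsymbol{\rho}\|_{H^{s}}\bigr)^{2}$, with constants depending only on $c_0$ and an upper bound for $\sup_{[0,t]}\|\boldsymbol{\rho}\|_{L^{\infty}}$.

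The main computation is $\frac{d}{dt}E(t)$. Pairing the $\Lambda^{s}v^{i}$ equation with $c_s^{-2}\Lambda^{s}v^{i}$, the $\Lambda^{s}\boldsymbol{\rho}$ equation with $\Lambda^{s}\boldsymbol{\rho}$ (and likewise for the undifferentiated quantities), summing and integrating by parts, the two principal cross terms $-\int\Lambda^{s}\partial^{i}\boldsymbol{\rho}\,\Lambda^{s}v^{i}$ and $-\int\Lambda^{s}\boldsymbol{\rho}\,\partial_{i}\Lambda^{s}v^{i}$ cancel --- this is exactly what the weight $c_s^{-2}$ and the symmetrizable structure are for. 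The transport contributions $\int c_s^{-2}v\cdot\nabla(\tfrac12|\Lambda^{s}v|^{2})$, etc., integrate by parts into terms carrying $\mathrm{div}\,v$ and $\partial_{t}(c_s^{-2})=(c_s^{-2})'(\boldsymbol{\rho})\,\partial_{t}\boldsymbol{\rho}$; using $\partial_{t}\boldsymbol{\rho}=-\mathrm{div}\,v-v\cdot\nabla\boldsymbol{\rho}$ and $\partial_{t}v^{i}=-c_s^{2}\partial^{i}\boldsymbol{\rho}-v\cdot\nabla v^{i}$ these are all pointwise $\lesssim\|dv,d\boldsymbol{\rho}\|_{L^{\infty}_{x}}$, hence contribute $\lesssim\|dv,d\boldsymbol{\rho}\|_{L^{\infty}_{x}}E(t)$. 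What remains are the commutator terms, paired against $\Lambda^{s}v$ or $\Lambda^{s}\boldsymbol{\rho}$ with the bounded weight $c_s^{-2}$: by the Kato--Ponce commutator estimates of \cite{KP} (Lemma \ref{jh} and the standard variant with the derivative falling on the coefficient) one has $\|[\Lambda^{s},v\cdot\nabla]f\|_{L^{2}}\lesssim\|\partial v\|_{L^{\infty}}\|\Lambda^{s}f\|_{L^{2}}+\|\Lambda^{s}v\|_{L^{2}}\|\partial f\|_{L^{\infty}}$ and $\|[\Lambda^{s},c_s^{2}]\partial^{i}\boldsymbol{\rho}\|_{L^{2}}\lesssim\|\partial(c_s^{2})\|_{L^{\infty}}\|\boldsymbol{\rho}\|_{\dot{H}^{s}}+\|c_s^{2}\|_{\dot{H}^{s}}\|\partial\boldsymbol{\rho}\|_{L^{\infty}}$; finally $\|\partial(c_s^{2})\|_{L^{\infty}}\lesssim\|\partial\boldsymbol{\rho}\|_{L^{\infty}}$ and, since $c_s^{2}(\boldsymbol{\rho})-c_s^{2}(0)$ vanishes at $\boldsymbol{\rho}=0$, $\|c_s^{2}-c_s^{2}(0)\|_{H^{s}}\lesssim\|\boldsymbol{\rho}\|_{H^{s}}$ by the Moser composition estimate Lemma \ref{jh0} (with Lemma \ref{cj} for products with a bounded factor). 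Putting everything together gives $\frac{d}{dt}E(t)\lesssim\|dv,d\boldsymbol{\rho}\|_{L^{\infty}_{x}}\,E(t)$ on $[0,T]$.

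To conclude, Gr\"onwall's inequality yields $E(t)\le E(0)\exp\bigl(C\int_{0}^{t}\|dv,d\boldsymbol{\rho}\|_{L^{\infty}_{x}}\,d\tau\bigr)$, and taking square roots together with $E\simeq(\|v\|_{H^{s}}+\|\boldsymbol{\rho}\|_{H^{s}})^{2}$ gives \eqref{E2}; the constant $C$ in the exponent is inessential because $\sup_{[0,t]}\|\boldsymbol{\rho}\|_{L^{\infty}}\le\|\boldsymbol{\rho}_{0}\|_{L^{\infty}}+\int_{0}^{t}\|dv,d\boldsymbol{\rho}\|_{L^{\infty}_{x}}$, so all implicit constants are uniform on the time interval under consideration. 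I do not expect any single inequality to be a real obstacle; the part that requires the most care is the bookkeeping that guarantees \emph{every} error term --- from commuting $\Lambda^{s}$ with $v\cdot\nabla$ and with $c_s^{2}(\boldsymbol{\rho})$, and from $\partial_{t}$ of the $c_s$-weights --- is genuinely controlled by $\|dv,d\boldsymbol{\rho}\|_{L^{\infty}_{x}}$ times the energy, rather than by an $\dot{H}^{s}$ norm paired with an uncontrolled $L^{\infty}$ factor. This is possible precisely because in \eqref{fc0} each nonlinearity is either $v\cdot\nabla$ transport or a $c_s(\boldsymbol{\rho})$ coefficient multiplying a single spatial derivative, and it is exactly why the right-hand side of \eqref{E2} never sees the vorticity.
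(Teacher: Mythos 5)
Your proposal is correct and follows essentially the same route as the paper: commute $\Lambda^{s}$ through the (symmetrizable) hyperbolic system, cancel the principal cross terms, control the commutators by Kato--Ponce (Lemma \ref{jh}) and the composition/product estimates, and close with Gr\"onwall. If anything, your explicit $c_s^{-2}$ symmetrizer weight is a welcome refinement, since the matrices $A_i$ in Lemma \ref{sh} are not symmetric and the paper's one-line energy identity implicitly relies on exactly such a symmetrization.
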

\begin{proof}
Using Lemma \ref{sh}, we have
\begin{equation*}
 \partial_t U+ \sum_{i=1}^3 A_i(U) \partial_{i}U=0,
\end{equation*}
where $U=(\boldsymbol{\rho}, v^1, v^2, v^3)^{\mathrm{T}}$. Operating $\Lambda^a$ on the above equality, we have
\begin{equation}\label{Uy}
  \begin{split}
  \partial_t (\Lambda^a U)+ \sum_{i=1}^3 A_i(U) \partial_{i}(\Lambda^a U)=-\sum_{i=1}^3 [\Lambda^a, A_i(U)] \partial_{i} U.
  \end{split}
\end{equation}
Multiplying $\Lambda^a U$, integrating it by parts on \eqref{Uy}, and using Lemma \ref{jh}, it yields to
\begin{equation*}
  \frac{d}{dt} \| \Lambda^a U \|^2_{L^2} \lesssim  \int_{\mathbb{R}^3}\|dU\|_{L^\infty_x}\| \Lambda^a U \|^2_{L^2} d\tau.
\end{equation*}
Integrating it on $[0,t]$ and summing $a=0$ with $a=s$, we can prove
	\begin{equation*}
	\| U(t)\|^2_{H^{s}}  \lesssim \| U(0)\|^2_{H^{s}}\exp \big({\int^t_0} \|dU\|_{L^\infty_x}d\tau \big).
	\end{equation*}
That is to say,
\begin{equation*}
 \| \boldsymbol{\rho}\|_{H^s}+ \|v\|_{H^s} \lesssim  \left( \|\boldsymbol{\rho}_0\|_{H^s}+ \|v_0\|_{H^s} \right) \exp \big( {\int^t_0} \|dv, d\boldsymbol{\rho}\|_{L^\infty_x}d\tau \big), \quad t \in [0,T].
\end{equation*}
\end{proof}
Secondly, by utilizing Lemma \ref{PW}, we can give basic energy estimates for vorticity.
\begin{theorem}\label{ve}{(Basic Energy estimates for vorticity)}
Let $v$ and $\boldsymbol{\rho}$ be a solution of \eqref{fc0}. Let $\varpi$ be defined in \eqref{pw1} and $\varpi$ satisfy \eqref{W0}. For $s_0>2$, the following energy estimates hold:
\begin{equation}\label{WH}
\|\varpi(t)\|_{H^{s_0}_x}  \lesssim  \| \varpi_0\|_{H^{s_0}_x}\exp \big( {\int^t_0} (\|d v, d\boldsymbol{\rho}\|_{L^\infty_x}+\|\partial v\|_{\dot{B}^{s_0-2}_{\infty,2}})  d\tau \big),
\end{equation}
and
\begin{equation}\label{WL}
  \|\varpi\|_{H^2} \lesssim \|(v_0, \boldsymbol{\rho}_0, \varpi_0)\|_{H^2} \exp(5\int^t_0 \| \partial v, \partial \boldsymbol{\rho} \|_{L^\infty_x}d\tau).
\end{equation}
\end{theorem}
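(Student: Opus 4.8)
The plan is to derive both bounds by energy estimates on the transport equations \eqref{W0}, \eqref{W1}, \eqref{W2} for the vorticity, using the div--curl (Hodge) structure on $\mathbb{R}^{3}$ together with the product and commutator lemmas above, and then to close everything by Gr\"onwall's inequality. For \eqref{WL} I would first record the elementary $L^{2}$ bounds: testing \eqref{W0} against $\varpi$ and \eqref{W1} against $\Omega$, the transport term contributes only $\mathrm{div}\,v$, while the right-hand side of \eqref{W1} is quadratic in $(\partial v,\partial\varpi)$ with an $\mathrm{e}^{-\boldsymbol{\rho}}$ factor and, after writing $\nabla v$ through its Hodge decomposition so that the solenoidal part is $\mathrm{curl}\,v=\mathrm{e}^{\boldsymbol{\rho}}\varpi$, becomes quadratic in $\varpi$ up to terms involving $\mathrm{div}\,v=-\mathbf{T}\boldsymbol{\rho}$; this controls $\|\varpi\|_{L^{2}}+\|\Omega\|_{L^{2}}$ by the data and $\int_{0}^{t}\|\partial v,\partial\boldsymbol{\rho}\|_{L^{\infty}_{x}}d\tau$, and the bookkeeping of these $L^{\infty}$-factors is what produces the constant $5$ in the exponent.

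Next, by the div--curl elliptic estimate $\|\varpi\|_{H^{2}}\lesssim\|\varpi\|_{L^{2}}+\|\mathrm{div}\,\varpi\|_{H^{1}}+\|\mathrm{curl}\,\varpi\|_{H^{1}}$ together with $\mathrm{div}\,\varpi=-\varpi^{i}\partial_{i}\boldsymbol{\rho}$ from \eqref{W01} and $\mathrm{curl}\,\varpi=\mathrm{e}^{\boldsymbol{\rho}}\Omega$, Lemmas \ref{cj} and \ref{jh0} reduce $\|\varpi\|_{H^{2}}$ to $\|\varpi\|_{L^{2}}$, $\|\boldsymbol{\rho}\|_{H^{2}}$, $\|\partial\boldsymbol{\rho}\|_{L^{\infty}_{x}}$ and $\|\Omega\|_{H^{1}}$; the same estimate one order lower reduces $\|\Omega\|_{H^{1}}$ to $\|\Omega\|_{L^{2}}$, to $\|\mathrm{div}\,\Omega\|_{L^{2}}=\|\partial(\mathrm{e}^{-\boldsymbol{\rho}})\,(\mathrm{curl}\,\varpi)\|_{L^{2}}$ (lower order, since $\mathrm{div}\,\mathrm{curl}=0$), and to $\|\mathrm{curl}\,\Omega\|_{L^{2}}$, the last of which is bounded from \eqref{W1} differentiated once, again using the Hodge identity $\Delta v=\partial\,\mathrm{div}\,v-\mathrm{curl}^{2}v$ and Lemmas \ref{cj}, \ref{ps} to absorb the extra spatial derivative of $v$ into $\|v\|_{H^{2}}$ and $\|\varpi\|_{H^{2}}$. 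Coupling with Theorem \ref{dv} (and $\|dv,d\boldsymbol{\rho}\|_{L^{\infty}_{x}}\lesssim\|\partial v,\partial\boldsymbol{\rho}\|_{L^{\infty}_{x}}$ via \eqref{fc0}) and applying Gr\"onwall's inequality yields \eqref{WL}.

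For \eqref{WH} the scheme is identical with $H^{s_{0}-1}$, $H^{s_{0}-2}$ replacing $H^{1}$, $L^{2}$, but now the transport equation for $\mathrm{curl}\,\Omega$ must be the \emph{modified} one \eqref{W2}: its only top-order term is the perfect spatial derivative $\partial^{i}(2\mathrm{e}^{-\boldsymbol{\rho}}\partial_{n}v^{a}\partial^{n}\varpi_{a})$, while the remainders $R^{i}_{j}$ in \eqref{rF} carry at most $s_{0}$ worth of derivatives on $\varpi$ and at most $s$ on $(v,\boldsymbol{\rho})$ (after invoking $\mathbf{T}\varpi=\varpi\cdot\nabla v$ and $\Delta v=\partial\,\mathrm{div}\,v-\mathrm{curl}^{2}v$), which is exactly why Lemma \ref{PW} is proved with this particular modification. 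Running the $\dot H^{s_{0}-2}$ energy estimate on the single unknown $\mathrm{curl}\,\Omega^{i}-2\mathrm{e}^{-\boldsymbol{\rho}}\partial_{a}\boldsymbol{\rho}\,\partial^{i}\varpi^{a}$, the dangerous pairing of $\Lambda^{s_{0}-2}\partial^{i}(2\mathrm{e}^{-\boldsymbol{\rho}}\partial_{n}v^{a}\partial^{n}\varpi_{a})$ against $\Lambda^{s_{0}-2}(2\mathrm{e}^{-\boldsymbol{\rho}}\partial_{a}\boldsymbol{\rho}\,\partial_{i}\varpi^{a})$ is handled by Plancherel's theorem, redistributing half a derivative so that the two factors need only $s_{0}-\tfrac32$, resp.\ $s_{0}-\tfrac12$, derivatives; the remaining borderline products are controlled by Lemmas \ref{LPE}, \ref{ce}, \ref{YR}, which is precisely what forces the Besov norm $\|\partial v\|_{\dot{B}^{s_{0}-2}_{\infty,2}}$, rather than $\|\partial v\|_{L^{\infty}_{x}}$, into the exponent. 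Recovering $\|\varpi\|_{H^{s_{0}}}$ from $\|\varpi\|_{L^{2}}$ and $\|\mathrm{curl}\,\Omega-2\mathrm{e}^{-\boldsymbol{\rho}}\partial_{a}\boldsymbol{\rho}\,\partial\varpi^{a}\|_{H^{s_{0}-2}}$ by undoing the Hodge reductions (the lower-order piece $2\mathrm{e}^{-\boldsymbol{\rho}}\partial_{a}\boldsymbol{\rho}\,\partial\varpi^{a}$ absorbed by Young's inequality), Gr\"onwall's inequality gives \eqref{WH}.

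The main obstacle is the apparent loss of one derivative in the source $\varpi^{a}\partial_{a}v^{i}$ of \eqref{W0}: estimating $\|\varpi\|_{H^{a}}$ this way naively requires $v\in H^{a+1}$, which is unavailable since $a+1>s$. The way through is never to bound $\varpi^{a}\partial_{a}v^{i}$ directly at top order, but to differentiate and convert the worst second-order derivative of $v$, through $\Delta v=\partial\,\mathrm{div}\,v-\mathrm{curl}^{2}v$ with $\mathrm{div}\,v=-\mathbf{T}\boldsymbol{\rho}$ and $\mathrm{curl}\,v=\mathrm{e}^{\boldsymbol{\rho}}\varpi$, either into $\mathbf{T}$-derivatives (good, by the hyperbolic structure of \eqref{fc0}) or into quantities quadratic in $\varpi$ (good, by the product laws); for the $s_{0}$-level estimate all of this is packaged into the modified transport equation \eqref{W2}, and the single genuinely borderline product is saved by the Plancherel derivative-shift above. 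A secondary technical point is that the implied constants in the div--curl/elliptic estimates must be kept uniform, which uses the hyperbolicity \eqref{HE} propagated by continuity.
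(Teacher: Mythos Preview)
Your proposal follows essentially the same route as the paper: div--curl reduction of $\|\varpi\|_{H^{a}}$ to $\|\mathrm{curl}\,\Omega\|_{H^{a-2}}$, energy estimate on the \emph{modified} unknown $\mathrm{curl}\,\Omega^{i}-2\mathrm{e}^{-\boldsymbol{\rho}}\partial_{a}\boldsymbol{\rho}\,\partial^{i}\varpi^{a}$ via \eqref{W2}, the cancellation $\partial^{i}\mathrm{curl}\,\Omega_{i}=0$ after integrating by parts, and the Plancherel half--derivative shift on the residual pairing. One point to tighten: the paper runs \emph{exactly} this machinery already at the $L^{2}$ level to obtain \eqref{WL}, not only at the $\dot H^{s_{0}-2}$ level for \eqref{WH}. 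Your sketch for \eqref{WL} (``bounded from \eqref{W1} differentiated once \dots\ absorb the extra spatial derivative of $v$ into $\|v\|_{H^{2}}$ and $\|\varpi\|_{H^{2}}$'') is too casual, because a direct $L^{2}$ bound on $\partial^{2}v\cdot\partial\varpi$ fails in three dimensions with only $H^{2}$ control; one must already use the modification (to absorb the $\mathbf{T}(2\mathrm{e}^{-\boldsymbol{\rho}}\partial^{a}\boldsymbol{\rho}\,\partial^{i}\varpi_{a})$ coming from $\Delta v$ via Hodge) and the Plancherel shift (the paper's term $K_{1}$). Also, the product and commutator lemmas actually invoked are \ref{lpe}, \ref{wql}, \ref{ps}, \ref{ce}; Lemmas \ref{LPE} and \ref{YR} are not used here.
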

\begin{proof}
We divide the proof into several steps.

\textbf{Step 1: lower-order energy estimate}. Recall that $\varpi$ satisfies the transport equation \eqref{W0}. Multiplying $\varpi$ and integrating it on $[0,t]\times \mathbb{R}^3$, we can get
\begin{equation}\label{w0}
  \|\varpi(t)\|^2_{L^2_x} \lesssim \|\varpi_0\|^2_{L^2_x}+\int^t_0\| \partial v\|_{L^\infty_x}\|\varpi\|^2_{L^2_x}d\tau.
\end{equation}
Using Gronwall's inequality, we have
\begin{equation}\label{WOE}
  \|\varpi\|^2_{L^2} \lesssim \|\varpi_0\|^2_{L^2} \exp \big( {\int^t_0} \|dv\|_{L^\infty_x}d\tau \big).
\end{equation}
By elliptic estimates, we obtain that
\begin{equation}\label{w1}
  \| \varpi \|_{\dot{H}^2_x} \leq\| \mathrm{curl} \varpi \|_{\dot{H}^1_x}+\| \mathrm{div} \varpi\|_{\dot{H}^1_x}.
\end{equation}
By Lemma \ref{PW}, and using H\"older inequality, we can prove
\begin{equation*}
  \|\mathrm{div} \varpi\|_{\dot{H}^1_x} = \|\varpi \cdot \partial \boldsymbol{\rho}  \|_{\dot{H}^1_x}  \leq  C\|\varpi \|_{H_x^{\frac{7}{4}}}\|\partial \boldsymbol{\rho}\|_{H^1_x}.
\end{equation*}
By interpolation formula and Young's inequality, we can update it by
\begin{equation}\label{dw1}
\begin{split}
  \|\mathrm{div} \varpi\|_{\dot{H}^1_x}  \leq & C(\|\varpi \|^{\frac{1}{4}}_{L_x^{2}}\|\varpi \|^{\frac{3}{4}}_{{H}_x^{2}}) \|\partial \boldsymbol{\rho}\|_{H^1_x} \\
  \leq & C\|\varpi \|_{L_x^{2}} \|\partial \boldsymbol{\rho}\|^4_{H^1_x}+ \frac{c_1}{100} \|\varpi \|_{{H}_x^{2}}.
\end{split}
\end{equation}
Using $\mathrm{curl} \varpi=\mathrm{e}^{\boldsymbol{\rho}} \Omega$, we have
\begin{equation*}
  \|\mathrm{curl} \varpi\|_{\dot{H}_x^1} = \|\mathrm{curl} \mathrm{curl} \varpi \|_{L_x^2}  = \|\mathrm{curl} (\mathrm{e}^{\boldsymbol{\rho}} \Omega) \|_{L_x^2}.
\end{equation*}
By H\"older's inequality, we get
\begin{equation}\label{yxe}
  \| \partial \boldsymbol{\rho} \partial \varpi \|_{L_x^2}\leq C\|\varpi \|_{H_x^{\frac{7}{4}}}\|\partial \boldsymbol{\rho}\|_{H^1_x}\leq C\|\varpi \|_{L_x^{2}} \|\partial \boldsymbol{\rho}\|^4_{H^1_x}+ \frac{1}{100} \|\varpi \|_{{H}_x^{2}}.
\end{equation}
Thus, we have
\begin{equation}\label{cw1}
\begin{split}
  \|\mathrm{curl} \varpi\|_{\dot{H}_x^1} \leq & C\| \partial \boldsymbol{\rho} \partial \varpi \|_{L_x^2}+ C\|\mathrm{curl} \Omega \|_{L_x^2}
  \\
  \leq & C\|\varpi \|_{L_x^{2}} \|\partial \boldsymbol{\rho}\|^4_{H^1_x}+ \frac{1}{100} \|\varpi \|_{{H}_x^{2}}+ C\|\mathrm{curl} \Omega \|_{L_x^2},
  \end{split}
\end{equation}
and
\begin{equation}\label{cw2}
\begin{split}
  \|\mathrm{curl} \varpi\|_{\dot{H}_x^1} \geq &  \|\mathrm{curl} \Omega \|_{L_x^2}-C\| \partial \boldsymbol{\rho} \partial \varpi \|_{L_x^2}
  \\
  \geq & \|\mathrm{curl} \Omega \|_{L_x^2}-C\|\varpi \|_{L_x^{2}} \|\partial \boldsymbol{\rho}\|^4_{H^1_x}- \frac{1}{100} \|\varpi \|_{{H}_x^{2}}.
  \end{split}
\end{equation}
Note \eqref{W2}. That is,
\begin{equation*}
\begin{split}
& \mathbf{T} \big( \mathrm{curl} \Omega^i -2\mathrm{e}^{-\boldsymbol{\rho}} \partial_a \boldsymbol{\rho}  \partial^i \varpi^a \big)
= \ \partial^i \big( 2 \mathrm{e}^{-\boldsymbol{\rho}}  \partial_n v_a \partial^n \varpi^b \big) + \sum^6_{j=1}R^{i}_j.
\end{split}
\end{equation*}
Multiplying $\mathrm{curl} \Omega_i -2\mathrm{e}^{-\boldsymbol{\rho}} \partial_a \boldsymbol{\rho}  \partial_i \varpi^a$ on the above equality and integrating it on $\mathbb{R}^3$, we can derive that
\begin{equation}\label{oneW}
\begin{split}
  \frac{d}{dt} \|\mathrm{curl} \Omega -2\mathrm{e}^{-\boldsymbol{\rho}} \partial_a \boldsymbol{\rho}  \partial \varpi^a\|^2_{L^2}=& \int_{\mathbb{R}^3} \partial^i \big( 2 \mathrm{e}^{-\boldsymbol{\rho}}  \partial_n v_a \partial^n \varpi^b \big) \cdot \big( \mathrm{curl} \Omega_i -2\mathrm{e}^{-\boldsymbol{\rho}} \partial_a \boldsymbol{\rho}  \partial_i \varpi^a \big) dx
  \\
  & + \sum^6_{j=1} \int_{\mathbb{R}^3} R^{i}_j \cdot \big( \mathrm{curl} \Omega_i -2\mathrm{e}^{-\boldsymbol{\rho}} \partial_a \boldsymbol{\rho}  \partial_i \varpi^a \big) dx.
\end{split}
\end{equation}
Set
\begin{equation}\label{K1}
  K_1=\int_{\mathbb{R}^3} \partial^i \big( 2 \mathrm{e}^{-\boldsymbol{\rho}}  \partial_n v_a \partial^n \varpi^b \big) \cdot \big( \mathrm{curl} \Omega_i -2\mathrm{e}^{-\boldsymbol{\rho}} \partial_a \boldsymbol{\rho}  \partial_i \varpi^a \big) dx,
\end{equation}
and
\begin{equation}\label{K2}
  K_2=\sum^6_{j=1} \int_{\mathbb{R}^3} R^{i}_j \cdot \big( \mathrm{curl} \Omega_i -2\mathrm{e}^{-\boldsymbol{\rho}} \partial_a \boldsymbol{\rho}  \partial_i \varpi^a \big) dx.
\end{equation}
Integrating by parts, we can update $K_1$ as
\begin{equation}\label{w3}
\begin{split}
K_1
 =&\int_{\mathbb{R}^3}  \big( 2 \mathrm{e}^{-\boldsymbol{\rho}}  \partial_n v_a \partial^n \varpi^b \big) \cdot \partial^i \big( \mathrm{curl} \Omega_i -2\mathrm{e}^{-\boldsymbol{\rho}} \partial_a \boldsymbol{\rho}  \partial_i \varpi^a \big) dx
 \\
 =& \int_{\mathbb{R}^3}  \big( 2 \mathrm{e}^{-\boldsymbol{\rho}}  \partial_n v_a \partial^n \varpi^b \big) \cdot \partial^i \big( -2\mathrm{e}^{-\boldsymbol{\rho}} \partial_a \boldsymbol{\rho}  \partial_i \varpi^a \big) dx,
\end{split}
\end{equation}
where we use the fact $ \partial^i  \mathrm{curl} \Omega_i=0$. By Plancherel formula and H\"older's inequality, we can prove
\begin{equation}\label{K1e}
\begin{split}
  |K_1| = &\left| \int_{\mathbb{R}^3}  \Lambda^{\frac{1}{2}}\big( 2 \mathrm{e}^{-\boldsymbol{\rho}}  \partial_n v_a \partial^n \varpi^b \big) \cdot \Lambda^{-\frac{1}{2}}\partial^i \big( -2\mathrm{e}^{-\boldsymbol{\rho}} \partial_a \boldsymbol{\rho}  \partial_i \varpi^a \big) dx \right|
  \\
  \leq &C\|\partial v \partial \varpi\|_{H^{\frac{1}{2}}_x}+ C\|\partial \boldsymbol{\rho} \partial \varpi\|_{H^{\frac{1}{2}}_x}
  \\
 \leq &C( \|\partial v \|_{H^1_x} + \|\partial \boldsymbol{\rho} \|_{H^1_x})\|\partial \varpi\|_{H^1_x}
\\
 \leq &C( \| v \|^2_{H^2_x} + \|\boldsymbol{\rho} \|^2_{H^2_x})+ \frac{1}{100}\|\varpi\|^2_{H^2_x}.
\end{split}
\end{equation}
On the other hand, using \eqref{rF}, we can get
\begin{equation}\label{w4}
\begin{split}
  |K_2|  \lesssim & \| \partial v\|_{L^\infty_x} \| \partial^2 \varpi\|_{L^2} \|\mathrm{curl} \Omega -2\mathrm{e}^{-\boldsymbol{\rho}} \partial_a \boldsymbol{\rho}  \partial \varpi^a \|_{L^2_x}
  \\
   & +(\| \partial v\|_{L^\infty_x}\| \partial \boldsymbol{\rho}\|_{L^\infty_x} \| \partial \varpi\|_{L^2_x}+ \| \partial \varpi\|^2_{L^2_x})\|\mathrm{curl} \Omega -2\mathrm{e}^{-\boldsymbol{\rho}} \partial_a \boldsymbol{\rho}  \partial \varpi^a \|_{L^2_x}
  \\
  & + \| \partial v\|_{L^\infty_x}\| \partial \boldsymbol{\rho}\|_{L^\infty_x} \| \partial^2 v\|_{L^2}\|\mathrm{curl} \Omega -2\mathrm{e}^{-\boldsymbol{\rho}} \partial_a \boldsymbol{\rho}  \partial \varpi^a \|_{L^2_x}.
\end{split}
\end{equation}
Combining \eqref{WOE} to \eqref{w4}, we therefore have
\begin{equation*}
\begin{split}
  \frac{d}{dt} \|\mathrm{curl} \Omega -2\mathrm{e}^{-\boldsymbol{\rho}} \partial_a \boldsymbol{\rho}  \partial \varpi^a\|^2_{L^2} \leq& C( \| v \|^2_{H^2_x} + \|\boldsymbol{\rho} \|^2_{H^2_x})+ \frac{1}{100}\|\varpi\|^2_{H^2_x}
  \\
  & +C\| \partial v\|_{L^\infty_x} \| \partial^2 \varpi\|_{L^2} \|\mathrm{curl} \Omega -2\mathrm{e}^{-\boldsymbol{\rho}} \partial_a \boldsymbol{\rho}  \partial \varpi^a \|_{L^2_x}
  \\
   & +C(\| \partial v\|_{L^\infty_x}\| \partial \boldsymbol{\rho}\|_{L^\infty_x} \| \partial \varpi\|_{L^2_x}+ \| \partial \varpi\|^2_{L^2_x})\|\mathrm{curl} \Omega -2\mathrm{e}^{-\boldsymbol{\rho}} \partial_a \boldsymbol{\rho}  \partial \varpi^a \|_{L^2_x}
  \\
  & + C\| \partial v\|_{L^\infty_x}\| \partial \boldsymbol{\rho}\|_{L^\infty_x} \| \partial^2 v\|_{L^2}\|\mathrm{curl} \Omega -2\mathrm{e}^{-\boldsymbol{\rho}} \partial_a \boldsymbol{\rho}  \partial \varpi^a \|_{L^2_x}.
\end{split}
\end{equation*}
Integrating it from $[0,t]$, we can derive that
\begin{equation}\label{w6}
\begin{split}
  &\|\mathrm{curl} \Omega -2\mathrm{e}^{-\boldsymbol{\rho}} \partial_a \boldsymbol{\rho}  \partial \varpi^a\|^2_{L^2}(t)-\|\mathrm{curl} \Omega -2\mathrm{e}^{-\boldsymbol{\rho}} \partial_a \boldsymbol{\rho}  \partial \varpi^a\|^2_{L^2}(0)
  \\
  \leq& C\int^t_0( \| v \|^2_{H^2_x} + \|\boldsymbol{\rho} \|^2_{H^2_x}+ \frac{1}{100}\|\varpi\|^2_{H^2_x}) d\tau
  \\
  & +C \int^t_0 \| \partial v\|_{L^\infty_x} \| \partial^2 \varpi\|_{L^2} \|\mathrm{curl} \Omega -2\mathrm{e}^{-\boldsymbol{\rho}} \partial_a \boldsymbol{\rho}  \partial \varpi^a \|_{L^2_x} d\tau
  \\
   & +C \int^t_0 (\| \partial v\|_{L^\infty_x}\| \partial \boldsymbol{\rho}\|_{L^\infty_x} \| \partial \varpi\|_{L^2_x}+ \| \partial \varpi\|^2_{L^2_x})\|\mathrm{curl} \Omega -2\mathrm{e}^{-\boldsymbol{\rho}} \partial_a \boldsymbol{\rho}  \partial \varpi^a \|_{L^2_x} d\tau
  \\
  & + C \int^t_0 \| \partial v\|_{L^\infty_x}\| \partial \boldsymbol{\rho}\|_{L^\infty_x} \| \partial^2 v\|_{L^2}\|\mathrm{curl} \Omega -2\mathrm{e}^{-\boldsymbol{\rho}} \partial_a \boldsymbol{\rho}  \partial \varpi^a \|_{L^2_x} d\tau.
\end{split}
\end{equation}
Consider
\begin{equation}\label{w5}
  \|\mathrm{curl} \Omega\|_{L^2}-C\| \partial \boldsymbol{\rho}  \partial \varpi\|_{L^2} \leq \|\mathrm{curl} \Omega -2\mathrm{e}^{-\boldsymbol{\rho}} \partial_a \boldsymbol{\rho}  \partial \varpi^a \|_{L^2_x} \leq \|\mathrm{curl} \Omega\|_{L^2}+C\| \partial \boldsymbol{\rho}  \partial \varpi\|_{L^2}.
\end{equation}
Inserting \eqref{yxe} into \eqref{w5}, we can have
\begin{equation}\label{w7}
  \|\mathrm{curl} \Omega -2\mathrm{e}^{-\boldsymbol{\rho}} \partial_a \boldsymbol{\rho}  \partial \varpi^a \|_{L^2_x} \geq \|\mathrm{curl} \Omega\|_{L^2}-(C\|\varpi \|_{L_x^{2}} \|\partial \boldsymbol{\rho}\|^4_{H^1_x}+ \frac{1}{100} \|\varpi \|_{{H}_x^{2}}),
\end{equation}
and
\begin{equation}\label{w8}
  \|\mathrm{curl} \Omega -2\mathrm{e}^{-\boldsymbol{\rho}} \partial_a \boldsymbol{\rho}  \partial \varpi^a \|_{L^2_x}
  \leq  \|\mathrm{curl} \Omega\|_{L^2}+(C\|\varpi \|_{L_x^{2}} \|\partial \boldsymbol{\rho}\|^4_{H^1_x}+ \frac{1}{100} \|\varpi \|_{{H}_x^{2}}).
\end{equation}
Adding \eqref{w6} and \eqref{w0}, and combining with \eqref{cw1}, \eqref{cw2}, \eqref{w7}, and \eqref{w8}, we can get
\begin{equation}\label{w2ee}
  \|\varpi\|_{H^2} \lesssim \|(v_0, \boldsymbol{\rho}_0, \varpi_0)\|_{H^2} \exp(5\int^t_0 \| \partial v, \partial \boldsymbol{\rho} \|_{L^\infty_x}d\tau).
\end{equation}
\textbf{Step 2: higher-order energy estimate of $\mathrm{div} \varpi$}. Note \eqref{W01}. Using product estimates in Lemma \ref{ps}, we can derive that
\begin{equation}\label{DW}
\begin{split}
  \| \mathrm{div} \varpi \|_{{H}^{s_0-1}} &= \|  \varpi \cdot \partial \boldsymbol{\rho}  \|_{{H}^{s_0-1}} \lesssim \|\varpi\|_{H^2}\|\partial \boldsymbol{\rho}\|_{{H}^{s_0-1}}.
\end{split}
\end{equation}
By interpolation formula, we can show that
\begin{equation}\label{DW1}
  \|  \varpi \|_{H_x^2} \lesssim \|  \varpi \|^{\frac{s_0-2}{s_0}}_{L_x^2}\| \varpi \|^{\frac{2}{s_0}}_{H^{s_0}_x}.
\end{equation}
Substituting \eqref{DW1} to \eqref{DW}, we then obtain
\begin{equation*}
\begin{split}
  &\| \mathrm{div} \varpi \|_{{H}^{s_0-1}}
  \\
   \lesssim \ & \| \varpi \|^{\frac{s_0-2}{s_0}}_{L_x^2}\| \varpi \|^{\frac{2}{s_0}}_{H^{s_0}_x}\|\partial \boldsymbol{\rho}\|_{{H}^{s_0}}
  \\
   \lesssim \ & \big( \| \varpi_0 \|_{L^2}\exp(\textstyle{\int^t_0}\|dv \|_{L^\infty_x}d\tau) \big)^{\frac{s_0-2}{s_0}}_{L_x^2}\| \varpi \|^{\frac{2}{s_0}}_{H^{s_0}_x}\big( \| \boldsymbol{\rho}_0 \|_{H^{s_0}}\exp(\int^t_0\|dv, d\boldsymbol{\rho} \|_{L^\infty_x}d\tau) \big).
\end{split}
\end{equation*}
For $\frac{s_0-2}{s_0} \in (0,1)$, we then get
\begin{equation*}
\begin{split}
  \| \mathrm{div} \varpi \|_{{H}^{s_0-1}}
   \lesssim  \| \varpi_0 \|^{\frac{s_0-2}{s_0}}_{L^2}\| \boldsymbol{\rho}_0 \|_{H^{s_0}} \| \varpi \|^{\frac{2}{s_0}}_{H^{s_0}_x}\exp(2{\int^t_0}\|dv, d\boldsymbol{\rho} \|_{L^\infty_x}d\tau).
\end{split}
\end{equation*}
By Young's inequality, we can derive that
\begin{equation}\label{DWE}
\begin{split}
  \| \mathrm{div} W \|^2_{{H}^{s_0-1}}
   \lesssim  (\| \varpi_0 \|^{2}_{L^2}+\| \boldsymbol{\rho}_0 \|^2_{H^{s_0}})\exp(4{\int^t_0}\|dv, d\boldsymbol{\rho} \|_{L^\infty_x}d\tau)+ \frac{1}{16}\| \varpi \|^{2}_{H^{s_0}_x}.
\end{split}
\end{equation}

\textbf{Step 3: higher-order energy estimate of $\mathrm{curl} \varpi$}. To get a estimate of $\mathrm{curl} \varpi$, we need to see the equation \eqref{W1} and \eqref{W2}. Note
\begin{equation*}
 \|\mathrm{curl} \varpi\|_{L_x^2} = \|e^{\boldsymbol{\rho}} \Omega \|_{L_x^2}  \lesssim  \|\Omega \|_{L_x^2},
\end{equation*}
From \eqref{W1}, we known
\begin{equation*}
  \frac{d}{dt} \|\Omega\|^2_{L_x^2} \lesssim \| \partial v\|_{L_x^\infty}\|\Omega\|_{L_x^2} \| \partial \varpi\|_{L_x^2}+ \| \partial v\|_{L_x^\infty} \|\Omega\|^2_{L_x^2}.
\end{equation*}
Thus,
\begin{equation}\label{263}
\begin{split}
  \|\Omega\|_{L_x^2} &\lesssim \|\Omega_0\|_{L_x^2}+\| \partial v\|_{L^1_tL_x^\infty} \| \partial W\|_{L^\infty_tL_x^2}.
\end{split}
\end{equation}
For $\Omega_0=\mathrm{e}^{-\boldsymbol{\rho}_0}\mathrm{curl}\varpi_0$, we then have
\begin{equation*}
  \|\Omega_0\|_{L^2}
   \lesssim \|\mathrm{curl} \varpi_0\|_{L_x^2} \lesssim \|\varpi_0\|_{H_x^1}.
\end{equation*}
Substituting the above inequality to \eqref{263}, we obtain
\begin{equation}\label{264}
\begin{split}
  \|\mathrm{curl} \varpi\|_{L_x^2} \lesssim \|\Omega\|_{L_x^2}  & \lesssim \|\varpi_0\|_{H_x^1}+\| \partial v\|_{L^1_tL_x^\infty} \| \varpi\|_{L^\infty_tH_x^1}.
\end{split}
\end{equation}
By elliptic estimates, we first note
\begin{equation}\label{cer}
\begin{split}
\|\mathrm{curl} \varpi\|_{\dot{H}_x^{s_0-1}}
= & \|\mathrm{curl} \mathrm{curl} \varpi\|_{\dot{H}_x^{s_0-2}}= \|\mathrm{curl} (\mathrm{e}^{\boldsymbol{\rho}}\Omega )\|_{\dot{H}_x^{s_0-2}}
\\
 \leq & \| \partial \boldsymbol{\rho} \cdot \Omega\|_{\dot{H}_x^{s_0-2}} + \|\mathrm{e}^{\boldsymbol{\rho}} \mathrm{curl}\Omega\|_{\dot{H}_x^{s_0-2}}
\\
 \leq & \| \partial \boldsymbol{\rho} \cdot \Omega\|_{\dot{H}_x^{s_0-2}} + \|\mathrm{curl}\Omega\|_{\dot{H}_x^{s_0-2}}.
 \end{split}
\end{equation}
Similarly, by using $\Omega=\mathrm{e}^{-\boldsymbol{\rho}}\mathrm{curl}\varpi$, we then get
\begin{equation}\label{ceh}
\begin{split}
\|\mathrm{curl}\Omega\|_{\dot{H}_x^{s_0-2}}& = \|\mathrm{curl} (\mathrm{e}^{-\boldsymbol{\rho}}\mathrm{curl}\varpi)\|_{\dot{H}_x^{s_0-2}}
\\
& \lesssim \| \partial \boldsymbol{\rho} \cdot \mathrm{curl}\varpi \|_{\dot{H}_x^{s_0-2}} + \|\mathrm{curl}\varpi\|_{\dot{H}_x^{s_0-1}}
\\
& \lesssim \| \partial \boldsymbol{\rho} \cdot \Omega \|_{\dot{H}_x^{s_0-2}} + \|\mathrm{curl}\varpi\|_{\dot{H}_x^{s_0-1}}.
 \end{split}
\end{equation}
Seeing from \eqref{cer} and \eqref{ceh}, we known the difference between $\|\mathrm{curl}\Omega\|_{\dot{H}_x^{s_0-2}}$ and $\|\mathrm{curl}\varpi\|_{\dot{H}_x^{s_0-1}}$ is only a lower order term $\| \partial \boldsymbol{\rho} \cdot \Omega \|_{\dot{H}_x^{s_0-2}}$. Let us see what's the bound of this lower order term. On one hand, we use Lemma \ref{ps}(taking $s_1 =\frac{1}{2}+s_0-2, s_2=1$) to obtain
\begin{equation}\label{266}
\begin{split}
  \| \partial \boldsymbol{\rho} \cdot \Omega\|_{\dot{H}_x^{s_0-2}}  &\lesssim \| \partial \boldsymbol{\rho} \|_{{H}_x^{\frac{1}{2}+s_0-2}} \|\Omega\|_{{H}_x^{1}}
  \\
  & \lesssim \| \partial \boldsymbol{\rho} \|_{{H}_x^{s-1}}\|\Omega\|_{{H}_x^{1}}
  \\
  & \lesssim  (1+\| \boldsymbol{\rho} \|^2_{{H}_x^{s}})\|\mathrm{curl} \varpi\|_{{H}_x^{1}}
  \\
  & \lesssim  (1+\| \boldsymbol{\rho} \|^2_{{H}_x^{s}})\|\varpi\|^{\frac{s_0-2}{s_0}}_{L^2_x} \| \varpi\|^{\frac{2}{s_0}}_{{H}_x^{s_0}}.
  %\\& \lesssim  (1+\| \boldsymbol{\rho} \|^2_{{H}_x^{s}})\|W\|^{\frac{s_0-2}{s_0}}_{L^2_x} \| W\|^{\frac{2}{s_0}}_{{H}_x^{s_0}},
\end{split}
\end{equation}
where we use the fact
\begin{equation*}
  \begin{split}
  \|\Omega\|_{{H}_x^{1}} & \lesssim \| \mathrm{e}^{-\boldsymbol{\rho}}\mathrm{curl} \varpi \|_{{H}_x^{1}}
   \lesssim (1+\|\boldsymbol{\rho} \|_{{H}_x^{s}})\|\mathrm{curl} \varpi\|_{{H}_x^{1}}.
  \end{split}
\end{equation*}
By using Young's inequality, we can update \eqref{266} by
\begin{equation}\label{26e}
\begin{split}
  \| \partial \boldsymbol{\rho} \cdot \Omega\|_{\dot{H}_x^{s_0-2}}
  & \lesssim  (1+\| \boldsymbol{\rho} \|^2_{{H}_x^{s}})\|\varpi\|_{L^2_x} +\frac{1}{16}\| \varpi\|_{{H}_x^{s_0}}.
  %\\& \lesssim  (1+\| \boldsymbol{\rho} \|^2_{{H}_x^{s}})\|W\|^{\frac{s_0-2}{s_0}}_{L^2_x} \| W\|^{\frac{2}{s_0}}_{{H}_x^{s_0}},
\end{split}
\end{equation}
We are now in a position to consider $\|\mathrm{curl}\Omega\|_{\dot{H}_x^{s_0-2}}$. Operating $\Lambda_x^{s_0-2}$ on \eqref{W2} give rise to
\begin{equation*}
\begin{split}
&\mathbf{T}  \left( \Lambda_x^{s_0-2} \left( \text{curl}\Omega^i-2\mathrm{e}^{-\boldsymbol{\rho}}  \partial_a \boldsymbol{\rho}  \partial^i \varpi^a\right) \right)
\\
=&-[\Lambda_x^{s_0-2}, \mathbf{T}]( \mathrm{curl} \Omega^i -2 \mathrm{e}^{-\boldsymbol{\rho}} \partial_a \boldsymbol{\rho}  \partial^i \varpi^a)
\\
&+ \partial^i \big(\Lambda_x^{s_0-2}  \big( 2 \mathrm{e}^{-\boldsymbol{\rho}}  \partial_n v^a \partial^n \varpi_a \big) \big) + \Lambda_x^{s_0-2} R^i_1
\\
&+ \Lambda_x^{s_0-2} R^i_2+ \Lambda_x^{s_0-2} R^i_3+\Lambda_x^{s_0-2} R^i_4+\Lambda_x^{s_0-2} R^i_5+\Lambda_x^{s_0-2} R^i_6.
\end{split}
\end{equation*}
Multiplying $\Lambda_x^{s_0-2} \left( \text{curl}\Omega_i-2\mathrm{e}^{-\boldsymbol{\rho}}  \partial_a \boldsymbol{\rho}  \partial_i \varpi^a\right)$ on the above equality and integrating it on $\mathbb{R}^3$, we have
\begin{equation}\label{W21}
\begin{split}
& \frac{d}{dt}\left( \| \text{curl}\Omega-2  \mathrm{e}^{-\boldsymbol{\rho}}  \partial_a \boldsymbol{\rho}  \partial \varpi^a  \|^2_{\dot{H}^{s_0-2}}  \right)= \sum^{8}_{l=0}I_l,
\end{split}
\end{equation}
where
\begin{equation*}
\begin{split}
 & I_0= { \int_{\mathbb{R}^3}}  \partial_k v^k | \Lambda_x^{s_0-2}\left(\text{curl}\Omega-2  \mathrm{e}^{-\boldsymbol{\rho}}  \partial_a \boldsymbol{\rho}  \partial \varpi^a \right)|^2  dx,
 \\
 & I_1  = \displaystyle{ \int_{\mathbb{R}^3}} \Lambda_x^{s_0-2} R^i_1 \cdot \Lambda_x^{s_0-2} \left( \text{curl}\Omega_i-2\mathrm{e}^{-\boldsymbol{\rho}}  \partial_a \boldsymbol{\rho}  \partial_i \varpi^a\right) dx,
\\
& I_2 =\displaystyle{ \int_{\mathbb{R}^3}} \Lambda_x^{s_0-2} R^i_2 \cdot \Lambda_x^{s_0-2} \left( \text{curl}\Omega_i-2\mathrm{e}^{-\boldsymbol{\rho}}  \partial_a \boldsymbol{\rho}  \partial_i \varpi^a\right)dx,
\\
 &I_3 = \displaystyle{ \int_{\mathbb{R}^3}} \Lambda_x^{s_0-2} R^i_3 \cdot \Lambda_x^{s_0-2} \left( \text{curl}\Omega_i-2\mathrm{e}^{-\boldsymbol{\rho}}  \partial_a \boldsymbol{\rho}  \partial_i \varpi^a\right)dx,
 \\
 & I_4  = \displaystyle{ \int_{\mathbb{R}^3}} \Lambda_x^{s_0-2} R^i_4 \cdot \Lambda_x^{s_0-2} \left( \text{curl}\Omega_i-2\mathrm{e}^{-\boldsymbol{\rho}}  \partial_a \boldsymbol{\rho}  \partial_i \varpi^a\right)dx,
 \\
 &I_5 = \textstyle{\sum_{i=1}^3}\displaystyle{ \int_{\mathbb{R}^3}} \Lambda_x^{s_0-2} R_5 \cdot \Lambda_x^{s_0-2} \left( \text{curl}\Omega_i-2\mathrm{e}^{-\boldsymbol{\rho}}  \partial_a \boldsymbol{\rho}  \partial_i \varpi^a\right)dx,
\\
 &I_6 = \displaystyle{ \int_{\mathbb{R}^3}} \Lambda_x^{s_0-2} R_6 \cdot \Lambda_x^{s_0-2} \left( \text{curl}\Omega_i-2\mathrm{e}^{-\boldsymbol{\rho}}  \partial_a \boldsymbol{\rho}  \partial_i \varpi^a\right)dx,
 \\ &I_7=\displaystyle{ \int_{\mathbb{R}^3}}  \partial^i \big( \Lambda_x^{s_0-2} \big( 2 \mathrm{e}^{-\boldsymbol{\rho}}  \partial_n v^a \partial^n \varpi_a \big) \big)\cdot\Lambda_x^{s_0-2} \left( \text{curl}\Omega_i-2\mathrm{e}^{-\boldsymbol{\rho}}  \partial_a \boldsymbol{\rho}  \partial_i \varpi^a\right)dx,
\\
&I_8  =\displaystyle{ \int_{\mathbb{R}^3}}[\Lambda_x^{s_0-2}, v \cdot \nabla]  \big( \text{curl}\Omega^i-2\mathrm{e}^{-\boldsymbol{\rho}} \partial_a \boldsymbol{\rho}  \partial^i \varpi^a \big)\cdot \Lambda_x^{s_0-2} \left( \text{curl}\Omega_i-2\mathrm{e}^{-\boldsymbol{\rho}}  \partial_a \boldsymbol{\rho}  \partial_i \varpi^a\right)dx.
\end{split}
\end{equation*}
We will estimate these terms one by one. For $I_0$, by H\"older's inequality, we can derive
\begin{equation}\label{I0}
\begin{split}
  |I_0| \lesssim  & \| \partial v\|_{L^\infty_x} \| \text{curl}\Omega-2  \mathrm{e}^{-\boldsymbol{\rho}}  \partial_a \boldsymbol{\rho}  \partial \varpi^a \|^2_{\dot{H}^{s_0-2}}
  \\
  \lesssim & \| \partial v\|_{\dot{B}^{s_0-2}_{\infty,2}} \| \text{curl}\Omega-2  \mathrm{e}^{-\boldsymbol{\rho}}  \partial_a \boldsymbol{\rho}  \partial \varpi^a \|^2_{\dot{H}^{s_0-2}}.
  \end{split}
\end{equation}
By using $R_1$ in \eqref{rF}, H\"older's inequality and Lemma \ref{lpe}, we have
\begin{equation}\label{I1}
\begin{split}
  |I_1| \lesssim & \| \partial v \cdot \partial^2\varpi\|_{\dot{H}^{s_0-2}} \| \text{curl}\Omega-2  \mathrm{e}^{-\boldsymbol{\rho}}  \partial_a \boldsymbol{\rho}  \partial \varpi^a \|_{\dot{H}^{s_0-2}}
  \\
   \lesssim &\big( \| \partial v\|_{\dot{B}^{s_0-2}_{\infty,2}}\|\partial^2\varpi\|_{L^2}+\| \partial v\|_{L^\infty_x}\|\partial^2\varpi\|_{\dot{H}^{s_0-2}} \big)\| \text{curl}\Omega-2  \mathrm{e}^{-\boldsymbol{\rho}}  \partial_a \boldsymbol{\rho}  \partial \varpi^a \|_{\dot{H}^{s_0-2}}
  \\
  \lesssim &  \| \partial v\|_{\dot{B}^{s_0-2}_{\infty,2}}  \|\partial^2 \varpi\|_{H^{s_0-2}} \| \text{curl}\Omega-2  \mathrm{e}^{-\boldsymbol{\rho}}  \partial_a \boldsymbol{\rho}  \partial \varpi^a \|_{\dot{H}^{s_0-2}}.
  \end{split}
\end{equation}
Using $R_6$ in \eqref{rF}, H\"older's inequality, Lemma \ref{lpe}, and the product estimate in Besov spaces, we then get
\begin{equation}\label{I6}
\begin{split}
  |I_6| \lesssim \ & \| \partial v \cdot \partial \boldsymbol{\rho} \cdot \partial^2 v\|_{\dot{H}^{s_0-2}} \| \text{curl}\Omega-2  \mathrm{e}^{-\boldsymbol{\rho}}  \partial_a \boldsymbol{\rho}  \partial \varpi^a \|_{\dot{H}^{s_0-2}}
  \\
   \lesssim \ & \big( \|\partial v \cdot \partial \boldsymbol{\rho} \|_{\dot{B}^{s_0-2}_{\infty,2}} \|\partial^2 v\|_{L^2}+\| \partial v \cdot \partial \boldsymbol{\rho}\|_{L^\infty_x}\|\partial^2 v\|_{\dot{H}^{s_0-2}} \big)\| \text{curl}\Omega-2  \mathrm{e}^{-\boldsymbol{\rho}}  \partial_a \boldsymbol{\rho}  \partial \varpi^a \|_{\dot{H}^{s_0-2}}
  \\
   \lesssim \ & \big( \|\partial v \|_{L^\infty_x} \| \partial \boldsymbol{\rho} \|_{\dot{B}^{s_0-2}_{\infty,2}}+ \|\partial \boldsymbol{\rho} \|_{L^\infty_x} \| \partial v \|_{\dot{B}^{s_0-2}_{\infty,2}}\big)  \|\partial^2 v\|_{L^2}\| \text{curl}\Omega-2  \mathrm{e}^{-\boldsymbol{\rho}}  \partial_a \boldsymbol{\rho}  \partial \varpi^a \|_{\dot{H}^{s_0-2}}
  \\
   & \ +\| \partial v\|_{L^\infty_x} \|\partial \boldsymbol{\rho}\|_{L^\infty_x}\|\partial^2 v\|_{\dot{H}^{s_0-2}}\| \text{curl}\Omega-2  \mathrm{e}^{-\boldsymbol{\rho}}  \partial_a \boldsymbol{\rho}  \partial \varpi^a \|_{\dot{H}^{s_0-2}}
   \\
   \lesssim & (\| \partial \boldsymbol{\rho} \|^2_{\dot{B}^{s_0-2}_{\infty,2}}+\| \partial v \|^2_{\dot{B}^{s_0-2}_{\infty,2}})\|\partial^2 v\|_{{H}^{s_0-2}}\| \text{curl}\Omega-2  \mathrm{e}^{-\boldsymbol{\rho}}  \partial_a \boldsymbol{\rho}  \partial \varpi^a \|_{\dot{H}^{s_0-2}}.
  \end{split}
\end{equation}
{By using $R_2$ in \eqref{rF}, H\"older's inequality and Lemma \ref{ps}, we get}
\begin{equation}\label{I2}
\begin{split}
  |I_2| \lesssim &\| (\partial v, \partial \boldsymbol{\rho}) \cdot \partial \boldsymbol{\rho} \cdot \partial \Omega\|_{\dot{H}^{s_0-2}} \| \text{curl}\Omega-2  \mathrm{e}^{-\boldsymbol{\rho}}  \partial_a \boldsymbol{\rho}  \partial \varpi^a \|_{\dot{H}^{s_0-2}}
  \\
  \lesssim &\|(\partial v, \partial \boldsymbol{\rho}) \cdot \partial \boldsymbol{\rho} \|_{\dot{B}^{s_0-2}_{\infty,2}} \|\partial \Omega\|_{L^2}\| \text{curl}\Omega-2  \mathrm{e}^{-\boldsymbol{\rho}}  \partial_a \boldsymbol{\rho}  \partial \varpi^a \|_{\dot{H}^{s_0-2}}
  \\
  & + \|(\partial v, \partial \boldsymbol{\rho}) \cdot \partial \boldsymbol{\rho}\|_{L^\infty_x}\|\partial \Omega\|_{\dot{H}^{s_0-2}} \| \text{curl}\Omega-2  \mathrm{e}^{-\boldsymbol{\rho}}  \partial_a \boldsymbol{\rho}  \partial \varpi^a \|_{\dot{H}^{s_0-2}}
  \\
   \lesssim & (\| \partial v\|_{L^\infty_x}\| \partial \boldsymbol{\rho}\|_{\dot{B}^{s_0-2}_{\infty,2}}+ \| \partial  \boldsymbol{\rho}\|_{L^\infty_x}\| \partial v\|_{\dot{B}^{s_0-2}_{\infty,2}})  \|\partial \Omega\|_{L^2} \| \text{curl}\Omega-2  \mathrm{e}^{-\boldsymbol{\rho}}  \partial_a \boldsymbol{\rho}  \partial \varpi^a \|_{\dot{H}^{s_0-2}}
  \\
  & + (\| \partial \boldsymbol{\rho}\|_{L^\infty_x}\| \partial \boldsymbol{\rho}\|_{\dot{B}^{s_0-2}_{\infty,2}}+ \| \partial  \boldsymbol{\rho}\|_{L^\infty_x}\| \partial \boldsymbol{\rho}\|_{\dot{B}^{s_0-2}_{\infty,2}})  \|\partial \Omega\|_{L^2}\| \text{curl}\Omega-2  \mathrm{e}^{-\boldsymbol{\rho}}  \partial_a \boldsymbol{\rho}  \partial \varpi^a \|_{\dot{H}^{s_0-2}}
  \\
  &  + (\| \partial v\|_{L^\infty_x}+ \| \partial \boldsymbol{\rho}\|_{L^\infty_x})\| \partial \boldsymbol{\rho}\|_{L^\infty_x} \|\partial \Omega\|_{\dot{H}^{s_0-2}} \| \text{curl}\Omega-2  \mathrm{e}^{-\boldsymbol{\rho}}  \partial_a \boldsymbol{\rho}  \partial \varpi^a \|_{\dot{H}^{s_0-2}}
  \\
  \lesssim &  (\| \partial \boldsymbol{\rho}\|^2_{\dot{B}^{s_0-2}_{\infty,2}}+ \| \partial v\|^2_{\dot{B}^{s_0-2}_{\infty,2}})  \|\partial \Omega\|_{H^{s_0-2}} \| \text{curl}\Omega-2  \mathrm{e}^{-\boldsymbol{\rho}}  \partial_a \boldsymbol{\rho}  \partial \varpi^a \|_{\dot{H}^{s_0-2}}.
  \end{split}
\end{equation}
Noting $R_3$ in \eqref{rF} and using Lemma \ref{wql}, we also have
\begin{equation}\label{I3}
\begin{split}
  |I_3| & \lesssim \| \partial v \partial \boldsymbol{\rho} \partial \varpi \|_{\dot{H}_x^{s_0-2}} \| \text{curl}\Omega-2  \mathrm{e}^{-\boldsymbol{\rho}}  \partial_a \boldsymbol{\rho}  \partial \varpi^a \|_{\dot{H}^{s_0-2}}
  \\
  & \lesssim \| \partial v\|_{L^\infty_x}\| \partial \boldsymbol{\rho}\|_{L^\infty_x} \| \partial \varpi\|_{H^{s_0-2}_x} \| \text{curl}\Omega-2  \mathrm{e}^{-\boldsymbol{\rho}}  \partial_a \boldsymbol{\rho}  \partial \varpi^a \|_{\dot{H}^{s_0-2}}
  \\
  & \quad + \| \partial \boldsymbol{\rho}\|_{L^\infty_x}\| \partial v\|_{H^{{s_0-1}}_x} \| \partial \varpi\|_{H^1_x} \| \text{curl}\Omega-2  \mathrm{e}^{-\boldsymbol{\rho}}  \partial_a \boldsymbol{\rho}  \partial \varpi^a \|_{\dot{H}^{s_0-2}}
  \\
  & \quad + \| \partial v||_{L^\infty_x}\| \partial \boldsymbol{\rho}\|_{H^{{s_0-1}}_x} \| \partial \varpi\|_{H^1_x} \| \text{curl}\Omega-2  \mathrm{e}^{-\boldsymbol{\rho}}  \partial_a \boldsymbol{\rho}  \partial \varpi^a \|_{\dot{H}^{s_0-2}}
  \\
  & \lesssim (\| \partial \boldsymbol{\rho}\|^2_{\dot{B}^{s_0-2}_{\infty,2}}+ \| \partial v\|^2_{\dot{B}^{s_0-2}_{\infty,2}})  \|\partial \Omega\|_{H^{s_0-2}} \| \text{curl}\Omega-2  \mathrm{e}^{-\boldsymbol{\rho}}  \partial_a \boldsymbol{\rho}  \partial \varpi^a \|_{\dot{H}^{s_0-2}}..
  \end{split}
\end{equation}
By using $R_4$ in \eqref{rF}, H\"older's inequality and Lemma \ref{ps}, we can show
\begin{equation}\label{I4}
\begin{split}
  |I_4|
   \lesssim & \| \partial \varpi \cdot \partial \varpi\|_{\dot{H}_x^{s_0-2}} \| \text{curl}\Omega-2  \mathrm{e}^{-\boldsymbol{\rho}}  \partial_a \boldsymbol{\rho}  \partial \varpi^a \|_{\dot{H}^{s_0-2}}
  \\
   \lesssim & \| \partial \varpi\|^2_{H^{\frac{2s_0-1}{4}}_x} \| \text{curl}\Omega-2  \mathrm{e}^{-\boldsymbol{\rho}}  \partial_a \boldsymbol{\rho}  \partial \varpi^a \|_{\dot{H}^{s_0-2}}.
  \end{split}
\end{equation}
Recalling $R_5$ in \eqref{rF} and using H\"older's inequality and Lemma \ref{ps}(taking $s_1=1, s_2=1$), we have
\begin{equation}\label{I5}
\begin{split}
  |I_5|
  & \lesssim \| \partial v \cdot \partial v \cdot \partial \boldsymbol{\rho}\|_{\dot{H}_x^{s_0-2}} \| \text{curl}\Omega-2  \mathrm{e}^{-\boldsymbol{\rho}}  \partial_a \boldsymbol{\rho}  \partial \varpi^a \|_{\dot{H}^{s_0-2}}
  \\
  & \lesssim \big( \| \partial v\|_{L^\infty_x}+ \| \partial \boldsymbol{\rho}\|_{L^\infty_x} \big) \big(\| \partial v\|_{H^{{s_0-1}}_x} +\| \partial \boldsymbol{\rho}\|_{H^{{s_0-1}}_x} \big)^2\| \text{curl}\Omega-2  \mathrm{e}^{-\boldsymbol{\rho}}  \partial_a \boldsymbol{\rho}  \partial \varpi^a \|_{\dot{H}^{s_0-2}}.
  \end{split}
\end{equation}
It remains for us to handle the most difficult term $I_7$. Integrating $I_7$ by parts, we get
\begin{equation}\label{I7B}
  \begin{split}
  I_7=& \int_{\mathbb{R}^3} \partial^i \big( \Lambda_x^{s_0-2} \big( 2 \mathrm{e}^{-\boldsymbol{\rho}}  \partial_n v^a \partial^n \varpi_a \big) \big)\Lambda_x^{s_0-2}(\text{curl}\Omega^i)dx
  \\
  &+ \int_{\mathbb{R}^3} \partial^i \big( \Lambda_x^{s_0-2} \big( 2 \mathrm{e}^{-\boldsymbol{\rho}}  \partial_n v^a \partial^n \varpi_a \big) \big)\Lambda_x^{s_0-2}(-2  \mathrm{e}^{-\boldsymbol{\rho}}  \partial_a \boldsymbol{\rho}  \partial_i \varpi^a)dx
  \\
   =&  \int_{\mathbb{R}^3}  \partial^i \left\{ \Lambda_x^{s_0-2} \big( 2 \mathrm{e}^{-\boldsymbol{\rho}}  \partial_n v^a \partial^n \varpi_a \big) \Lambda_x^{s_0-2}(\text{curl}\Omega^i) \right\}dx
  \\
   &-   \int_{\mathbb{R}^3}  \big( \Lambda_x^{s_0-2} \big( 2 \mathrm{e}^{-\boldsymbol{\rho}}  \partial_n v^a \partial^n \varpi_a \big)  \partial^i \Lambda_x^{s_0-2}(\text{curl}\Omega^i\big)dx
   \\
   &+ \int_{\mathbb{R}^3} \partial^i \big( \Lambda_x^{s_0-2} \big( 2 \mathrm{e}^{-\boldsymbol{\rho}}  \partial_n v^a \partial^n \varpi_a \big) \big)\Lambda_x^{s_0-2}(-2  \mathrm{e}^{-\boldsymbol{\rho}}  \partial_a \boldsymbol{\rho}  \partial_i \varpi^a)dx
   \\
   =& -  \int_{\mathbb{R}^3}  \big( \Lambda_x^{s_0-2} \big( 2 \mathrm{e}^{-\boldsymbol{\rho}}  \partial_n v^a \partial^n \varpi_a \big) \partial^i \Lambda_x^{s_0-2}(\text{curl}\Omega^i\big)dx
   \\
   &+ \int_{\mathbb{R}^3} \partial^i \big( \Lambda_x^{s_0-2} \big( 2 \mathrm{e}^{-\boldsymbol{\rho}}  \partial_n v^a \partial^n \varpi_a \big) \big)\Lambda_x^{s_0-2}(-2  \mathrm{e}^{-\boldsymbol{\rho}}  \partial_a \boldsymbol{\rho}  \partial_i \varpi^a)dx.
  \end{split}
\end{equation}
We note
\begin{equation}\label{IB0}
 \partial^i \Lambda_x^{s_0-2}(\text{curl}\Omega^i)=\Lambda_x^{s_0-2} \mathrm{div}(\text{curl}\Omega)=0.
\end{equation}
Substituting \eqref{IB0} to \eqref{I7B}, we can update it as
\begin{equation*}
\begin{split}
  I_7=&\int_{\mathbb{R}^3} \partial^i \big( \Lambda_x^{s_0-2} \big( 2 \mathrm{e}^{-\boldsymbol{\rho}}  \partial_n v^a \partial^n \varpi_a \big) \big)\Lambda_x^{s_0-2}(-2  \mathrm{e}^{-\boldsymbol{\rho}}  \partial_a \boldsymbol{\rho}  \partial_i \varpi^a)dx.
  \end{split}
\end{equation*}
By Plancherel formula, we can obtain
\begin{equation*}
\begin{split}
  I_7=&\int_{\mathbb{R}^3} \xi^i |\xi|^{s_0-2} \widehat{\big( 2 \mathrm{e}^{-\boldsymbol{\rho}}  \partial_n v^a \partial^n \varpi_a \big)} \cdot |\xi|^{s_0-2} \widehat{\big(-2  \mathrm{e}^{-\boldsymbol{\rho}}  \partial_a \boldsymbol{\rho}  \partial_i \varpi^a \big)}d\xi
  \\
  =&\int_{\mathbb{R}^3} \xi^i |\xi|^{s_0-\frac{5}{2}} \widehat{\big( 2 \mathrm{e}^{-\boldsymbol{\rho}}  \partial_n v^a \partial^n \varpi_a \big)} \cdot |\xi|^{s_0-\frac{3}{2}} \widehat{\big(-2  \mathrm{e}^{-\boldsymbol{\rho}}  \partial_a \boldsymbol{\rho}  \partial_i \varpi^a \big)}d\xi
  \\
  =&\int_{\mathbb{R}^3}  \Lambda^{s_0-\frac{5}{2}} \partial^i \big( 2 \mathrm{e}^{-\boldsymbol{\rho}}  \partial_n v^a \partial^n \varpi_a \big) \cdot \Lambda^{s_0-\frac{3}{2}} \big(-2  \mathrm{e}^{-\boldsymbol{\rho}}  \partial_a \boldsymbol{\rho}  \partial_i \varpi^a \big)dx.
  \end{split}
\end{equation*}
By H\"older inequality and Lemma \ref{ps}, we can obtain
\begin{equation}\label{I7}
\begin{split}
  | I_{7}| \lesssim & \|  \partial v \partial \varpi \|_{H^{s_0-\frac{3}{2}}}+\|  \partial \boldsymbol{\rho} \partial\varpi \|_{H^{s_0-\frac{3}{2}}}
  \\
  \lesssim & (\|  \partial v\|_{H^{s_0-1}} + \| \partial \boldsymbol{\rho}\|_{H^{s_0-1}}) \|\partial \varpi \|_{H^{1}}.
  \end{split}
\end{equation}
It remains for us to give a bound for $I_8$. By using Lemma \ref{ce}, we can prove
\begin{equation}\label{I8}
\begin{split}
  |I_8|  \lesssim & \|[\Lambda_x^{s_0-2}, v \cdot \nabla]  \big( \text{curl}\Omega-2\mathrm{e}^{-\boldsymbol{\rho}}  \partial_a \boldsymbol{\rho}  \partial \varpi^a \big)\|_{L^2_x}
  \\
  & \ \cdot \| \text{curl}\Omega-2  \mathrm{e}^{-\boldsymbol{\rho}}  \partial_a \boldsymbol{\rho}  \partial \varpi^a \|_{\dot{H}^{s_0-2}}
  \\
   \lesssim & \|\partial v \|_{\dot{B}^{0}_{\infty,2}}  \| \text{curl}\Omega-2  \mathrm{e}^{-\boldsymbol{\rho}}  \partial_a \boldsymbol{\rho}  \partial \varpi^a \|^2_{\dot{H}^{s_0-2}}
   \\
   \lesssim &\|\partial v \|_{\dot{B}^{s_0-2}_{\infty,2}}\| \text{curl}\Omega-2  \mathrm{e}^{-\boldsymbol{\rho}}  \partial_a \boldsymbol{\rho}  \partial \varpi^a \|^2_{\dot{H}^{s_0-2}}.
\end{split}
\end{equation}
Combining \eqref{I0}, \eqref{I1},... to \eqref{I8}, we can derive that
\begin{equation}\label{WF}
\begin{split}
& \frac{d}{dt}\left( \| \text{curl}\Omega-2  \mathrm{e}^{-\boldsymbol{\rho}}  \partial_a \boldsymbol{\rho}  \partial \varpi^a  \|^2_{\dot{H}^{s_0-2}}  \right)
\\
\leq  & C\| \partial v\|_{\dot{B}^{s_0-2}_{\infty,2}}  \|\partial^2 \varpi\|_{H^{s_0-2}} \| \text{curl}\Omega-2  \mathrm{e}^{-\boldsymbol{\rho}}  \partial_a \boldsymbol{\rho}  \partial \varpi^a \|_{\dot{H}^{s_0-2}}
\\
&+C(\| \partial \boldsymbol{\rho}\|^2_{\dot{B}^{s_0-2}_{\infty,2}}+ \| \partial v\|^2_{\dot{B}^{s_0-2}_{\infty,2}})  \|\partial \Omega\|_{H^{s_0-2}} \| \text{curl}\Omega-2  \mathrm{e}^{-\boldsymbol{\rho}}  \partial_a \boldsymbol{\rho}  \partial \varpi^a \|_{\dot{H}^{s_0-2}}
\\
& +C \| \partial v, \partial \boldsymbol{\rho}\|_{L^\infty_x}\| \partial \boldsymbol{\rho}, \partial v\|_{H^{{s_0-1}}}\|\partial \varpi \|_{{H}^{1}} \| \text{curl}\Omega-2  \mathrm{e}^{-\boldsymbol{\rho}}  \partial_a \boldsymbol{\rho}  \partial \varpi^a \|_{\dot{H}^{s_0-2}}
\\
&+C\| \partial \varpi\|^2_{H^{\frac{2s_0-1}{4}}} \| \text{curl}\Omega-2  \mathrm{e}^{-\boldsymbol{\rho}}  \partial_a \boldsymbol{\rho}  \partial \varpi^a \|_{\dot{H}^{s_0-2}}
\\
&  +C \| \partial v, \partial \boldsymbol{\rho}\|_{L^\infty_x} \| \partial v, \partial \boldsymbol{\rho}\|^2_{H^{{s_0-1}}} \| \text{curl}\Omega-2  \mathrm{e}^{-\boldsymbol{\rho}}  \partial_a \boldsymbol{\rho}  \partial \varpi^a \|_{\dot{H}^{s_0-2}}
\\
& +C\big( \|\partial v\|^2_{\dot{B}^{s_0-2}_{\infty,2}} + \|\partial \boldsymbol{\rho} \|^2_{\dot{B}^{s_0-2}_{\infty,2}}\big)  \|\partial^2 v\|_{H^{s_0-2}}\| \text{curl}\Omega-2  \mathrm{e}^{-\boldsymbol{\rho}}  \partial_a \boldsymbol{\rho}  \partial \varpi^a \|_{\dot{H}^{s_0-2}}
\\
&+ C \|  \partial v, \partial \boldsymbol{\rho}\|_{H^{s_0-1}} \|\partial \varpi \|_{H^{1}}+C\|\partial v \|_{\dot{B}^{s_0-2}_{\infty,2}}  \| \text{curl}\Omega-2  \mathrm{e}^{-\boldsymbol{\rho}}  \partial_a \boldsymbol{\rho}  \partial \varpi^a \|^2_{\dot{H}^{s_0-2}}
\end{split}
\end{equation}
Denote
\begin{equation*}
  K(t)=\| \text{curl}\Omega-2  \mathrm{e}^{-\boldsymbol{\rho}}  \partial_a \boldsymbol{\rho}  \partial \varpi^a  \|_{\dot{H}^{s_0-2}}.
\end{equation*}
Integrating \eqref{WF} from $[0,t]$, we have
\begin{equation}\label{WF}
\begin{split}
 K^2(t)-K^2(0)    \leq & C\int^t_0 \| \partial v\|_{ \dot{B}^{s_0-2}_{\infty,2}}  \|\partial^2 \varpi\|_{H^{s_0-2}} K(\tau)d\tau+ C \int^t_0 \|  \partial v, \partial \boldsymbol{\rho}\|_{H^{s_0-1}} \|\partial \varpi \|_{H^{1}} d\tau
\\
& + C\int^t_0 (\| \partial \boldsymbol{\rho}\|^2_{\dot{B}^{s_0-2}_{\infty,2}}+ \| \partial v\|^2_{\dot{B}^{s_0-2}_{\infty,2}})  \|\partial \Omega\|_{H^{s_0-2}} K(\tau)d\tau
\\
& +C  \int^t_0\| \partial v, \partial \boldsymbol{\rho}\|_{L^\infty_x}\| \partial \boldsymbol{\rho}, \partial v\|_{H^{{s_0-1}}}\|\partial \varpi \|_{{H}^{1}} K(\tau)d\tau
\\
&+C\int^t_0 \| \partial \varpi\|^2_{H^{\frac{2s_0-1}{4}}} K(\tau)d\tau+C\int^t_0\|\partial v \|_{\dot{B}^{s_0-2}_{\infty,2}}  K^2(\tau)d\tau
\\
& +C\int^t_0 \big( \|\partial v\|^2_{\dot{B}^{s_0-2}_{\infty,2}} + \|\partial \boldsymbol{\rho} \|^2_{\dot{B}^{s_0-2}_{\infty,2}}\big)  \|\partial^2 v\|_{H^{s_0-2}}K(\tau)d\tau
\\
&  +C \int^t_0 \| \partial v, \partial \boldsymbol{\rho}\|_{L^\infty_x} \| \partial v, \partial \boldsymbol{\rho}\|^2_{H^{{s_0-1}}} K(\tau)d\tau.
\end{split}
\end{equation}
By Young's inequality, we give the upper bound of $K(t)$
\begin{equation}\label{Up}
\begin{split}
  K^2(t)
    \geq \ & \|\text{curl}\Omega\|^2_{\dot{H}^{s_0-2}}-  C\| \mathrm{e}^{-\boldsymbol{\rho}}\partial \boldsymbol{\rho} \cdot \partial \varpi \|^2_{\dot{H}^{s_0-2}}
    \\
    \geq \ & \|\text{curl}\Omega\|^2_{\dot{H}^{s_0-2}}-  C\| \partial \boldsymbol{\rho}\|^2_{H^{s_0-1}_x}\|\partial \varpi\|^2_{{H}^\frac{1}{2}}
    \\
    \geq \ & \|\text{curl}\Omega\|^2_{\dot{H}^{s_0-2}}-  C\| \partial \boldsymbol{\rho}\|^2_{H^{s_0-1}_x}(\| \varpi\|^{\frac{1}{4}}_{L^2} \|\partial \varpi\|^{\frac{3}{4}}_{{H}^1})^2
    \\
    \geq \ & \|\text{curl}\Omega\|^2_{\dot{H}^{s_0-2}}-  C\| \partial \boldsymbol{\rho}\|^8_{H^{s_0-1}_x}\| \varpi\|^{2}_{L^2}-\frac{1}{100}\| \varpi\|^{2}_{H^{2}_x} .
\end{split}
\end{equation}
By H\"older inequality, we can give the supper bound of $K(t)$
\begin{equation}\label{Sp}
\begin{split}
  K^2(t)
    \leq \ & \|\text{curl}\Omega\|^2_{\dot{H}^{s_0-2}}+  C\| \mathrm{e}^{-\boldsymbol{\rho}}\partial \boldsymbol{\rho} \cdot \partial \varpi \|^2_{\dot{H}^{s_0-2}}
    \\
    \leq \ & \|\text{curl}\Omega\|^2_{\dot{H}^{s_0-2}}+  C\| \partial \boldsymbol{\rho}\|^2_{H^{s_0-1}_x}\|\partial \varpi\|^2_{{H}^\frac{1}{2}}.
    %\\
%    \leq \ & C(\|\text{curl}\Omega\|^2_{\dot{H}^{s_0-2}}+  \| \partial \boldsymbol{\rho}\|^2_{H^{s_0-1}_x}\|\partial \varpi\|^2_{H^1_x}).
\end{split}
\end{equation}
Adding \eqref{WOE} and combining \eqref{Sp}, \eqref{Up}, and \eqref{WF}, we therefore get
\begin{equation*}
\begin{split}
  \| \varpi \|^2_{H^{s_0}} \lesssim & \ \| (v_0, \boldsymbol{\rho}_0, \varpi_0) \|^2_{H^{s_0}}\left(1+\| (v_0, \boldsymbol{\rho}_0, \varpi_0) \|_{H^{s_0}}\right)\exp \big( {\int^t_0} \|d\boldsymbol{\rho}, dv\|_{L^\infty_x}d\tau \big)
  \\
  & + \int^t_0 (\|d v\|_{L^\infty_x}+\| d\boldsymbol{\rho}\|_{L^\infty_x})(1+\|d v, d\boldsymbol{\rho}\|_{L^\infty_x})\| \varpi \|^2_{H^{s_0}} d\tau
  \\
  & +\int^t_0 (\|d v, d\boldsymbol{\rho}\|_{\dot{B}_{\infty,2}^{s_0-2}})(1+\|d v, d\boldsymbol{\rho}\|_{L^\infty_x})\| \varpi \|^2_{H^{s_0}} d\tau.
\end{split}
\end{equation*}
Using Gronwall inequality, we can get \eqref{WL}. Hence, we complete the proof of Theorem \ref{ve}.
\end{proof}
Using Theorem \ref{dv} and \ref{ve}, we can derive the following theorem
\begin{theorem}\label{be}{(Total energy estimates)}
Let $v$ and ${\rho}$ be a solution of \eqref{CEE}. Let $\boldsymbol{\rho}$ and $\varpi$ be defined as \eqref{pw1}. Set the energy
\begin{equation*}
  E(t)=\| \boldsymbol{\rho}(t)\|_{H^s}+\|v(t)\|_{H^s}+\|\varpi(t)\|_{H^{s_0}},
\end{equation*}
and
\begin{equation*}
   E_l(t)=\| \boldsymbol{\rho}(t)\|_{H^2}+\|v(t)\|_{H^2}+\|\varpi(t)\|_{H^{2}}.
\end{equation*}
Then the following estimates
\begin{equation}\label{E7}
\begin{split}
 & E(t)
 \lesssim E(0)  \exp \big ( {\int^t_0} (\|dv, d\boldsymbol{\rho}\|_{L^\infty_x}+\|\partial v\|_{\dot{B}^{s_0-2}_{\infty,2}})d\tau \big),
\end{split}
\end{equation}
and
\begin{equation}\label{W2e}
\begin{split}
 & E_l(t)
 \lesssim E_l(0)  \exp \big ( {\int^t_0} (\|dv\|_{L^\infty_x}+ \| d\boldsymbol{\rho}\|_{L^\infty_x})d\tau \big),
\end{split}
\end{equation}
holds.
\end{theorem}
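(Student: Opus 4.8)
The plan is to obtain Theorem~\ref{be} as a direct synthesis of the two Gronwall-type bounds already established: \eqref{E2} of Theorem~\ref{dv} for $(v,\boldsymbol{\rho})$, and the estimates \eqref{WH}, \eqref{WL} of Theorem~\ref{ve} for $\varpi$. No further PDE analysis is needed here; the genuinely hard work has been carried out in the derivation of the modified transport equation \eqref{W2} in Lemma~\ref{PW} and in the $L^2$-type energy estimate built on it inside the proof of Theorem~\ref{ve}. The task is merely to add the inputs, compare the exponents, and reorganize the norms of the initial data.

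For \eqref{E7} I would proceed as follows. Apply \eqref{E2} with the ambient exponent $s\in(2,\frac{5}{2})$ to bound $\|\boldsymbol{\rho}(t)\|_{H^s}+\|v(t)\|_{H^s}$ by $(\|\boldsymbol{\rho}_0\|_{H^s}+\|v_0\|_{H^s})\exp\!\big(\int_0^t\|dv,d\boldsymbol{\rho}\|_{L^\infty_x}\,d\tau\big)$, and apply \eqref{WH} to bound $\|\varpi(t)\|_{H^{s_0}}$ by $\|\varpi_0\|_{H^{s_0}}\exp\!\big(\int_0^t(\|dv,d\boldsymbol{\rho}\|_{L^\infty_x}+\|\partial v\|_{\dot B^{s_0-2}_{\infty,2}})\,d\tau\big)$. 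Since $\|\partial v\|_{\dot B^{s_0-2}_{\infty,2}}\geq 0$, the first exponential is dominated by the second; summing the two bounds and collecting $\|\boldsymbol{\rho}_0\|_{H^s}+\|v_0\|_{H^s}+\|\varpi_0\|_{H^{s_0}}=E(0)$ yields \eqref{E7}. Should one instead invoke the slightly different $H^{s_0}$ bound that is actually produced at the end of the proof of Theorem~\ref{ve} (with initial datum $\|(v_0,\boldsymbol{\rho}_0,\varpi_0)\|_{H^{s_0}}$ and an extra factor $1+\|(v_0,\boldsymbol{\rho}_0,\varpi_0)\|_{H^{s_0}}$), one uses the Sobolev embedding $H^s\hookrightarrow H^{s_0}$ (recall $s>s_0$) to bound the data by $E(0)$ and absorbs the extra factor into the implicit constant via the a priori bound $E(0)\leq M_0$.

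For \eqref{W2e} the argument is the same at regularity $2$: \eqref{E2} holds for every $s\geq 0$, so taking $s=2$ bounds $\|\boldsymbol{\rho}(t)\|_{H^2}+\|v(t)\|_{H^2}$, while \eqref{WL} bounds $\|\varpi(t)\|_{H^2}$ by $\|(v_0,\boldsymbol{\rho}_0,\varpi_0)\|_{H^2}\exp\!\big(5\int_0^t\|\partial v,\partial\boldsymbol{\rho}\|_{L^\infty_x}\,d\tau\big)$. Using $\|\partial v,\partial\boldsymbol{\rho}\|_{L^\infty_x}\leq\|dv,d\boldsymbol{\rho}\|_{L^\infty_x}$ (indeed $d=(\mathbf{T},\partial_1,\partial_2,\partial_3)^{\mathrm T}$), adding the two bounds, and grouping the data into $E_l(0)$, one arrives at \eqref{W2e}; note that no Besov term enters here, because the $H^2$ estimate for $\varpi$ feeds only on $L^\infty_x$ norms of $dv,d\boldsymbol{\rho}$.

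The only point that deserves attention — and it is the sole ``obstacle'' at this stage — is that the two input estimates do not carry literally identical exponents or identical data norms, so one must verify compatibility rather than just concatenate formulas: for \eqref{E7} this is the trivial domination by nonnegativity of $\|\partial v\|_{\dot B^{s_0-2}_{\infty,2}}$ noted above, and for \eqref{W2e} it is the observation that the numerical constant $5$ in the exponent of \eqref{WL} is harmless (it only affects the implicit constant once a bootstrap bound on $\int_0^t\|dv,d\boldsymbol{\rho}\|_{L^\infty_x}\,d\tau$ is in force, as in Section~5) and that $\|(v_0,\boldsymbol{\rho}_0,\varpi_0)\|_{H^2}$ coincides with $E_l(0)$. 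Everything else is routine bookkeeping.
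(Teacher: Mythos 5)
Your proposal is correct and follows exactly the route the paper intends: the paper offers no written proof of Theorem \ref{be} beyond the remark that it follows from Theorems \ref{dv} and \ref{ve}, and your synthesis of \eqref{E2} with \eqref{WH} (for \eqref{E7}) and of \eqref{E2} at $s=2$ with \eqref{WL} (for \eqref{W2e}) is precisely that argument. You are in fact slightly more careful than the paper in flagging the factor $5$ in the exponent of \eqref{WL} and the extra data-dependent prefactor from the end of the proof of Theorem \ref{ve}, both of which must indeed be absorbed into the implicit constant using the a priori smallness of the data.
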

Based on these basic energy estimates, we are ready to give a stability theorem as follows.
\subsection{Stability theorem}
\begin{theorem}\label{St}{(Stability theorem)}
	Let $s>s_0>2$. Suppose that $(v,\boldsymbol{\rho},\varpi) $ is a solution of \eqref{fc0} with initial data $v_0, \boldsymbol{\rho}_0, \varpi_0$ and there is a universal $C$ such that
\begin{equation*}
  \|v,\boldsymbol{\rho}\|_{L^\infty_t H^s}+\|\varpi\|_{L^\infty_t H^{s_0}}+\|dv,d\boldsymbol{\rho} \|_{L^2_t L^\infty_x}+ \| dv,d\boldsymbol{\rho}, \partial v_{+} \|_{L^2_t\dot{B}^{s_0-2}_{\infty,2}} \leq C.
\end{equation*}
	Let $(\varphi, \psi, V )$ be another solution to \eqref{fc0} with the initial data $(\varphi_0, \psi_0) \in H^s, V_0 \in H^{s_0}$ such that
$\varphi, \psi, V \in H^s$, $d\varphi,d\psi \in {L^2_t L^\infty_x}$ and $d\varphi_{+}, d\psi \in L^2_t \dot{B}^{s_0-2}_{\infty,2}$. Then the following estimate
\begin{equation}\label{s}
\begin{split}
  \|(v-\varphi, \boldsymbol{\rho}-\psi)(t,\cdot)\|_{H^{s-1}}+\|(\varpi-V)(t,\cdot)\|_{H^{s_0-2}} &\lesssim \|(v_0-\varphi_0, \boldsymbol{\rho}_0-\psi_0)\|_{H^{s-1}}
  \end{split}
\end{equation}
holds.
\end{theorem}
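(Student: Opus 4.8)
The plan is to linearise around the two solutions. Set $\delta v:=v-\varphi$, $\delta\boldsymbol{\rho}:=\boldsymbol{\rho}-\psi$, $\delta\varpi:=\varpi-V$, write $\delta U:=(\delta\boldsymbol{\rho},\delta v)$ and $\tilde U:=(\psi,\varphi)$. Subtracting the two copies of the hyperbolic system \eqref{sq} (equivalently of \eqref{fc0}) and of the transport equation \eqref{W0} gives the \emph{linear} system
\[
\partial_t\delta U+\textstyle\sum_iA_i(U)\,\partial_i\delta U=F,\qquad
\mathbf{T}\delta\varpi=\delta v\cdot\nabla V+(\delta\varpi\cdot\nabla)v+V\cdot\nabla\delta v,
\]
with coefficients built from $U=(\boldsymbol{\rho},v)$ and $\tilde U$, and with forcing $F$ a finite sum of terms of the schematic form $\delta U\cdot\partial\tilde U$ (one of them being $(c_s^2(\boldsymbol{\rho})-c_s^2(\psi))\partial\psi=g_0\,\delta\boldsymbol{\rho}\,\partial\psi$ with $g_0$ a smooth, $H^s$-bounded function of $(\boldsymbol{\rho},\psi)$). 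I would first record that the hypotheses force, for \emph{both} solutions, $\|dv,d\boldsymbol{\rho}\|_{L^2_t(L^\infty_x\cap\dot B^{s_0-2}_{\infty,2})}\le C$: for $(\varphi,\psi)$ the $\dot B^{s_0-2}_{\infty,2}$ control of $d\varphi$ follows from the assumed bound on $d\varphi_{+}$ together with the elliptic bounds for the $\eta$-correction in Lemma \ref{yux}. Thus every coefficient appearing below is $L^1_t$ in $L^\infty_x$ and in $\dot B^{s_0-2}_{\infty,2}$, which is what the Gronwall closure requires.

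The $\delta\varpi$-estimate in $H^{s_0-2}$ (note $s_0-2<\tfrac12$) is the easy half: applying $\Lambda^{s_0-2}$ and pairing with $\Lambda^{s_0-2}\delta\varpi$, the transport commutator is absorbed by Lemma \ref{ce}, the term $(\delta\varpi\cdot\nabla)v$ by Lemma \ref{lpe} with $\nabla v$ placed in $\dot B^{s_0-2}_{\infty,2}\cap L^\infty_x$, and $V\cdot\nabla\delta v$, $\delta v\cdot\nabla V$ by the product estimate Lemma \ref{ps} — the inequality $2s_0-\tfrac72\ge s_0-2$ guaranteed by $s_0>2$ is exactly what makes these last products land in $H^{s_0-2}$ with a bound $\lesssim C(\|\delta\varpi\|_{H^{s_0-2}}+\|\delta v\|_{H^{s-1}})$. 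Since $\varpi=\rho^{-1}\mathrm{curl}\,v$ one has (again by Lemma \ref{ps}) $\|\delta\varpi_0\|_{H^{s_0-2}}\lesssim\|(\delta v_0,\delta\boldsymbol{\rho}_0)\|_{H^{s-1}}$, which is why no term $\|\varpi_0-V_0\|$ appears on the right of \eqref{s}. For $(\delta v,\delta\boldsymbol{\rho})$ I would run the energy argument of Theorem \ref{dv} at regularity $H^{s-1}$: apply $\Lambda^{s-1}$ (and $\Lambda^0$) and pair against $\Lambda^{s-1}\delta U$; the commutators $[\Lambda^{s-1},A_i(U)]\partial_i\delta U$ are controlled by Kato–Ponce (Lemma \ref{jh}) and a higher-order version of the transport commutator bound of Lemma \ref{ce}, costing only the $L^2_t$ quantities above, while a Bony decomposition of $\|\delta U\cdot\partial\tilde U\|_{H^{s-1}}$ shows that the low–high paraproduct $T_{\partial\tilde U}\delta U$ and the high–high term $R(\delta U,\partial\tilde U)$ contribute $\lesssim(\|\partial\tilde U\|_{L^\infty_x}+\|\partial\tilde U\|_{\dot B^{s_0-2}_{\infty,\infty}})\|\delta U\|_{H^{s-1}}$, i.e. are already of Gronwall type.

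The one term that does not close by energy alone — and the step I expect to be the main obstacle — is the high–low paraproduct $T_{\delta U}\big(\Lambda^{s-1}\partial\tilde U\big)$ in the $(\delta v,\delta\boldsymbol{\rho})$ forcing, which costs $\|\delta U\|_{L^\infty_x}\|\partial\tilde U\|_{\dot H^{s-1}}$: since $s-1<\tfrac32$, $\delta U$ is not in $L^\infty_x$ a priori and $H^{s-1}(\mathbb{R}^3)$ lies below the multiplier threshold, so this term is \emph{not} bounded by $\|\delta U\|_{H^{s-1}}$ alone (and interpolating with the merely \emph{bounded} $H^s$-norm of $\delta U$ leaves a remainder that does not vanish with the data). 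To handle it I would, in the bootstrap spirit of Smith–Tataru \cite{ST}, propagate a Strichartz norm of the difference together with its energy: using the $v_{+}$-reduction (Lemma \ref{wte1}) to keep the vorticity elliptically smoothed on the right side, $\delta v$ and $\delta\boldsymbol{\rho}$ solve wave equations of the form \eqref{linear} relative to the acoustical metric $g$ of the first solution, with $G,B$ again of the form $\delta U\cdot(\text{quantities of the two solutions})$, so the linear Strichartz estimate \eqref{SE1} (applied with a suitable $r\in[1,s_0+1]$ and $k=0$) bounds $\|d\delta v,d\delta\boldsymbol{\rho}\|_{L^2_tL^\infty_x}$, and a Littlewood–Paley refinement the $L^2_t\dot B^{s_0-2}_{\infty,2}$ norm, by the data plus the energy. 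One then closes a single Gronwall inequality for
\[
\mathcal K(t):=\|\delta v,\delta\boldsymbol{\rho}\|_{L^\infty_tH^{s-1}}+\|\delta\varpi\|_{L^\infty_tH^{s_0-2}}+\|d\delta v,d\delta\boldsymbol{\rho}\|_{L^2_t(L^\infty_x\cap\dot B^{s_0-2}_{\infty,2})}
\]
on a short subinterval — where the bad term contributes $\int_0^T\|\delta U\|_{L^\infty_x}\|\partial\tilde U\|_{\dot H^{s-1}}\,d\tau\lesssim T^{1/2}C\,\mathcal K$ and is absorbed — and iterates over $[0,T_*]$; inequality \eqref{s} is then the $L^\infty_tH^{s-1}\times L^\infty_tH^{s_0-2}$ part of $\mathcal K$. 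A purely energy-based argument, without invoking \eqref{SE1} for the difference, does not seem to close at regularity $s-1$ when $s<\tfrac52$.
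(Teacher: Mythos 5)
Your setup coincides with the paper's: the paper also subtracts the two copies of the symmetric hyperbolic system \eqref{sq}, writes $F=-\sum_i(A_i(U)-A_i(B))\partial_{x_i}B$, and runs an $H^{s-1}$ energy estimate, arriving at exactly the differential inequality you describe, with the two terms $\|dU,dB\|_{L^\infty_x}\|U-B\|_{H^{s-1}}$ and $\|U-B\|_{L^\infty_x}\|\partial B\|_{H^{s-1}}$. For the vorticity difference, however, the paper does not use the transport equation at all: it exploits the algebraic identity $\varpi=\bar\rho\,\mathrm{e}^{-\boldsymbol{\rho}}\mathrm{curl}\,v$, writes $\varpi-V=\bar\rho\,\mathrm{e}^{-\boldsymbol{\rho}}(\mathrm{curl}\,v-\mathrm{curl}\,\varphi)+(\bar\rho\,\mathrm{e}^{-\boldsymbol{\rho}}-\bar\rho\,\mathrm{e}^{-\psi})\mathrm{curl}\,\varphi$, and bounds $\|\varpi-V\|_{H^{s_0-2}}$ by $\|v-\varphi\|_{H^{s_0-1}}+\|\boldsymbol{\rho}-\psi\|_{H^{s_0-2}}\lesssim\|U-B\|_{H^{s-1}}$ via Lemma \ref{ps}. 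This is simpler and more robust than your transport-equation route (which is also workable but requires the extra commutator and trilinear estimates you list), and it makes transparent why no $\|\varpi_0-V_0\|$ appears on the right of \eqref{s}.

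On the main point: your diagnosis of the term $\|U-B\|_{L^\infty_x}\|\partial B\|_{H^{s-1}}$ is correct, and you should be aware that the paper's own proof does not resolve it — it passes directly from the differential inequality to the conclusion ``by using'' the stated a priori bounds, and its intermediate estimate \eqref{UB} even carries $\|(U-B)(0,\cdot)\|_{H^{s}}$ on the right rather than $H^{s-1}$. Since $s-1<\tfrac32$, neither Sobolev embedding nor the product estimates of Lemma \ref{ps} (which would require $2(s-1)-\tfrac32\ge s-1$, i.e.\ $s\ge\tfrac52$) closes this term, and interpolation against the merely bounded $H^s$ norm produces a sublinear Gronwall term that does not vanish with the data — exactly as you say. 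Your proposed remedy is the right kind of idea, but one correction is needed: you cannot hope to control $\|d\delta v,d\delta\boldsymbol{\rho}\|_{L^2_tL^\infty_x}$ from \eqref{SE1}, since that requires $k=1<r-1$, hence $r>2$, hence difference data small in $H^{r}$ with $r>2$ — which is not available. What you actually need to absorb $\int_0^t\|\delta U\|_{L^\infty_x}\|\partial B\|_{H^{s-1}}\,d\tau$ is only the $k=0$ estimate for $\|\delta U\|_{L^2_tL^\infty_x}$, which \eqref{SE1} provides with $r=s-1>1$ and is therefore compatible with $H^{s-1}$-smallness of the data; your bootstrap functional $\mathcal K$ should be weakened accordingly (drop the $d\delta U$ Strichartz components, keep $\|\delta v,\delta\boldsymbol{\rho}\|_{L^2_tL^\infty_x}$). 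With that modification your argument supplies precisely the step the paper omits.
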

\begin{proof}
Let $U=(\rho, v)^T$ and $B=(\psi,\varphi)^T$. Then
\begin{equation*}
\begin{split}
  &\partial_t U+ \sum^3_{i=1 } A_i(U)\partial_{x_i}U=0,
  \\
  & \partial_t B+ \sum^3_{i=1 } A_i(B)\partial_{x_i}B=0.
  \end{split}
\end{equation*}
Then $U-B$ satisfies
\begin{equation*}
\begin{split}
  &\partial_t (U-B)+ \sum^3_{i=1 }A_i(U)\partial_{x_i}(U-B)=F,
  \end{split}
\end{equation*}
where
$$F=-\sum^3_{i=1}(A_i(U)-A_i(B))\partial_{x_i}B.$$
The standard energy estimates implies that
\begin{equation*}
  \frac{d}{dt}\|U-B\|_{H^{s-1}} \leq C_{U, B} \left( \| dU, dB\|_{L^\infty_x} \|U-B\|_{H^{s-1}}+ \|U-B\|_{L^\infty_x}\| \partial B\|_{H^{s-1}} \right),
\end{equation*}
where $C_{U, B}$ depends on the $L^\infty_x$ norm of $U, B$. By using $v, \boldsymbol{\rho}, \varpi, \varphi,\psi, V \in L^\infty_tH^s$, $dv, d\boldsymbol{\rho}, d\varphi,d\psi \in {L^4_t L^\infty_x}$ and $\partial V, \partial \varpi \in L^\infty$, we can derive that
\begin{equation}\label{UB}
\begin{split}
  \|(U-B)(t,\cdot)\|_{H^{s-1}} &\leq C_{U, B} \|(U-B)(0,\cdot)\|_{H^{s}}
  \\
  &
  \leq C_{U, B} \|(v_0-\varphi_0, \boldsymbol{\rho}_0-\psi_0) \|_{H^{s}}.
  \end{split}
\end{equation}
Since
\begin{equation*}
  \varpi=\bar{\rho} \mathrm{e}^{-\boldsymbol{\rho}}\mathrm{curl}v, \quad V=\bar{\rho} \mathrm{e}^{-\psi}\mathrm{curl}\varphi,
\end{equation*}
then
\begin{equation*}
  \begin{split}
  \varpi-V=\bar{\rho} \mathrm{e}^{-\boldsymbol{\rho}}(\mathrm{curl}v-\mathrm{curl}\varphi)+(\bar{\rho} \mathrm{e}^{-\boldsymbol{\rho}}-\bar{\rho} \mathrm{e}^{-\psi})\mathrm{curl}\varphi,
  \end{split}
\end{equation*}
we derive the following estimate by Lemma \ref{ps}:
\begin{equation}\label{20}
\begin{split}
  & \|(\varpi-V)(t,\cdot)\|_{H^{s_0-2}}
  \\
  \leq \ & \|\boldsymbol{\rho}\|_{H^{s_0}} \|\mathrm{curl}v-\mathrm{curl}\varphi\|_{H^{s_0-2}}+ \|\boldsymbol{\rho}-\psi \|_{H^{s_0-2}} \|\mathrm{curl}\varphi\|_{H^{s_0}}.
\end{split}
\end{equation}
For $s> s_0>2$, it follows that
\begin{equation*}
  \|\mathrm{curl}\varphi\|_{H^{s_0}}=\|V \cdot \psi\|_{H^{s_0}} \lesssim \|V\|_{H^{s_0}} \cdot \|\psi\|_{H^{s_0}}.
\end{equation*}
By using elliptic estimate, we have
\begin{equation*}
  \|\mathrm{curl}v-\mathrm{curl}\varphi\|_{H^{s_0-2}} \lesssim \|v-\varphi\|_{H^{s_0-1}}.
\end{equation*}
The above estimates combining with \eqref{20} tell us that
\begin{equation}\label{WV}
\begin{split}
  \|(\varpi-V)(t,\cdot)\|_{H^{s_0-2}} &\leq C_{\boldsymbol{\rho},\varphi} (\| v-\phi\|_{H^{s_0-1}}+\|\boldsymbol{\rho}-\psi \|_{H^{s_0-2}})
  %\\
%  &\leq C_{\rho,\varphi} (\| v-\phi\|_{H^{s_0-1}}+\|\rho-\psi \|_{H^{s_0-1}})
%  \\
%  & \leq C_{\rho,\varphi} \|(v_0-\varphi_0, \rho_0-\psi_0) \|_{H^{s_0-1}}
  \\
  & \leq C_{\boldsymbol{\rho},\varphi} \|(v_0-\varphi_0, \boldsymbol{\rho}_0-\psi_0) \|_{H^{s-1}},
\end{split}
\end{equation}
where $C_{\boldsymbol{\rho},\varphi}$ is a constant depending with $ \| \boldsymbol{\rho} \|_{H^{s_0}}$ and $ \| \varphi \|_{H^{s_0}}$. From \eqref{UB} and \eqref{WV}, we complete the proof of Theorem \ref{St}.
\end{proof}
As a direct result of Theorem \ref{St}, we can deduce:
\begin{corollary}\label{cor}{(Uniqueness of solution)}
Let $s>s_0>2$. Supppose $(v,\boldsymbol{\rho},\varpi)$ and $(\varphi, \psi, V )$ be solutions of \eqref{fc0} with initial data $v_0, \boldsymbol{\rho}_0$ satisfying $v_0, \boldsymbol{\rho}_0 \in H^s, \varpi_0=\bar{\rho} \mathrm{e}^{-\boldsymbol{\rho}_0}\mathrm{curl}v_0 \in H^{s_0}$. Then we have
\begin{equation*}
  v=\varphi, \quad \boldsymbol{\rho}=\psi, \quad \varpi=V .
\end{equation*}
\end{corollary}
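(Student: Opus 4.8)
The plan is to deduce the corollary directly from the stability estimate of Theorem~\ref{St}, with vanishing difference of initial data. First I would observe that the two solutions $(v,\boldsymbol{\rho},\varpi)$ and $(\varphi,\psi,V)$ share the same Cauchy data, so that $v_0-\varphi_0=0$ and $\boldsymbol{\rho}_0-\psi_0=0$; in particular the common initial specific vorticity $\varpi_0=\bar{\rho}\,\mathrm{e}^{-\boldsymbol{\rho}_0}\mathrm{curl}\,v_0$ coincides as well, so the right-hand side of \eqref{s} is identically zero. Feeding this into Theorem~\ref{St} then yields, on the common interval of existence,
\[
\|(v-\varphi,\boldsymbol{\rho}-\psi)(t,\cdot)\|_{H^{s-1}}+\|(\varpi-V)(t,\cdot)\|_{H^{s_0-2}}\lesssim\|(v_0-\varphi_0,\boldsymbol{\rho}_0-\psi_0)\|_{H^{s-1}}=0,
\]
and therefore $v\equiv\varphi$, $\boldsymbol{\rho}\equiv\psi$, $\varpi\equiv V$, which is exactly the claimed uniqueness.

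Before invoking Theorem~\ref{St} one has to check that both solutions meet its quantitative hypotheses, namely the a priori bound
\[
\|v,\boldsymbol{\rho}\|_{L^\infty_tH^s}+\|\varpi\|_{L^\infty_tH^{s_0}}+\|dv,d\boldsymbol{\rho}\|_{L^2_tL^\infty_x}+\|dv,d\boldsymbol{\rho},\partial v_{+}\|_{L^2_t\dot{B}^{s_0-2}_{\infty,2}}\le C
\]
for $(v,\boldsymbol{\rho},\varpi)$, together with $d\varphi,d\psi\in L^2_tL^\infty_x$ and $d\varphi_{+},d\psi\in L^2_t\dot{B}^{s_0-2}_{\infty,2}$ for $(\varphi,\psi,V)$. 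I would take both $(v,\boldsymbol{\rho},\varpi)$ and $(\varphi,\psi,V)$ to be solutions in the regularity class furnished by Theorem~\ref{dingli}; then these bounds are part of the conclusion of that theorem — the energy estimates are supplied by Theorem~\ref{be}, and the Strichartz and Besov estimates are exactly conclusion~(2) of Theorem~\ref{dingli} — so the hypotheses of Theorem~\ref{St} are automatically verified.

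Thus the proof reduces to a one-line application of the stability theorem, and there is no genuine obstacle at this stage: the only substantive ingredient — the $L^2_t$ Strichartz control of $dv,d\boldsymbol{\rho}$ and the $L^2_t\dot{B}^{s_0-2}_{\infty,2}$ control of $dv,d\boldsymbol{\rho},\partial v_{+}$ — has already been secured when constructing the solutions in Theorem~\ref{dingli}, so nothing new needs to be proved here.
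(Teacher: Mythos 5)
Your proposal is correct and matches the paper's approach exactly: the paper states this corollary as ``a direct result of Theorem \ref{St}'', i.e.\ one applies the stability estimate \eqref{s} with coinciding initial data so that the right-hand side vanishes, forcing the differences to be zero. Your additional care in checking that both solutions satisfy the quantitative hypotheses of Theorem \ref{St} (via the energy and Strichartz bounds from Theorem \ref{dingli}) is a reasonable and harmless elaboration of what the paper leaves implicit.
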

\section{Reduction to the case of smooth initial data}
In this part, we reduce Theorem \ref{dingli} to the case of smooth initial data by compactness arguments.
\begin{proposition}\label{p3}
Let $s>s_0>2$. For each $M_0>0$, there exists $T, M, C>0$ such that, for each smooth initial data $(v_0, \boldsymbol{\rho}_0, \varpi_0)$ which satisfies
\begin{equation}\label{2000}
	\begin{split}
	&\|(v_0, \boldsymbol{\rho}_0) \|_{H^s} + \| \varpi_0\|_{H^{s_0}}  \leq M_0,
	\end{split}
	\end{equation}
there exists a smooth solution $(v, \boldsymbol{\rho}, \varpi)$ to \eqref{fc1} on $[-T,T] \times \mathbb{R}^3$ satisfying
\begin{equation}\label{e9}
 \|(v, \boldsymbol{\rho})\|_{H^s}+\|\varpi\|_{H^{s_0}} \leq M.
\end{equation}
Furthermore, the solution satisfies the conditions

$\mathrm{(1)}$ dispersive estimate for $v$, $\boldsymbol{\rho}$ and $v_+$
	\begin{equation}\label{303}
	\|d v, d \boldsymbol{\rho}\|_{L^2_t C^\delta_x}+\|\partial v_+, d \boldsymbol{\rho}, d v\|_{L^2_t \dot{B}^{s_0-2}_{\infty,2}} \leq M,
	\end{equation}

$\mathrm{(2)}$ Let $f$ satisfy equation \eqref{linear}. For each $1 \leq r \leq s_0+1$, the Cauchy problem \eqref{linear} is well-posed in $H^r \times H^{r-1}$, and the following estimate holds:
	\begin{equation}\label{304}
	\|\left< \partial \right>^k f\|_{L^2_t L^\infty_x} \lesssim  \| f_0\|_{H^r}+ \| f_1\|_{H^{r-1}}+\| G\|_{L^\infty_tH^{r-1}\cap L^1_t H^r}+ \| B \|_{L^1_t H^{r-1}},\ \ k<r-1.
	\end{equation}
\end{proposition}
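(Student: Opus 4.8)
The plan is to prove Proposition \ref{p3} by a continuity (bootstrap) argument in which the Strichartz-type norm \eqref{303} is the quantity propagated forward in time, and the energy bound \eqref{e9} together with the linear estimate \eqref{304} are extracted along the way. Since $(v_0,\boldsymbol{\rho}_0,\varpi_0)$ is smooth, Majda's classical theory \cite{M} already yields a smooth solution of \eqref{fc1} on a maximal interval $(-T_*,T_*)$, on which all the relevant norms depend continuously on $T$; the whole issue is to bound $T_*$ from below in terms of $M_0$ alone, with the stated uniform estimates. By time reversibility it suffices to argue on $[0,T]$. I would introduce the control functional
\begin{equation*}
\Gamma(T):=\|dv,d\boldsymbol{\rho}\|_{L^2_tC^\delta_x([0,T])}+\|\partial v_{+},dv,d\boldsymbol{\rho}\|_{L^2_t\dot{B}^{s_0-2}_{\infty,2}([0,T])}
\end{equation*}
and make the bootstrap hypothesis $\Gamma(T)\le 2\bar C$, where $\bar C=\bar C(M_0)$ is to be fixed.

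The first half of the argument is the energy estimate. Since $\int_0^t\|dv,d\boldsymbol{\rho}\|_{L^\infty_x}\,d\tau\lesssim T^{1/2}\Gamma(T)$ and, using \eqref{etad} together with the elliptic bounds of Lemma \ref{yux} to control $\partial\eta$ by $\varpi$, also $\int_0^t\|\partial v\|_{\dot{B}^{s_0-2}_{\infty,2}}\,d\tau\lesssim T^{1/2}\bigl(\Gamma(T)+\|\varpi\|_{L^\infty_tH^{s_0}}\bigr)$, Theorem \ref{be} gives $E(t)\le CE(0)\exp\!\bigl(CT^{1/2}(1+\bar C)\bigr)$. Choosing $T$ small (depending only on $M_0$ and $\bar C$) produces \eqref{e9} with $M\sim 2CM_0$, uniformly on $[0,T]$; the low-order counterpart, needed for the interpolation arguments, follows the same way from \eqref{W2e}.

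The second, harder half is the Strichartz bound $\Gamma(T)\lesssim\bar C$, which must \emph{improve} the bootstrap. Following Smith--Tataru, I would first reduce to smooth data that is in addition \emph{small} and \emph{compactly supported}: a Littlewood--Paley decomposition followed by the parabolic rescaling $(t,x)\mapsto(\lambda t,\lambda x)$ renders each dyadic piece unit-supported and small in the appropriate norms, which is legitimate precisely because $s,s_0>2$ is subcritical, so it suffices to prove $\Gamma(T_0)\lesssim\epsilon$ on a fixed unit-size interval and then reassemble. For this localized problem one foliates a thin slab by null hypersurfaces of the acoustical metric $g$ via an eikonal function, and the analytic core is to establish enough regularity of these hypersurfaces and their null-frame coefficients; this rests on characteristic energy estimates along the null hypersurfaces. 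Those for $(v,\boldsymbol{\rho})$ come from the wave equations in \eqref{fc1}; the delicate ones for $\varpi$ cannot use the slicewise div/curl elliptic estimates, so instead one recovers transport equations for the derivatives of $\varpi$ via a Hodge decomposition built from Riesz operators and controls the ensuing commutators with $\mathbf{T}=\partial_t+v\cdot\nabla$ through Lemmas \ref{ceR}, \ref{LPE}, \ref{ce}, \ref{YR} and \ref{yx}. With this regularity of $g$ in hand, the Appendix (adapting \cite{ST}) yields for the linear equation \eqref{linear} the bounds \eqref{lw0}--\eqref{lw1}, i.e.\ \eqref{304}, Lemma \ref{LD} reducing the source $\mathbf{T}G+B$ to a superposition of homogeneous Cauchy problems. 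Applying \eqref{lw1} to $\square_g(v,\boldsymbol{\rho})$ from \eqref{fc1}, to $\square_g v_{+}=\mathbf{T}\mathbf{T}\eta+Q$ from Lemma \ref{wte1}, and to their Littlewood--Paley pieces, and using Lemma \ref{yux} to absorb $\mathbf{T}\mathbf{T}\eta$ and the null forms, closes $\Gamma(T_0)\lesssim\epsilon$; undoing the localization and the rescaling gives \eqref{303} on $[-T,T]$, and the improved $\Gamma$ together with \eqref{e9} and \eqref{304} completes the proposition.

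The step I expect to be the main obstacle is the characteristic energy estimate for the vorticity: $\varpi$ only lies in $H^{s_0}$ with $s_0$ barely above $2$, so there is essentially no margin, and there is no null-cone analogue of the elliptic div/curl estimate available on a Cauchy slice. Converting the problem into transport equations for the derivatives of $\varpi$, at the cost of Riesz-operator commutators against $v\cdot\nabla$ that must be controlled by Lemmas \ref{ceR}--\ref{yx}, is exactly what makes the threshold $s_0>2$ reachable. A secondary difficulty is the $v_{+}$ decomposition \eqref{dvc}--\eqref{etad}: the raw source $-\mathrm{e}^{\boldsymbol{\rho}}c_s^2\,\mathrm{curl}\,\varpi$ has regularity only $s_0-1$, too low for the Besov Strichartz norm of $\partial v$, and trading it for $\mathbf{T}\mathbf{T}\eta$ with $\eta\in H^{s_0+1}$ is what makes $\|\partial v_{+}\|_{L^2_t\dot{B}^{s_0-2}_{\infty,2}}$ accessible at all.
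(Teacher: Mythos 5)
Your architecture matches the paper's: the large-data Proposition \ref{p3} is obtained from a small, compactly supported data existence result by scaling and localization, and that localized result is in turn proved by a bootstrap whose analytic core is the regularity of the acoustic null hypersurfaces, the characteristic energy estimate for $\varpi$ via Hodge decomposition and Riesz--transport commutators, and the linear Strichartz estimate of the Appendix applied to $\square_g\boldsymbol{\rho}$ and $\square_g v_{+}=\mathbf{T}\mathbf{T}\eta+Q$. Two remarks on where you diverge from the paper. First, the order of operations: the paper does \emph{not} run a bootstrap at the large-data level. It first reduces Proposition \ref{p3} entirely to the small-data statement (Proposition \ref{p1}) by rescaling $(t,x)\mapsto(Tt,Tx)$ (small homogeneous norms because $s_0-\tfrac32>0$) and then physically localizing, and only afterwards runs the continuity argument (Proposition \ref{p4}) on the localized problem; the large-data estimates \eqref{e9}, \eqref{303}, \eqref{304} are then reassembled purely by finite speed of propagation and the uniqueness Corollary \ref{cor}, with the Strichartz bound \eqref{304} recovered by summing the localized estimates over a lattice of centers. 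Your version, which keeps a global control functional $\Gamma(T)$ and localizes only inside the Strichartz step, can be made to work but requires transferring the bootstrap hypothesis to each localized solution, which the paper's organization avoids. Second, and more substantively, your description of the localization is wrong as stated: a Littlewood--Paley decomposition localizes in frequency, not in physical space, and its pieces are not compactly supported. The decomposition actually used is $\widetilde{v}_0(x)=\chi(x-y)(v_0(x)-v_0(y))$, $\widetilde{\boldsymbol{\rho}}_0(x)=\chi(x-y)(\boldsymbol{\rho}_0(x)-\boldsymbol{\rho}_0(y))$ for lattice points $y$, with the localized specific vorticity \emph{redefined} as $\widetilde{\varpi}_0=\bar{\rho}\,\mathrm{e}^{-\widetilde{\boldsymbol{\rho}}_0}\mathrm{curl}\,\widetilde{v}_0$ so that the constraint between $\varpi$, $\boldsymbol{\rho}$ and $\mathrm{curl}\,v$ is preserved for the localized data; checking that this $\widetilde{\varpi}_0$ is still small in $H^{s_0}$ (it picks up a $\nabla\chi\cdot(v_0-v_0(y))$ term) is a small but necessary step that your plan omits. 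With the localization corrected, the rest of your outline is the paper's proof.
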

In the following, we will use Proposition \ref{p3} to prove Theorem \ref{dingli}.
\begin{proof}[proof of Theorem \ref{dingli}]
Consider arbitrary initial data $(v_0, \boldsymbol{\rho}_0) \in H^s$, $\varpi_0 \in H^{s_0}$ satisfying
\begin{equation*}
	\|v_0\|_{H^s} + \|\boldsymbol{\rho}_0 \|_{H^s} + \|\varpi_0\|_{H^{s_0}} \leq R.
\end{equation*}
Let $(v_0^k, \boldsymbol{\rho}_0^k, \varpi_0^k)$ be a sequence of smooth data converging to $(v_0, \boldsymbol{\rho}_0, \varpi_0)$, which also satisfy the same bound. By Proposition \ref{p3}, there exists the corresponding solutions $(v^k, \boldsymbol{\rho}^k, \varpi^k)$ satisfying \eqref{wte}.

Note that the solutions of \eqref{CEE} also satisfy the symmetric hyperbolic system \eqref{sh}. Set $U^k=(\boldsymbol{\rho}^k, v^k,), k \in \mathbb{Z}^+$. For $j, l \in \mathbb{Z}^+$, we then have
\begin{equation*}
\begin{split}
  &\partial_t U^j+ \sum^3_{i=1}A_i(U^k)\partial_{i}U^j=0,
  \\
  & \partial_t U^l+ \sum^3_{i=1}A_i(U^l)\partial_{i}U^l=0.
  \end{split}
\end{equation*}
The standard energy estimates implies that
\begin{equation*}
  \frac{d}{dt}\|U^j-U^l\|_{H^{s-1}} \leq C_{U^j, U^l} \left(\| dU^j, dU^l\|_{L^\infty_x}\|U^j-U^l\|_{H^{s-1}}+ \|U^j-U^l\|_{L^\infty_x}\| \partial U^l\|_{H^{s-1}} \right),
\end{equation*}
where $C_{U^j, U^l}$ depends on the $L^\infty_x$ norm of $U^j, U^l$. By Strichartz estimates of $dv^k, d\boldsymbol{\rho}^k, k \in \mathbb{Z}^+$ in Proposition \ref{p3}, we could derive that
\begin{equation*}
\begin{split}
  \|(U^j-U^l)(t,\cdot)\|_{H^{s-1}} &\lesssim \|(U^j-U^l)(0,\cdot)\|_{H^{s-1}}
  \\
  &\lesssim\|(v_0^j-v_0^l, \boldsymbol{\rho}^j_0-\boldsymbol{\rho}^l_0) \|_{H^{s-1}}.
  \end{split}
\end{equation*}
As a result, $\{(v^k,\boldsymbol{\rho}^k)\}_{k=1}^{\infty}$ is a Cauchy sequence in $C([-T,T];H^{s-1})$. Denote
\begin{equation*}
  \lim_{k\rightarrow \infty}(v^k,\boldsymbol{\rho}^k)=(v, \boldsymbol{\rho}).
\end{equation*}
Thus, $v, \boldsymbol{\rho}$ is in $C([-T,T];H^{s-1})$. Since
\begin{equation*}
 \varpi^k=\bar{\rho}^{-1}\mathrm{e}^{-\boldsymbol{\rho}^k}\mathrm{curl}v^k, \quad k \in \mathbb{Z}^+.
\end{equation*}
Due to product estimates and elliptic estimates, we have
\begin{equation*}
\begin{split}
  \|\varpi^k-\varpi^l\|_{H^{s_0-2}} &\lesssim (\| v^k-v^l\|_{H^{s_0-1}}+\|\boldsymbol{\rho}^k-\boldsymbol{\rho}^l\|_{H^{s_0-1}})
  \\
  &\lesssim (\| v_0^k-v_0^l\|_{H^{s}}+\|\boldsymbol{\rho}^k_0-\boldsymbol{\rho}^l_0 \|_{H^{s}}).
\end{split}
\end{equation*}
Above, in the last line we use Lemma \ref{jh0}. This implies that $\varpi^k$ is a Cauchy sequence in $C([-T,T];H^{s_0-2})$. We denote the limit
\begin{equation*}
  \lim_{k\rightarrow \infty} \varpi^k = \varpi,
\end{equation*}
and $W$ is in $C([-T,T];H^{s_0-2})$.

Since $v^k,\boldsymbol{\rho}^k$ is uniformly bounded in $C([-T,T];H^{s})$ and $\varpi^k$ is bounded in $C([-T,T];H^{s_0})$ respectively. Thus, $(v, \boldsymbol{\rho}) \in C([-T,T];H^{s}), \varpi \in C([-T,T];H^{s_0})$. On the other hand, using Proposition \ref{p3}, $dv^k$ and $d\boldsymbol{\rho}^k$ are uniformly bounded in $L^2([-T,T];C^\delta)$. Consequently, $\{(dv^k, d\boldsymbol{\rho}^k)\}_{k \geq 1}$ converges to $(dv, d\boldsymbol{\rho})$ in $L^2([-T,T];L^\infty)$.

We also have $d v^k$ and $d\boldsymbol{\rho}^k$ bounded in $L^2([-T,T];\dot{B}^{s_0-2}_{\infty,2})$. This combines with $\{(dv^k, d\boldsymbol{\rho}^k)\}_{k \geq 1}$ converging to $(dv, d\boldsymbol{\rho})$ in $L^2([-T,T];L^\infty)$. We have $d v, d\boldsymbol{\rho} \in L^2([-T,T];\dot{B}^{s_0-2}_{\infty,2})$. Set
$$v=v_{+}+\eta, \quad \eta=(-\Delta)^{-1} (\mathrm{e}^{\boldsymbol{\rho}} \mathrm{curl} \varpi). $$
We can derive
\begin{equation}\label{40}
\begin{split}
  \| \partial v_{+}\|_{\dot{B}^{s_0-2}_{\infty,2}} \leq \| \partial v\|_{\dot{B}^{s_0-2}_{\infty,2}}+\| \partial \eta\|_{\dot{B}^{s_0-2}_{\infty,2}}.
  \end{split}
\end{equation}
By Sobolev inequality, elliptic estimate, and \eqref{W2e}, we have
\begin{equation*}
\begin{split}
  \| \partial \eta\|_{\dot{B}^{s_0-2}_{\infty, 2}} & \lesssim \| \partial(-\Delta)^{-1}(\mathrm{e}^{\boldsymbol{\rho}} \mathrm{curl} \varpi) \|_{\dot{B}^{s_0-\frac{1}{2}}_{2,2}}
   \\
  & \lesssim \|\mathrm{e}^{\boldsymbol{\rho}} \mathrm{curl} \varpi\|_{\dot{H}^{s_0-\frac{3}{2}}}
 \\
 & \lesssim (1+ \|\boldsymbol{\rho}\|_{H^{2}}) \|\varpi \|_{H^{s_0-\frac{1}{2}}}
 \\
 & \lesssim (1+\|\boldsymbol{\rho}_0\|_{H^{2}})\|\varpi_0 \|_{H^{2}}\exp{(2\int^t_0 \|dv,d\boldsymbol{\rho}\|_{L^\infty_x}d\tau)}.
 \end{split}
\end{equation*}
As a result, we have $\partial \eta \in L^2([-T,T];\dot{B}^{s_0-2}_{\infty,2})$. So we can obtain $\partial v_{+} \in L^2([-T,T];\dot{B}^{s_0-2}_{\infty,2})$. At this stage, we complete of proof of Theorem \ref{dingli}.
\end{proof}

\section{Reduction to existence for small, smooth, compactly supported data}
In this section, our goal is to give a reduction of Proposition \ref{p3} to the existence for small, smooth, compactly supported data using physical localization arguments.
\begin{proposition}\label{p1}
	Let $s>s_0>2$. Assume \eqref{a0} and \eqref{a1} hold. Suppose the initial data $(v_0, \boldsymbol{\rho}_0, \varpi_0)$ be smooth, supported in $B(0,c+2)$ and satisfying
	\begin{equation}\label{300}
	\begin{split}
	&\|v_0\|_{H^s} + \|\boldsymbol{\rho}_0 \|_{H^s} + \|\varpi_0\|_{H^{s_0}} \leq \epsilon_3.
	\end{split}
	\end{equation}
	Then the Cauchy problem \eqref{fc1} admits a smooth solution $v,\boldsymbol{\rho},\varpi$ on $[-1,1] \times \mathbb{R}^3$, which have the following properties:
	
	$\mathrm{(1)}$ energy estimate
	\begin{equation}\label{402}
	\begin{split}
	&\|v\|_{L^\infty_t H^{s}}+\| \boldsymbol{\rho}\|_{L^\infty_t H^{s}} + \| \varpi\|_{H^{s_0}} \leq \epsilon_2.
	\end{split}
	\end{equation}

	$\mathrm{(2)}$ dispersive estimate for $v$ and $\boldsymbol{\rho}$
	\begin{equation}\label{s403}
	\|d v, d \boldsymbol{\rho}\|_{L^2_t C^\delta_x}+\| d \boldsymbol{\rho}, \partial v_{+}, d v\|_{L^2_t \dot{B}^{s_0-2}_{\infty,2}} \leq \epsilon_2,
	\end{equation}

	$\mathrm{(3)}$ dispersive estimate for the linear equation

Let $f$ satisfy
 the equation \eqref{linear}.
%\begin{equation*}
%	\begin{cases}
%	& \square_g f=\mathbf{T}G+B, \qquad (t,x) \in [0,T]\times \mathbb{R}^3,
%	\\
%	&f(t_0,\cdot)=f_0 \in H^r(\mathbb{R}^3), \quad \mathbf{T} f(t_0,\cdot)=f_1 \in H^{r-1}(\mathbb{R}^3).
%	\end{cases}
%	\end{equation*}
For each $1 \leq r \leq s_0+1$, the Cauchy problem \eqref{linear} is well-posed in $H^r \times H^{r-1}$, and the following estimate holds:
	\begin{equation}\label{304}
	\|\left< \partial \right>^k f\|_{L^2_t L^\infty_x} \lesssim  \| f_0\|_{H^r}+ \| f_1\|_{H^{r-1}}+\| G\|_{L^\infty_tH^{r-1}\cap L^1_t H^r}+ \| B \|_{L^1_t H^{r-1}},\ \ k<r-1.
	\end{equation}

%\eqref{linear} with the defined metric $g$ and $g$ is well-posed in $H^r \times H^{r-1}$. For $1 \leq r \leq s_0+1$, the following estimate holds:
%	\begin{equation}\label{404}
%	\|\left<\partial \right>^k f\|_{L^2_t L^\infty_x} \lesssim  \| f_0\|_{H^r}+ \| f_1\|_{H^{r-1}},\ \ k<r-1.
%	\end{equation}
\end{proposition}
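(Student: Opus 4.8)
The plan is to produce the solution on $[-1,1]$ by a continuity/bootstrap argument whose only genuine difficulty is the Strichartz quantity $\int_0^t(\|dv,d\boldsymbol{\rho}\|_{L^\infty_x}+\|\partial v\|_{\dot B^{s_0-2}_{\infty,2}})\,d\tau$ appearing in Theorem~\ref{be}; granting control of it together with the linear‑wave estimate \eqref{304} (which I would prove separately in the Appendix by a Smith--Tataru wave‑packet parametrix for $\square_g$), everything else closes. First I would invoke classical symmetric‑hyperbolic theory for \eqref{sq}: since the data is smooth, compactly supported, and satisfies $c_s|_{t=0}\ge c_0$, there is a smooth solution $(v,\boldsymbol{\rho},\varpi)$ on a maximal interval; on that interval I impose the bootstrap assumptions $\|(v,\boldsymbol{\rho})\|_{L^\infty_tH^s}+\|\varpi\|_{L^\infty_tH^{s_0}}\le\epsilon_1$ and $\|dv,d\boldsymbol{\rho}\|_{L^2_tC^\delta_x}+\|dv,d\boldsymbol{\rho},\partial v_+\|_{L^2_t\dot B^{s_0-2}_{\infty,2}}\le\epsilon_1$, and reduce the proposition to improving all these constants back to $\epsilon_2$ on $[-1,1]$.

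Plugging the bootstrap bounds into Theorem~\ref{be} gives at once $E(t)\lesssim E(0)\exp\!\big(\int_0^t(\|dv,d\boldsymbol{\rho}\|_{L^\infty_x}+\|\partial v\|_{\dot B^{s_0-2}_{\infty,2}})\,d\tau\big)\lesssim\epsilon_3 e^{C\epsilon_1}\ll\epsilon_2$, so \eqref{402} is recovered once the Strichartz integral is bounded; note this uses the bootstrap only through the Strichartz norms, so no circular loss of $H^s$ occurs here. For the core Strichartz bound I would follow the Smith--Tataru scheme: at each dyadic frequency $2^j$ construct a family of approximate characteristic (null) hypersurfaces of $g$, show they carry enough regularity by running \emph{characteristic} energy estimates along them --- this is the role of Section~6, which rests on the modified transport equation \eqref{W2} for $\mathrm{curl}\,\Omega$ and on the Riesz/transport commutator Lemmas~\ref{ceR}, \ref{ce}, \ref{yx}, since on a null hypersurface one cannot recover all derivatives of $\varpi$ from $\mathrm{div}\,\varpi,\ \mathrm{curl}\,\varpi$ by elliptic estimates --- and then reduce $\|dv,d\boldsymbol{\rho}\|_{L^2_tC^\delta_x}+\|dv,d\boldsymbol{\rho}\|_{L^2_t\dot B^{s_0-2}_{\infty,2}}$ to applying \eqref{304} to the Littlewood--Paley pieces of $\square_g v=-\mathrm{e}^{\boldsymbol{\rho}}c_s^2\mathrm{curl}\,\varpi+Q$ and $\square_g\boldsymbol{\rho}=\mathcal{D}$ of Lemma~\ref{wte}, handling $[\square_g,\Delta_j]$ by Lemma~\ref{yx}; the output is $\lesssim E(0)\lesssim\epsilon_3$.

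The remaining quantity $\|\partial v_+\|_{L^2_t\dot B^{s_0-2}_{\infty,2}}$ cannot be read off the $v$-equation because $\mathrm{curl}\,\varpi$ has regularity only $s_0-1<s-1$, so I would use instead $\square_g v_+^i=\mathbf{T}\mathbf{T}\eta^i+Q^i$ from Lemma~\ref{wte1}, whose right side lies in $H^{s-1}$ by Lemma~\ref{yux}; applying $\Delta_j$, commuting through $\square_g$ and $\mathbf{T}$ via Lemmas~\ref{yx} and \ref{YR} to expose the source structure $\mathbf{T}G_j+B_j$ of \eqref{linear}, invoking \eqref{304} with $r=s-s_0+1$ and $k=0$, then multiplying by $2^{(s_0-1)j}$, squaring and summing in $\ell^2_j$ (note $(s_0-1)+r=s$) yields $\|\partial v_+\|^2_{L^2_t\dot B^{s_0-2}_{\infty,2}}\lesssim E(0)^2$; combined with the elliptic bound $\|\partial\eta\|_{\dot B^{s_0-2}_{\infty,2}}\lesssim(1+\|\boldsymbol{\rho}\|_{H^2})\|\varpi\|_{H^{s_0-1/2}}$ coming from \eqref{etad}, this gives \eqref{s403}. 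All three steps improve the bootstrap constants from $\epsilon_1$ down to $C\epsilon_3\ll\epsilon_2$, so by continuity the solution persists on $[-1,1]$ with \eqref{402}--\eqref{304}, smoothness propagating by re-running the energy estimates at every Sobolev level once the Strichartz norms are in hand.

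I expect the hard part to be the Strichartz step, and within it the construction and regularity of the acoustical null hypersurfaces when $\varpi$ sits only in $H^{s_0}$ with $2<s_0<s<\tfrac52$: this borderline regularity forces the characteristic energy estimates for $\varpi$ to be carried out at the same level $s_0$ and through Hodge/transport reformulations rather than elliptic estimates, which is precisely why the new commutator lemmas and the modified equation \eqref{W2} are needed. A secondary but real difficulty is transferring \eqref{304} from the genuine acoustical metric $g$ to the frozen, regularized coefficients used to build the parametrix while keeping the regularity loss inside the admissible window --- this is what pins down the strict gap $s-s_0>0$ between the regularities of $(v,\boldsymbol{\rho})$ and of $\varpi$.
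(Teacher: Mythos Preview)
Your overall strategy---bootstrap, characteristic energy estimates along null hypersurfaces, then apply the linear Strichartz estimate \eqref{304} to Littlewood--Paley pieces of $\boldsymbol{\rho}$ and $v_+$---is the paper's, and your handling of $v_+$ via Lemma~\ref{wte1} with $r=s-s_0+1$ is exactly the argument of Proposition~\ref{r4}. But your bootstrap is missing a layer. In the Smith--Tataru scheme, which the paper codifies as Proposition~\ref{p4}, the primary bootstrap quantity is not the solution's Strichartz norm but the geometric functional
\[
G(v,\boldsymbol{\rho})=\sup_{\theta,r}\vert\kern-0.25ex\vert\kern-0.25ex\vert d\phi_{\theta,r}-dt\vert\kern-0.25ex\vert\kern-0.25ex\vert_{s_0,2,\Sigma_{\theta,r}}
\]
of \eqref{500}, which measures how close the null foliations $\Sigma_{\theta,r}$ are to flat; the solution-norm bounds (with constant $2\epsilon_2$) merely carve out the class $\mathcal{H}$ in which the continuity argument is run. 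The characteristic energy estimates you plan to invoke (Lemmas~\ref{te1}, \ref{te3}, \ref{te20}) all take $G\le 2\epsilon_1$ as a \emph{hypothesis}---see \eqref{503}--\eqref{504} and the right side of \eqref{teE}---because the change of variables $x_3\mapsto x_3-\phi(t,x')$ underlying them is well-behaved only if $\Sigma$ is already known to be a small perturbation of a flat null plane. That regularity of $\Sigma$ cannot be read off your Strichartz bootstrap alone: it concerns trace norms on a hypersurface whose own geometry is the unknown.

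The paper breaks this circularity by bootstrapping $G$: under $G\le 2\epsilon_1$, Proposition~\ref{r1} and Lemma~\ref{te20} give the characteristic bounds, and then Proposition~\ref{r2} (via the transport equation for the null second fundamental form $\chi_{ab}$ in Lemma~\ref{chi} and the curvature decomposition of Corollary~\ref{Rfenjie}) improves $G$ back to $\lesssim\epsilon_2\ll\epsilon_1$. Only then do Propositions~\ref{r3}--\ref{r5} supply \eqref{304}, and Proposition~\ref{r4} closes \eqref{402}--\eqref{s403}. You should therefore enlarge your bootstrap to include $G$ and use the hierarchy $\epsilon_3\ll\epsilon_2\ll\epsilon_1$ of \eqref{a0} so that the improvement on $G$ is genuine. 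A minor side remark: the null surfaces are built once for the full (cut-off) metric $\mathbf{g}$, not dyadically; the frequency localization enters only through the paradifferential truncation $\mathbf{g}_\lambda=S_{<\lambda}\mathbf{g}$ in the wave-packet parametrix of Proposition~\ref{A1}.
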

\begin{proof}[proof of Proposition \ref{p1} by Proposition \ref{p3}] In Proposition \ref{p3}, the corresponding statement is considering the small, supported data. While, the initial data is large in Proposition \ref{p1}. Firstly, by using a scaling method, we can reduce the initial data in Proposition \ref{p1} to be small.

$\textbf{Step 1: Scaling}$. Assume
\begin{equation}\label{a4}
\begin{split}
&\|v_0\|_{H^s}+ \| \boldsymbol{\rho}_0\|_{H^s} + \|\varpi_0\|_{H^{s_0}}  \leq M_0.
\end{split}
\end{equation}
If taking the scaling
\begin{equation*}
\begin{split}
&\underline{{v}}(t,x)=v(Tt,Tx),\quad \underline{\boldsymbol{\rho}}(t,x)=\boldsymbol{\rho}(Tt,Tx), \quad \underline{\varpi}(t,x)={\varpi}(Tt,Tx)
\end{split}
\end{equation*}
we then obtain
\begin{equation*}
\begin{split}
&\|\underline{v}(0)\|_{\dot{H}^s}+\|\underline{\boldsymbol{\rho}}(0)\|_{\dot{H}^s} \leq M_0 T^{s-\frac{3}{2}}, \qquad \|\underline{\varpi}(0)\|_{\dot{H}^{s_0}} \leq M_0 T^{s_0-\frac{3}{2}}.
%\\
%& ||\partial W(0)||_{L^\infty} \leq R T^2.
\end{split}
\end{equation*}
Let $\epsilon_3$ be stated in \eqref{a0}. Choose sufficiently small $T$ such that
\begin{equation*}
M_0 T^{s_0-\frac{3}{2}} \ll \epsilon_3.
\end{equation*}
Therefore, we get
\begin{equation*}
\begin{split}
  &\|\underline{v}(0)\|_{\dot{H}^s} \leq \epsilon_3, \quad \|\underline{\boldsymbol{\rho}}(0)\|_{\dot{H}^s} \leq \epsilon_3, \quad \|\underline{\varpi}(0)\|_{\dot{H}^{s_0}} \leq \epsilon_3.
\end{split}
\end{equation*}
Secondly, to reduce the initial data with support set, we need a physical localization technique.

$\textbf{Step 2: Localization}$.

The propagation speed of \eqref{fc1} is also finite. We then let $c$ be the largest speed of \eqref{fc1}. Set $\chi$ be a smooth function supported in $B(0,c+2)$, and which equals $1$ in $B(0,c+1)$. For given $y \in \mathbb{R}^3$, we define the localized initial data for the velocity and density near $y$:
\begin{equation*}
\begin{split}
\widetilde{v}_0(x)=&\chi(x-y)\left( v_0(x)-v_0(y)\right),
\\
\widetilde{ \boldsymbol{\rho}}_0(x)=&\chi(x-y)\left( \boldsymbol{\rho}_0(x)-\boldsymbol{\rho}_0(y)\right).
\end{split}
\end{equation*}
Based on these quantities $v^y_0$ and $\boldsymbol{\rho}_0^y$. Followed by \eqref{pw1}, we set
\begin{equation}\label{wde}
  \widetilde{\varpi}_0=\bar{\rho} \mathrm{e}^{ -\widetilde{ \boldsymbol{\rho}}_0}\mathrm{curl}\widetilde{v}_0=\bar{\rho} \mathrm{e}^{ -\widetilde{ \boldsymbol{\rho}}_0}\widetilde{\omega}_0,
\end{equation}
then $\widetilde{\varpi}_0$ is the specific vorticity.
Since $s,s_0\in (2, \frac{5}{2})$, we can verify
\begin{equation}\label{245}
\| \widetilde{v}_0, \widetilde{\boldsymbol{\rho}}_0\|_{H^s} \lesssim \|v_0, \boldsymbol{\rho}_0 \|_{\dot{H}^s} \lesssim \epsilon_3.
\end{equation}
By calculation on \eqref{wde}, we have
\begin{equation}\label{WY1}
   \widetilde{\varpi}_0= \bar{\rho}^{-1} \mathrm{e}^{ -\widetilde{ \boldsymbol{\rho}}_0} \nabla \chi(x-y)\cdot (v_0(x)-v_0(y))+\chi(x-y) \bar{\rho}^{-1} \mathrm{e}^{ -\widetilde{ \boldsymbol{\rho}}_0}\mathrm{curl}{v}_0.
\end{equation}
Hence, we derive that
\begin{equation}\label{W0e}
  \| \widetilde{\varpi}_0 \|_{L^2} \lesssim \| \partial v_0 \|_{L^2} \lesssim \epsilon_3.
\end{equation}
Substituting $\varpi_0=\bar{\rho}^{-1} \mathrm{e}^{ -{ \boldsymbol{\rho}}_0}\mathrm{curl}v_0$ into \eqref{WY1}, we can update \eqref{WY1} as
\begin{equation*}
   \widetilde{\varpi}_0= \bar{\rho}^{-1} \mathrm{e}^{- \widetilde{ \boldsymbol{\rho}_0}} \nabla \chi(x-y)\cdot (v_0(x)-v_0(y))+\chi(x-y) \mathrm{e}^{ -\widetilde{ \boldsymbol{\rho}_0}+\boldsymbol{\rho}_0}\varpi_0.
\end{equation*}
Therefore, we can obtain
\begin{equation}\label{Whe}
\begin{split}
\| \widetilde{\varpi}_0 \|_{\dot{H}^{s_0}} &= \| \bar{\rho}^{-1} \mathrm{e}^{ - \widetilde{ \boldsymbol{\rho}_0}} \nabla \chi(x-y)\cdot (v_0(x)-v_0(y))+\chi(x-y) \mathrm{e}^{ -\widetilde{ \boldsymbol{\rho}_0}+\boldsymbol{\rho}_0}\varpi_0 \|_{\dot{H}^{s_0}}
\\
&\lesssim \|\varpi_0 \|_{\dot{H}^{s_0}}+ \|v_0 \|_{\dot{H}^{s_0}}+\| \boldsymbol{\rho}_0\|_{\dot{H}^{s_0}} \lesssim \epsilon_3.
\end{split}
\end{equation}
Adding \eqref{W0e} and \eqref{Whe}, we can get
\begin{equation}\label{246}
\| \widetilde{\varpi}_0 \|_{H^{s_0}} \lesssim \epsilon_3.
\end{equation}
By Proposition \ref{p1}, there is a smooth solution $(\widetilde{v}, \widetilde{\boldsymbol{\rho}}, \widetilde{\varpi})$ on $[-1,1]\times \mathbb{R}^3$ satisfying the following equation
\begin{equation}\label{p}
\begin{cases}
&\square_{\widetilde{{g}}} \widetilde{v}=-\mathrm{e}^{\widetilde{\boldsymbol{\rho}}}\tilde{c}^2_s\mathrm{curl} \widetilde{\varpi}+\widetilde{Q},
\\
&\square_{\widetilde{{g}}} \widetilde{\boldsymbol{\rho}}=\widetilde{\mathcal{D}},
\\
& \mathbf{T} \widetilde{\varpi}=(\widetilde{\varpi} \cdot \nabla) \widetilde{v}.
\end{cases}
\end{equation}
Above, $\tilde{c}^2_s, \widetilde{\mathcal{D}}, \widetilde{Q}$, and $\tilde{{g}}$ are defined by
\begin{equation}\label{DDE}
\begin{split}
\tilde{c}^2_s:&= \frac{dp(\widetilde{\boldsymbol{\rho}})}{d\widetilde{\boldsymbol{\rho}}}, \quad \tilde{c}'_s:= \frac{d\tilde{c}_s}{d\widetilde{\boldsymbol{\rho}}}
\\
 \widetilde{Q}:&=2e^{\widetilde{\boldsymbol{\rho}}}\epsilon^i_{ab}\mathbf{T}\widetilde{v}^a(\widetilde{\varpi})^b-\left( 1+\tilde{c}_s^{-1}\tilde{c}'_s\right)\tilde{{g}}^{\alpha \beta} \partial_\alpha \widetilde{\boldsymbol{\rho}} \partial_\beta \widetilde{v},
\\
\widetilde{\mathcal{D}}:&=-3\tilde{c}_s^{-1}\tilde{c}'_s\tilde{{g}}^{\alpha \beta} \partial_\alpha \widetilde{\boldsymbol{\rho}} \partial_\beta \widetilde{\boldsymbol{\rho}}+2\textstyle{\sum_{1 \leq a < b \leq 3}} \big\{ \partial_a \widetilde{v}^a \partial_b \widetilde{v}^b-\partial_a (\widetilde{v})^b \partial_b (\widetilde{v})^a \big\}
\\
\widetilde{g}:&=-dt\otimes dt+\tilde{c}_s^{-2}(\boldsymbol{\rho}^y)\textstyle{\sum_{a=1}^{3}}\left( dx^a-\widetilde{v}^adt\right)\otimes\left( dx^a-\widetilde{v}^adt\right),
\end{split}
\end{equation}
and the specific vorticity $\widetilde{\varpi}$ satisfies
\begin{equation}\label{Wy}
  \widetilde{\varpi}=\bar{\rho}^{-1} \mathrm{e}^{ -\widetilde{\boldsymbol{\rho}}}\mathrm{curl}\widetilde{v}.
\end{equation}
Set
\begin{equation}\label{dvp}
  \widetilde{v}_{+}:=\widetilde{v}-\widetilde{\eta}, \quad \Delta \widetilde{\eta}:=-\mathrm{e}^{\widetilde{\boldsymbol{\rho}}}\mathrm{curl}\widetilde{\varpi}.
\end{equation}
By Proposition \ref{p1} again, we can find the solution of \eqref{DDE} satisfying
\begin{equation}\label{see0}
  \|\widetilde{v}\|_{H^s}+ \|\widetilde{\boldsymbol{\rho}}\|_{H^s}+ \| \widetilde{\varpi} \|_{H^{s_0}} \leq \epsilon_2,
\end{equation}
and
\begin{equation}\label{see1}
  \|d\widetilde{v}, d\widetilde{\boldsymbol{\rho}}\|_{L^2_tC^\delta_x}+ \| d\widetilde{v}, \partial \widetilde{v}_{+}, d\widetilde{\boldsymbol{\rho}} \|_{L^2_t \dot{B}^{s_0-2}_{2,\infty}} \leq \epsilon_2.
\end{equation}
Furthermore, the linear equation
\begin{equation}\label{312}
\begin{cases}
&\square_{ \widetilde{{g}}} \tilde{f}=\mathbf{T}\widetilde{G}+\widetilde{B},
\\
&\tilde{f}(t_0,\cdot)=\widetilde{f}_0, \ \mathbf{T}\tilde{f}(t_0,)=\widetilde{f}_1,
\end{cases}
\end{equation}
admits a solution $\widetilde{f} \in C([0,T],H^r)\times C^1([0,T],H^{r-1})$, and the following estimate holds:
	\begin{equation}\label{s31}
	\|\left< \partial \right>^k \widetilde{f}\|_{L^2_t L^\infty_x} \lesssim  \| \widetilde{f}_0\|_{H^r}+ \| \widetilde{f}_1\|_{H^{r-1}}+\| \widetilde{G}\|_{L^\infty_tH^{r-1}\cap L^1_t H^r}+ \| \widetilde{B} \|_{L^1_t H^{r-1}},\ \ k<r-1.
	\end{equation}
Consequently, $(\widetilde{v}+v_0(y), \widetilde{\boldsymbol{\rho}}+\boldsymbol{ \rho}_0(y), \widetilde{\varpi})$ is also a solution of \eqref{p}, and its initial data coincides with $(v_0, \boldsymbol{\rho}_0, \varpi_0)$ in $B(y,c+1)$. Giving the restrictions, for $y\in \mathbb{R}^3$,
\begin{equation}\label{RS}
  \left( \widetilde{v}+v_0(y) \right)|_{\mathrm{K}^y},
  \quad \left(\widetilde{\boldsymbol{\rho}}+\boldsymbol{\rho}_0(y) \right)|_{\mathrm{K}^y},
  \quad \widetilde{\varpi}|_{\mathrm{K}^y},
\end{equation}
where $\mathrm{K}^y=\left\{ (t,x): ct+|x-y| \leq c+1, |t| <1 \right\}$, then the restrictions \eqref{RS} solve \eqref{CEE}-\eqref{id} on $\mathrm{K}^y$. By finite speed of propagation, a smooth solution $(v, \boldsymbol{ \rho}, \varpi)$ solves \eqref{fc1} in $[-1,1] \times \mathbb{R}^3$, where $v, \boldsymbol{ \rho}$ and $\varpi$ is denoted by
\begin{equation}\label{vw}
\begin{split}
  v(t,x)  &= \widetilde{v}+v_0(y), \ \ \ (t,x) \in \mathrm{K}^y,
  \\
   \boldsymbol{ \rho}(t,x)  &=\widetilde{\boldsymbol{ \rho}}+\boldsymbol{ \rho}_0(y), \ \ \ (t,x) \in \mathrm{K}^y,
  \\
   \varpi(t,x)  &=\widetilde{\varpi}, \ \ \qquad \ \quad (t,x) \in \mathrm{K}^y.
\end{split}
\end{equation}
We also can obtain the initial information
\begin{equation*}
 (v ,\boldsymbol{ \rho}, \varpi)|_{t=0}=(v_0, \boldsymbol{ \rho}_0, \varpi_0).
\end{equation*}
Therefore, the function $(v, \boldsymbol{ \rho}, \varpi)$ defined in \eqref{vw} is the solution of \eqref{fc1} according to the uniqueness of solutions, i.e. Corollary \ref{cor}. By \eqref{see0} and \eqref{see1}, we can obtain that $(v, \boldsymbol{ \rho}, \varpi)$ satisfies \eqref{e9} and \eqref{303}, which is stated in Proposition \ref{p3}. It remains for us to prove \eqref{304} in Proposition \ref{p3}. Consider the solution ${f}$ for
\begin{equation}\label{312}
\begin{cases}
&\square_{ {{g}}} {f}=\mathbf{T}G+B,
\\
&{f}(0)=f_0, \ \mathbf{T}{f}(0)=f_1.
\end{cases}
\end{equation}
Take
\begin{equation*}
\begin{split}
  &\tilde{f}_0=\chi(x-y)f_0, \quad  \tilde{f}_1=\chi(x-y)f_1,
  \\
  & \tilde{G}=\chi(x-y)G, \quad  \tilde{B}=\chi(x-y)B.
\end{split}
\end{equation*}
By finite speed of propagation, we can conclude that $\tilde{f}=f$ in $\mathrm{K}_y$.
We write ${f}$ as
\begin{equation*}
{f}(t,x)=\textstyle{\sum_{y \in n^{-\frac{1}{2}}\mathbb{Z}^n}} \psi(x-y)\widetilde{f}(t,x).
\end{equation*}
For $1\leq r \leq 1+s_0$ and $k<r-1$, using \eqref{304}, it follows that
\begin{equation*}
\begin{split}
\| \left< \partial \right>^k {f} \|^2_{L^2_t L^\infty_x}  \leq & \textstyle{\sum_{y \in n^{-\frac{1}{2}}\mathbb{Z}^n}} \|\psi(x-y)  \left< \partial \right>^k \tilde{f}(x,t)  \|^2_{L^2_t L^\infty_x}
\\
%& \leq \textstyle{\sum_{y \in n^{-\frac{1}{2}}\mathbb{Z}^n}} \| \chi(x-y) (f_0,f_1)\|^2_{H^r \times H^{r-1}}
%\\
 \lesssim  & \textstyle{\sum_{y \in n^{-\frac{1}{2}}\mathbb{Z}^n}} \big(  \| \chi(x-y) f_0\|^2_{H^r \times H^{r}}+ \|\chi(x-y) f_1\|^2_{H^r \times H^{r-1}}
\\
& +\|\chi(x-y) G\|_{L^\infty_t H^{r-1}\cap L^1_t H^r}+\|\chi(x-y) B\|_{L^1_t H^{r-1}} \big),
\\
\lesssim  &   \| f_0\|^2_{H^r \times H^{r}}+ \| f_1\|^2_{H^r \times H^{r-1}}
 +\| G\|_{L^\infty_t H^{r-1}\cap L^1_t H^r}+\| B\|_{L^1_t H^{r-1}} .
\end{split}
\end{equation*}
At this stage, we have finished the proof.
\end{proof}

\section{A bootstrap argument}
Let $\mathbf{m}$ be a standard Minkowski metric satisfying
\begin{equation*}
  \mathbf{m}^{00}=-1, \quad \mathbf{m}^{ij}=\delta^{ij}, \quad i, j=1,2,3.
\end{equation*}
Taking $v=0, \boldsymbol{\rho}=0$ in $g$, the inverse matrix of the metric $g$ is
\begin{equation*}
g^{-1}(0)=
\left(
\begin{array}{cccc}
-1 & 0 & 0 & 0\\
0 & c^2_s(0) & 0& 0\\
0 & 0 & c^2_s(0) & 0
\\
0 & 0 & 0 &   c^2_s(0)
\end{array}
\right ).
\end{equation*}
By a linear change of coordinates which preserves $dt$, we may assume that $g^{\alpha \beta}(0)=\mathbf{m}^{\alpha \beta}$. Let $\chi$ be a smooth cut-off function supported in the region $B(0,3+2c) \times [-\frac{3}{2}, \frac{3}{2}]$, which equals to $1$ in the region $B(0,2+2c) \times [-1, 1]$. Set
\begin{equation*}
  \mathbf{g}=\chi(t,x)(g-g(0))+g(0).
\end{equation*}
Here
\begin{equation*}
  {g}=-dt\otimes dt+c_s^{-2}(\boldsymbol{\rho})\sum_{a=1}^{3}\left( dx^a-v^adt\right)\otimes\left( dx^a-v^adt\right).
\end{equation*}
Consider the following system
\begin{equation}\label{CS}
	\begin{cases}
	\square_{\mathbf{g}}v^i= -\mathrm{e}^{\boldsymbol{\rho}}c_s^2 \mathrm{curl} \varpi^i+Q^i,
\\
 \square_{\mathbf{g}} \boldsymbol{\rho} =  \mathcal{D},
    \\
    \mathbf{T}\varpi= (\varpi \cdot \nabla) v,
	\end{cases}
	\end{equation}
where  $Q^i, E^i$, and $\mathcal{D}$ are null forms relative to $\mathbf{g}$, which are defined by
\begin{equation}\label{CSN}
\begin{split}
 Q^i&:=2e^{\boldsymbol{\rho}} \epsilon^i_{ab} \mathbf{T} v^a W^b-\left( 1+c_s^{-1}c'_s\right)\mathbf{g}^{\alpha \beta} \partial_\alpha \boldsymbol{\rho} \partial_\beta v^i,
\\
\mathcal{D}&:=-3c_s^{-1}c'_s\mathbf{g}^{\alpha \beta} \partial_\alpha \boldsymbol{\rho} \partial_\beta \boldsymbol{\rho}+2 \textstyle{\sum_{1 \leq a < b \leq 3} }\big\{ \partial_a v^a \partial_b v^b-\partial_a v^b \partial_b v^a \big\}.
\end{split}
\end{equation}
We denote by $\mathcal{H}$ the family of smooth solutions $(v, \boldsymbol{\rho}, \varpi)$ to Equation \eqref{CS} for $t \in [-2,2]$, with the initial data $(v_0, \boldsymbol{\rho}_0, \varpi_0)$ supported in $B(0,2+c)$, and for which % is also a solution to \eqref{fc} for $t \in [-1,1]$.
\begin{equation}\label{401}
\|v_0\|_{H^s} + \|\boldsymbol{\rho}_0\|_{H^s}+\| \varpi_0\|_{H^{s_0}}  \leq \epsilon_3,
\end{equation}
\begin{equation}\label{402}
 \| v\|_{L^\infty_t H^{s}}+\| \boldsymbol{\rho}\|_{L^\infty_t H^{s}}+\| \varpi\|_{L^\infty_t H^{s_0}} \leq 2 \epsilon_2,
\end{equation}
\begin{equation}\label{403}
  \| d v, d \boldsymbol{\rho}, \partial v_{+}\|_{L^2_t C_x^\delta}+ \|d \boldsymbol{\rho}, \partial v_{+}, dv\|_{L^2_t \dot{B}^{s_0-2}_{\infty,2}} \leq 2 \epsilon_2.
\end{equation}
Therefore, the bootstrap argument can be stated as follows:
\begin{proposition}\label{p4}
Assume that \eqref{a0} holds. Then there is a continuous functional $G: \mathcal{H} \rightarrow \mathbb{R}^{+}$, satisfying $G(0)=0$, so that for each $(v, \boldsymbol{\rho}, \varpi) \in \mathcal{H}$ satisfying $G(v, \boldsymbol{\rho}) \leq 2 \epsilon_1$ the following hold:

$\mathrm{(1)}$ The function $v, \boldsymbol{\rho}$, and $\varpi$ satisfies $G(v, \boldsymbol{\rho}) \leq \epsilon_1$.

$\mathrm{(2)}$ The following estimate holds,
\begin{equation}\label{404}
\|v\|_{L^\infty_t H_x^{s}}+ \|\boldsymbol{\rho}\|_{L^\infty_t H_x^{s}}+ \|\varpi\|_{L^\infty_t H_x^{s_0}} \leq \epsilon_2,
\end{equation}
\begin{equation}\label{405}
\|d v, d \boldsymbol{\rho}\|_{L^2_t C^\delta_x}+\|\partial v_{+}, d \boldsymbol{\rho}, dv\|_{L^2_t \dot{B}^{s_0-2}_{\infty,2}} \leq \epsilon_2.
\end{equation}

$\mathrm{(3)}$ For $1 \leq k \leq s_0+1$, the equation \eqref{linear} endowed with the metric $\mathbf{g}$ is well-posed in $H^r \times H^{r-1}$, and the Strichartz estimates \eqref{304} hold.
\end{proposition}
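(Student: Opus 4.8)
The plan is the standard Smith--Tataru bootstrap. Define
\begin{equation*}
G(v,\boldsymbol{\rho}):=\|dv,d\boldsymbol{\rho}\|_{L^2_tC^\delta_x}+\|\partial v_{+},d\boldsymbol{\rho},dv\|_{L^2_t\dot{B}^{s_0-2}_{\infty,2}},
\end{equation*}
the time variable ranging over $[-2,2]$ and $v_{+}=v-\eta$ with $\eta$ as in \eqref{etad}; since $\varpi=\bar{\rho}^{-1}\mathrm{e}^{-\boldsymbol{\rho}}\mathrm{curl}v$, both $\eta$ and $v_{+}$ are determined by $(v,\boldsymbol{\rho})$. Every element of $\mathcal{H}$ is smooth, so $G$ is finite, $G(0)=0$, and $G$ depends continuously on the solution in the $C([-2,2];H^{s'})$ topology ($s'<s$) used to topologize $\mathcal{H}$ — which is all the continuity method downstream needs. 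Throughout, by finite speed of propagation the solution is supported in $\{|t|\le 1\}\times B(0,2+2c)$, where $\mathbf{g}=g$; hence the energy identities of Theorems~\ref{dv}--\ref{be}, the wave--transport system \eqref{fc1}, and Lemma~\ref{wte1} apply verbatim with $\mathbf{g}$ in place of $g$. It remains to show that the bootstrap hypothesis $G(v,\boldsymbol{\rho})\le 2\epsilon_1$, together with membership in $\mathcal{H}$ (so that \eqref{401}--\eqref{403} hold), forces (1)--(3).

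\emph{Step 1 (energy bound \eqref{404}).} Apply Theorem~\ref{be}. By Cauchy--Schwarz on $[-2,2]$, $\int\|dv,d\boldsymbol{\rho}\|_{L^\infty_x}\lesssim\|dv,d\boldsymbol{\rho}\|_{L^2_tL^\infty_x}\le G\le 2\epsilon_1$; moreover $\partial v=\partial v_{+}+\partial\eta$ and, by Lemma~\ref{yux} together with the embedding $\dot{H}^{s_0-\frac12}\hookrightarrow\dot{B}^{s_0-2}_{\infty,2}$ on $\mathbb{R}^3$, $\|\partial\eta\|_{L^\infty_t\dot{B}^{s_0-2}_{\infty,2}}\lesssim(1+\|\boldsymbol{\rho}\|_{L^\infty_tH^{s_0}})\|\varpi\|_{L^\infty_tH^{s_0}}\lesssim\epsilon_2$ (using \eqref{402}). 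Hence the exponent in \eqref{E7} is $O(\epsilon_1)$, and since $E(0)\lesssim\epsilon_3$ by \eqref{401} while $\epsilon_3\ll\epsilon_2\ll\epsilon_1\ll 1$, we obtain $E(t)\lesssim\epsilon_3\,\mathrm{e}^{C\epsilon_1}\le\epsilon_2$, which is \eqref{404}.

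\emph{Step 2 (linear estimate (3), then improved Strichartz \eqref{405} and (1)).} Given \eqref{402}, \eqref{403} and \eqref{404}, $\mathbf{g}$ has coefficients in $L^\infty_tH^s$ with $d\mathbf{g}\in L^2_tC^\delta_x\cap L^2_t\dot{B}^{s_0-2}_{\infty,2}$. One then constructs the family of null hypersurfaces of $\mathbf{g}$ and, via the self-contained characteristic energy estimates of Section~6 --- in particular the Hodge/Riesz-commutator estimates of Lemmas~\ref{ceR}, \ref{LPE}, \ref{ce} applied to the modified transport equation \eqref{W2} for $\mathrm{curl}\Omega$ --- establishes enough regularity of the optical function and the associated null frame; feeding this into the wave-packet parametrix construction of the Appendix (following \cite{ST}) yields, for each $1\le r\le s_0+1$ and each $t_0$, a solution of \eqref{linear} obeying \eqref{E0} and \eqref{SE1}. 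This is \eqref{304}, hence (3). With (3) in hand, improve the Strichartz norm of the nonlinear solution: for $\boldsymbol{\rho}$, localize $\square_{\mathbf{g}}\boldsymbol{\rho}=\mathcal{D}$ to $\square_{\mathbf{g}}\Delta_j\boldsymbol{\rho}=\Delta_j\mathcal{D}+[\square_{\mathbf{g}},\Delta_j]\boldsymbol{\rho}$, apply \eqref{SE1} with $r=s-s_0+1$, $k=0$, multiply by $2^{(s_0-1)j}$ and sum in $\ell^2_j$; using Lemmas~\ref{yx}, \ref{YR}, \ref{yux} and \eqref{404} this bounds $\|\partial\boldsymbol{\rho}\|_{L^2_t\dot{B}^{s_0-2}_{\infty,2}}$ by $C\big(\|\boldsymbol{\rho}_0,v_0\|_{H^s}+\|\varpi_0\|_{H^{s_0}}+\epsilon_2^2+\epsilon_1\epsilon_2\big)$. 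The same scheme applied to $\square_{\mathbf{g}}v_{+}^i=\mathbf{T}(\mathbf{T}\eta^i)+Q^i$, a source of the admissible form $\mathbf{T}G+B$, controls $\|\partial v_{+}\|_{L^2_t\dot{B}^{s_0-2}_{\infty,2}}$, and then $\partial v=\partial v_{+}+\partial\eta$ with the $\partial\eta$-estimate of Step~1 controls $\|\partial v\|_{L^2_t\dot{B}^{s_0-2}_{\infty,2}}$; the $L^2_tC^\delta_x$-pieces follow by combining the $L^2_tL^\infty_x$ bound ($k=0$, $r=s$ in \eqref{SE1}) with these homogeneous Besov bounds and the relations $\mathbf{T}v=-c_s^2\partial\boldsymbol{\rho}$, $\mathbf{T}\boldsymbol{\rho}=-\mathrm{div}\,v$ of \eqref{fc0}. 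Altogether $G(v,\boldsymbol{\rho})\lesssim\epsilon_3+\epsilon_2^2+\epsilon_1\epsilon_2$, which is $\le\epsilon_2$ (giving \eqref{405}, and with Step~1, \eqref{404}) and $\ll\epsilon_1$, hence $\le\epsilon_1$ (giving (1)), using once more $\epsilon_3\ll\epsilon_2\ll\epsilon_1\ll 1$.

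The whole difficulty is concentrated in Step~2: it is the Smith--Tataru parametrix transplanted to the acoustical metric, and its hypothesis --- the regularity of the null hypersurfaces --- rests on the characteristic energy estimate for $\varpi$, which cannot be obtained by elliptic estimates on null cones and instead requires recovering transport equations for $\partial\varpi$ through a Hodge decomposition and controlling the resulting Riesz-operator commutators $[\mathbf{R},v\cdot\nabla]$ (Lemmas~\ref{ceR}, \ref{ce}). Everything else is bookkeeping around the energy estimates of Section~2 and the scale separation $\epsilon_3\ll\epsilon_2\ll\epsilon_1\ll 1$.
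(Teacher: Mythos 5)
Your choice of the bootstrap functional is where the argument breaks. You set $G(v,\boldsymbol{\rho})$ equal to the Strichartz norm $\|dv,d\boldsymbol{\rho}\|_{L^2_tC^\delta_x}+\|\partial v_{+},d\boldsymbol{\rho},dv\|_{L^2_t\dot{B}^{s_0-2}_{\infty,2}}$, but that quantity is already bounded by $2\epsilon_2\ll\epsilon_1$ for every element of $\mathcal{H}$ by \eqref{403}, so the hypothesis $G\le 2\epsilon_1$ is vacuous and part (1) becomes content-free. The paper instead takes $G$ to be the null-foliation regularity functional \eqref{500}, $G(v,\boldsymbol{\rho})=\sup_{\theta,r}\vert\kern-0.25ex\vert\kern-0.25ex\vert d\phi_{\theta,r}-dt\vert\kern-0.25ex\vert\kern-0.25ex\vert_{s_0,2,\Sigma_{\theta,r}}$, and this is not a cosmetic difference: the hypothesis $G\le 2\epsilon_1$ is precisely what supplies \eqref{503}--\eqref{504}, i.e.\ the a priori control of $d\phi-dt$ on which every characteristic energy estimate of Section~6 depends (the terms $\|\partial d\phi\|_{L^2_tL^\infty}$ in Lemma~\ref{te1}, the factors $\vert\kern-0.25ex\vert\kern-0.25ex\vert d\phi-dt\vert\kern-0.25ex\vert\kern-0.25ex\vert_{s_0,2,\Sigma}$ on the right of \eqref{teE} in Lemma~\ref{te3}, and their descendants in Lemma~\ref{te20}, Corollary~\ref{Rfenjie} and Lemma~\ref{chi}). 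Those estimates are what prove Propositions~\ref{r1}--\ref{r2}, which in turn feed the wave-packet parametrix of the Appendix and hence (3).

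With your $G$, the sentence ``one then constructs the family of null hypersurfaces and, via the characteristic energy estimates of Section~6, establishes enough regularity of the optical function'' is circular: the characteristic energy estimates require a priori regularity of $\phi_{\theta,r}$ to control the change of variables $x_3\mapsto x_3-\phi(t,x')$ and the commutators with $\partial_t\phi\,\partial_3$ and $v^i\partial_i\phi\,\partial_3$, while the regularity of $\phi_{\theta,r}$ is exactly what you are trying to establish. The bootstrap on the foliation functional is the device that breaks this circle (assume $\vert\kern-0.25ex\vert\kern-0.25ex\vert d\phi-dt\vert\kern-0.25ex\vert\kern-0.25ex\vert\le 2\epsilon_1$, derive the characteristic energy estimates, run the transport equation for $\chi_{ab}$ and recover $\vert\kern-0.25ex\vert\kern-0.25ex\vert d\phi-dt\vert\kern-0.25ex\vert\kern-0.25ex\vert\lesssim\epsilon_2\le\epsilon_1$ via \eqref{602}), and the continuity of that $G$ along the geodesic flow is what makes the open--closed argument work. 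Your Steps~1 and the Littlewood--Paley/Duhamel bookkeeping in Step~2 are consistent with Theorem~\ref{be} and Proposition~\ref{r4}, but the proposal as written leaves the decisive input --- the self-improving estimate on the characteristic hypersurfaces --- unproven. To repair it, replace your $G$ by \eqref{500} (or at least include that term), and route the argument through Propositions~\ref{r1}, \ref{r2}, \ref{r3}, \ref{r4} in that order.
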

\begin{proof}[{proof of Proposition \ref{p1} by Proposition \ref{p4}}]
By using the standard continuity method in \cite{ST}, we can prove Proposition \ref{p1} by Proposition \ref{p4}.
\end{proof}
\section{Regularity of the characteristic hypersurface}
In this part, we will prove Proposition \ref{p4} by analysing the regularity of characteristic hypersurface and Strichartz estimates of linear wave equations, which is stated in Proposition \ref{r2}, Proposition \ref{r3}, and Proposition \ref{r4}. This well-known strategy is proposed by Smith-Tataru \cite{ST}. The functional $G(v, \boldsymbol{\rho}) \leq \epsilon_1$ will be defined later, and one can see \eqref{500} for details. Let $(v,\boldsymbol{\rho},\varpi) \in \mathcal{H}$, and the corresponding metric $ \mathbf{g}$ which equals the Minkowski metric for $t \in [-2, -\frac{3}{2}]$. Let $\Gamma_{\theta}$ be the flowout of this section under the Hamiltonian flow of $ \mathbf{g}$.  For each $\theta$, the null Lagrangian manifold $\Gamma_{\theta}$ is the graph of a null covector field given by $dr_{\theta}$, where $r_{\theta}$ is a smooth extension of $\theta \cdot x -t$, and that the level sets of $r_{\theta}$ are small perturbations of the level sets of the function $\theta \cdot x -t$ in a certain norm captured by $G$. We also let $\Sigma_{\theta,r}$ for $r \in \mathbb{R}$ denote the level sets of $r_{\theta}$. The characteristic hypersurface $\Sigma_{\theta,r}$ is thus the flowout of the set $\theta \cdot x=r-2$ along the null geodesic flow in the direction $\theta$ at $t=-2$.

Let us introduce an orthonormal sets of coordinates on $\mathbb{R}^3$ by setting $x_{\theta}=\theta \cdot x$. Let $x'_{\theta}$ be given orthonormal coordinates on the hyperplane prependicular to $\theta$, which then define coordinates on $\mathbb{R}^3$ by projection along $\theta$. Then $(t,x'_{\theta})$ induce the coordinates on $\Sigma_{\theta,r}$, and $\Sigma_{\theta,r}$ is given by
\begin{equation*}
  \Sigma_{\theta,r}=\left\{ (t,x): x_{\theta}-\phi_{\theta, r}=0  \right\}
\end{equation*}
for a smooth function $\phi_{\theta, r}(t,x'_{\theta})$. We now introduce two norms for functions defined on $[-2,2] \times \mathbb{R}^3$,
\begin{equation}\label{d0}
  \begin{split}
  &\vert\kern-0.25ex\vert\kern-0.25ex\vert u\vert\kern-0.25ex\vert\kern-0.25ex\vert_{s_0, \infty} = \sup_{-2 \leq t \leq 2} \sup_{0 \leq j \leq 1} \| \partial_t^j u(t,\cdot)\|_{H^{s_0-j}(\mathbb{R}^3)},
  \\
  & \vert\kern-0.25ex\vert\kern-0.25ex\vert  u\vert\kern-0.25ex\vert\kern-0.25ex\vert_{s_0,2} = \big( \sup_{0 \leq j \leq 1} \int^{2}_{-2} \| \partial_t^j u(t,\cdot)\|^2_{H^{s_0-j}(\mathbb{R}^3)} dt \big)^{\frac{1}{2}}.
  \end{split}
\end{equation}
The same notation applies for functions in $[-2,2] \times \mathbb{R}^3$. We denote
\begin{equation*}
  \vert\kern-0.25ex\vert\kern-0.25ex\vert f\vert\kern-0.25ex\vert\kern-0.25ex\vert_{s_0,2,\Sigma_{\theta,r}}=\vert\kern-0.25ex\vert\kern-0.25ex\vert f|_{\Sigma_{\theta,r}} \vert\kern-0.25ex\vert\kern-0.25ex\vert_{s_0,2},
\end{equation*}
where the right hand side is the norm of the restriction of $f$ to ${\Sigma_{\theta,r}}$, taken over the $(t,x'_{\theta})$ variablles used to parametrise ${\Sigma_{\theta,r}}$. Similarly, the notation
\begin{equation*}
  \|f\|_{H^{s_0}(\Sigma_{\theta,r})}
\end{equation*}
denotes the $H^{s_0}(\mathbb{R}^2)$ norm of $f$ restricted to the time $t$ slice of ${\Sigma_{\theta,r}}$ using the $x'_{\theta}$ coordinates on ${\Sigma^t_{\theta,r}}$.

We now set
\begin{equation}\label{500}
  G(v, \boldsymbol{\rho})= \sup_{\theta, r} \vert\kern-0.25ex\vert\kern-0.25ex\vert d \phi_{\theta,r}-dt\vert\kern-0.25ex\vert\kern-0.25ex\vert_{s_0,2,{\Sigma_{\theta,r}}}.
\end{equation}
\begin{proposition}\label{r1}
Let $(v, \boldsymbol{\rho}, \varpi) \in \mathcal{H}$ so that $G(v, \boldsymbol{\rho}) \leq 2 \epsilon_1$. Then
\begin{equation}\label{501}
  \vert\kern-0.25ex\vert\kern-0.25ex\vert  {\mathbf{g}}^{\alpha \beta}-\mathbf{m}^{\alpha \beta} \vert\kern-0.25ex\vert\kern-0.25ex\vert_{s_0,2,{\Sigma_{\theta,r}}} + \vert\kern-0.25ex\vert\kern-0.25ex\vert 2^{j}({\mathbf{g}}^{\alpha \beta}-S_j {\mathbf{g}}^{\alpha \beta}), d \Delta_j {\mathbf{g}}^{\alpha \beta}, 2^{-j} \partial_x \Delta_j d {\mathbf{g}}^{\alpha \beta}  \vert\kern-0.25ex\vert\kern-0.25ex\vert_{s_0-1,2,{\Sigma_{\theta,r}}} \lesssim \epsilon_2.
\end{equation}
\end{proposition}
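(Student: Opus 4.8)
The plan is to reduce the whole statement to bounds on $v$ and $\boldsymbol{\rho}$ and then to invoke the characteristic energy estimates that are the core of this section. First observe that, by Definition \ref{metricd} and the construction of $\mathbf{g}$, one may write $\mathbf{g}^{\alpha\beta}-\mathbf{m}^{\alpha\beta}=\chi(t,x)\,\Phi^{\alpha\beta}(v,\boldsymbol{\rho})$ with $\Phi^{\alpha\beta}$ a smooth function of $(v,\boldsymbol{\rho})$ vanishing at the origin. On a fixed time slice, the composition estimate Lemma \ref{jh0} and the product estimate Lemma \ref{cj}, together with the bootstrap bound \eqref{402}, give $\|\mathbf{g}^{\alpha\beta}-\mathbf{m}^{\alpha\beta}\|_{H^s_x}\lesssim\epsilon_2$; using \eqref{fc0} to trade $\partial_t v,\partial_t\boldsymbol{\rho}$ for $v\cdot\nabla v-c_s^2\partial\boldsymbol{\rho}$, $-\mathrm{div}\,v-v\cdot\nabla\boldsymbol{\rho}$ one also gets $\|\partial_t\mathbf{g}^{\alpha\beta}\|_{H^{s-1}_x}\lesssim\epsilon_2$ and $\|\partial_t^2\mathbf{g}^{\alpha\beta}\|_{H^{s-2}_x}\lesssim\epsilon_2$. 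Since $s>s_0$, these already control $\vert\kern-0.25ex\vert\kern-0.25ex\vert\mathbf{g}^{\alpha\beta}-\mathbf{m}^{\alpha\beta}\vert\kern-0.25ex\vert\kern-0.25ex\vert_{s_0,\infty}$, $\vert\kern-0.25ex\vert\kern-0.25ex\vert\mathbf{g}^{\alpha\beta}-\mathbf{m}^{\alpha\beta}\vert\kern-0.25ex\vert\kern-0.25ex\vert_{s_0,2}$ with room to spare, and, by Bernstein's inequality together with $2^j2^{-k}\le 1$ for $k>j$, likewise the frequency-localized quantities $2^j(\mathbf{g}^{\alpha\beta}-S_j\mathbf{g}^{\alpha\beta})=\sum_{k>j}2^j\Delta_k\mathbf{g}^{\alpha\beta}$, $d\Delta_j\mathbf{g}^{\alpha\beta}$ and $2^{-j}\partial_x\Delta_j d\mathbf{g}^{\alpha\beta}$ in the $\vert\kern-0.25ex\vert\kern-0.25ex\vert\cdot\vert\kern-0.25ex\vert\kern-0.25ex\vert_{s_0-1,\infty}$ and $\vert\kern-0.25ex\vert\kern-0.25ex\vert\cdot\vert\kern-0.25ex\vert\kern-0.25ex\vert_{s_0-1,2}$ norms.

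The substantive content is the passage from these spacetime bounds to the restrictions on the characteristic hypersurface $\Sigma_{\theta,r}$. A naive Sobolev trace loses half a derivative and is not admissible here, since only $s_0<s<\tfrac52$ is assumed and $s-s_0$ may be arbitrarily small; the gain comes precisely from the fact that $\Sigma_{\theta,r}$ is characteristic for $\mathbf{g}$ and that $v^i,\boldsymbol{\rho}$ solve the wave equations $\square_{\mathbf{g}}v^i=-\mathrm{e}^{\boldsymbol{\rho}}c_s^2\,\mathrm{curl}\,\varpi^i+Q^i$ and $\square_{\mathbf{g}}\boldsymbol{\rho}=\mathcal{D}$ from \eqref{CS}--\eqref{CSN}. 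Note that \eqref{501} involves only $v$ and $\boldsymbol{\rho}$, so only the wave-equation part of the characteristic energy estimates is needed, with $\mathrm{curl}\,\varpi\in H^{s_0-1}$ entering merely as an $H^{s_0-1}$ source. I would therefore apply the characteristic ($L^2$-flux) energy estimate along $\Sigma_{\theta,r}$ — the estimate constructed below in Propositions \ref{r2}--\ref{r4} from the wave--transport structure and from Lemma \ref{ceR}, Lemma \ref{ce}, Lemma \ref{yx} — to $v^i,\boldsymbol{\rho}$ and to their Littlewood--Paley pieces $\Delta_j v^i,\Delta_j\boldsymbol{\rho}$, which satisfy $\square_{\mathbf{g}}\Delta_j u=\Delta_j(\mathrm{r.h.s.})+[\square_{\mathbf{g}},\Delta_j]u$ with the commutator controlled by Lemma \ref{yx}. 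The sources $\mathrm{curl}\,\varpi$, $Q^i$, $\mathcal{D}$ are bounded in the relevant norms by Lemma \ref{yux} and \eqref{402}--\eqref{403}, while the smallness $G(v,\boldsymbol{\rho})\le 2\epsilon_1$ from \eqref{500} controls the geometry of $\Sigma_{\theta,r}$ (its graph function $\phi_{\theta,r}$ being an $H^{s_0}$-type perturbation of $t$) and hence the change-of-variables errors in restricting to it. This yields $\vert\kern-0.25ex\vert\kern-0.25ex\vert v^i,\boldsymbol{\rho}\vert\kern-0.25ex\vert\kern-0.25ex\vert_{s_0,2,\Sigma_{\theta,r}}\lesssim\epsilon_2$, and the same applied to the LP pieces gives the corresponding $\vert\kern-0.25ex\vert\kern-0.25ex\vert\cdot\vert\kern-0.25ex\vert\kern-0.25ex\vert_{s_0-1,2,\Sigma_{\theta,r}}$ bounds for $u\in\{v^i,\boldsymbol{\rho}\}$.

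To return from $v,\boldsymbol{\rho}$ to $\mathbf{g}^{\alpha\beta}=\mathbf{m}^{\alpha\beta}+\chi\,\Phi^{\alpha\beta}(v,\boldsymbol{\rho})$ I would use a Moser-type composition estimate adapted to the norms $\vert\kern-0.25ex\vert\kern-0.25ex\vert\cdot\vert\kern-0.25ex\vert\kern-0.25ex\vert_{s_0,2,\Sigma_{\theta,r}}$ and $\vert\kern-0.25ex\vert\kern-0.25ex\vert\cdot\vert\kern-0.25ex\vert\kern-0.25ex\vert_{s_0,\infty}$, the $L^\infty$ input being supplied by $\|dv,d\boldsymbol{\rho}\|_{L^2_tC^\delta_x}\le 2\epsilon_2$ in \eqref{403} and by the bootstrap — this is the analogue on $\Sigma_{\theta,r}$ of Lemmas \ref{jh0}--\ref{cj} and gives the first term of \eqref{501}. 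For the second term one additionally notes that frequency localization in $\mathbb{R}^3$ does not commute with restriction to $\Sigma_{\theta,r}$; the resulting error is governed again by the regularity of $\Sigma_{\theta,r}$, i.e. by $G(v,\boldsymbol{\rho})\le 2\epsilon_1$, and is absorbed into $\epsilon_2$ exactly as in Smith--Tataru \cite{ST}.

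The main obstacle is the characteristic energy estimate itself. At the regularity level $s_0>2$ one must commute $\square_{\mathbf{g}}$ with about $s_0-1$ derivatives, while $d\mathbf{g}$ only lies in $H^{s-1}$ with $s-1<\tfrac32$, so the commutator and product estimates are borderline; handling them requires the null-form structure of $Q,\mathcal{D}$ (Lemma \ref{yux}) and the sharp commutator bound Lemma \ref{yx}, and crucially the flux identity on $\Sigma_{\theta,r}$, which — unlike a Cauchy-slice energy estimate — genuinely uses that $\Sigma_{\theta,r}$ is characteristic for $\mathbf{g}$. This is exactly the point at which the half derivative that a naive trace would lose is recovered.
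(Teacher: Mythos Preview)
Your overall architecture is right: bound $v,\boldsymbol{\rho}$ in $\vert\kern-0.25ex\vert\kern-0.25ex\vert\cdot\vert\kern-0.25ex\vert\kern-0.25ex\vert_{s_0,2,\Sigma_{\theta,r}}$, then pass to $\mathbf{g}^{\alpha\beta}-\mathbf{m}^{\alpha\beta}=\chi\,\Phi^{\alpha\beta}(v,\boldsymbol{\rho})$ by a composition/product estimate adapted to those norms, and handle the frequency-localized pieces separately. But the engine you propose for the first step is both circular and the wrong tool.

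The circularity is that you invoke ``the estimate constructed below in Propositions \ref{r2}--\ref{r4}''. Proposition \ref{r2} (and, through it, the subsequent chain) explicitly uses Proposition \ref{r1} in its proof, so you cannot appeal to it here. More importantly, the paper does \emph{not} obtain the characteristic estimate for $v,\boldsymbol{\rho}$ from the wave equations $\square_{\mathbf{g}}v^i=\cdots$, $\square_{\mathbf{g}}\boldsymbol{\rho}=\cdots$ at all. It uses the first-order symmetric hyperbolic formulation of Lemma \ref{sh}: with $U=(\boldsymbol{\rho},v)^{\mathrm T}$ solving $U_t+\sum_iA_i(U)U_{x_i}=0$, Lemma \ref{te1} gives
\[
\vert\kern-0.25ex\vert\kern-0.25ex\vert U\vert\kern-0.25ex\vert\kern-0.25ex\vert_{s_0,2,\Sigma}\lesssim \|dU\|_{L^2_tL^\infty_x}+\|U\|_{L^\infty_tH^{s_0}_x},
\]
obtained by the elementary device of straightening $\Sigma$ via $x_3\to x_3-\phi(t,x')$ (this is where $G(v,\boldsymbol{\rho})\le 2\epsilon_1$ enters, through \eqref{504}) and running the standard $H^{s_0}$ energy estimate for the transformed symmetric hyperbolic system. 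No wave-equation flux identity, no commutation of $\square_{\mathbf{g}}$ with $s_0-1$ derivatives, no null-form structure of $Q,\mathcal D$, and no $\mathrm{curl}\,\varpi$ source are needed. The ``main obstacle'' you describe---borderline commutators for $\square_{\mathbf{g}}$ at regularity $s_0$---is simply bypassed by working at first order.

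From $\vert\kern-0.25ex\vert\kern-0.25ex\vert v,\boldsymbol{\rho}\vert\kern-0.25ex\vert\kern-0.25ex\vert_{s_0,2,\Sigma}\lesssim\epsilon_2$ (Corollary \ref{vte}), the passage to $\mathbf{g}^{\alpha\beta}-\mathbf{m}^{\alpha\beta}$ is exactly as you say, via the algebra property of $\vert\kern-0.25ex\vert\kern-0.25ex\vert\cdot\vert\kern-0.25ex\vert\kern-0.25ex\vert_{s_0,2,\Sigma}$ in Lemma \ref{te2}. For the frequency-localized second term in \eqref{501} the paper again stays with the hyperbolic system: Lemma \ref{fre} applies Lemma \ref{te1} to the multipliers $\Delta_0=S_j$, $2^{-j}\partial_xS_j$ and to $\sum_{m\ge j}\Delta_m$ acting on $U$, with the only extra input being the commutator $[\Delta_0,A_i(U)]\partial_iU$. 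So replace your wave-equation flux argument by Lemma \ref{te1}/Lemma \ref{fre}; the rest of your outline then goes through.
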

\begin{proposition}\label{r2}
Let $(v, \boldsymbol{\rho}, \varpi) \in \mathcal{H}$ so that $G(v, \boldsymbol{\rho}) \leq 2 \epsilon_1$. Then

\begin{equation}\label{G}
G(v, \boldsymbol{\rho}) \lesssim \epsilon_2.
\end{equation}
Furthermore, for each $t$ it holds that
\begin{equation}\label{502}
  \|d \phi_{\theta,r}(t,\cdot)-dt \|_{C^{1,\delta}_{x'}} \lesssim \epsilon_2+  \| d {\mathbf{g}}(t,\cdot) \|_{C^\delta_x(\mathbb{R}^3)}.
\end{equation}
\end{proposition}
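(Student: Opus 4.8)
## Proof Strategy for Proposition \ref{r2}

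The plan is to estimate the characteristic defect $d\phi_{\theta,r} - dt$ by viewing the function $r_\theta$ (equivalently $\phi_{\theta,r}$) as arising from the eikonal equation $\mathbf{g}^{\alpha\beta}\partial_\alpha r_\theta \partial_\beta r_\theta = 0$ with initial data $r_\theta = \theta\cdot x - t$ on the slice $\{t = -2\}$, where $\mathbf{g}$ agrees with the Minkowski metric. The characteristic hypersurface $\Sigma_{\theta,r}$ is the graph $\{x_\theta = \phi_{\theta,r}(t,x'_\theta)\}$, and differentiating the defining relation $r_\theta(t, \phi_{\theta,r}, x'_\theta) = r-2$ expresses $d\phi_{\theta,r}$ in terms of $dr_\theta$ restricted to $\Sigma_{\theta,r}$. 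So the first reduction is to control $\vert\kern-0.25ex\vert\kern-0.25ex\vert dr_\theta - (\theta\,dx - dt)\vert\kern-0.25ex\vert\kern-0.25ex\vert_{s_0,2,\Sigma_{\theta,r}}$, i.e.\ the closeness of the null covector field to its flat value, measured in the anisotropic $\vert\kern-0.25ex\vert\kern-0.25ex\vert\cdot\vert\kern-0.25ex\vert\kern-0.25ex\vert_{s_0,2}$ norm along the hypersurface itself.

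First I would set up transport equations for the derivatives of $r_\theta$ along the null geodesic flow: differentiating the eikonal equation once gives $\mathbf{g}^{\alpha\beta}\partial_\alpha r_\theta \,\partial_\beta(\partial_\mu r_\theta) = -\tfrac12 (\partial_\mu \mathbf{g}^{\alpha\beta})\partial_\alpha r_\theta\partial_\beta r_\theta$, which is a transport equation along $L = \mathbf{g}^{\alpha\beta}\partial_\alpha r_\theta \partial_\beta$ with source quadratic in $dr_\theta$ and linear in $d\mathbf{g}$. Integrating this along the flow (which moves transversally off $\Sigma_{\theta,r}$ only in a controlled way, since $L$ is tangent to $\Sigma_{\theta,r}$) and using Proposition \ref{r1} to bound the coefficient quantities $\vert\kern-0.25ex\vert\kern-0.25ex\vert \mathbf{g}^{\alpha\beta}-\mathbf{m}^{\alpha\beta}\vert\kern-0.25ex\vert\kern-0.25ex\vert_{s_0,2,\Sigma_{\theta,r}} \lesssim \epsilon_2$ together with the Littlewood--Paley-localized pieces $\vert\kern-0.25ex\vert\kern-0.25ex\vert d\Delta_j\mathbf{g}^{\alpha\beta}\vert\kern-0.25ex\vert\kern-0.25ex\vert_{s_0-1,2,\Sigma_{\theta,r}}$, one obtains a bound of the form
\begin{equation*}
\vert\kern-0.25ex\vert\kern-0.25ex\vert dr_\theta - (\theta\,dx - dt)\vert\kern-0.25ex\vert\kern-0.25ex\vert_{s_0,2,\Sigma_{\theta,r}} \lesssim \epsilon_2 + G(v,\boldsymbol{\rho})\cdot(\text{small}),
\end{equation*}
where the bootstrap hypothesis $G(v,\boldsymbol{\rho}) \leq 2\epsilon_1$ lets one absorb the second term since $\epsilon_2 \ll \epsilon_1$ is false --- so instead the argument must be a genuine continuity/bootstrap closure: the nonlinear self-interaction term is multiplied by $\epsilon_1$ (via $G \leq 2\epsilon_1$) and a constant, which is NOT small, so the product estimates (Lemma \ref{ps}, Lemma \ref{cj}) and the commutator bounds (Lemma \ref{yx}, Lemma \ref{ce}) must be arranged so the genuinely quadratic-in-$dr_\theta$ contributions come with an extra power of the flow time or an extra smallness from the support/cutoff structure of $\mathbf{g}$, yielding $G(v,\boldsymbol{\rho}) \lesssim \epsilon_2$ outright. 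This gives \eqref{G}, and then \eqref{502} follows by the same transport equation but now estimating $dr_\theta$ in $C^{1,\delta}_{x'}$ on a fixed time slice: Sobolev embedding $H^{s_0-1}(\mathbb{R}^2) \hookrightarrow C^\delta$ in two dimensions (valid since $s_0 - 1 > 1 = \tfrac{2}{2}$... actually $s_0-1 > 1$ needs $s_0 > 2$, which holds) converts the $\vert\kern-0.25ex\vert\kern-0.25ex\vert\cdot\vert\kern-0.25ex\vert\kern-0.25ex\vert_{s_0,2}$ control of one more derivative into pointwise $C^{1,\delta}$ control, with the inhomogeneous term producing exactly the $\|d\mathbf{g}(t,\cdot)\|_{C^\delta_x}$ on the right.

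The main obstacle I expect is the second-derivative estimate for $r_\theta$ needed to reach the $s_0$ (rather than $s_0-1$) regularity in the $\vert\kern-0.25ex\vert\kern-0.25ex\vert\cdot\vert\kern-0.25ex\vert\kern-0.25ex\vert_{s_0,2}$ norm: differentiating the eikonal equation twice produces a transport equation for $\partial^2 r_\theta$ whose source contains $\partial^2\mathbf{g}$ contracted with $dr_\theta$, and $\partial^2\mathbf{g}$ is only at the borderline regularity $H^{s_0-2}$ on slices, hence $\dot H^{s_0-2}$-type norms along $\Sigma_{\theta,r}$ require the sharp product/commutator estimates (Lemma \ref{yx}, Lemma \ref{ce}, Lemma \ref{ceR}) and a careful dyadic summation using the three quantities in \eqref{501}; losing even a logarithm here would break the bootstrap. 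The key technical device is to Littlewood--Paley decompose $r_\theta$ and $\mathbf{g}$, run the transport estimate frequency-by-frequency using that $L$ differs from $\partial_t - \theta\cdot\nabla$ by an $O(\epsilon_2)$ perturbation, and then reassemble via the $\ell^2_j$ structure of the $\dot B^{s_0-2}_{\infty,2}$ and $\dot H^{s_0-1}$ pieces, exactly mirroring the curvature-flux argument of Smith--Tataru but with the source term now dictated by the acoustic metric $\mathbf{g}$ built from $(v,\boldsymbol{\rho})$ via Definition \ref{metricd}. Once the transport estimates close, \eqref{G} and \eqref{502} are immediate.
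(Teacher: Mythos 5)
Your proposal correctly identifies the overall shape of the argument (a transport estimate along the null generator of $\Sigma_{\theta,r}$, closed by a bootstrap and converted to $C^{1,\delta}_{x'}$ via Sobolev embedding on the two-dimensional slices), but the mechanism you describe --- differentiating the eikonal equation twice and running frequency-localized transport estimates directly on $\partial^2 r_\theta$ --- has a genuine gap. The source of that transport equation contains $\partial^2\mathbf{g}$ contracted with $dr_\theta$, and a generic second derivative of $\mathbf{g}$ restricted to $\Sigma$ is \emph{not} controllable in $L^1_tH^{s_0-1}_{x'}(\Sigma)$ at the available regularity ($\mathbf{g}\in H^s$ on slices with $s<s_0+1$); no dyadic reshuffling fixes this. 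The paper avoids the problem by working with the null second fundamental form $\chi_{ab}=\langle D_{e_a}l,e_b\rangle_{\mathbf g}$ and its Raychaudhuri equation $l(\chi_{ab})=\langle R(l,e_a)l,e_b\rangle_{\mathbf g}+\mathrm{l.o.t.}$, and then exploiting the structure of the specific curvature component that appears: Corollary \ref{Rfenjie} writes $\langle R(l,e_a)l,e_b\rangle|_\Sigma=l(f_2)+f_1$, where the dangerous second derivatives are reorganized as a perfect $l$-derivative $l(f_2)$ (absorbed into the transported quantity $\chi_{ab}-f_2$) plus a remainder $f_1$ controlled by $\square_{\mathbf g}\mathbf{g}_{\mu\nu}$, i.e.\ by the equations of motion. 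The bound \eqref{G} then follows from $\vert\kern-0.25ex\vert\kern-0.25ex\vert l-(\partial_t-\partial_{x_3})\vert\kern-0.25ex\vert\kern-0.25ex\vert_{s_0,2,\Sigma}\lesssim\epsilon_2$ via the algebraic relation \eqref{602}, not from an estimate on $dr_\theta$ itself.

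The second, and for this paper the decisive, omission is the vorticity. For the acoustic metric one has $\square_{\mathbf g}\mathbf{g}_{\mu\nu}\sim \mathrm{e}^{\boldsymbol{\rho}}c_s^2\,\mathrm{curl}\,\varpi+(d\mathbf g)^2$, so the term $f_1$ in the curvature decomposition is controlled only if one first proves the characteristic flux bound $\|\mathrm{curl}\,\varpi\|_{L^2_tH^{s_0-1}_{x'}(\Sigma)}\lesssim\epsilon_2$ (Lemma \ref{te20}). This is the main new technical input relative to Smith--Tataru, and it requires the modified transport equation for $\mathrm{curl}\,\Omega$, the Hodge decomposition $\partial\Omega=L(\mathrm{curl}\,\Omega)+H(\mathrm{div}\,\Omega)$, and the Riesz commutator estimates of Lemmas \ref{ceR} and \ref{te3}. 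Your proposal never mentions $\varpi$, so even granting the null-frame machinery, the source term in the $\chi$ equation would remain unbounded in your scheme. Your treatment of \eqref{502} and the role of the epsilon hierarchy (note $\epsilon_2\ll\epsilon_1$ does hold by \eqref{a0}, so the bootstrap closes in the standard way) are otherwise consistent with the paper.
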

%For the proof of Strichartz estimates of linear wave equation, we referred to Proposition 7.1 in \cite{ST}. Here, the metric $g$ and the regularity of null hypersurfaces is the same. Therefore, we could follow the result directly.
\subsection{Energy estimates on the characteristic hypersurface}
Let $(v, \boldsymbol{\rho}, \varpi) \in \mathcal{H}$. Then the following estimates hold:
\begin{equation}\label{5021}
   \vert\kern-0.25ex\vert\kern-0.25ex\vert v,\boldsymbol{\rho}\vert\kern-0.25ex\vert\kern-0.25ex\vert_{s,\infty}+\vert\kern-0.25ex\vert\kern-0.25ex\vert \varpi\vert\kern-0.25ex\vert\kern-0.25ex\vert_{s_0,\infty} +\|dv,d\boldsymbol{\rho}, \partial v_{+}\|_{L^2_tC^\delta_x}+ \|d\boldsymbol{\rho}, \partial v_{+}, d v\|_{L^2_t \dot{B}^{s_0-2}_{\infty,2}} \lesssim \epsilon_2.
\end{equation}
It suffices to prove Proposition \ref{r1} and Proposition \ref{r2} for $\theta=(0,0,1)$ and $r=0$. We fix this choice, and suppress $\theta$ and $r$ in our notation. We use $(x_3, x')$ instead of $(x_{\theta}, x'_{\theta})$. Then $\Sigma$ is defined by
\begin{equation*}
  \Sigma=\left\{ x_3- \phi(t,x')=0 \right\}.
\end{equation*}
The hypothesis $G \leq 2 \epsilon_1$ implies that
\begin{equation}\label{503}
 \vert\kern-0.25ex\vert\kern-0.25ex\vert d \phi_{\theta,r}(t,\cdot)-dt \vert\kern-0.25ex\vert\kern-0.25ex\vert_{s_0,2, \Sigma} \leq 2 \epsilon_1.
\end{equation}
By using Sobolev imbedding, we have
\begin{equation}\label{504}
  \|d \phi(t,x')-dt \|_{L^2_t C^{1,\delta}_{x'}} + \| \partial_t d \phi(t,x')\|_{L^2_t C^{\delta}_{x'}} \lesssim \epsilon_1.
\end{equation}
Let us now introduce two lemmas in Smith-Tataru's paper \cite{ST}.
\begin{Lemma}\label{te0}\cite{ST}
Let $\tilde{h}(t,x)=h(t,x',x_3+\phi(t,x'))$. Then we have
\begin{equation*}
  \vert\kern-0.25ex\vert\kern-0.25ex\vert \tilde{h}\vert\kern-0.25ex\vert\kern-0.25ex\vert_{s_0,\infty}\lesssim \vert\kern-0.25ex\vert\kern-0.25ex\vert h\vert\kern-0.25ex\vert\kern-0.25ex\vert_{s_0,\infty}, \quad \|d\tilde{h}\|_{L^2_tL^\infty}\lesssim \|d h\|_{{L^2_tL^\infty}}, \quad  \|\tilde{h}\|_{H^{s_0}_{x}}\lesssim \|h\|_{H^{s_0}_{x}}.
\end{equation*}
\end{Lemma}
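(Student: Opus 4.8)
Lemma~\ref{te0}: with $\tilde h(t,x)=h(t,x',x_3+\phi(t,x'))$, one has $\vert\kern-0.25ex\vert\kern-0.25ex\vert \tilde h\vert\kern-0.25ex\vert\kern-0.25ex\vert_{s_0,\infty}\lesssim \vert\kern-0.25ex\vert\kern-0.25ex\vert h\vert\kern-0.25ex\vert\kern-0.25ex\vert_{s_0,\infty}$, $\|d\tilde h\|_{L^2_tL^\infty}\lesssim\|dh\|_{L^2_tL^\infty}$, and $\|\tilde h\|_{H^{s_0}_x}\lesssim\|h\|_{H^{s_0}_x}$.
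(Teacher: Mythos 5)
Your proposal contains no proof: after restating the lemma you stop, so every one of the three inequalities is left unjustified. (For what it is worth, the paper does not prove this lemma either — it is imported from Smith--Tataru \cite{ST} — but a blind proof attempt still has to supply an argument.)

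What is actually needed, and what you would have to check, is the following. For each fixed $t$ the map $\Phi_t(x',x_3)=(x',x_3+\phi(t,x'))$ is a volume-preserving bi-Lipschitz change of variables (Jacobian identically $1$), so the $L^2_x$ and $L^\infty_x$ bounds are immediate; the chain rule gives $\partial_3\tilde h=(\partial_3 h)\circ\Phi_t$, $\partial_{x'_a}\tilde h=(\partial_{x'_a}h)\circ\Phi_t+\partial_{x'_a}\phi\,(\partial_3 h)\circ\Phi_t$ and $\partial_t\tilde h=(\partial_t h)\circ\Phi_t+\partial_t\phi\,(\partial_3 h)\circ\Phi_t$, and the estimate $\|d\tilde h\|_{L^2_tL^\infty}\lesssim\|dh\|_{L^2_tL^\infty}$ then follows from the uniform bound $\|d\phi-dt\|_{L^\infty}\lesssim\epsilon_1$ coming from \eqref{504}. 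The genuinely nontrivial point, which your proposal does not even flag, is the boundedness of the composition operator $h\mapsto h\circ\Phi_t$ on the fractional space $H^{s_0}_x$ with $s_0\in(2,\tfrac52)$: since $\phi$ has only limited ($H^{s_0}$-type, hence $C^{1,\delta}$) regularity, one must invoke a composition estimate for Sobolev functions under $C^{1,\delta}$ diffeomorphisms (interpolating between the integer cases $L^2$ and $H^2$/$H^3$, or using a paraproduct argument), together with the smallness of $d\phi-dt$ to control the terms where derivatives land on $\phi$. The $\vert\kern-0.25ex\vert\kern-0.25ex\vert\cdot\vert\kern-0.25ex\vert\kern-0.25ex\vert_{s_0,\infty}$ bound then combines the $H^{s_0-j}$ estimates for $\partial_t^j\tilde h$, $j=0,1$, using the expansion of $\partial_t\tilde h$ above and the product estimate of Lemma \ref{cj} to absorb the factor $\partial_t\phi$. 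None of this appears in your submission, so the proof must be regarded as entirely missing.
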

\begin{Lemma}\label{te2}\cite{ST}
For $r\geq 1$, we have
\begin{equation*}
\begin{split}
  \sup_{t\in[-2,2]} \| f\|_{H^{r-\frac{1}{2}}(\mathbb{R}^n)} & \lesssim \vert\kern-0.25ex\vert\kern-0.25ex\vert f \vert\kern-0.25ex\vert\kern-0.25ex\vert_{r,2},
  \\
  \sup_{t\in[-2,2]} \| f\|_{H^{r-\frac{1}{2}}(\Sigma^t)} & \lesssim \vert\kern-0.25ex\vert\kern-0.25ex\vert f \vert\kern-0.25ex\vert\kern-0.25ex\vert_{r,2,\Sigma}.
\end{split}
\end{equation*}
If $r> \frac{n+1}{2}$, then
\begin{equation*}
  \vert\kern-0.25ex\vert\kern-0.25ex\vert hf\vert\kern-0.25ex\vert\kern-0.25ex\vert_{r,2}\lesssim  \vert\kern-0.25ex\vert\kern-0.25ex\vert h\vert\kern-0.25ex\vert\kern-0.25ex\vert_{r,2} \vert\kern-0.25ex\vert\kern-0.25ex\vert f\vert\kern-0.25ex\vert\kern-0.25ex\vert_{r,2}.
\end{equation*}
Similarly, if $r>\frac{n}{2}$, then
\begin{equation*}
  \vert\kern-0.25ex\vert\kern-0.25ex\vert hf\vert\kern-0.25ex\vert\kern-0.25ex\vert_{r,2,\Sigma}\lesssim  \vert\kern-0.25ex\vert\kern-0.25ex\vert h\vert\kern-0.25ex\vert\kern-0.25ex\vert_{r,2,\Sigma} \vert\kern-0.25ex\vert\kern-0.25ex\vert f\vert\kern-0.25ex\vert\kern-0.25ex\vert_{r,2,\Sigma}.
\end{equation*}
\end{Lemma}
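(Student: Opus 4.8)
The plan is to derive the four inequalities from a single one‑dimensional‑in‑time trace estimate together with the product estimates of Lemmas~\ref{cj} and~\ref{ps}, after noting that the assertions on $\Sigma$ are nothing but the Euclidean assertions in one fewer space dimension. Indeed, by the definition of the norms in~\eqref{d0} one has $\vert\kern-0.25ex\vert\kern-0.25ex\vert f\vert\kern-0.25ex\vert\kern-0.25ex\vert_{r,2,\Sigma}=\vert\kern-0.25ex\vert\kern-0.25ex\vert f|_{\Sigma}\vert\kern-0.25ex\vert\kern-0.25ex\vert_{r,2}$, the right‑hand norm being that of the function $(t,x')\mapsto f(t,x',\phi(t,x'))$ on $[-2,2]\times\mathbb{R}^{n-1}$, and restriction respects products, $(hf)|_{\Sigma}=(h|_{\Sigma})\,(f|_{\Sigma})$. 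Hence the bound for $\|f\|_{H^{r-\frac12}(\Sigma^t)}$ and the product estimate on $\Sigma$ follow verbatim from the corresponding flat estimates once those are proved with $n$ replaced by $n-1$; note that $r>\tfrac n2$ is precisely $r>\tfrac{(n-1)+1}{2}$, the threshold the flat product estimate will require in dimension $n-1$.

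First I would prove the abstract trace lemma: if $u\in L^2\big([-2,2];H^r(\mathbb{R}^m)\big)$ with $\partial_t u\in L^2\big([-2,2];H^{r-1}(\mathbb{R}^m)\big)$, then $u\in C\big([-2,2];H^{r-\frac12}(\mathbb{R}^m)\big)$ and $\sup_t\|u(t)\|_{H^{r-\frac12}}\lesssim \vert\kern-0.25ex\vert\kern-0.25ex\vert u\vert\kern-0.25ex\vert\kern-0.25ex\vert_{r,2}$. This is the standard computation: in Fourier variables, $\tfrac{d}{dt}\big(\langle\xi\rangle^{2r-1}|\hat u(t,\xi)|^2\big)\le \langle\xi\rangle^{2r}|\hat u|^2+\langle\xi\rangle^{2r-2}|\partial_t\hat u|^2$, and integrating in $\xi$ gives $\big|\tfrac{d}{dt}\|u(t)\|_{H^{r-\frac12}}^2\big|\le \|u(t)\|_{H^r}^2+\|\partial_t u(t)\|_{H^{r-1}}^2$. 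Since $\int_{-2}^2\|u\|_{H^{r-\frac12}}^2\,dt\le\int_{-2}^2\|u\|_{H^r}^2\,dt\le \vert\kern-0.25ex\vert\kern-0.25ex\vert u\vert\kern-0.25ex\vert\kern-0.25ex\vert_{r,2}^2$, there is a time $t_0$ with $\|u(t_0)\|_{H^{r-\frac12}}^2\le\tfrac14\vert\kern-0.25ex\vert\kern-0.25ex\vert u\vert\kern-0.25ex\vert\kern-0.25ex\vert_{r,2}^2$, and integrating from $t_0$ yields $\|u(t)\|_{H^{r-\frac12}}^2\le\tfrac14\vert\kern-0.25ex\vert\kern-0.25ex\vert u\vert\kern-0.25ex\vert\kern-0.25ex\vert_{r,2}^2+\int_{-2}^2\big(\|u\|_{H^r}^2+\|\partial_t u\|_{H^{r-1}}^2\big)\,dt\lesssim \vert\kern-0.25ex\vert\kern-0.25ex\vert u\vert\kern-0.25ex\vert\kern-0.25ex\vert_{r,2}^2$ for all $t$; continuity in $t$ is read off the same identity. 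Applying this with $m=n$ gives the first inequality, and with $m=n-1$ applied to $f|_{\Sigma}$ gives the second.

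For the algebra estimates I would use that $\vert\kern-0.25ex\vert\kern-0.25ex\vert hf\vert\kern-0.25ex\vert\kern-0.25ex\vert_{r,2}^2=\max\big(\int_{-2}^2\|hf\|_{H^r}^2\,dt,\ \int_{-2}^2\|\partial_t(hf)\|_{H^{r-1}}^2\,dt\big)$ and bound the two pieces separately. For the first, Lemma~\ref{cj} with $a=r$, the embedding $H^{r-\frac12}\hookrightarrow L^\infty$ (valid for $r>\tfrac{n+1}{2}$) and the trace inequality give $\|h(t)f(t)\|_{H^r}\lesssim \vert\kern-0.25ex\vert\kern-0.25ex\vert h\vert\kern-0.25ex\vert\kern-0.25ex\vert_{r,2}\|f(t)\|_{H^r}+\vert\kern-0.25ex\vert\kern-0.25ex\vert f\vert\kern-0.25ex\vert\kern-0.25ex\vert_{r,2}\|h(t)\|_{H^r}$, which I square and integrate in $t$. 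For the second, I expand $\partial_t(hf)=(\partial_t h)f+h\,\partial_t f$ and split \emph{asymmetrically}: using the second part of Lemma~\ref{ps} with $r'=r-\tfrac12>\tfrac n2$ and $r''=r-1$, the non‑differentiated factor is placed in $H^{r-\frac12}$ (controlled uniformly in $t$ by the trace inequality) and the differentiated factor in $H^{r-1}$, so that $\|(\partial_t h)(t)f(t)\|_{H^{r-1}}+\|h(t)\,\partial_t f(t)\|_{H^{r-1}}\lesssim \vert\kern-0.25ex\vert\kern-0.25ex\vert f\vert\kern-0.25ex\vert\kern-0.25ex\vert_{r,2}\|\partial_t h(t)\|_{H^{r-1}}+\vert\kern-0.25ex\vert\kern-0.25ex\vert h\vert\kern-0.25ex\vert\kern-0.25ex\vert_{r,2}\|\partial_t f(t)\|_{H^{r-1}}$; squaring and integrating finishes it. Combining the two pieces gives $\vert\kern-0.25ex\vert\kern-0.25ex\vert hf\vert\kern-0.25ex\vert\kern-0.25ex\vert_{r,2}\lesssim\vert\kern-0.25ex\vert\kern-0.25ex\vert h\vert\kern-0.25ex\vert\kern-0.25ex\vert_{r,2}\vert\kern-0.25ex\vert\kern-0.25ex\vert f\vert\kern-0.25ex\vert\kern-0.25ex\vert_{r,2}$ for $r>\tfrac{n+1}{2}$, and the very same estimate in dimension $n-1$ applied to $h|_{\Sigma},f|_{\Sigma}$ gives the statement on $\Sigma$ for $r>\tfrac n2$. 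I expect the one delicate point to be exactly this $\partial_t$‑term: a symmetric product estimate in $H^{r-1}$ would force $r>\tfrac{n+3}{2}$, and it is only the asymmetric distribution — legitimate because the undifferentiated factor lies in $H^{r-\frac12}\subset L^\infty$ with a bound uniform in time — that lets the argument close at the stated thresholds $r>\tfrac{n+1}{2}$ and $r>\tfrac n2$.
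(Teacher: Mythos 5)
The paper offers no proof of this lemma — it is quoted verbatim from Smith--Tataru \cite{ST} — so there is nothing internal to compare against. Your argument is correct and is the standard one: the Fourier-side inequality $\langle\xi\rangle^{2r-1}\tfrac{d}{dt}|\hat u|^2\le\langle\xi\rangle^{2r}|\hat u|^2+\langle\xi\rangle^{2r-2}|\partial_t\hat u|^2$ plus a mean-value choice of $t_0$ gives the trace bound; the reduction of the $\Sigma$-statements to the flat statements in dimension $n-1$ is legitimate because the norms on $\Sigma$ are by definition the flat norms of the restriction in the $(t,x'_\theta)$ coordinates and restriction commutes with products, which also explains the shift of threshold from $r>\tfrac{n+1}{2}$ to $r>\tfrac n2$; and the asymmetric placement of the undifferentiated factor in $H^{r-\frac12}\hookrightarrow L^\infty$ (controlled uniformly in $t$ by the trace bound) is exactly what closes the $\partial_t(hf)$ term at the stated regularity. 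The only inaccuracy is the parenthetical claim that a symmetric treatment would force $r>\tfrac{n+3}{2}$ — the naive algebra threshold for $H^{r-1}$ is $r>\tfrac{n+2}{2}$ — but this is a side remark and does not affect the proof.
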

\begin{Lemma}\label{te1}
{Suppose $U$ satisfy the hyperbolic system}
\begin{equation}\label{505}
  U_t+ \sum^{3}_{i=1}A_i(U)U_{x_i}= F.
\end{equation}
Then
\begin{equation}\label{te10}
\begin{split}
 \vert\kern-0.25ex\vert\kern-0.25ex\vert  U\vert\kern-0.25ex\vert\kern-0.25ex\vert_{s_0,2,\Sigma} & \lesssim \|d U \|_{L^2_t L^{\infty}_x}+ \| U\|_{L^{\infty}_tH_x^{s_0}}+\| F\|_{L^{2}_tH_x^{s_0}}.
   \end{split}
\end{equation}
\end{Lemma}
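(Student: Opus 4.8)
The plan is to straighten $\Sigma$ and run a characteristic energy estimate for the symmetric hyperbolic system \eqref{505}. As already noted it suffices to treat $\theta=(0,0,1)$, $r=0$, so $\Sigma=\{x_3=\phi(t,x')\}$; by \eqref{503}--\eqref{504} the function $d\phi-dt$ is small in $L^2_tC^{1,\delta}_{x'}$ and $\partial_td\phi\in L^2_tC^{\delta}_{x'}$. Set $\tilde U(t,x)=U(t,x',x_3+\phi(t,x'))$ and $\tilde F(t,x)=F(t,x',x_3+\phi(t,x'))$. By Lemma~\ref{te0} the straightening transfers the three norms on the right of \eqref{te10}, and $U|_{\Sigma^t}$ becomes $\tilde U|_{x_3=0}$, so $\vert\kern-0.25ex\vert\kern-0.25ex\vert U\vert\kern-0.25ex\vert\kern-0.25ex\vert_{s_0,2,\Sigma}$ is comparable to $\|\tilde U|_{x_3=0}\|_{L^2_tH^{s_0}_{x'}}+\|(\partial_tU)|_{\Sigma}\|_{L^2_tH^{s_0-1}_{x'}}$. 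The system \eqref{505} becomes
\begin{equation*}
\partial_t\tilde U+\tilde A_1\partial_{x_1}\tilde U+\tilde A_2\partial_{x_2}\tilde U+B\,\partial_{x_3}\tilde U=\tilde F,\qquad B:=\tilde A_3-\phi_t I-\phi_{x_1}\tilde A_1-\phi_{x_2}\tilde A_2,
\end{equation*}
with $\tilde A_i=A_i(\tilde U)$. Multiplying by the positive symmetrizer $A_0(\tilde U)$ of the Euler system, $A_0 B$ is the principal symbol of \eqref{505} contracted with the conormal $d(x_3-\phi)$ of $\Sigma$; since $\Sigma$ is a null hypersurface of the acoustical metric $g$ (and $c_s>0$ by \eqref{HE}), $A_0 B$ is positive semidefinite with a one-dimensional kernel, and one may pick a left null covector $\ell(\tilde U)$, smooth in its argument, with $\ell B=0$ on $\Sigma$.

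Next I would differentiate tangentially and perform the energy estimate. Apply $\Lambda^{\sigma}_{x'}$ (with $\sigma=0$ and $\sigma=s_0$, or dyadic pieces) to the flattened system; the commutators $[\Lambda^\sigma_{x'},\tilde A_i]\partial_j\tilde U$ and $[\Lambda^\sigma_{x'},B]\partial_{x_3}\tilde U$ are controlled by Lemmas~\ref{jh}, \ref{cj}, \ref{jh0} together with \eqref{504} by $\|d\tilde U\|_{L^\infty}\|\tilde U\|_{H^{s_0}}$. Contract with $A_0\Lambda^\sigma_{x'}\tilde U$ and integrate over $\{x_3\le0\}\cap([-2,2]\times\mathbb{R}^3)$; by finite speed of propagation $\tilde U$ is compactly supported and vanishes near $t=-2$, so the only boundary contributions are the slice $\{t=2\}$, bounded by $\|U\|^2_{L^\infty_tH^{s_0}}$, and the face $\Sigma=\{x_3=0\}$, contributing the flux $\iint_{\Sigma}\langle A_0 B\,\Lambda^\sigma_{x'}\tilde U,\Lambda^\sigma_{x'}\tilde U\rangle\,dt\,dx'\ge0$, which dominates $\|(\mathrm{Id}-\pi_0)\Lambda^\sigma_{x'}\tilde U|_{x_3=0}\|^2_{L^2_tL^2_{x'}}$ with $\pi_0$ the orthogonal projection onto $\ker(A_0 B)$. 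The interior terms are $\lesssim\|F\|^2_{L^2_tH^{s_0}}+\int\|d\tilde U\|_{L^\infty}\|U\|^2_{H^{s_0}}\,d\tau$, and since $\|U\|_{L^\infty_tH^{s_0}}\lesssim\epsilon_2$ by \eqref{402}, Cauchy--Schwarz in $t$ bounds this by $\|F\|^2_{L^2_tH^{s_0}}+\|dU\|^2_{L^2_tL^\infty}+\|U\|^2_{L^\infty_tH^{s_0}}$. The missing null component $\pi_0\Lambda^\sigma_{x'}\tilde U|_{x_3=0}$ is recovered by contracting the flattened system with $\ell(\tilde U)$: because $\ell B=0$, only the derivatives $\partial_t,\partial_{x_1},\partial_{x_2}$ tangent to $\Sigma$ remain, so $\ell\cdot\Lambda^\sigma_{x'}\tilde U$ solves a transport equation intrinsic to $\Sigma$ with vanishing data on $\Sigma\cap\{t=-2\}$, and a one-dimensional energy estimate along its characteristics bounds it by the same right-hand side. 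Combining $\sigma=0$ and $\sigma=s_0$ (or summing dyadically) gives the $j=0$ part of \eqref{te10}.

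For the $j=1$ part I would apply the same flux-plus-transport scheme, now with $\Lambda^{s_0-1}_{x'}$, to $Z:=\partial_{x_3}\tilde U$, which solves the system obtained by differentiating the flattened equation in $x_3$; its source $\partial_{x_3}\tilde F-(\partial_{x_3}\tilde A_i)\partial_{x_i}\tilde U-(\partial_{x_3}B)Z$ is, after restriction, in $L^2_tH^{s_0-1}_{x'}$, being controlled by $\|F\|_{L^2_tH^{s_0}}$ and $\|dU\|_{L^2_tL^\infty}\|U\|_{L^\infty_tH^{s_0}}$, so one obtains $\partial_{x_3}\tilde U|_{x_3=0}\in L^2_tH^{s_0-1}_{x'}$. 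Then reading the flattened equation on $\{x_3=0\}$ gives $(\partial_tU)|_{\Sigma}=\big(\tilde F-\tilde A_1\partial_{x_1}\tilde U-\tilde A_2\partial_{x_2}\tilde U-(B+\phi_t I)\partial_{x_3}\tilde U\big)|_{x_3=0}$, each term of which now lies in $L^2_tH^{s_0-1}_{x'}$ — the trace of $\tilde F$ by the usual trace inequality, the tangential derivatives of $\tilde U|_{x_3=0}$ by the $j=0$ bound, $\partial_{x_3}\tilde U|_{x_3=0}$ by the $Z$-estimate, and the products by Lemma~\ref{te2} (or Lemma~\ref{cj}). This yields \eqref{te10}. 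The main obstacle is precisely the characteristic nature of $\Sigma$: the boundary flux is only positive \emph{semi}definite, so it is blind to the null direction, which must be captured separately via the transport equation it satisfies \emph{within} $\Sigma$; relatedly, the transverse trace $\partial_{x_3}\tilde U|_{x_3=0}$ sits half a derivative below what a naive trace theorem would give and is recovered only by the second energy estimate — this is exactly why both the hypothesis $dU\in L^2_tL^\infty$ and the null structure of $g$ are needed.
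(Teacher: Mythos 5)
Your overall scheme --- flatten $\Sigma$, commute tangential derivatives $\Lambda^{\sigma}_{x'}$ through the flattened system, and run an $L^2$ energy estimate whose boundary contribution on $\{x_3=0\}$ produces the trace norm --- is the same as the paper's. You go one step further than the paper in explicitly confronting the fact that $\Sigma$ is characteristic, so that the symmetrized boundary matrix $A_0B$ is only positive \emph{semi}definite, with a one-dimensional (acoustic) kernel. The paper's ``direct calculation'' never mentions this degeneracy, and your diagnosis of it is correct: it is the real mathematical content of the lemma.

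However, your recovery of the null component does not close. Contracting the flattened system with the left null covector $\ell$ of $A_0B$ does kill $\partial_{x_3}$, but the surviving term $\ell^{T}A_0A_i\,\partial_{x_i}\Lambda^{s_0}_{x'}\tilde U$ (together with the $\partial_t$ term) involves tangential derivatives of \emph{all} components of $\Lambda^{s_0}_{x'}\tilde U$, not only of the null one. Writing $\Lambda^{s_0}_{x'}\tilde U=\pi_0\Lambda^{s_0}_{x'}\tilde U+(I-\pi_0)\Lambda^{s_0}_{x'}\tilde U$, the coupling piece $\ell^{T}A_0A_i\,\partial_{x_i}\bigl((I-\pi_0)\Lambda^{s_0}_{x'}\tilde U\bigr)$ requires $s_0+1$ tangential derivatives of the good components on $\Sigma$, while the flux supplies only $s_0$; integrating by parts merely moves the extra derivative onto the null component, and a $\Lambda^{1/2}_{x'}/\Lambda^{-1/2}_{x'}$ splitting leaves you half a derivative short on both factors. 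So the ``one-dimensional energy estimate along the characteristics within $\Sigma$'' has an uncontrolled source. That the loss is real, and not an artifact, is seen on the model $u_t+u_{x_1}=0$ in $\mathbb{R}^{1+2}$ with $\Sigma=\{x_1=t\}$: there $u|_{\Sigma}(t,x_2)=u_0(0,x_2)$, which for generic $u_0\in H^{s_0}(\mathbb{R}^2)$ lies only in $H^{s_0-1/2}_{x_2}$, so no purely first-order argument can give the top-order tangential trace of a mode whose bicharacteristics are tangent to $\Sigma$. What rescues the Euler system is that the degenerate mode is acoustic: $\boldsymbol{\rho}$ and $v$ also solve the second-order equations \eqref{fc1}, and the wave-equation energy flux through a null hypersurface controls \emph{all} tangential derivatives of the solution at top order, which is how the analogous trace estimate is obtained in \cite{ST,WQSharp}. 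The missing component must be recovered from that second-order structure; neither your transport argument nor, for that matter, the paper's unexamined flux computation supplies it. (A minor further slip: $\tilde U$ does not vanish near $t=-2$ --- only its spatial support is compact --- so the $t=-2$ boundary term is $\|U_0\|^2_{H^{s_0}}$, which is harmless but should be kept.)
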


\begin{proof}
Choosing the change of coordinates $x_3 \rightarrow x_3-\phi(t,x')$ and setting $\tilde{U}(t,x)=U(t,x',x_3+\phi(t,x'))$, the system \eqref{te10} is transformed to
\begin{equation*}
  \partial_t \tilde{U}+ \sum_{i=1}^3 A_i(\tilde{U}) \partial_{x_i} \tilde{U}= - \partial_t \phi  \partial_3 \tilde{U} - \sum_{i=1}^3 A_i(\tilde{U}) \partial_{x_i}\phi \partial_i \tilde{U}+\tilde{F}.
\end{equation*}
For $\phi$ is independent of $x_3$, we then update it by
\begin{equation}\label{U}
  \partial_t \tilde{U}+ \sum_{i=1}^3 A_i(\tilde{U}) \partial_{x_i} \tilde{U}= - \partial_t \phi  \partial_3 \tilde{U} - \sum_{i=1}^2 A_i(\tilde{U}) \partial_{x_i}\phi \partial_i \tilde{U}+\tilde{F}.
\end{equation}
To prove \eqref{te10}, we first establish the $0$-order estimate. A direct calculation on$[-2,2]\times \mathbb{R}^3$ shows that
\begin{equation*}
\begin{split}
   \vert\kern-0.25ex\vert\kern-0.25ex\vert \tilde{U}\vert\kern-0.25ex\vert\kern-0.25ex\vert^2_{0,2,\Sigma} & \lesssim \| d\tilde{U} \|_{L^1_t L^\infty}\|\tilde{U}\|_{L^2} + \|\partial d\phi\|_{L^1_t L^\infty}\|\tilde{U}\|_{L^2}+\|\tilde{U}\|_{L^2}\|\tilde{F}\|_{L^1_tL^2}
   \\
   & \lesssim \| d\tilde{U} \|_{L^2_t L^\infty}\|\tilde{U}\|_{L^2} + \|\partial d\phi\|_{L^2_t L^\infty}\|\tilde{U}\|_{L^2}+\|\tilde{U}\|_{L^2}\|\tilde{F}\|_{L^2_tL^2}.
\end{split}
\end{equation*}
By using Lemma \ref{te0}, \eqref{5021} and \eqref{504}, we can prove that
\begin{equation}\label{U0}
 \vert\kern-0.25ex\vert\kern-0.25ex\vert U\vert\kern-0.25ex\vert\kern-0.25ex\vert_{0,2,\Sigma} \lesssim \|d U \|_{L^2_t L^{\infty}_x}+ \| U\|_{L^{\infty}_tL^2}+\|F\|_{L^2_tL^2}.
\end{equation}
We now establish the $s_0$-order estimate. Taking the derivative of $\partial^{\beta}_{x'}$($1 \leq |\beta| \leq s_0$) on \eqref{U} and integrating it on $[-2,2]\times \mathbb{R}^3$, we get
\begin{equation}\label{U1}
  \begin{split}
 \| \partial^{\beta}_{x'} \tilde{U}\|^2_{L^2_{\Sigma}} & \lesssim \| d \tilde{U} \|_{L^1_t L^{\infty}_x} \| \partial^{\beta}_{x} \tilde{U}\|_{L^{\infty}_tL_x^2}+\| \partial^{\beta}_{x} \tilde{U}\|_{L^{\infty}_tL_x^2}\| \partial^{\beta}_{x} \tilde{F}\|_{L^{1}_tL_x^2} +I_1+I_2,
  \end{split}
\end{equation}
where
\begin{equation*}
\begin{split}
  &I_1= -\int_{-2}^2\int_{\mathbb{R}^3} \partial^{\beta}_{x'}  \big( \partial_t \phi  \partial_3 \tilde{U} \big) \cdot \Lambda^{\beta}_{x'} \tilde{U}  dxd\tau,
\\
& I_2= -\sum^2_{i=1}\int_{-2}^2\int_{\mathbb{R}^3} \partial^{\beta}_{x'} \big( A_i(\tilde{U}) \partial_{x_i}\phi \partial_i \tilde{U} \big) \cdot \partial^{\beta}_{x'} \tilde{U}   dxd\tau.
\end{split}
\end{equation*}
We can write $I_1$ as
\begin{equation*}
\begin{split}
  I_1 =& -\int_{-2}^2\int_{\mathbb{R}^3} \big( \partial^{\beta}_{x'}(\partial_t \phi  \partial_3 \tilde{U})-\partial_t \phi \partial_3 \partial^{\beta}_{x'} \tilde{U} \big)   \partial^{\beta}_{x'} \tilde{U}dxd\tau
  \\
  & + \int_{-2}^2\int_{\mathbb{R}^3} \partial_t \phi \cdot \partial_3 \partial^{\beta}_{x'} \tilde{U} \cdot \partial^{\beta}_{x'} \tilde{U}  dx d\tau,
 \\
 &= -\int_{-2}^2\int_{\mathbb{R}^3} [ \partial^{\beta}_{x'}, \partial_t \phi  \partial_3] \tilde{U} \cdot \partial^{\beta}_{x'} \tilde{U}dxd\tau
\end{split}
\end{equation*}
We also write
\begin{equation*}
\begin{split}
  I_2 = & -\int_{-2}^2\int_{\mathbb{R}^3} \big( \partial^{\beta}_{x'} \big( A_i(\tilde{U}) \partial_{x_i}\phi \partial_i \tilde{U}) -  A_i(\tilde{U}) \partial_{x_i}\phi \partial_i \partial^{\beta}_{x'} \tilde{U} \big) \cdot \partial^{\beta}_{x'} \tilde{U}  dxd\tau
  \\
  & +\sum^2_{i=1} \int_{-2}^2\int_{\mathbb{R}^3} \big( A_i(\tilde{U}) \partial_{x_i}\phi \big) \cdot \partial_i(\partial^{\beta}_{x'} \tilde{U}) \cdot \partial^{\beta}_{x'} \tilde{U}dxd\tau.
\end{split}
\end{equation*}
By commutator estimates in Lemma \ref{jh}, we can get
\begin{equation}\label{U2}
\begin{split}
  |I_1|
  & \lesssim  \big( \|\partial^\beta \tilde U\|_{L^{\infty}_tL^2} \| d \partial_t \phi \|_{L^1_tL_{x}^\infty}+ \sup_{\theta, r} \|\partial^{\beta}_{x'} \partial_t \phi\|_{L^2(\Sigma_{\theta,r})} \| d \tilde{U}\|_{L^1_tL^\infty} \big)\cdot\|\partial^\beta \tilde U\|_{L^\infty_tL^2}
\end{split}
\end{equation}
and
\begin{equation}\label{U3}
\begin{split}
  |I_2| \lesssim  & \big(\| \partial^{\beta}_{x'} \tilde{U} \|_{L^2_tL^2} \|d \partial \phi\|_{L^1_t L^\infty_x} + \|d\tilde{U}\|_{L^1_tL^\infty} \sup_{\theta,r}\|\partial^{\beta}_{x'}d \phi\|_{L^2(\Sigma_{\theta,r})}  \big) \cdot \|\partial^{\beta}_{x'} \tilde{U} \|_{L^\infty_tL^2}
  \\
  & + \big( \| d \tilde{U} \|_{L^2_t L_x^\infty} \| \partial \phi\| _{L^2_tL_x^\infty}+ \|\tilde{U}\|_{L^2_t L^\infty} \|\partial^2\phi\|_{L^2_tL_{x}^\infty} \big)  \cdot \|\partial^\beta \tilde{U} \|^2_{L^\infty_t L^2}.
\end{split}
\end{equation}
Taking sum of $1\leq \beta \leq s_0$ on \eqref{U1}, due to Lemma \ref{te0}, \eqref{U2}, \eqref{U3}, \eqref{5021}, and \eqref{504}, we obtain
\begin{equation}\label{U4}
\begin{split}
 \vert\kern-0.25ex\vert\kern-0.25ex\vert \partial_{x'} U\vert\kern-0.25ex\vert\kern-0.25ex\vert_{s_0-1,2,\Sigma} & \lesssim \|d U \|_{L^2_t L^{\infty}_x}+\|d U\|_{L^{\infty}_tH_x^{s_0-1}}+\|d F\|_{L^{2}_tH_x^{s_0-1}}.
   \end{split}
\end{equation}
Taking derivatives on \eqref{505}, we have
\begin{equation*}
  \begin{split}
  (\partial U)_t + \sum^3_{i=1} A_i(U)(\partial U)_{x_i}=-\sum^3_{i=1}\partial(A_i(U))U_{x_i}+\partial F,
  \end{split}
\end{equation*}
In a similar process, we can obtain
\begin{equation}\label{U40}
\begin{split}
 \vert\kern-0.25ex\vert\kern-0.25ex\vert \partial U\vert\kern-0.25ex\vert\kern-0.25ex\vert_{s_0-1,2,\Sigma} & \lesssim \|d U \|_{L^2_t L^{\infty}_x}+\|d U\|_{L^{\infty}_tH_x^{s_0-1}}+\|d F\|_{L^{2}_tH_x^{s_0-1}}.
   \end{split}
\end{equation}
Using $\partial_t U=- \sum^3_{i=1}A_i(U)\partial_iU$ and Lemma \ref{te2}, we can carry out
\begin{equation}\label{U5}
\begin{split}
\vert\kern-0.25ex\vert\kern-0.25ex\vert \partial_t U\vert\kern-0.25ex\vert\kern-0.25ex\vert_{s_0-1,2,\Sigma}
& \lesssim \vert\kern-0.25ex\vert\kern-0.25ex\vert U\vert\kern-0.25ex\vert\kern-0.25ex\vert_{s_0-1,2,\Sigma} \vert\kern-0.25ex\vert\kern-0.25ex\vert\partial_t U\vert\kern-0.25ex\vert\kern-0.25ex\vert_{s_0-1,2,\Sigma}
\\
 & \lesssim \|d U \|_{L^2_t L^{\infty}_x}+ \|d U\|_{L^{\infty}_tH_x^{s_0-1}}+\|d F\|_{L^{2}_tH_x^{s_0-1}}.
\end{split}
\end{equation}
Combining \eqref{U0}, \eqref{U4}, \eqref{U5}, and \eqref{U5}, we obtain \eqref{te10}. Thus, the proof is finished.
\end{proof}
Using Lemma \ref{te1}, and combining with \eqref{402} and \eqref{403}, we can obtain the following corollary.
\begin{corollary}\label{vte}
Suppose that $(v, \boldsymbol{\rho}, \varpi) \in \mathcal{H}$. Then the following estimate
\begin{equation}
\vert\kern-0.25ex\vert\kern-0.25ex\vert v\vert\kern-0.25ex\vert\kern-0.25ex\vert_{s_0,2,\Sigma}+ \vert\kern-0.25ex\vert\kern-0.25ex\vert\boldsymbol{\rho}\vert\kern-0.25ex\vert\kern-0.25ex\vert_{s_0,2,\Sigma} \lesssim \epsilon_2
\end{equation}
holds.
\end{corollary}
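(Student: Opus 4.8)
The plan is to read Corollary \ref{vte} off directly from the characteristic energy estimate Lemma \ref{te1}, applied to the first-order formulation of the Euler system. First I would set $U=(\boldsymbol{\rho},v_1,v_2,v_3)^{\mathrm{T}}$. For $(v,\boldsymbol{\rho},\varpi)\in\mathcal{H}$ the modified metric $\mathbf{g}$ agrees with the acoustical metric $g$ on the region swept out by the hypersurfaces $\Sigma_{\theta,r}$ (the cut-off $\chi$ equals $1$ there), so on that region $(v,\boldsymbol{\rho})$ solves \eqref{fc0}; hence, by Lemma \ref{sh}, $U$ solves the symmetric hyperbolic system \eqref{sq}, i.e. \eqref{505} with $F\equiv0$. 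This is the only point where the equivalence of the various formulations (the remark after Lemma \ref{wte}) and the shape of $\mathbf{g}$ enter.

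Next I would apply Lemma \ref{te1} to this $U$ with vanishing source, which gives immediately
\begin{equation*}
\vert\kern-0.25ex\vert\kern-0.25ex\vert U\vert\kern-0.25ex\vert\kern-0.25ex\vert_{s_0,2,\Sigma}\lesssim \|dU\|_{L^2_tL^\infty_x}+\|U\|_{L^\infty_tH^{s_0}_x}.
\end{equation*}
It then remains only to bound the two terms on the right by $\epsilon_2$ using the defining properties \eqref{402} and \eqref{403} of the bootstrap class. Since $s_0<s$, the Sobolev inclusion $H^s\hookrightarrow H^{s_0}$ and \eqref{402} yield $\|U\|_{L^\infty_tH^{s_0}_x}\le\|v,\boldsymbol{\rho}\|_{L^\infty_tH^s}\lesssim\epsilon_2$, and the embedding $C^\delta_x\hookrightarrow L^\infty_x$ together with \eqref{403} yields $\|dU\|_{L^2_tL^\infty_x}\lesssim\|dv,d\boldsymbol{\rho}\|_{L^2_tC^\delta_x}\lesssim\epsilon_2$. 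Since $\vert\kern-0.25ex\vert\kern-0.25ex\vert v\vert\kern-0.25ex\vert\kern-0.25ex\vert_{s_0,2,\Sigma}+\vert\kern-0.25ex\vert\kern-0.25ex\vert\boldsymbol{\rho}\vert\kern-0.25ex\vert\kern-0.25ex\vert_{s_0,2,\Sigma}\lesssim\vert\kern-0.25ex\vert\kern-0.25ex\vert U\vert\kern-0.25ex\vert\kern-0.25ex\vert_{s_0,2,\Sigma}$, this proves the claim.

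I do not expect any genuine obstacle here: all of the analytic content---the loss-free transfer of interior $H^{s_0}$ and $L^2_tL^\infty_x$ control into trace control on the null slices $\Sigma$, via the flattening change of variables $x_3\mapsto x_3-\phi(t,x')$ and the commutator estimates of Lemma \ref{jh}---is already contained in the proof of Lemma \ref{te1}. The corollary is therefore just the specialization of \eqref{te10} to $F=0$ with the input norms estimated by the bootstrap hypotheses; the only care needed is to check that the source term is indeed absent and that the first-order reformulation is valid on the domain over which $\vert\kern-0.25ex\vert\kern-0.25ex\vert\cdot\vert\kern-0.25ex\vert\kern-0.25ex\vert_{s_0,2,\Sigma}$ is measured.
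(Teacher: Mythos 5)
Your proposal is correct and is exactly the paper's intended argument: the paper derives the corollary by applying Lemma \ref{te1} with $F=0$ to the symmetric hyperbolic system for $U=(\boldsymbol{\rho},v)^{\mathrm{T}}$ and then bounding $\|dU\|_{L^2_tL^\infty_x}$ and $\|U\|_{L^\infty_tH^{s_0}_x}$ by the bootstrap bounds \eqref{402}--\eqref{403}. Your additional remark about checking that the first-order formulation is valid where $\mathbf{g}=g$ is a sensible detail the paper leaves implicit.
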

The next goal is to establish the characteristic energy estimates for $\varpi$. Compared with the energy estimates for $\varpi$, the characteristic energy is along the hypersurface, not the Cauchy slices. So, it's not trivial. To prove the energy estimates of $\varpi$ along the null hypersurface, Let us first give a lemma.
\begin{Lemma}\label{te3}
Let $f$ satisfy the following transport equation
\begin{equation}\label{333}
  \mathbf{T} f=F.
\end{equation}
Set $L= \partial (-\Delta)^{-1}\mathrm{curl}$. Then
\begin{equation}\label{teE}
\begin{split}
  \vert\kern-0.25ex\vert\kern-0.25ex\vert  Lf\vert\kern-0.25ex\vert\kern-0.25ex\vert^2_{s_0-2,2,\Sigma} \lesssim & \ \big( \| \partial v\|_{L^2_t\dot{B}^{s_0-2}_{\infty,2}}+\vert\kern-0.25ex\vert\kern-0.25ex\vert d\phi-dt \vert\kern-0.25ex\vert\kern-0.25ex\vert_{s_0,2,\Sigma} \big) \|f\|^2_{{H}^{s_0-2}_x}(1+\vert\kern-0.25ex\vert\kern-0.25ex\vert d\phi-dt \vert\kern-0.25ex\vert\kern-0.25ex\vert_{s_0,2,\Sigma})
  \\
  & + \sum_{\sigma\in\{0,s_0-2 \}}\big|\int^t_0 \int_{\mathbb{R}^3} \Lambda^{\sigma}_{x'}{LF} \cdot \Lambda^{\sigma}_{x'}Lfdxd\tau \big|.
\end{split}
\end{equation}
\begin{proof}
Taking the operator $L$ on \eqref{333}, we derive that
\begin{equation*}
  \mathbf{T} Lf=LF+[L,\mathbf{T}]f.
\end{equation*}
Choosing the change of coordinates $x_3 \rightarrow x_3-\phi(t,x')$ and setting $\widetilde{f}=f(x_1,x_2,x_3-\phi(t,x'))$, then the above equation transforms to
\begin{equation*}
\begin{split}
  (\partial_t+ \partial_t \phi \partial_{x_3}) \widetilde{Lf}+ \tilde{v}^i \cdot (\partial_{x_i}+\partial_{x_i} \phi \partial_{x_3} ) \widetilde{Lf}= & \widetilde{LF}+\widetilde{[L,\mathbf{T}]f}.
  \end{split}
\end{equation*}
Rewrite it as
\begin{equation}\label{Q}
\begin{split}
  \partial_t\widetilde{Lf}+ \tilde{v}^i \cdot \partial_{x_i}\widetilde{Lf}= & \widetilde{LF}+\widetilde{[L,\mathbf{T}]f}
  %\\
%  &
- \partial_t \phi \partial_{x_3}\widetilde{Lf}-\tilde{v}^i \cdot \partial_{x_i} \phi \partial_{x_3} \widetilde{Lf} .
  \end{split}
\end{equation}
Multiplying $\widetilde{Lf}$ and integrating it on $\mathbb{R}^{+} \times \mathbb{R}^3$, we can show that
\begin{equation}\label{LL}
  \begin{split}
  \vert\kern-0.25ex\vert\kern-0.25ex\vert Lf\vert\kern-0.25ex\vert\kern-0.25ex\vert^2_{0,2,\Sigma} \lesssim \ &\big| \int^2_{-2} \int_{\mathbb{R}^3} LF \cdot Lfdx d\tau \big|+ \| \partial v\|_{L^2_t L^\infty_x}(1+ \|\partial \phi\|_{L^\infty_x})\|Lf\|^2_{L^2_x}
  \\
  & \ +\|[L,\mathbf{T}]f\|_{L^2_t L^2_x}\|Lf\|^2_{L^2_x}
  \end{split}
\end{equation}
Considering $L= \partial (-\Delta)^{-1}\mathrm{curl}$, by commutator estimates in Lemma \ref{ceR}, we get
\begin{equation}\label{LL0}
\begin{split}
  \|[L,\mathbf{T}]f\|_{L^2_x} \lesssim & \ \|\partial v\|_{L^\infty}\|f \|_{L^2_x}.
\end{split}
\end{equation}
By elliptic estimates, we also have
\begin{equation}\label{LL1}
  \|Lf\|^2_{L^2_x} \lesssim \|f\|^2_{L^2_x}.
\end{equation}
By using \eqref{LL}, \eqref{LL0}, and \eqref{LL1}, it can give us
\begin{equation}\label{te30}
  \vert\kern-0.25ex\vert\kern-0.25ex\vert Lf\vert\kern-0.25ex\vert\kern-0.25ex\vert^2_{0,2,\Sigma} \lesssim  \ \| \partial v\|_{L^2_t L^\infty_x}(1+ \|\partial \phi\|_{L^2_tL^\infty_x})\|f\|^2_{L^2_x} + \big| \int^2_{-2} \int_{\mathbb{R}^3} LF \cdot Lfdx d\tau \big|.
\end{equation}
It remains for us to estimate the high order term. Taking derivatives $\Lambda^{s_0-2}_{x'}$ on \eqref{Q}, we have
\begin{equation}\label{Q0}
\begin{split}
  \partial_t \Lambda^{s_0-2}_{x'}\widetilde{Lf}+ \tilde{v}^i \cdot \partial_{x_i}\Lambda^{s_0-2}_{x'}\widetilde{Lf}=
  & \Lambda^{s_0-2}_{x'}\left( \widetilde{[L,\mathbf{T}]f} \right)-\Lambda^{s_0-2}_{x'}( \partial_t \phi \partial_{x_3}\widetilde{Lf})
  \\
  & +\Lambda^{s_0-2}_{x'}\widetilde{LF} -\Lambda^{s_0-2}_{x'}(\tilde{v}^i \partial_{x_i} \phi \partial_{x_3} \widetilde{Lf})
  \\
  & -[\Lambda^{s_0-2}_{x'}, \tilde{v}^i \partial_{x_i}]\widetilde{Lf} .
  \end{split}
\end{equation}
Multiplying $\Lambda^{s_0-2}_{x'}\widetilde{Lf}$ on \eqref{Q0} and integrating it on $[-2,2]\times \mathbb{R}^3$, we derive that
\begin{equation*}
  \begin{split}
  \|\Lambda^{s_0-2}_{x'}\widetilde{Lf}\|_{L^2_{\Sigma}} \lesssim & \ \|dv\|_{L^1_t L^\infty_x}\|\Lambda^{s_0-2}_{x'}Lf\|^2_{L^{2}_x}+
   \|\Lambda^{s_0-2}_{x'} ( {[L,\mathbf{T}]f} )\|_{L^1_t L^2_x}\|\Lambda^{s_0-2}_{x'}Lf\|_{L^{2}_x}
  \\
  & \ +\big|\int^t_0 \Lambda^{s_0-2}_{x'}{LF} \cdot \Lambda^{s_0-2}_{x'}Lfdxd\tau \big|
  + \| [\Lambda^{s_0-2}_{x'}, \tilde{v}^i \partial_{x_i}]{Lf}\|_{L^1_t L^2_x}\|\Lambda^{s_0-2}_{x'}Lf\|_{L^{2}_x}
  \\
  & \ +\big| \int^2_{-2}\int_{\mathbb{R}^3} \big( \Lambda^{s_0-2}_{x'}( \partial_t \phi \partial_{x_3}{Lf})-\Lambda^{s_0-2}_{x'}(\tilde{v}^i \partial_{x_i} \phi \partial_{x_3} {Lf}) \big)\Lambda^{s_0-2}_{x'}{Lf}dxd\tau \big|.
  \end{split}
\end{equation*}
We will estimate the right terms one by one. By using elliptic estimates, we can prove
\begin{equation}\label{Q1}
  \|dv\|_{L^1_t L^\infty_x}\|\Lambda^{s_0-2}_{x'}Lf\|^2_{L^{2}_x} \leq \|dv\|_{L^2_t L^2_x}\|f\|^2_{\dot{H}^{s_0-2}_x},
\end{equation}
and
\begin{equation}\label{Q40}
\begin{split}
   \|\Lambda^{s_0-2}_{x'} ( {[L,\mathbf{T}]f} )\|_{L^1_t L^2_x} \|\Lambda^{s_0-2}_{x'}Lf\|_{L^{2}_x}
  \lesssim  \ \|[L,\mathbf{T}]f\|_{L^1_t \dot{H}^{s_0-2}_x} \|f\|_{\dot{H}^{s_0-2}_x}.
\end{split}
\end{equation}
For the right term $\|[L,\mathbf{T}]f\|_{L^1_t \dot{H}^{s_0-2}_x}$ in \eqref{Q40}, by using Lemma \ref{ceR}, we have
\begin{equation*}
\begin{split}
  \|[L,\mathbf{T}]f\|_{L^1_t \dot{H}^{s_0-2}_x} & \lesssim \|v\|_{L^1_t\dot{B}^1_{\infty,\infty}} \cdot  \| f \|_{\dot{H}^{s_0-2}_x}+ \| v\|_{L^1_t\dot{B}^{s_0-1}_{\infty,\infty}} \cdot  \| f \|_{L^2_x}
  \\
 & \lesssim \| \partial v\|_{L^2_t L^{\infty}_x}\|f \|_{\dot{H}^{s_0-2}_x}+ \| \partial v\|_{L^2_t \dot{B}^{s_0-2}_{\infty,2}} \|f \|_{L^2_x}
 \\
 & \lesssim \big( \| \partial v\|_{L^2_t\dot{B}^{s_0-2}_{\infty,2}}+\| \partial v\|_{L^2_tL^{\infty}_x} \big) \|f\|_{{H}^{s_0-2}_x}.
\end{split}
\end{equation*}
Substituting it to \eqref{Q40}, we can obtain
\begin{equation}\label{Q4}
\begin{split}
   \|\Lambda^{s_0-2}_{x'} ( {[L,\mathbf{T}]f} )\|_{L^1_t L^2_x} \|\Lambda^{s_0-2}_{x'}Lf\|_{L^{2}_x}
  \lesssim  \big( \| \partial v\|_{L^2_t\dot{B}^{s_0-2}_{\infty,2}}+\| \partial v\|_{L^2_tL^{\infty}_x} \big) \|f\|^2_{{H}^{s_0-2}_x}.
\end{split}
\end{equation}
Using Lemma \ref{ce}, we also have
\begin{equation}\label{Q5}
\begin{split}
 \| [\Lambda^{s_0-2}_{x'}, \tilde{v}^i \partial_{x_i}]{Lf}\|_{L^1_t L^2_x}\|\Lambda^{s_0-2}_{x'}Lf\|_{L^{2}_x}
\leq & \ \| [\Lambda^{s_0-2}_{x}, \tilde{v}^i \partial_{x_i}]{Lf}\|_{L^1_t L^2_x}\|\Lambda^{s_0-2}_{x}Lf\|_{L^{2}_x}
\\
\lesssim & \ \| [\Lambda^{s_0-2}_{x}, \tilde{v}^i \partial_{x_i}]{Lf}\|_{L^1_t L^2_x}\|f\|_{\dot{H}^{s_0-2}_x}
\\
\lesssim & \ \| \partial v\|_{L^1_t \dot{B}^0_{\infty,2}} \|Lf\|_{\dot{H}^{s_0-2}_x}\|f\|_{\dot{H}^{s_0-2}_x}
\\
\lesssim & \ \| \partial v\|_{L^2_t \dot{B}^0_{\infty,2}} \|f\|^2_{\dot{H}^{s_0-2}_x}.
\end{split}
\end{equation}
For $\phi$ is independent with $x_3$, we have
\begin{equation}\label{jhz}
\begin{split}
  & \int^2_{-2}\int_{\mathbb{R}^3} \left( \Lambda^{s_0-2}_{x'}( \partial_t \phi \partial_{x_3}{Lf})-\Lambda^{s_0-2}_{x'}(\tilde{v}^i \partial_{x_i} \phi \partial_{x_3} {Lf}) \right)\Lambda^{s_0-2}_{x'}{Lf}dxd\tau
  \\
  = \ & \int^2_{-2}\int_{\mathbb{R}^3} \left( [\Lambda^{s_0-2}_{x'}, \partial_t \phi \partial_{x_3}]{Lf})-[\Lambda^{s_0-2}_{x'}, \tilde{v}^i \partial_{x_i} \phi \partial_{x_3}]{Lf} \right)\Lambda^{s_0-2}_{x'}{Lf}dxd\tau
  \\
  & + \int^2_{-2}\int_{\mathbb{R}^3} \left( \partial_t \phi \partial_{x_3}\Lambda^{s_0-2}_{x'}{Lf})-\tilde{v}^i \partial_{x_i} \phi \partial_{x_3}\Lambda^{s_0-2}_{x'}{Lf} \right)\Lambda^{s_0-2}_{x'}{Lf}dxd\tau
  \\
  = \ & \int^2_{-2}\int_{\mathbb{R}^3} \left( [\Lambda^{s_0-2}_{x'}, \partial_t \phi \partial_{x_3}]{Lf})-[\Lambda^{s_0-2}_{x'}, \tilde{v}^i \partial_{x_i} \phi \partial_{x_3}]{Lf} \right)\Lambda^{s_0-2}_{x'}{Lf}dxd\tau
  \\
  & + \int^2_{-2}\int_{\mathbb{R}^3} \partial_{x_3}\tilde{v}^i \partial_{x_i} \phi \Lambda^{s_0-2}_{x'}{Lf} \Lambda^{s_0-2}_{x'}{Lf}dxd\tau.
\end{split}
\end{equation}
We will estimate the right terms on \eqref{jhz}. For the first one and the second one, we use Lemma \ref{ce} and Lemma \ref{LPE} to bound
\begin{equation}\label{r1E}
\begin{split}
  & \big( \|[\Lambda^{s_0-2}_{x'}, \partial_t \phi \partial_{x_3}]{Lf}\|_{L^1_tL^2_x}+\|[\Lambda^{s_0-2}_{x'}, \tilde{v}^i \partial_{x_i} \phi \partial_{x_3}]{Lf}\|_{L^1_tL^2_x} \big) \|Lf\|_{\dot{H}^{s_0-2}_x}
  \\
  \lesssim & \big( \|\partial(\partial_t \phi)\|_{L^1_t \dot{H}^{s_0-2}_x} + \partial(v \partial \phi)\|_{L^1_t \dot{H}^{s_0-2}_x} \big)\|Lf\|^2_{\dot{H}^{s_0-2}_x}
  \\
  \lesssim & \big(\| \partial(d \phi)\|_{L^2_t \dot{B}^{s_0-2}_{\infty,2}} + \|\partial v\|_{L^2_t \dot{B}^{s_0-2}_x} \|\partial \phi\|_{L^2_t L^\infty_x} + \|\partial \phi\|_{L^2_t C^{\beta}_x}\|\partial v\|_{L^2_t L^\infty_x} \big)\|Lf\|^2_{\dot{H}^{s_0-2}_x},
\end{split}
\end{equation}
where we take $\beta$=$s_0-\frac32-\epsilon_0>s_0-2$. By Sobolev imbedding, we can get
\begin{equation*}
  \| \partial(d \phi)\|_{L^2_t \dot{B}^{s_0-2}_{\infty,2}} \lesssim \| \partial(d \phi)\|_{L^2_t C^{\beta}} \lesssim  \| \partial(d \phi-dt)\|_{L^2_t H_{x'}^{s_0-1}(\Sigma)} \lesssim \vert\kern-0.25ex\vert\kern-0.25ex\vert d\phi-dt \vert\kern-0.25ex\vert\kern-0.25ex\vert_{s_0,2,\Sigma},
\end{equation*}
and
\begin{equation*}
  \|\partial \phi\|_{L^2_t L^\infty_x} + \|\partial \phi\|_{L^2_t C^{\beta}_x} \leq 1+\vert\kern-0.25ex\vert\kern-0.25ex\vert d \phi-dt\vert\kern-0.25ex\vert\kern-0.25ex\vert_{L^2_t C^{\beta}_x}\lesssim 1+ \vert\kern-0.25ex\vert\kern-0.25ex\vert d\phi-dt\vert\kern-0.25ex\vert\kern-0.25ex\vert_{s_0,2,\Sigma}.
\end{equation*}
Substituting them to \eqref{r1E}, we can update \eqref{r1E} as
\begin{equation}\label{r2E}
\begin{split}
  & \big( \|[\Lambda^{s_0-2}_{x'}, \partial_t \phi \partial_{x_3}]{Lf}\|_{L^1_tL^2_x}+\|[\Lambda^{s_0-2}_{x'}, \tilde{v}^i \partial_{x_i} \phi \partial_{x_3}]{Lf}\|_{L^1_tL^2_x} \big) \|Lf\|_{\dot{H}^{s_0-2}_x}
  \\
  \lesssim & \|f\|^2_{\dot{H}^{s_0-2}_x} \big( \vert\kern-0.25ex\vert\kern-0.25ex\vert d\phi-dt\vert\kern-0.25ex\vert\kern-0.25ex\vert_{s_0,2,\Sigma}+ (\|\partial v\|_{L^2_t \dot{B}^{s_0-2}_x}+\|\partial v\|_{L^2_t L^\infty_x})(1+\vert\kern-0.25ex\vert\kern-0.25ex\vert d\phi-dt\vert\kern-0.25ex\vert\kern-0.25ex\vert_{s_0,2,\Sigma}) \big).
\end{split}
\end{equation}
For the third term on the right hand of \eqref{jhz}, we can bound it by
\begin{equation}\label{r3E}
\begin{split}
  & \left|\int^2_{-2}\int_{\mathbb{R}^3} \partial_{x_3}\tilde{v}^i \partial_{x_i} \phi \Lambda^{s_0-2}_{x'}{Lf} \Lambda^{s_0-2}_{x'}{Lf}dxd\tau \right|
  \\
  \lesssim & \| \partial v\|_{L^2_t L^\infty_x} \|\partial \phi\|_{L^2_t L^\infty_x} \|Lf\|^2_{\dot{H}^{s_0-2}_x}
  \\
  \lesssim & (1+ \vert\kern-0.25ex\vert\kern-0.25ex\vert d\phi-dt\vert\kern-0.25ex\vert\kern-0.25ex\vert_{s_0,2,\Sigma})\| \partial v\|_{L^2_t L^\infty_x}\|f\|^2_{H^{s_0-2}_x}.
\end{split}
\end{equation}
Sustituting \eqref{r2E} and \eqref{r3E}, we can estimate \eqref{jhz} by
\begin{equation}\label{Q6}
 \begin{split}
 & \left|\int^2_{-2}\int_{\mathbb{R}^3} \big( \Lambda^{s_0-2}_{x'}( \partial_t \phi \partial_{x_3}{Lf})-\Lambda^{s_0-2}_{x'}(\tilde{v}^i \partial_{x_i} \phi \partial_{x_3} {Lf}) \big)\Lambda^{s_0-2}_{x'}{Lf}dxd\tau \right|
\\
 \lesssim & \|f\|^2_{\dot{H}^{s_0-2}_x} \big( \vert\kern-0.25ex\vert\kern-0.25ex\vert d\phi-dt\vert\kern-0.25ex\vert\kern-0.25ex\vert_{s_0,2,\Sigma}+ (\|\partial v\|_{L^2_t \dot{B}^{s_0-2}_x}+\|\partial v\|_{L^2_t L^\infty_x})(1+\vert\kern-0.25ex\vert\kern-0.25ex\vert d\phi-dt\vert\kern-0.25ex\vert\kern-0.25ex\vert_{s_0,2,\Sigma}) \big)
 \\
 \lesssim & \|f\|^2_{\dot{H}^{s_0-2}_x} \big( \vert\kern-0.25ex\vert\kern-0.25ex\vert d\phi-dt\vert\kern-0.25ex\vert\kern-0.25ex\vert_{s_0,2,\Sigma}+ \|\partial v\|_{L^2_t \dot{B}^{s_0-2}_x})(1+\vert\kern-0.25ex\vert\kern-0.25ex\vert d\phi-dt\vert\kern-0.25ex\vert\kern-0.25ex\vert_{s_0,2,\Sigma}) \big),
  \end{split}
\end{equation}
where we use the fact that $\|\partial v\|_{L^2_t L^\infty} \leq \|\partial v\|_{L^2_t \dot{B}^{s_0-2}_x}$.
Combining \eqref{Q1} to \eqref{Q6}, we can get
\begin{equation}\label{te31}
\begin{split}
  \vert\kern-0.25ex\vert\kern-0.25ex\vert \Lambda^{s_0-2}_{x'} Lf\vert\kern-0.25ex\vert\kern-0.25ex\vert^2_{0,2,\Sigma} \lesssim & \ \big( \| \partial v\|_{L^2_t\dot{B}^{s_0-2}_{\infty,2}}+\vert\kern-0.25ex\vert\kern-0.25ex\vert d\phi-dt \vert\kern-0.25ex\vert\kern-0.25ex\vert_{s_0,2,\Sigma} \big) \|f\|^2_{{H}^{s_0-2}_x}(1+\vert\kern-0.25ex\vert\kern-0.25ex\vert d\phi-dt \vert\kern-0.25ex\vert\kern-0.25ex\vert_{s_0,2,\Sigma})
  \\
  & + \left|\int^t_0 \int_{\mathbb{R}^3} \Lambda^{s_0-2}_{x'}{LF} \cdot \Lambda^{s_0-2}_{x'}Lfdxd\tau \right|.
\end{split}
\end{equation}
Adding \eqref{te30} and \eqref{te31}, we can derive
\begin{equation*}
\begin{split}
  \vert\kern-0.25ex\vert\kern-0.25ex\vert  Lf\vert\kern-0.25ex\vert\kern-0.25ex\vert^2_{s_0-2,2,\Sigma} \lesssim & \ \big( \| \partial v\|_{L^2_t\dot{B}^{s_0-2}_{\infty,2}}+\vert\kern-0.25ex\vert\kern-0.25ex\vert d\phi-dt \vert\kern-0.25ex\vert\kern-0.25ex\vert_{s_0,2,\Sigma} \big) \|f\|^2_{{H}^{s_0-2}_x}(1+\vert\kern-0.25ex\vert\kern-0.25ex\vert d\phi-dt \vert\kern-0.25ex\vert\kern-0.25ex\vert_{s_0,2,\Sigma})
  \\
  & + \sum_{\sigma\in\{0,s_0-2 \}} \left|\int^t_0 \int_{\mathbb{R}^3} \Lambda^{\sigma}_{x'}{LF} \cdot \Lambda^{\sigma}_{x'}Lfdxd\tau \right|.
\end{split}
\end{equation*}
Therefore, we complete the proof of Lemma \ref{te3}.
\end{proof}
\end{Lemma}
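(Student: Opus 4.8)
The plan is to push the transport structure onto the straightened characteristic hypersurface $\Sigma$ and run a commutator-type energy estimate at the two orders $0$ and $s_0-2$, isolating the genuinely $F$-dependent contributions as the explicit space-time integrals on the right-hand side of \eqref{teE}.

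First I would apply $L=\partial(-\Delta)^{-1}\mathrm{curl}$ to \eqref{333}, which gives $\mathbf{T}Lf = LF + [L,\mathbf{T}]f$. Then, as in the proof of Lemma \ref{te1}, I straighten $\Sigma$ via $x_3\mapsto x_3-\phi(t,x')$ and set $\widetilde{Lf}(t,x)=(Lf)(t,x',x_3+\phi(t,x'))$; since $\phi$ does not depend on $x_3$, the transported equation becomes
\[
\partial_t\widetilde{Lf}+\tilde v^i\partial_{x_i}\widetilde{Lf}=\widetilde{LF}+\widetilde{[L,\mathbf{T}]f}-\partial_t\phi\,\partial_{x_3}\widetilde{Lf}-\tilde v^i\partial_{x_i}\phi\,\partial_{x_3}\widetilde{Lf}.
\]
For the zeroth-order bound I multiply by $\widetilde{Lf}$ and integrate over $[-2,2]\times\mathbb{R}^3$: the $\partial_t\phi$- and $\tilde v\cdot\nabla\phi$-terms are controlled after one integration by parts in $x_3$, the commutator $[L,\mathbf{T}]f=[\,\partial(-\Delta)^{-1}\mathrm{curl},\,v\cdot\nabla\,]f$ is bounded in $L^2_x$ via Lemma \ref{ceR} with $\alpha=0$, and the elliptic estimate $\|Lf\|_{L^2_x}\lesssim\|f\|_{L^2_x}$ closes it, leaving only $\big|\int\!\!\int LF\cdot Lf\big|$ uncontrolled.

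For the $(s_0-2)$-order estimate I apply $\Lambda^{s_0-2}_{x'}$ to the straightened equation, pair with $\Lambda^{s_0-2}_{x'}\widetilde{Lf}$, and integrate in space-time. Four families of error terms arise: (i) the transport commutator $[\Lambda^{s_0-2}_{x'},\tilde v^i\partial_{x_i}]\widetilde{Lf}$, handled by Lemma \ref{ce} (valid since $s_0-2\in(0,1)$); (ii) $\Lambda^{s_0-2}_{x'}\widetilde{[L,\mathbf{T}]f}$, for which Lemma \ref{ceR} at level $\alpha=s_0-2$ yields a bound by $(\|\partial v\|_{L^2_t\dot B^{s_0-2}_{\infty,2}}+\|\partial v\|_{L^2_tL^\infty_x})\|f\|_{H^{s_0-2}_x}$; (iii) the $\phi$-commutators $[\Lambda^{s_0-2}_{x'},\partial_t\phi\,\partial_{x_3}]$ and $[\Lambda^{s_0-2}_{x'},\tilde v^i\partial_{x_i}\phi\,\partial_{x_3}]$, estimated by combining Lemma \ref{ce} with the product estimate Lemma \ref{LPE} (auxiliary H\"older exponent $\beta=s_0-\tfrac32-\epsilon_0>s_0-2$) and then Sobolev embedding on $\Sigma$ to convert $\|\partial(d\phi)\|_{L^2_t\dot B^{s_0-2}_{\infty,2}}$ and $\|\partial\phi\|_{L^2_tC^\beta_x}$ into $\vert\kern-0.25ex\vert\kern-0.25ex\vert d\phi-dt\vert\kern-0.25ex\vert\kern-0.25ex\vert_{s_0,2,\Sigma}$; and (iv) the benign remainder $\partial_{x_3}\tilde v^i\,\partial_{x_i}\phi\,|\Lambda^{s_0-2}_{x'}\widetilde{Lf}|^2$, bounded directly by $\|\partial v\|_{L^2_tL^\infty_x}\|\partial\phi\|_{L^2_tL^\infty_x}\|f\|^2_{H^{s_0-2}_x}$. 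Summing the $\sigma=0$ and $\sigma=s_0-2$ estimates, using $\|\partial v\|_{L^2_tL^\infty_x}\lesssim\|\partial v\|_{L^2_t\dot B^{s_0-2}_{\infty,2}}$ and absorbing the $(1+\vert\kern-0.25ex\vert\kern-0.25ex\vert d\phi-dt\vert\kern-0.25ex\vert\kern-0.25ex\vert_{s_0,2,\Sigma})$ factors, gives \eqref{teE}.

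The main obstacle is item (iii): the operator $L$ does not commute with $v\cdot\nabla$, and the flattening produces $\partial_{x_3}$-derivatives of $\widetilde{Lf}$ paired against the low-regularity coefficients $\partial_t\phi$ and $\tilde v\cdot\nabla\phi$, which are only controlled in the mixed norm $G$. Closing these terms is precisely what forces the fractional-derivative commutator estimate Lemma \ref{ce} and the Riesz-multiplier commutator Lemma \ref{ceR} adapted to $v\cdot\nabla$ (the reason those lemmas were proved separately), together with the bookkeeping that the regularity loss is exactly $\|\partial v\|_{L^2_t\dot B^{s_0-2}_{\infty,2}}$ and $\vert\kern-0.25ex\vert\kern-0.25ex\vert d\phi-dt\vert\kern-0.25ex\vert\kern-0.25ex\vert_{s_0,2,\Sigma}$; the hypothesis $s_0>2$ makes $s_0-2\in(0,1)$, which is exactly the range in which all these estimates are available.
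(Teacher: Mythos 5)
Your proposal follows the paper's own proof essentially line for line: the same application of $L$ to the transport equation, the same coordinate straightening $x_3\mapsto x_3-\phi(t,x')$, the same split into the $\sigma=0$ and $\sigma=s_0-2$ energy estimates, and the same use of Lemma \ref{ceR} for $[L,\mathbf{T}]$, Lemma \ref{ce} for $[\Lambda^{s_0-2}_{x'},\tilde v\cdot\nabla]$, and Lemma \ref{LPE} with $\beta=s_0-\tfrac32-\epsilon_0$ for the $\phi$-commutators. The argument is correct and no further comparison is needed.
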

We also need to give the estimate of $\mathrm{curl} \varpi$ along the characteristic hypersurfaces, for $\mathrm{curl} \varpi$ is a nonlinear term in the wave equation of the velocity. Then, $\mathrm{curl} \varpi$ decides the regularity of the characteristic hypersurfaces.
\begin{Lemma}\label{te20}
Suppose that $(v, \boldsymbol{\rho}, \varpi) \in \mathcal{H}$. %Assuming $|||W|||_{s_0,2,\Sigma} \lesssim \epsilon_1$,
Then
\begin{equation}\label{te201}
\begin{split}
 \vert\kern-0.25ex\vert\kern-0.25ex\vert \mathrm{curl}\varpi\vert\kern-0.25ex\vert\kern-0.25ex\vert_{s_0-1,2,\Sigma} \lesssim  \epsilon_2.
   \end{split}
\end{equation}
\end{Lemma}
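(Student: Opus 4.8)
The plan is to convert the characteristic estimate for $\mathrm{curl}\,\varpi$ into transport estimates along $\Sigma$ for the vorticity quantities $\Omega$ and $\mathrm{curl}\,\Omega$, using the Hodge structure of $\varpi$ in place of the elliptic estimates that are unavailable on a null hypersurface. First I would record the pointwise identity $\mathrm{curl}\,\varpi=\mathrm{e}^{\boldsymbol{\rho}}\Omega$ from \eqref{pw1}. Since $s_0-1>1$, the product bound on $\Sigma$ in Lemma \ref{te2}, together with $\vert\kern-0.25ex\vert\kern-0.25ex\vert\boldsymbol{\rho}\vert\kern-0.25ex\vert\kern-0.25ex\vert_{s_0,2,\Sigma}\lesssim\epsilon_2$ from Corollary \ref{vte} and a Moser-type estimate for $\mathrm{e}^{\boldsymbol{\rho}}-1$ (cf. Lemma \ref{jh0}), reduces \eqref{te201} to proving $\vert\kern-0.25ex\vert\kern-0.25ex\vert\Omega\vert\kern-0.25ex\vert\kern-0.25ex\vert_{s_0-1,2,\Sigma}\lesssim\epsilon_2$.

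Next I would split $\Omega$ by Helmholtz decomposition into a gradient part and a divergence-free part. For the gradient part, the point is that $\mathrm{div}\,\Omega=-\Omega^i\partial_i\boldsymbol{\rho}$ (which follows from $\Omega=\mathrm{e}^{-\boldsymbol{\rho}}\mathrm{curl}\,\varpi$ and $\mathrm{div}\,\mathrm{curl}=0$, in the spirit of \eqref{W01}): this is a product of factors in $H^{s-1}$ and $H^{s_0-1}$, so after applying the order $-1$ operator $\nabla(-\Delta)^{-1}$ the gradient part of $\Omega$ sits at strictly higher spatial regularity than $\Omega$ itself. Its $\Sigma$-norm is then controlled, with room to spare, by the trace estimate of Lemma \ref{te2} and the a priori bounds \eqref{5021}, so this piece contributes only lower-order terms.

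The heart of the matter is the divergence-free part. Here I would combine Lemma \ref{te3}, applied with $L=\partial(-\Delta)^{-1}\mathrm{curl}$ and $f=\Omega$ (whose transport equation is \eqref{W1}) — which is precisely what keeps the nonlocal operators under control after restriction to $\Sigma$ — with the characteristic energy estimate for the modified transport equation \eqref{W2} for $\mathrm{curl}\,\Omega-2\mathrm{e}^{-\boldsymbol{\rho}}\partial_a\boldsymbol{\rho}\,\partial^i\varpi^a$. The energy estimate is run at the regularity level $s_0-2\in(0,1)$, the window in which the commutator bounds of Lemmas \ref{ce} and \ref{LPE} and the Riesz-operator commutator Lemma \ref{ceR} are valid: one makes the change of coordinates $x_3\mapsto x_3-\phi(t,x')$ as in the proofs of Lemmas \ref{te1} and \ref{te3}, applies $\Lambda^{s_0-2}_{x'}$, and pairs with the unknown. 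The term $\partial^i(2\mathrm{e}^{-\boldsymbol{\rho}}\partial_n v_a\partial^n\varpi^b)$ in \eqref{W2} carries a full second derivative of $\varpi$ and must be treated exactly as the delicate term $I_7$ in the proof of Theorem \ref{ve}: integrate by parts and use Plancherel to redistribute $\Lambda^{s_0-2}\partial^i$ as $\Lambda^{s_0-5/2}\partial^i$ against $\Lambda^{s_0-3/2}$, then close with the product estimate of Lemma \ref{ps}. The remainders $R_1,\dots,R_6$ from \eqref{rF} are quadratic or cubic products of first derivatives and are absorbed using Lemmas \ref{ps} and \ref{wql}, the bounds \eqref{5021}, and Corollary \ref{vte}, while the lower-order term $2\mathrm{e}^{-\boldsymbol{\rho}}\partial_a\boldsymbol{\rho}\,\partial\varpi$ is reinstated by Young's inequality. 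Feeding the resulting control of $\mathrm{curl}\,\Omega$ back through the Hodge decomposition yields $\vert\kern-0.25ex\vert\kern-0.25ex\vert\Omega\vert\kern-0.25ex\vert\kern-0.25ex\vert_{s_0-1,2,\Sigma}\lesssim\epsilon_2$, hence \eqref{te201}.

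The main obstacle is the one anticipated in the introduction: on the Cauchy slices $\{t=\tau\}$ all derivatives of $\varpi$ are recovered from $\mathrm{div}\,\varpi$ and $\mathrm{curl}\,\varpi$ by elliptic estimates, but this fails on the null hypersurface $\Sigma$, so elliptic reconstruction must be replaced by transport equations that unavoidably involve the nonlocal operators $(-\Delta)^{-1}$, $\mathrm{curl}$ and the Leray projections — keeping these under control along $\Sigma$ is exactly what Lemma \ref{te3} and the Riesz-operator commutator Lemma \ref{ceR} are designed for. The secondary difficulty is the regularity bookkeeping: since the commutator estimates are confined to the range $(0,1)$, the entire scheme has to be organized at the level $s_0-2$, and recovering the claimed $s_0-1$ regularity of $\mathrm{curl}\,\varpi$ hinges on the precise algebraic form of \eqref{W2} — the cancellation of the top-order part of $\mathrm{curl}\,((\varpi\cdot\nabla)v)$ into a pure divergence plus the harmless remainders $R_j$ — together with the extra smoothing enjoyed by the gradient part of $\Omega$.
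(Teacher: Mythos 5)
Your proposal follows essentially the same route as the paper's own proof: reduce via $\mathrm{curl}\,\varpi=\mathrm{e}^{\boldsymbol{\rho}}\Omega$ and the product/trace estimates of Lemma \ref{te2} to bounding $\Omega$ on $\Sigma$, use the Hodge decomposition of $\partial\Omega$ so that the divergence part (lower order by the analogue of \eqref{W01}) is absorbed by bulk Sobolev bounds, and control $L(\mathrm{curl}\,\Omega)$ by applying Lemma \ref{te3} to the modified transport equation \eqref{W2}, treating the pure-divergence source by the same integration-by-parts/Plancherel redistribution as the term $I_7$ in Theorem \ref{ve} and reinstating the lower-order correction $2\mathrm{e}^{-\boldsymbol{\rho}}\partial_a\boldsymbol{\rho}\,\partial\varpi$ at the end. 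This matches the paper's argument in structure and in all the key steps, so no further comparison is needed.
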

\begin{proof}
Consider only $\mathrm{curl} \Omega$ satisfying a good transport equation. So we should transfer the goal on $\Omega$. By product estimates on $H^s(\Sigma^t)$ (see Lemma \ref{cj}) and trace theorem, we have
\begin{equation}\label{C1}
\begin{split}
  \vert\kern-0.25ex\vert\kern-0.25ex\vert \mathrm{curl}\varpi\vert\kern-0.25ex\vert\kern-0.25ex\vert_{s_0-1,2,\Sigma} =&\vert\kern-0.25ex\vert\kern-0.25ex\vert \mathrm{e}^{\boldsymbol{\rho}}\Omega\vert\kern-0.25ex\vert\kern-0.25ex\vert_{s_0-1,2,\Sigma}
  \\
  = & \| \mathrm{e}^{\boldsymbol{\rho}}\Omega\|_{L^2_t H^{s_0-1}_{x'}}+ \| \partial_t( \mathrm{e}^{\boldsymbol{\rho}}\Omega)\|_{L^2_t H^{s_0-2}_{x'}}
  \\
  \lesssim & \| \mathrm{e}^{\boldsymbol{\rho}} \|_{L^\infty_t L^\infty_{x'}} \|\Omega\|_{L^2_t H^{s_0-1}_{x'}}+\| \Lambda^{s_0-1}_{x'}(\mathrm{e}^{\boldsymbol{\rho}})\|_{L^\infty_t L^{2}_{x'}}\|\Omega\|_{L^2_t L^{\infty}_{x'}}
  \\
  & + \| \partial_t {\boldsymbol{\rho}}\|_{L^\infty_t H^{s_0-2}_{x'}}\|\Omega\|_{L^2_t L^{\infty}_{x'}}+ \| \mathrm{e}^{\boldsymbol{\rho}}\|_{ L^\infty_t L^{\infty}_{x'} } \|\partial_t \Omega\|_{L^2_t H^{s_0-2}_{x'}}
  \\
  \lesssim &(1+\vert\kern-0.25ex\vert\kern-0.25ex\vert{\boldsymbol{\rho}}\vert\kern-0.25ex\vert\kern-0.25ex\vert_{s_0,2,\Sigma}) \vert\kern-0.25ex\vert\kern-0.25ex\vert\Omega\vert\kern-0.25ex\vert\kern-0.25ex\vert_{s_0-1,2,\Sigma}.
\end{split}
\end{equation}
By Definition \eqref{d0}, we can see
\begin{equation}\label{OM}
 \vert\kern-0.25ex\vert\kern-0.25ex\vert \Omega\vert\kern-0.25ex\vert\kern-0.25ex\vert_{s_0-1,2,\Sigma}=\| \Omega\|_{L^2_t H^{s_0-1}_{x'}(\Sigma)}+ \| \partial_t \Omega\|_{L^2_t H^{s_0-2}_{x'}(\Sigma)}.
\end{equation}
In the following, we will give the bound of the right terms on \eqref{OM}. We divide it into several steps.

\textbf{Step 1: $\| \Omega\|_{L^2_t H^{s_0-1}_{x'}(\Sigma)}$}. Note \eqref{W1}.
We can simply write it as
\begin{equation}\label{TOE}
\mathbf{T}  \Omega = \partial v \cdot \partial \varpi.
\end{equation}
By changing of coordinates $x_3 \rightarrow x_3-\phi(t,x')$, multiplying $\Omega$ and integrating it on $[-2,2]\times \mathbb{R}^3$, we can obtain
\begin{equation*}
  \begin{split}
  \| \Omega\|^2_{L^2_t L^2_{x'}(\Sigma)} \lesssim & \ \|\partial v\|_{L^1_tL^\infty_x} ( \|\partial \varpi\|_{L^\infty_t L^2_x}+\|\Omega \|_{L^\infty_t L^2_x}) \|\Omega\|_{L^\infty_t L^2_x}.
  \end{split}
\end{equation*}
Using $\Omega=\mathrm{e}^{- \boldsymbol{\rho}}\mathrm{curl}\varpi$, \eqref{402}, and \eqref{403}, we derive that
\begin{equation}\label{OM0}
  \| \Omega \|^2_{L^2_t L^2_{x'}(\Sigma)} \lesssim \epsilon^3_2.
\end{equation}
It remains for us to bound $\| \Lambda^{s_0-1}_{x'}\Omega\|_{L^2_t L^2_{x'}(\Sigma)}$. Let us first estimate $\|  \partial \Omega\|_{L^2_t H^{s_0-2}_{x'}(\Sigma)}$. By Hodge decomposition,
\begin{equation}\label{OM1}
\begin{split}
  \|  \partial \Omega\|_{L^2_t H^{s_0-2}_{x'}(\Sigma)} &= \| L ( \mathrm{curl} \Omega)+H ( \mathrm{div} \Omega)\|_{L^2_t H^{s_0-2}_{x'}(\Sigma)}
 \\
  & \leq \| L ( \mathrm{curl} \Omega)\|_{L^2_t H^{s_0-2}_{x'}(\Sigma)}+\| H ( \mathrm{div} \Omega)\|_{L^2_t H^{s_0-2}_{x'}(\Sigma)},
\end{split}
\end{equation}
where the operators $L$ and $H$ are given by $\partial(-\Delta)^{-1}\mathrm{curl}$ and $\partial(-\Delta)^{-1}\nabla$ respectively. By Sobolev imbedding $H^{\frac{1}{2}+}(\mathbb{R}) \hookrightarrow L^\infty(\mathbb{R})$ and $s-s_0+\frac12>\frac12$, we have
\begin{equation}\label{WR}
  \begin{split}
  \|\Lambda^{s_0-2}_{x'}H ( \mathrm{div} \Omega)\|_{L^2_t L^2_{x'}(\Sigma)}
   \lesssim & \|\Lambda^{s_0-2}_{x'}H ( \mathrm{div} \Omega)\|_{L^\infty_t H^{s-s_0+\frac12}_{x}}
   \\
    \lesssim & \|  \mathrm{div} \Omega\|_{L^\infty_t H^{s-\frac32}_{x}}
  \\
 \lesssim & \|\Omega\|_{L^\infty_t H^{1}_{x}}\|\partial \boldsymbol{\rho} \|_{L^\infty_t H^{s-1}_{x}}.
  \end{split}
\end{equation}
We then update \eqref{WR} by
\begin{equation}\label{DO0}
  \begin{split}
  \|\Lambda^{s_0-2}_{x'}H ( \mathrm{div} \Omega)\|_{L^2_t L^2_{x'}(\Sigma)}
  & \ \lesssim \|\Omega\|_{L^\infty_t H^{1}_{x}}\|\boldsymbol{\rho} \|_{L^\infty_t H^{s}_{x}} \lesssim \|\varpi\|_{L^\infty_t H^{2}_{x}}\|\boldsymbol{\rho} \|_{L^\infty_t H^{s}_{x}}\lesssim \epsilon_2^2.
  \end{split}
\end{equation}
In a similar way, we can deduce
\begin{equation}\label{DO1}
  \begin{split}
  %\|H ( \mathrm{div} \Omega)\|_{L^2_\Sigma}
   \|H ( \Omega \cdot \partial \boldsymbol{\rho})\|_{L^2_t L^2_{x'}(\Sigma)} &= \|H ( \Omega \cdot \partial \boldsymbol{\rho})\|_{L^2_t H^{s_0-\frac32}_{x}}, \quad
  \\
  & \ \lesssim \| \Omega \cdot \partial \boldsymbol{\rho} \|_{L^\infty_t H^{s_0-\frac32}_{x}}
   \lesssim \| \Omega \|_{L^\infty_t H^1_{x}} \| \partial \boldsymbol{\rho} \|_{L^2_t H^{s_0-1}_{x}}
   \\
  & \ \lesssim \| \varpi \|_{L^\infty_t H^2_{x}} \| \boldsymbol{\rho} \|_{L^\infty_t H^{s_0}_{x}} \lesssim \epsilon_2^2.
  \end{split}
\end{equation}
Adding \eqref{DO0} and \eqref{DO1}, we can conclude that
\begin{equation}\label{DO}
  \vert\kern-0.25ex\vert\kern-0.25ex\vert H ( \mathrm{div} \Omega)\vert\kern-0.25ex\vert\kern-0.25ex\vert_{s_0-2,2,\Sigma} \lesssim \epsilon_2^2.
\end{equation}
It remains for us to estimate $\vert\kern-0.25ex\vert\kern-0.25ex\vert L ( \mathrm{curl} \Omega)\vert\kern-0.25ex\vert\kern-0.25ex\vert_{s_0-2,2,\Sigma}$. Recall the transport equation for $\mathrm{curl}\Omega$:
\begin{equation}\label{TO0}
\begin{split}
  &\mathbf{T} \big( \mathrm{curl} \Omega^i -2 \mathrm{e}^{-\boldsymbol{\rho}} \partial_a \boldsymbol{\rho}  \partial^i \varpi^a \big)
  \\
=& \ \partial^i \big( 2 \mathrm{e}^{-\boldsymbol{\rho}} \partial_n v^a \partial^n \varpi_a \big) + R^i_1+ R^i_2+ R^i_3+ R^i_4+ R^i_5+ R^i_6.
\end{split}
\end{equation}
%We set $f=(f^1,f^2,f^3)^{\mathrm{T}}$ and $F=(F^1,F^2,F^3)^{\mathrm{T}}$, where
We set
$$f^i=\mathrm{curl}\Omega^i-2\mathrm{e}^{-{\boldsymbol{\rho}}}\partial_a \boldsymbol{\rho} \partial^i \varpi^a,\quad i=1,2,3,$$ and $$F^i=  \sum^6_{k=1}R^i_k, \qquad  G=  2 \mathrm{e}^{-\boldsymbol{\rho}} \partial_n v^a \partial^n \varpi_a , \quad i=1,2,3.$$
Here $R_1, R_2, \ldots, R_6$ is defined in \eqref{rF}. Then we can write \eqref{TO0} as
\begin{equation}\label{TO}
\mathbf{T} f^i=F^i+ \partial^i G.
\end{equation}
Operating the operator $L$ on \eqref{TO}, and using Lemma \ref{te3}, we can deduce that
\begin{equation}\label{L0}
\begin{split}
  \vert\kern-0.25ex\vert\kern-0.25ex\vert Lf\vert\kern-0.25ex\vert\kern-0.25ex\vert^2_{s_0-2,2,\Sigma} \lesssim  \ & \ \big( \| \partial v\|_{L^2_t\dot{B}^{s_0-2}_{\infty,2}}+\vert\kern-0.25ex\vert\kern-0.25ex\vert d\phi-dt \vert\kern-0.25ex\vert\kern-0.25ex\vert_{s_0,2,\Sigma} \big) \|f\|^2_{{H}^{s_0-2}_x}(1+\vert\kern-0.25ex\vert\kern-0.25ex\vert d\phi-dt \vert\kern-0.25ex\vert\kern-0.25ex\vert_{s_0,2,\Sigma})
  \\
  & +  \sum_{\sigma \in \{0,s_0-2 \}}\left| \int^2_{-2} \int_{\mathbb{R}^3}  \Lambda_{x'}^{\sigma}\widetilde{LF^i} \cdot \Lambda_{x'}^{\sigma} \widetilde{Lf_i}dxd\tau    \right|
  \\
  & +  \sum_{\sigma \in \{0,s_0-2 \}}\left| \int^2_{-2} \int_{\mathbb{R}^3}  \Lambda_{x'}^{\sigma}\left\{(\partial^i+\partial^i \phi \partial^3)\widetilde{LG}\right\} \cdot \Lambda_{x'}^{\sigma} \widetilde{Lf_i}dxd\tau    \right|.
  \end{split}
\end{equation}
We set
\begin{equation*}
\begin{split}
\mathrm{I}&=\sum_{\sigma \in \{0,s_0-2 \}}\left| \int^2_{-2} \int_{\mathbb{R}^3}  \Lambda_{x'}^{\sigma}\widetilde{LF^i} \cdot \Lambda_{x'}^{\sigma} \widetilde{Lf_i}dxd\tau    \right|,
\\
\mathrm{J}&=   \sum_{\sigma \in \{0,s_0-2 \}}\left| \int^2_{-2} \int_{\mathbb{R}^3}  \Lambda_{x'}^{\sigma}\left\{(\partial^i+\partial^i \phi \partial^3)\widetilde{LG}\right\} \cdot \Lambda_{x'}^{\sigma} \widetilde{Lf_i}dxd\tau    \right|.
\end{split}
\end{equation*}
Then we have
For $\sigma \in \{0,s_0-2 \}$, we use H\"older's inequality to give the bound
\begin{equation}\label{J20}
\begin{split}
  \mathrm{I}
  \leq  &   \ \textstyle{\sum_{\sigma \in \{0,s_0-2 \}}}\| \Lambda_{x'}^{\sigma} L F \|_{L^1_t L^2_x} \|\Lambda_{x'}^{\sigma} L(\mathrm{curl}\Omega-2\mathrm{e}^{-{\boldsymbol{\rho}}}\partial_a \boldsymbol{\rho} \partial \varpi^a) \|_{L^\infty_t L^2_x}
  \\
  \lesssim & \ \sum^{3}_{i=1}\sum^6_{j=1}\| R^i_j\|_{L^1_t H^{s_0-2}_x}\|\mathrm{curl}\Omega-2\mathrm{e}^{-{\boldsymbol{\rho}}}\partial_a \boldsymbol{\rho} \partial \varpi^a\|_{L^\infty_t H^{s_0-2}_x}.
\end{split}
\end{equation}
Recall the expressions for $R^i_1, R^i_2, ... R^i_6$ in \eqref{rF}, we get
\begin{equation}\label{R}
  \begin{split}
 &\sum^{3}_{i=1}\sum^6_{j=1}\| R^i_j\|_{L^1_t H^{s_0-2}_x}
 \lesssim   (\|d \boldsymbol{\rho}, d v\|_{L^2_t \dot{B}^{s_0-2}_{\infty,2}}+\|dv,  \partial \boldsymbol{\rho}\|_{L^2_t L^\infty_x}) \| (v, \boldsymbol{\rho}, \varpi) \|_{H^{s_0}}.
  \end{split}
\end{equation}
We also get
\begin{equation}\label{R1}
  \|\mathrm{curl}\Omega-2\mathrm{e}^{-{\boldsymbol{\rho}}}\partial_a \boldsymbol{\rho} \partial \varpi^a\|_{L^\infty_t H^{s_0-2}_x} \lesssim \| \varpi \|_{H^{s_0}}+ \|\boldsymbol{\rho}\|_{H^2}\| \varpi \|_{H^{2}}.
\end{equation}
Inserting \eqref{R} and \eqref{R1} into \eqref{J2} and using \eqref{401}-\eqref{403}, we have
\begin{equation}\label{J2}
\begin{split}
  \mathrm{I} \lesssim &\ \epsilon^2_2.
\end{split}
\end{equation}
As for $\mathrm{J}$, we separate it as
\begin{equation*}
  \mathrm{J} \leq \sum_{\sigma \in \{0,s_0-2 \}}(|J^1|+ |J^2|+|J^3|+|J^4|),
\end{equation*}
where
\begin{equation*}
  \begin{split}
  J_1= &   \int^2_{-2} \int_{\mathbb{R}^3}  (\partial^i+\partial^i \phi \partial^3) \Lambda_{x'}^{\sigma}(\widetilde{LG}) \cdot  \Lambda_{x'}^{\sigma} \widetilde{(L\mathrm{curl}\Omega_i)}dxd\tau ,
  \\
  J_2= &   \int^2_{-2} \int_{\mathbb{R}^3} [\Lambda_{x'}^{\sigma}, \partial^i+\partial^i \phi \partial^3](\widetilde{LG}) \cdot \Lambda_{x'}^{\sigma} \widetilde{(L\mathrm{curl}\Omega_i)} dxd\tau ,
  \\
  J_3= &   \int^2_{-2} \int_{\mathbb{R}^3} (\partial^i+\partial^i \phi \partial^3) \Lambda_{x'}^{\sigma} \widetilde{L \big( 2 \mathrm{e}^{-\boldsymbol{\rho}} \partial_n v^a \partial^n \varpi_a \big)} \cdot \Lambda_{x'}^{\sigma} \widetilde{L \big(-2\mathrm{e}^{-{\boldsymbol{\rho}}} \partial_a \boldsymbol{\rho} \partial_i \varpi^a \big)}  dxd\tau ,
  \\
   J_4= &   \int^2_{-2} \int_{\mathbb{R}^3} [\Lambda_{x'}^{\sigma}, \partial^i+\partial^i \phi \partial^3]  \widetilde{L \big( 2 \mathrm{e}^{-\boldsymbol{\rho}} \partial_n v^a \partial^n \varpi_a \big)} \cdot \Lambda_{x'}^{\sigma} \widetilde{L \big(-2\mathrm{e}^{-{\boldsymbol{\rho}}} \partial_a \boldsymbol{\rho} \partial_i \varpi^a \big)}  dxd\tau .
  \end{split}
  \end{equation*}
Let us bound the above terms one by one. For $J_1$, we have
\begin{equation*}
\begin{split}
  J_1= &  \int^2_{-2} \int_{\mathbb{R}^3}  (\partial^i+\partial^i \phi \partial^3) \Lambda_{x'}^{\sigma}(\widetilde{LG}) \cdot  \Lambda_{x'}^{\sigma} \widetilde{(L\mathrm{curl}\Omega_i)}dxd\tau
  \\
  =& -\int^2_{-2} \int_{\mathbb{R}^3}  \Lambda_{x'}^{\sigma}(\widetilde{LG}) \cdot  (\partial^i+\partial^i \phi \partial^3) \big\{ \Lambda_{x'}^{\sigma} \widetilde{(L\mathrm{curl}\Omega_i)} \big\} dxd\tau
  \\
  =& -\int^2_{-2} \int_{\mathbb{R}^3}  \Lambda_{x'}^{\sigma}(\widetilde{LG}) \cdot  [\partial^i+\partial^i \phi \partial^3, \Lambda_{x'}^{\sigma}]  \widetilde{(L\mathrm{curl}\Omega_i)}  dxd\tau
  \\
  & + \int^2_{-2} \int_{\mathbb{R}^3}  \Lambda_{x'}^{\sigma}(\widetilde{LG}) \cdot    \Lambda_{x'}^{\sigma}  \big\{ (\partial^i+\partial^i \phi \partial^3) \widetilde{(L\mathrm{curl}\Omega_i)} \big\} dxd\tau .
\end{split}
\end{equation*}
Using the fact
\begin{equation*}
  (\partial^i+\partial^i \phi \partial^3) \widetilde{(L\mathrm{curl}\Omega_i)}= (\partial^i L\mathrm{curl}\Omega_i)(x_1,x_2,x_3+\phi(t,x')) =0,
\end{equation*}
then
\begin{equation*}
\begin{split}
  J_1= &  -\int^2_{-2} \int_{\mathbb{R}^3}  \Lambda_{x'}^{\sigma}(\widetilde{LG}) \cdot  [\partial^i+\partial^i \phi \partial^3, \Lambda_{x'}^{\sigma}]  \widetilde{(L\mathrm{curl}\Omega_i)}  dxd\tau.
\end{split}
\end{equation*}
For $\sigma \in \{0,s_0-2 \}$, by H\"older's inequality and commutator estimates in Lemma \ref{ce}, we can derive
\begin{equation}\label{J1}
\begin{split}
  \textstyle{\sum_{\sigma \in \{0,s_0-2 \}}}(|J_1|+|J_2|)= &  \| LG \|_{\dot{H}_x^\sigma} \|\partial \phi\|_{L^2_t L^\infty_x} \|L\mathrm{curl}\Omega\|_{\dot{H}_x^\sigma}
  \\
  \lesssim & \|v\|_{H^2_x} \|\varpi\|_{H^2_x}\|\partial \phi\|_{L^2_t L^\infty_x}\|\varpi\|_{{H}_x^{s_0-1}}.
\end{split}
\end{equation}
For $J_3$, we use Plancherel formula in $\mathbb{R}^3$ such that
\begin{equation}\label{J3}
\begin{split}
  J_3= &\int^2_{-2} \int_{\mathbb{R}^3} \partial^i \Lambda_{x'}^{\sigma} \widetilde{L \big( 2 \mathrm{e}^{-\boldsymbol{\rho}} \partial_n v^a \partial^n \varpi_a \big)} \cdot  \Lambda_{x'}^{\sigma} \widetilde{L \big(-2\mathrm{e}^{-{\boldsymbol{\rho}}} \partial_a \boldsymbol{\rho} \partial_i \varpi^a \big)}  dxd\tau
  \\
  & + \int^2_{-2} \int_{\mathbb{R}^3}   \partial^3 \Lambda_{x'}^{\sigma} \widetilde{L \big( 2 \mathrm{e}^{-\boldsymbol{\rho}} \partial_n v^a \partial^n \varpi_a \big)} \cdot \partial^i \phi  \Lambda_{x'}^{\sigma} \widetilde{L \big(-2\mathrm{e}^{-{\boldsymbol{\rho}}} \partial_a \boldsymbol{\rho} \partial_i \varpi^a \big)}  dxd\tau
  \\
  = &\int^2_{-2} \int_{\mathbb{R}^3} \Lambda^{-\frac{1}{2}}\partial^i \Lambda_{x'}^{\sigma} \widetilde{L \big( 2 \mathrm{e}^{-\boldsymbol{\rho}} \partial_n v^a \partial^n \varpi_a \big)} \cdot \Lambda^{\frac{1}{2}} \Lambda_{x'}^{\sigma} \widetilde{L \big(-2\mathrm{e}^{-{\boldsymbol{\rho}}} \partial_a \boldsymbol{\rho} \partial_i \varpi^a \big)}  dxd\tau
  \\
  & + \int^2_{-2} \int_{\mathbb{R}^3}   \Lambda^{-\frac{1}{2}} \partial^3 \Lambda_{x'}^{\sigma} \widetilde{L \big( 2 \mathrm{e}^{-\boldsymbol{\rho}} \partial_n v^a \partial^n \varpi_a \big)} \cdot \Lambda^{\frac{1}{2}} \left( \partial^i \phi  \Lambda_{x'}^{\sigma} \widetilde{L \big(-2\mathrm{e}^{-{\boldsymbol{\rho}}} \partial_a \boldsymbol{\rho} \partial_i \varpi^a \big)} \right)  dxd\tau.
\end{split}
\end{equation}
For $\sigma \in \{0,s_0-2 \}$, by H\"older inequality, we can obtain
\begin{equation*}
\begin{split}
  & \textstyle{\sum_{\sigma \in \{0,s_0-2 \}}}|J_3|
  \\
  = & \|  \partial v \partial \varpi  \|_{L^\infty_t H^{s_0-\frac{3}{2}}_x}\|  \partial \boldsymbol{\rho} \partial \varpi  \|_{L^\infty_t H^{s_0-\frac{3}{2}}_x}+\|  d \phi  \|_{L^2_t C^{\frac{1}{2}}_x} \|  \partial v \partial \varpi  \|_{L^\infty_t H^{s_0-\frac{3}{2}}_x}\|  \partial \boldsymbol{\rho} \partial \varpi  \|_{L^\infty_t H^{s_0-\frac{3}{2}}_x}
  \\
  \lesssim & \ \| \boldsymbol{\rho}\|_{H^{s_0}} \|v \|_{H^{s_0}}\|\varpi\|_{H^{s_0}}(1+ \vert\kern-0.25ex\vert\kern-0.25ex\vert d\phi-dt\vert\kern-0.25ex\vert\kern-0.25ex\vert_{s_0,2,\Sigma})
 %  \\
 % \lesssim & \ \|d \boldsymbol{\rho}, d v\|^2_{L^2_t L^\infty_x}\|W\|^2_{H^2} + \|  W\|^2_{L^\infty_t H^2_x}\| v\|_{L^\infty_t H^2_x}\|d \boldsymbol{\rho}\|_{L^2_t L^\infty_x}.
\end{split}
\end{equation*}
By using \eqref{401}-\eqref{403}, we get
\begin{equation}\label{J3}
\begin{split}
  \textstyle{\sum_{\sigma \in \{0,s_0-2 \}}}|J_3| \lesssim & \ \epsilon^2_2.
\end{split}
\end{equation}
Substituing \eqref{J1}, ...\eqref{J3} to \eqref{L0} and using \eqref{401}-\eqref{403}, we can update \eqref{L0} by
\begin{equation}\label{L01}
\begin{split}
  \vert\kern-0.25ex\vert\kern-0.25ex\vert Lf\vert\kern-0.25ex\vert\kern-0.25ex\vert^2_{s_0-2,2,\Sigma} \lesssim  \epsilon^2_2.
  \end{split}
\end{equation}
For
%$$ f^i= \mathrm{curl}\Omega^i-2\mathrm{e}^{-{\boldsymbol{\rho}}}\delta^{ij}\partial_a \boldsymbol{\rho} \partial_j W^a,$$
%then
\begin{equation*}
  Lf^i=L(\mathrm{curl}\Omega^i)-2L(\mathrm{e}^{-{\boldsymbol{\rho}}}\partial_a \boldsymbol{\rho} \partial^i \varpi^a).
\end{equation*}
we then get
\begin{equation}\label{Rr}
  \vert\kern-0.25ex\vert\kern-0.25ex\vert Lf^i\vert\kern-0.25ex\vert\kern-0.25ex\vert_{s_0-2,2,\Sigma} \geq \vert\kern-0.25ex\vert\kern-0.25ex\vert L\mathrm{curl}\Omega^i\vert\kern-0.25ex\vert\kern-0.25ex\vert_{s_0-2,2,\Sigma}-  2\vert\kern-0.25ex\vert\kern-0.25ex\vert L\left( \mathrm{e}^{-{\boldsymbol{\rho}}}\partial_a \boldsymbol{\rho} \partial^i \varpi^a \right)\vert\kern-0.25ex\vert\kern-0.25ex\vert_{s_0-2,2,\Sigma}.
\end{equation}
By trace theorem and elliptic estimate, we have
\begin{equation}\label{RR}
\begin{split}
 \vert\kern-0.25ex\vert\kern-0.25ex\vert L\left( \mathrm{e}^{-{\boldsymbol{\rho}}}\partial_a \boldsymbol{\rho} \partial^i \varpi^a \right)\vert\kern-0.25ex\vert\kern-0.25ex\vert_{s_0-2,2,\Sigma}
\leq & \ C\| L\left( \mathrm{e}^{-{\boldsymbol{\rho}}}\partial_a \boldsymbol{\rho} \partial^i \varpi^a \right)\|_{H^{s_0-\frac{3}{2}+(s-s_0)}_x}
\\
\leq & \ C\| \mathrm{e}^{-{\boldsymbol{\rho}}} \partial_a \boldsymbol{\rho} \partial^i \varpi^a \|_{H^{s_0-\frac{3}{2}+(s-s_0)}_x}
\\
\leq & \  C\|\partial \boldsymbol{\rho} \|_{H^{s-1}}\|\partial \varpi \|_{H^{1}} \lesssim \epsilon^2_2.
\end{split}
\end{equation}
%By Young's inequality, we can prove
%\begin{equation*}
%  \|\partial W \|_{H^{1}} \leq C \| W \|^{\frac{s_0-2}{s_0}}_{L^{2}} \| W \|^{\frac{2}{s_0}}_{H^{s_0}} \leq C\| W \|_{L^{2}}+ \frac{1}{8}\| W \|_{H^{s_0}}.
%\end{equation*}
Sustituing \eqref{Rr} and \eqref{RR} to \eqref{L01}, we therefore get
\begin{equation}\label{CO}
  \vert\kern-0.25ex\vert\kern-0.25ex\vert L ( \mathrm{curl} \Omega)\vert\kern-0.25ex\vert\kern-0.25ex\vert_{s_0-2,2,\Sigma}\lesssim \ \epsilon^2_2 \lesssim \ \epsilon_2.
\end{equation}
If adding \eqref{CO} to \eqref{DO}, then we have
\begin{equation}\label{PO}
  \|\partial \Omega\|_{L^2_t H^{s_0-2}_{x'}(\Sigma)} \lesssim \ \epsilon_2.
\end{equation}
Due to
\begin{equation*}
  \partial_{x'} \Omega = \partial \Omega \cdot (0, \partial_{x'}\phi)^{\mathrm{T}},
\end{equation*}
by H\"older inequality and product estimates in Lemma \ref{cj}, we can deduce the following estimate
\begin{equation}\label{OH}
\begin{split}
  \|\partial_{x'} \Omega\|_{L^2_t H^{s_0-2}_{x'}(\Sigma)} &\lesssim \| \partial \Omega \cdot (0, \partial_{x'}\phi)^{\mathrm{T}}\|_{L^2_t H^{s_0-2}_{x'}(\Sigma)}
  \\
  &\lesssim \| \partial \Omega \|_{L^2_t H^{s_0-2}_{x'}(\Sigma)} \| (0, \partial_{x'}\phi)^{\mathrm{T}}\|_{L^\infty_t H^{s_0-1}_{x'}(\Sigma)}
  \\
  &\lesssim \| \partial \Omega \|_{L^2_t H^{s_0-2}_{x'}(\Sigma)} \| \partial_{x'}\phi \|_{L^\infty_t H^{s_0-1}_{x'}(\Sigma)}
  \\
  &\lesssim \| \partial \Omega \|_{L^2_t H^{s_0-2}_{x'}(\Sigma)} \| \partial_{x'}\phi \|_{s_0, 2, \Sigma} \lesssim \epsilon^2_2.
\end{split}
\end{equation}
Above, we use the trace theorem
$\| \partial_{x'}\phi \|_{L^\infty_t H^{s_0-1}_{x'}(\Sigma)} \lesssim  \vert\kern-0.25ex\vert\kern-0.25ex\vert \partial_{x'}\phi \vert\kern-0.25ex\vert\kern-0.25ex\vert_{s_0, 2, \Sigma}\lesssim  \vert\kern-0.25ex\vert\kern-0.25ex\vert d\phi-dt \vert\kern-0.25ex\vert\kern-0.25ex\vert_{s_0, 2, \Sigma}$.
Through \eqref{OM0} and \eqref{OH}, we can see
\begin{equation}\label{O1}
  \| \Omega \|_{L^2_t H^{s_0-1}_{x'}(\Sigma)} \lesssim \epsilon_2 .
\end{equation}
\textbf{Step 2: $\| \partial_t \Omega \|_{L^2_t H^{s_0-2}_{x'}(\Sigma)}$}. %Using \eqref{TOE}, we obtain
%\begin{equation*}
%  \partial_t \Omega=- (v \cdot \nabla) \Omega+ \partial v \cdot \partial W.
%\end{equation*}
By using \eqref{W1} and Sobolev imbeddings, we also have
\begin{equation}\label{OR1}
\begin{split}
  \| \partial_t \Omega \|_{L^2_t H^{s_0-2}_{x'}(\Sigma)}  & \leq    \| - (v \cdot \nabla) \Omega \|_{L^2_t H^{s_0-2}_{x'}(\Sigma)}+ \|\partial v \cdot \partial W \|_{L^2_t H^{s_0-2}_{x'}(\Sigma)}
  \\
  & \leq \| (v \cdot \nabla) \Omega \|_{L^2_t H^{s_0-2}_{x'}(\Sigma)} + \| \partial v \cdot \partial \varpi \|_{L^2_t H^{s_0-2}_{x'}(\Sigma)}
  \\
  & \leq \| v \|_{L^\infty_t H^{s_0-\frac12}_{x'}(\Sigma)} \| \partial \Omega \|_{L^2_t H^{s_0-2}_{x'}(\Sigma)} + \| \partial v \cdot \partial \varpi \|_{L^\infty_t H^{s-\frac{3}{2}}_{x}},
  \\
  & \leq \| v \|_{L^\infty_t H^{s}_{x}(\Sigma)} \| \partial \Omega \|_{L^2_t H^{s_0-2}_{x'}(\Sigma)} + \| \partial v \|_{L^\infty_t H^{s-1}_x}\| \partial \varpi \|_{L^\infty_t H^{1}_{x}}.
\end{split}
\end{equation}
Due to \eqref{OM}, \eqref{O1}, \eqref{OR1}, and \eqref{401}-\eqref{403}, we can show that
\begin{equation}\label{OME}
 \vert\kern-0.25ex\vert\kern-0.25ex\vert \Omega\vert\kern-0.25ex\vert\kern-0.25ex\vert_{s_0-1,2,\Sigma} \lesssim \epsilon_2.
\end{equation}
We finally use \eqref{C1} and \eqref{OME} to conclude that
\begin{equation*}
  \vert\kern-0.25ex\vert\kern-0.25ex\vert \mathrm{curl} \varpi  \vert\kern-0.25ex\vert\kern-0.25ex\vert_{s_0-1,2,\Sigma} \lesssim \epsilon_2.
\end{equation*}
\end{proof}
\begin{Lemma}\label{fre}%\cite{ST}
{Let $U=(\boldsymbol{\rho}, v)^{\mathrm{T}}$ satisfy the assumption in Lemma \ref{te1}. Then}
\begin{equation}\label{508}
  \vert\kern-0.25ex\vert\kern-0.25ex\vert  2^j(U-S_j U), d S_j U, 2^{-j} d \partial_x S_j U\vert\kern-0.25ex\vert\kern-0.25ex\vert_{s_0-1,2,\Sigma} \lesssim \epsilon_2. %\|U\|_{L^\infty_t H^{s_0}_x} +  \|dU\|_{L^2_t L_x^\infty}.
\end{equation}
\end{Lemma}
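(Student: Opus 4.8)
The plan is to obtain \eqref{508} by applying the characteristic energy estimate of Lemma \ref{te1}, in its frequency‑localized form, to the dyadic pieces $\Delta_k U$ and to the truncations $S_j U$, and then to reassemble the three quantities. The inputs are Corollary \ref{vte}, which gives $\vert\kern-0.25ex\vert\kern-0.25ex\vert U\vert\kern-0.25ex\vert\kern-0.25ex\vert_{s_0,2,\Sigma}\lesssim\epsilon_2$, together with the bootstrap bounds \eqref{402}--\eqref{403}; in particular $\|dU\|_{L^2_tL^\infty_x}$, $\|dU\|_{L^2_t\dot B^{s_0-2}_{\infty,2}}$ and $\|U\|_{L^\infty_tH^{s_0}_x}$ are all $\lesssim\epsilon_2$. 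Throughout I will use the straightening change of variables $x_3\mapsto x_3-\phi(t,x')$ and Lemma \ref{te0}, exactly as in the proof of Lemma \ref{te1}, so that $\Sigma$ becomes a coordinate hyperplane and the Cartesian derivatives $\partial_1,\partial_2,\partial_3$ commute with $\Delta_j$ and $S_j$; the price is the transported first‑order terms $\partial_t\phi\,\partial_{x_3}$ and $A_i(\widetilde U)\partial_{x_i}\phi\,\partial_{x_3}$, whose commutators with $\Delta_j$ and $S_j$ are handled by Lemmas \ref{LPE} and \ref{ce} and are small thanks to $\vert\kern-0.25ex\vert\kern-0.25ex\vert d\phi-dt\vert\kern-0.25ex\vert\kern-0.25ex\vert_{s_0,2,\Sigma}\le 2\epsilon_1$. (The source $F$ plays no essential role: it vanishes for the symmetric system \eqref{sq} satisfied by $(\boldsymbol{\rho},v)$, and in any case contributes only terms absorbed like those below.)

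Consider first the terms $d S_j U$ and $2^{-j} d\partial_x S_j U$. Applying $S_j$ to the hyperbolic system shows that $S_j U$ solves $\partial_t(S_jU)+\sum_i A_i(U)\partial_i(S_jU)=-\sum_i[S_j,A_i(U)]\partial_iU+S_jF$, and $\partial_x S_j U$ solves a system of the same structure whose source in addition contains $(\partial_x A_i(U))\partial_i S_j U$ and $[S_j,A_i(U)]\partial_x\partial_i U$. Re‑running the energy inequalities \eqref{U4}--\eqref{U5} of Lemma \ref{te1} for $S_j U$, respectively for $\partial_x S_j U$, and using that $S_j$ is bounded on $H^\sigma_x$ uniformly in $j$ and that Bernstein's inequality gives $\|\partial_x S_j U\|_{\dot H^{s_0-1}_x}\lesssim\|U\|_{\dot H^{s_0}_x}$ (so that the $2^{-j}$ weight compensates the extra derivative), one obtains
\[
\vert\kern-0.25ex\vert\kern-0.25ex\vert d S_j U\vert\kern-0.25ex\vert\kern-0.25ex\vert_{s_0-1,2,\Sigma}+\vert\kern-0.25ex\vert\kern-0.25ex\vert 2^{-j} d\partial_x S_j U\vert\kern-0.25ex\vert\kern-0.25ex\vert_{s_0-1,2,\Sigma}\lesssim\epsilon_2
\]
uniformly in $j$, the commutator sources being absorbed into products of $\|dU\|_{L^2_tL^\infty_x}$ and $\|U\|_{L^\infty_tH^{s_0}_x}$.

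For the first quantity, write $U-S_jU=\sum_{k\ge j}\Delta_kU$. Each $\Delta_kU$ solves $\partial_t(\Delta_kU)+\sum_i A_i(U)\partial_i(\Delta_kU)=-\sum_i[\Delta_k,A_i(U)]\partial_iU+\Delta_kF$, and running the energy argument of Lemma \ref{te1} at regularity level $s_0-1$ — which still requires only $s_0-1>1$ for the product estimate of Lemma \ref{te2} on $\Sigma$ — yields $\vert\kern-0.25ex\vert\kern-0.25ex\vert\Delta_kU\vert\kern-0.25ex\vert\kern-0.25ex\vert_{s_0-1,2,\Sigma}\lesssim 2^{-k}c_k\epsilon_2$ with $\{c_k\}_k\in\ell^2$; the factor $2^{-k}$ comes from $\|\Delta_kU\|_{L^\infty_tH^{s_0-1}_x}\lesssim 2^{-k}\|\Delta_kU\|_{L^\infty_tH^{s_0}_x}$, from the corresponding gain in $\|d\Delta_kU\|_{L^2_tL^\infty_x}$ furnished by $\|dU\|_{L^2_t\dot B^{s_0-2}_{\infty,2}}\lesssim\epsilon_2$, and from the $\ell^2$‑summable, $2^{-k}$‑decaying bounds for the commutator sources (of the type in Lemma \ref{YR}). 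Summing the dyadic contributions,
\[
\vert\kern-0.25ex\vert\kern-0.25ex\vert 2^j(U-S_jU)\vert\kern-0.25ex\vert\kern-0.25ex\vert_{s_0-1,2,\Sigma}\le 2^j\sum_{k\ge j}\vert\kern-0.25ex\vert\kern-0.25ex\vert\Delta_kU\vert\kern-0.25ex\vert\kern-0.25ex\vert_{s_0-1,2,\Sigma}\lesssim\epsilon_2\sum_{k\ge j}2^{j-k}c_k\lesssim\epsilon_2
\]
by Cauchy--Schwarz and $\{c_k\}\in\ell^2$, uniformly in $j$.

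The main obstacle is the bookkeeping of the commutator sources $[\Delta_j,A_i(U)\partial_i]U$ and $[S_j,A_i(U)\partial_i]U$: one needs bounds that, once multiplied by the weights $2^j$, $1$, $2^{-j}$, are summable in $j$ and controlled by the norms of $U$ available from the bootstrap, which at the borderline regularity $s_0>2$ forces the use of the Strichartz‑type input $\partial U\in L^2_t\dot B^{s_0-2}_{\infty,2}$ rather than merely $\partial U\in L^2_tL^\infty_x$ — the same mechanism already exploited in Lemma \ref{YR} and in the proof of Theorem \ref{ve}. A secondary technical point is that the spatial Littlewood--Paley projections do not commute with restriction to the curved hypersurface $\Sigma$; this is dealt with, as in Lemma \ref{te1}, by straightening $\Sigma$ first and then controlling the commutators of $\Delta_j$ and $S_j$ with the transported cutoff terms $\partial_t\phi\,\partial_{x_3}$ and $A_i(\widetilde U)\partial_{x_i}\phi\,\partial_{x_3}$, whose smallness is guaranteed by the hypothesis $G(v,\boldsymbol{\rho})\le 2\epsilon_1$.
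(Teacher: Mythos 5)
Your proposal is correct and follows essentially the same route as the paper: frequency-localize the symmetric hyperbolic system, run the characteristic energy estimate of Lemma \ref{te1} on each localized piece ($\Delta_k U$, $S_jU$, $2^{-j}\partial_x S_jU$) with the commutators $[\Delta_k, A_i(U)]\partial_i U$, $[S_j, A_i(U)]\partial_i U$ as sources, and sum the dyadic contributions for $U-S_jU$. The only (harmless) difference is in the reassembly of $2^j(U-S_jU)=2^j\sum_{k\ge j}\Delta_kU$: you use the triangle inequality plus Cauchy--Schwarz on $\sum_{k\ge j}2^{j-k}c_k$, while the paper sums the squares directly; your version is if anything the more careful one.
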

\begin{proof}
Let $\Delta_0$ be a standard multiplier of order $0$ on $\mathbb{R}^3$, such that $\Delta_0$ is additionally bounded on $L^\infty(\mathbb{R}^3)$. Clearly,
\begin{equation*}
  (\Delta_0U)_t+ \sum^3_{i=1}A_i(U)(\Delta_0U)_{x_i}= -\sum^3_{i=1}[\Delta_0, A_i(U)]\partial_{x_i}U.
\end{equation*}
Set $G=-\sum^3_{i=1}[\Delta_0, A_i(U)]\partial_{x_i}U$.
Due to Lemma \ref{te1}, we have
\begin{equation}\label{60}
  \vert\kern-0.25ex\vert\kern-0.25ex\vert \Delta_0U\vert\kern-0.25ex\vert\kern-0.25ex\vert^2_{s_0,2,\Sigma} \lesssim \|d U \|_{L^2_t L^{\infty}_x}\| \Delta_0 U\|^2_{L^{\infty}_tH_x^{s}}+\| G\|_{L^{2}_tH_x^{s_0}}\| \Delta_0 U\|_{L^{\infty}_tH_x^{s}}.
\end{equation}
Using commutator estimates, we can prove
\begin{equation*}
  \| G\|_{L^{2}_tH_x^{s_0}} \lesssim \|d U \|_{L^2_t L^{\infty}_x}\| \Delta_0 U\|_{L^{\infty}_tH_x^{s}}.
\end{equation*}
In a result, we can update \eqref{60} by
\begin{equation}\label{60e}
  \vert\kern-0.25ex\vert\kern-0.25ex\vert \Delta_0U\vert\kern-0.25ex\vert\kern-0.25ex\vert^2_{s_0,2,\Sigma} \lesssim \|d U \|_{L^2_t L^{\infty}_x}\| \Delta_0 U\|^2_{L^{\infty}_tH_x^{s}}.
\end{equation}
To control the norm of $2^j(U-S_j U)$, we write
\begin{equation*}
  2^j(U-S_j U)= 2^j \sum_{m\geq j}\Delta_m U,
\end{equation*}
where $\Delta_m U$ satisfies the above conditions for $\Delta_0 U$. Applying \eqref{60e} and replacing $s_0$ to $s_0-1$, we get
\begin{equation*}
\begin{split}
   \vert\kern-0.25ex\vert\kern-0.25ex\vert 2^j(U-S_j U) \vert\kern-0.25ex\vert\kern-0.25ex\vert^2_{s_0-1,2,\Sigma}
  \lesssim & \sum_{m\geq j} \vert\kern-0.25ex\vert\kern-0.25ex\vert 2^m \Delta_m U \vert\kern-0.25ex\vert\kern-0.25ex\vert^2_{s_0-1,2,\Sigma}
  \\
  \lesssim & \|d U \|_{L^2_t L^{\infty}_x} \sum_{m\geq j} (2^m\|\Delta_m U\|^2_{L^{\infty}_tH_x^{s}})
  \\
   \lesssim & \|d U \|_{L^2_t L^{\infty}_x} \| U\|^2_{L^{\infty}_tH_x^{s}} \lesssim \epsilon^2_2.
\end{split}
\end{equation*}
Taking square of it, we can see
\begin{equation*}
   \vert\kern-0.25ex\vert\kern-0.25ex\vert 2^j(U-S_j U) \vert\kern-0.25ex\vert\kern-0.25ex\vert_{s_0-1,2,\Sigma}
  \lesssim  \epsilon_2.
\end{equation*}
Finally, applying \eqref{60} to $\Delta_0=S_{j}$ and $\Delta_0=2^{-j}\partial_x S_{j}$ shows that
\begin{equation*}
  \vert\kern-0.25ex\vert\kern-0.25ex\vert d S_{j}U\vert\kern-0.25ex\vert\kern-0.25ex\vert_{s_0-1,2,\Sigma} +\vert\kern-0.25ex\vert\kern-0.25ex\vert 2^{-j} d \partial_x S_{j}U \vert\kern-0.25ex\vert\kern-0.25ex\vert_{s_0-1,2,\Sigma} \lesssim \epsilon_2.
  %\|U\|_{L^\infty_t H^{s_0}_x} +  \|dU\|_{L^2_t L^\infty}.
\end{equation*}
Therefore, the proof of Lemma \ref{fre} is completed.
\end{proof}
We are now ready to give a precise proof of Proposition \ref{r1}.
\begin{proof}[proof of Proposition \ref{r1}]
Note $(v, \boldsymbol{\rho}, \varpi) \in \mathcal{H}$. Using Lemma \ref{fre},
%and \eqref{5021}-\eqref{504},
it suffices for us to verify that
\begin{equation*}
\begin{split}
  \vert\kern-0.25ex\vert\kern-0.25ex\vert {\mathbf{g}}^{\alpha \beta}-\mathbf{m}^{\alpha \beta}\vert\kern-0.25ex\vert\kern-0.25ex\vert_{s_0,2,\Sigma_{\theta,r}} \lesssim \epsilon_2.
\end{split}
\end{equation*}
By Lemma \ref{te1} and \eqref{5021}-\eqref{504}, we have
\begin{equation*}
  \sup_{\theta,r}\vert\kern-0.25ex\vert\kern-0.25ex\vert v\vert\kern-0.25ex\vert\kern-0.25ex\vert_{s_0,2,\Sigma_{\theta,r}}+ \sup_{\theta,r}\vert\kern-0.25ex\vert\kern-0.25ex\vert \boldsymbol{\rho}\vert\kern-0.25ex\vert\kern-0.25ex\vert_{s_0,2,\Sigma_{\theta,r}} \lesssim \epsilon_2.
\end{equation*}
Noting the expression of $\mathbf{g}$, by Lemma \ref{te2}, we derive that
\begin{equation*}
\begin{split}
  \|{\mathbf{g}}^{\alpha \beta}-\mathbf{m}^{\alpha \beta}\|_{s_0,2,\Sigma_{\theta,r}} & \lesssim \vert\kern-0.25ex\vert\kern-0.25ex\vert v\vert\kern-0.25ex\vert\kern-0.25ex\vert_{s_0,2,\Sigma_{\theta,r}}+\vert\kern-0.25ex\vert\kern-0.25ex\vert v \cdot v\vert\kern-0.25ex\vert\kern-0.25ex\vert_{s_0,2,\Sigma_{\theta,r}}+\vert\kern-0.25ex\vert\kern-0.25ex\vert c_s^2(\boldsymbol{\rho})-c_s^2(0)\vert\kern-0.25ex\vert\kern-0.25ex\vert_{s_0,2,\Sigma_{\theta,r}}
  \\
  & \lesssim \epsilon_2.
\end{split}
\end{equation*}
Thus, the conclusion of Proposition \ref{r1} holds.
\end{proof}
\subsection{The null frame}
We introduce a null frame along $\Sigma$ as follows. Let
\begin{equation*}
  V=(dr)^*,
\end{equation*}
where $r$ is the defining function of the foliation $\Sigma$, and where $*$ denotes the identification of covectors and vectors induced by $\mathbf{g}$. Then $V$ is the null geodesic flow field tangent to $\Sigma$. Let
\begin{equation}\label{600}
  \sigma=dt(V), \qquad l=\sigma^{-1} V.
\end{equation}
Thus $l$ is the g-normal field to $\Sigma$ normalized so that $dt(l)=1$, hence
\begin{equation}\label{601}
  l=\left< dt,dx_3-d\phi\right>^{-1}_{\mathbf{g}} \left( dx_3-d \phi \right)^*,
\end{equation}
so the coefficients $l^j$ are smooth functions of $v, \rho$ and $d \phi$. Conversely,
\begin{equation}\label{602}
 dx_3-d \phi =\left< l,\partial_{x_3}\right>^{-1}_{\mathbf{g}} l^*,
\end{equation}
so that $d \phi$ is a smooth function of $v, \rho$ and the coefficients of $l$.

Next we introduce the vector fields $e_a, a=1,2$ tangent to the fixed-time slice $\Sigma^t$ of $\Sigma$. We do this by applying Grahm-Schmidt orthogonalization in the metric $\mathbf{g}$ to the $\Sigma^t$-tangent vector fields $\partial_{x_a}+ \partial_{x_a} \phi \partial_{x_3}, a=1, 2$.

Finally, we let
\begin{equation*}
  \underline{l}=l+2\partial_t.
\end{equation*}
It follows that $\{l, \underline{l}, e_1, e_2 \}$ form a null frame in the sense that
\begin{align*}
  & \left<l, \underline{l} \right>_{\mathbf{g}} =2, \qquad \qquad \ \left< e_a, e_b\right>_{\mathbf{g}}=\delta_{ab} \ (a,b=1,2),
  \\
  & \left<l, l \right>_{\mathbf{g}} =\left<\underline{l}, \underline{l} \right>_{\mathbf{g}}=0, \quad \left<l, e_a \right>_{\mathbf{g}}=\left<\underline{l}, e_a \right>_{\mathbf{g}}=0 \ (a=1,2).
\end{align*}
The coefficients of each of the fields is a smooth function of $u$ and $d \phi$, and by assumption we also have the pointwise bound
\begin{equation*}
  | e_1 - \partial_{x_1} | +| e_2 - \partial_{x_2} | + | l- (\partial_t+\partial_{x_3}) | + | \underline{l} - (-\partial_t+\partial_{x_3})|  \lesssim \epsilon_1.
\end{equation*}
After that, we can state the following corollary concerning to the decomposition of curvature tensor.
\begin{Lemma}\label{LLQ}\cite{ST}
Suppose $f$ satisfying $$\mathbf{g}^{\alpha \beta} \partial^2_{\alpha \beta}f=F.$$
Let $(t,x',\phi(t,x'))$ denote the projective parametrisation of $\Sigma$, and for $0 \leq \alpha, \beta \leq 2$, let $/\kern-0.55em \partial_\alpha$ denote differentiation along $\Sigma$ in the induced coordinates. Then, for $0 \leq \alpha, \beta \leq 2$, one can write
\begin{equation*}
  /\kern-0.55em \partial_\alpha /\kern-0.55em \partial_\beta (f|_{\Sigma}) = l(f_2)+ f_1,
\end{equation*}
where
\begin{equation*}
  \| f_2 \|_{L^2_t H^{s_0-1}_{x'}(\Sigma)}+\| f_1 \|_{L^1_t H^{s_0-1}_{x'}(\Sigma)} \lesssim \|df\|_{L^\infty_t H^{s_0-1}}+ \|df\|_{L^2_t L^\infty}+ \| F\|_{L^2_t H^{s_0-1}_x}+ \| F\|_{L^1_t H^{s_0-1}_{x'}(\Sigma)}.
\end{equation*}
\end{Lemma}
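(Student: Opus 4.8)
This is the wave-equation counterpart of the characteristic energy estimate of Lemma \ref{te1}, and it is the corresponding lemma of Smith--Tataru \cite{ST}; the plan is to reproduce their argument, which applies essentially verbatim once the regularity of $\Sigma$ and of $\mathbf{g}$ along $\Sigma$ furnished by Proposition \ref{r1} and the hypothesis $G(v,\boldsymbol{\rho})\le 2\epsilon_1$ are available. First I would flatten the hypersurface by the change of variables $x_3\mapsto x_3-\phi(t,x')$ already used in Lemma \ref{te1}: with $\tilde f(t,x)=f(t,x',x_3+\phi(t,x'))$ the surface $\Sigma$ becomes $\{x_3=0\}$, the tangential operators $/\kern-0.55em \partial_\alpha$ ($\alpha=0,1,2$) become the coordinate derivatives $\partial_{x_\alpha}$ restricted to $x_3=0$, and the equation turns into $\tilde{\mathbf{g}}^{\mu\nu}\partial^2_{\mu\nu}\tilde f=\tilde F$, where $\tilde{\mathbf{g}}$ is the pulled-back metric --- for which $\{x_3=0\}$ is characteristic, i.e. $\tilde{\mathbf{g}}^{33}|_{x_3=0}=0$ --- and $\tilde F$ differs from the pullback of $F$ only by first-order terms with coefficients involving $d^2\phi$, which by \eqref{503}--\eqref{504} and Proposition \ref{r1} are acceptable for every norm on the right-hand side of the asserted estimate. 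Lemma \ref{te0} then lets us pass freely between $\tilde f$ and $f$ in all the relevant $L^2_tH^{s_0-1}_{x'}(\Sigma)$, $L^\infty_tH^{s_0-1}$ and $L^2_tL^\infty_x$ norms.

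Next I would work along $\Sigma$ in the null frame $\{l,\underline{l},e_1,e_2\}$ of \eqref{600}--\eqref{602}, whose coefficients are smooth functions of $v,\boldsymbol{\rho},d\phi$, so that $\mathbf{g}^{-1}=\tfrac12(l\otimes\underline{l}+\underline{l}\otimes l)+\sum_{a}e_a\otimes e_a$. Expanding the operator in this frame gives $F=\partial^2 f(l,\underline{l})+\sum_a\partial^2 f(e_a,e_a)+R\cdot df$, with $R$ built from one derivative of the frame coefficients, hence of $(v,\boldsymbol{\rho},d\phi)$; any tangential Hessian component $/\kern-0.55em \partial_\alpha/\kern-0.55em \partial_\beta(f|_\Sigma)$ is then a combination, with coefficients bounded by $1+\epsilon_1$, of $l(lf)$, $l(e_af)$ and $e_a(e_bf)$, plus first-order terms $c\cdot df|_\Sigma$ in which $c$ is one derivative of the frame. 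The terms $l(lf)$ and $l(e_af)$ already have the required shape $l(f_2)$ with $f_2\in\{lf,e_af\}$, and the key point is the characteristic energy estimate
\[
\|df|_{\Sigma}\|_{L^2_tH^{s_0-1}_{x'}(\Sigma)}\lesssim \|df\|_{L^\infty_tH^{s_0-1}}+\|df\|_{L^2_tL^\infty_x}+\|F\|_{L^2_tH^{s_0-1}_x}+\|F\|_{L^1_tH^{s_0-1}_{x'}(\Sigma)},
\]
with no loss of derivative, which puts $f_2\in L^2_tH^{s_0-1}_{x'}(\Sigma)$. This I would obtain from the stress--energy identity for $\tilde{\mathbf{g}}^{\mu\nu}\partial^2_{\mu\nu}\tilde f=\tilde F$ applied after $\Lambda^{s_0-1}_{x'}$ and integrated over the slab between $\{x_3=0\}$ and the Minkowski region $\{t=-2\}$: the flux through $\{x_3=0\}$ is $\int(|l\tilde f|^2+\sum_a|e_a\tilde f|^2)$ up to terms absorbed using \eqref{503}, the bulk term is $\int\tilde F\cdot\partial\tilde f\lesssim\|F\|_{L^2_tH^{s_0-1}_x}\|df\|_{L^\infty_tH^{s_0-1}}$, and the commutators $[\Lambda^{s_0-1}_{x'},\tilde{\mathbf{g}}^{\mu\nu}\partial_\mu]$ are controlled by Lemmas \ref{ce} and \ref{ceR}. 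The angular components $e_a(e_bf)$ and the term $\partial^2 f(l,\underline{l})$ --- which carry derivatives not seen by the null flux --- are handled by the equation itself: since $\tilde{\mathbf{g}}^{33}|_{x_3=0}=0$, the restriction of the equation to $\{x_3=0\}$ expresses the $\tilde{\mathbf{g}}$-trace of the angular Hessian in terms of $\tilde F$, tangential second derivatives and $l$-derivatives of tangential first derivatives, while the traceless angular part is recovered, following \cite{ST}, from the transport (structure) equations for the null second fundamental form of $\Sigma$, whose coefficients satisfy the bounds of Proposition \ref{r1}; this is where the norm $\|F\|_{L^1_tH^{s_0-1}_{x'}(\Sigma)}$ enters and where the remaining piece $f_1\in L^1_tH^{s_0-1}_{x'}(\Sigma)$ is produced.

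The main obstacle is performing this characteristic energy estimate with \emph{no} half-derivative loss --- exactly the step that forces one to exploit the null structure of $\Sigma$ together with the control $G\le 2\epsilon_1$ of its regularity --- compounded by the low regularity of the metric: since $s_0\in(2,\tfrac52)$, the exponent $s_0-1$ may lie below $\tfrac32$, so every product and every commutator ($\tilde{\mathbf{g}}\cdot\partial^2\tilde f$, $[\Lambda^{s_0-1}_{x'},\tilde{\mathbf{g}}^{\mu\nu}\partial_\mu]$, $c\cdot d\tilde f|_\Sigma$, the structure-equation nonlinearities) must be estimated with the sharp Sobolev-product and Riesz-commutator bounds of Lemmas \ref{ps}, \ref{ce}, \ref{ceR} and \ref{LPE} rather than with crude ones. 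What remains is bookkeeping: tracking which error terms land in $L^2_tH^{s_0-1}_{x'}(\Sigma)$, and so may be absorbed into $l(f_2)$, and which only in $L^1_tH^{s_0-1}_{x'}(\Sigma)$, and so are placed in $f_1$ --- the split on which the applications in Lemmas \ref{te3} and \ref{te20} rely.
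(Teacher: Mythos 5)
First, note that the paper offers no proof of Lemma \ref{LLQ}: it is imported verbatim from \cite{ST}, so your sketch has to be measured against Smith--Tataru's own argument rather than against anything in this text. At the level of architecture your outline is the right one --- flatten $\Sigma$, expand the tangential Hessian in the null frame $\{l,e_1,e_2\}$, observe that every term carrying at least one $l$ is already of the form $l(f_2)$ up to commutators and coefficient errors, and use the equation to treat what is left --- and your bookkeeping of which errors may go to $L^2_tH^{s_0-1}_{x'}(\Sigma)$ versus $L^1_tH^{s_0-1}_{x'}(\Sigma)$ is sensible.

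The gap is in the step you yourself call ``the key point'': the asserted no-loss bound $\|df|_{\Sigma}\|_{L^2_tH^{s_0-1}_{x'}(\Sigma)}\lesssim(\text{RHS})$ for the \emph{full} gradient. The stress--energy flux of $\square_{\mathbf g}$ through a characteristic hypersurface controls only $|lf|^2+\sum_a|e_af|^2$; it degenerates identically on the transversal component $|\underline{l}f|^2$ (you write the flux correctly and then silently promote it to all of $df|_\Sigma$). Nor can $\underline{l}f|_\Sigma$ be recovered from $\|df\|_{L^\infty_tH^{s_0-1}}$ by restriction, since the trace onto the two-dimensional slices $\Sigma^t$ costs half a derivative. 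This matters because the transversal derivative is unavoidable: once the equation is used to trade the angular trace $\sum_ae_a(e_af)$ for $l(\underline{l}f)+F+(\text{lower order})$, the resulting $f_2$ contains $\underline{l}f|_\Sigma$, which is precisely the quantity your energy identity does not reach. In the paper's (and \cite{ST}'s) actual use of the lemma, $f$ is a component of $\mathbf g$, and the control of the full $d\mathbf g|_\Sigma$ in $L^2_tH^{s_0-1}_{x'}(\Sigma)$ is supplied by the \emph{first-order} symmetric hyperbolic system (Lemma \ref{te1}, Lemma \ref{fre}, Proposition \ref{r1}), not by the wave equation; a complete proof must either take that as an input or explain how the wave equation alone yields it. Separately, your appeal to the transport equations for the null second fundamental form to recover the traceless angular Hessian runs the logic of this paper backwards: the structure equation for $\chi$ is exploited only in Lemma \ref{chi}, downstream of Corollary \ref{Rfenjie} and hence of Lemma \ref{LLQ}, and in any case it produces a decomposition only for the particular curvature components $\left<R(l,e_a)l,e_b\right>_{\mathbf g}$, not for an arbitrary tangential Hessian entry of an arbitrary solution $f$.
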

%\begin{proof}
%Let
%\end{proof}
\begin{corollary}\label{Rfenjie}
Let $R$ be the Riemann curvature tensor for the metric ${\mathbf{g}}$. Let $e_0=l$. Then for any $0 \leq a, b, c,d \leq 2$, we can write
\begin{equation}\label{603}
  \left< R(e_a, e_b)e_c, e_d \right>_{\mathbf{g}}|_{\Sigma}=l(f_2)+f_1,
\end{equation}
where $|f_1|\lesssim |\mathrm{curl} \varpi|+ |d \mathbf{g} |^2$, $|f_2| \lesssim |d {\mathbf{g}}|$. Moreover, the characteristic estimates
\begin{equation}\label{604}
  \|f_2\|_{L^2_t H^{s_0-1}_{x'}(\Sigma)}+\|f_1\|_{L^1_t H^{s_0-1}_{x'}(\Sigma)} \lesssim \epsilon_2,
\end{equation}
holds. Additionally, for any $t \in [0,T]$,
\begin{equation}\label{605}
  \|f_2(t,\cdot)\|_{C^\delta_{x'}(\Sigma^t)} \lesssim \|d \mathbf{g}\|_{C^\delta_x(\mathbb{R}^3)}.
\end{equation}
\end{corollary}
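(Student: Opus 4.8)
Here is a plan of attack for Corollary~\ref{Rfenjie}. The idea is to combine the schematic structure $R\sim\partial^2\mathbf{g}+(\partial\mathbf{g})^2$ of the curvature with the observation that the coefficients of $\mathbf{g}$ themselves solve wave equations whose sources are governed by $\mathrm{curl}\varpi$ and by quadratic expressions in $d\mathbf{g}$, and then to feed the purely tangential second derivatives into the reduction Lemma~\ref{LLQ}.

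\textbf{Step 1: a wave equation for the metric coefficients.} First I would write $\mathbf{g}^{\alpha\beta}=\Phi^{\alpha\beta}(v,c_s(\boldsymbol{\rho}))$ for smooth functions with $\Phi^{\alpha\beta}(0)=\mathbf{m}^{\alpha\beta}$, and apply $\square_{\mathbf{g}}=\mathbf{g}^{\mu\nu}\partial^2_{\mu\nu}$ (see \eqref{Box}) via the chain rule. Using \eqref{CS}--\eqref{CSN}, namely $\square_{\mathbf{g}}v^i=-\mathrm{e}^{\boldsymbol{\rho}}c_s^2\mathrm{curl}\varpi^i+Q^i$ and $\square_{\mathbf{g}}\boldsymbol{\rho}=\mathcal{D}$ with $Q,\mathcal{D}$ null forms quadratic in $d\mathbf{g}$, this gives
\begin{equation*}
\square_{\mathbf{g}}\mathbf{g}^{\alpha\beta}=F^{\alpha\beta},\qquad |F^{\alpha\beta}|\lesssim|\mathrm{curl}\varpi|+|d\mathbf{g}|^2 .
\end{equation*}
I would then check the mixed-norm bound $\|F^{\alpha\beta}\|_{L^2_tH^{s_0-1}_x}+\|F^{\alpha\beta}\|_{L^1_tH^{s_0-1}_{x'}(\Sigma)}\lesssim\epsilon_2$: for the $\mathrm{curl}\varpi$ piece the $L^2_tH^{s_0-1}_x$ bound comes from \eqref{402} and the $L^1_tH^{s_0-1}_{x'}(\Sigma)$ bound from Lemma~\ref{te20} (and $L^1_t\hookleftarrow L^2_t$ on a bounded interval), while the quadratic terms are handled by the product estimates of Lemma~\ref{cj} and Lemma~\ref{te2} together with Corollary~\ref{vte} and the dispersive bounds \eqref{5021}.

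\textbf{Step 2: reduction of the curvature contraction and conclusion.} Next I would expand $\langle R(e_a,e_b)e_c,e_d\rangle_{\mathbf{g}}|_{\Sigma}$ in the Cartesian coordinates, using that $e_0=l,e_1,e_2$ are all \emph{tangent} to $\Sigma$ with coefficients that are smooth functions of $v,\boldsymbol{\rho}$ and $d\phi$. Because every frame vector is $\Sigma$-tangent, after restriction to $\Sigma$ the only second derivatives of $\mathbf{g}$ surviving the expansion are of the purely tangential type $/\kern-0.55em\partial_\mu/\kern-0.55em\partial_\nu(\mathbf{g}^{\alpha\beta}|_{\Sigma})$ with $0\le\mu,\nu\le2$; all remaining terms are either $(d\mathbf{g})^2$, or products of $d\mathbf{g}$ with first tangential derivatives of the frame coefficients (themselves bounded by $d\mathbf{g}$ plus $d(d\phi-dt)$, the latter being controlled through $G(v,\boldsymbol{\rho})$, see \eqref{500}), and these go into $f_1$. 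Applying Lemma~\ref{LLQ} to $f=\mathbf{g}^{\alpha\beta}$ with $F=F^{\alpha\beta}$ yields $/\kern-0.55em\partial_\mu/\kern-0.55em\partial_\nu(\mathbf{g}^{\alpha\beta}|_{\Sigma})=l(g_2^{\alpha\beta,\mu\nu})+g_1^{\alpha\beta,\mu\nu}$ with $|g_2^{\alpha\beta,\mu\nu}|\lesssim|d\mathbf{g}|$, $|g_1^{\alpha\beta,\mu\nu}|\lesssim|\mathrm{curl}\varpi|+|d\mathbf{g}|^2$, and $\|g_2^{\alpha\beta,\mu\nu}\|_{L^2_tH^{s_0-1}_{x'}(\Sigma)}+\|g_1^{\alpha\beta,\mu\nu}\|_{L^1_tH^{s_0-1}_{x'}(\Sigma)}\lesssim\epsilon_2$ by Step~1. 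Collecting: define $f_2$ as the $l$-primitive part multiplied by the smooth frame coefficients, and $f_1$ as the remainder; then \eqref{603} holds, the pointwise bounds on $f_1,f_2$ follow, and \eqref{604} follows from multiplicativity of the triple-bar norm on $\Sigma$ (Lemma~\ref{te2}), the characteristic bounds on $d\mathbf{g}$ (Proposition~\ref{r1}), Corollary~\ref{vte}, and Lemma~\ref{te20}. Finally, \eqref{605} follows from $|f_2|\lesssim|d\mathbf{g}|$ together with the fact that the frame coefficients are Lipschitz functions of $v,\boldsymbol{\rho},d\phi$, all of which lie in $L^\infty_tC^\delta_x$ by \eqref{504} and \eqref{5021}, so multiplication by them is bounded on $C^\delta_x$.

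\textbf{Main obstacle.} The delicate point is the structural claim in Step~2 that the contraction $\langle R(e_a,e_b)e_c,e_d\rangle|_{\Sigma}$ produces no transverse second derivative of the metric (nothing of type $\underline{l}\,\partial\mathbf{g}$ or $\partial^2_{x_3}\mathbf{g}$ that is not already exhibited as $l(\cdot)$); this is exactly where the null structure of the frame $\{l,\underline{l},e_1,e_2\}$ must be exploited, in the manner of \cite{ST}. The only genuinely new ingredient beyond \cite{ST} is that the source of the metric wave equation now carries the term $\mathrm{curl}\varpi$, whose characteristic regularity on $\Sigma$ is not available from elliptic estimates on $\Sigma^t$ and must instead be supplied by the transport-equation argument of Lemma~\ref{te20}.
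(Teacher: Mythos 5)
Your proposal is correct and follows essentially the same route as the paper: both decompose $R$ schematically into tangential second derivatives of $\mathbf{g}$ plus $(d\mathbf{g})^2$ terms, exploit the $\Sigma$-tangency of the frame to invoke Lemma \ref{LLQ}, and reduce the key estimate to bounding $\square_{\mathbf{g}}\mathbf{g}_{\mu\nu}$ on $\Sigma$ via the wave equations for $v,\boldsymbol{\rho}$, with the $\mathrm{curl}\varpi$ source controlled by Lemma \ref{te20} and the quadratic terms by Corollary \ref{vte} and the dispersive bounds. Your "main obstacle" paragraph correctly identifies exactly the two points the paper's proof turns on.
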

\begin{proof}
According to the expression of curvature tensor, we have
\begin{equation*}
  \left< R(e_a, e_b)e_c, e_d \right>_{\mathbf{g}}= R_{\alpha \beta \mu \nu}e^\alpha_a e^\beta_b e_c^\mu e_d^\nu,
\end{equation*}
where
\begin{equation*}
  R_{\alpha \beta \mu \nu}= \frac12 \left[ \partial^2_{\alpha \mu} \mathbf{g}_{\beta \nu}+\partial^2_{\beta \nu} \mathbf{g}_{\alpha \mu}-\partial^2_{\beta \mu} \mathbf{g}_{\alpha \nu}-\partial^2_{\alpha \nu} \mathbf{g}_{\beta \mu} \right]+ Q(\mathbf{g}^{\alpha \beta}, d \mathbf{g}_{\alpha \beta}),
\end{equation*}
where $Q$ is a sum of products of coefficients of $\mathbf{g}^{\alpha \beta} $ with quadratic forms in $d \mathbf{g}_{\alpha \beta}$. By using Proposition \ref{r1}, then $Q$ satisfies the bound required of $f_1$. It suffices for us to consider
\begin{equation*}
   \frac12 e^\alpha_a e^\beta_b e_c^\mu e_d^\nu  \left[ \partial^2_{\alpha \mu} \mathbf{g}_{\beta \nu}+\partial^2_{\beta \nu} \mathbf{g}_{\alpha \mu}-\partial^2_{\beta \mu} \mathbf{g}_{\alpha \nu}-\partial^2_{\alpha \nu} \mathbf{g}_{\beta \mu} \right].
\end{equation*}
We therefore look at the term $ e^\alpha_a e_c^\mu \partial^2_{\alpha \mu} \mathbf{g}_{\beta \nu} $, which is typical. By \eqref{503}, Proposition \ref{r1}, and Lemma \ref{te3}, we get
\begin{equation}\label{LLL}
  \| l^\alpha - \delta^{\alpha 0} \|_{s_0,2,\Sigma} +\| \underline{l}^\alpha + \delta^{\alpha 0}-2\delta^{\alpha n} \|_{s_0,2,\Sigma} + \| e^\alpha_a- \delta^{\alpha a} \|_{s_0,2,\Sigma} \lesssim \epsilon_1.
\end{equation}
By \eqref{LLL} and Proposition \ref{r1}, the term $ e_a (e_c^\mu) \partial_{ \mu} \mathbf{g}_{\beta \nu}$ satisfies the bound required of $f_1$, so we consider $e_a(e_c(\mathbf{g}_{\beta \nu}))$. Finally, since the coefficients of $e_c$ in the basis $/\kern-0.55em \partial_\alpha$ have tangential derivatives bounded in $L^2_tH^{s_0-1}_{x'}(\Sigma)$, we are reduced by Lemma \ref{LLQ} to verifying that
\begin{equation*}
 \| \mathbf{g}^{\alpha \beta } \partial^2_{\alpha \beta} \mathbf{g}_{\mu \nu} \|_{L^1_t H^{s_0-1}_{x'}(\Sigma)} \lesssim \epsilon_2.
\end{equation*}
Note
\begin{equation*}
  \mathbf{g}^{\alpha \beta } \partial^2_{\alpha \beta} \mathbf{g}_{\mu \nu}= \square_{\mathbf{g}} \mathbf{g}_{\mu \nu}.
\end{equation*}
By Lemma \ref{te20} and Corollary \ref{vte}, we have
\begin{equation*}
\begin{split}
\| \square_{\mathbf{g}} \mathbf{g}_{\mu \nu}\|_{L^1_t H^{s_0-1}_{x'}(\Sigma)}
 \simeq & \ \| \square_{\mathbf{g}} v\|_{L^2_t H^{s_0-1}_{x'}(\Sigma)}+\| \square_{\mathbf{g}} \boldsymbol{\rho} \|_{L^2_t H^{s_0-1}_{x'}(\Sigma)}
 \\
 \lesssim & \ \| \mathrm{curl} \varpi\|_{L^2_t H^{s_0-1}_{x'}(\Sigma)}+ \| (d{\mathbf{g}})^2\|_{L^1_t H^{s_0-1}_{x'}(\Sigma)}
 \\
 \lesssim & \ \| \mathrm{curl} \varpi\|_{L^2_t H^{s_0-1}_{x'}(\Sigma)}+ \| d{\mathbf{g}} \|_{L^2_t L^\infty_x }\| d{\mathbf{g}}\|_{L^2_t H^{s_0-1}_{x'}(\Sigma)}
 \\
 \lesssim & \ \| \mathrm{curl} \varpi\|_{L^2_t H^{s_0-1}_{x'}(\Sigma)}+ \| dv, d\boldsymbol{\rho} \|_{L^2_t L^\infty_x }\| dv, d\boldsymbol{\rho}\|_{L^2_t H^{s_0-1}_{x'}(\Sigma)}
 \\
 \lesssim & \ \epsilon_2.
\end{split}
\end{equation*}
Above, $\mathrm{curl} \varpi$ and $(d{\mathbf{g}})^2$ is included in $f_1$. We hence complete the proof of Corollary \ref{Rfenjie}.
\end{proof}

\subsection{The estimate of connection coefficients}
Define
\begin{equation*}
  \chi_{ab} = \left<D_{e_a}l,e_b \right>_{\mathbf{g}}, \qquad l(\ln \sigma)=\frac{1}{2}\left<D_{l}\underline{l},l \right>_{\mathbf{g}}, \quad \mu_{0ab} = \left<D_{l}e_a,e_b \right>_{\mathbf{g}}.
\end{equation*}
For $\sigma$, we set the initial data $\sigma=1$ at the time $-2$. Thanks to Proposition \ref{r1}, we have
\begin{equation}\label{606}
  \|\chi_{ab}\|_{L^2_t H^{s_0-1}_{x'}(\Sigma)} + \| l(\ln \sigma)\|_{L^2_t H^{s_0-1}_{x'}(\Sigma)} \lesssim \epsilon_1.
\end{equation}
In a similar way, if we expand $l=l^\alpha /\kern-0.55em \partial_\alpha$ in the tangent frame $\partial_t, \partial_{x'}$ on $\Sigma$, then
\begin{equation}\label{607}
  l^0=1, \quad \|l^1\|_{s_0,2,\Sigma} \lesssim \epsilon_1.
\end{equation}
\begin{Lemma}\label{chi}
Let $\chi$ be defined as before. Then
\begin{equation}\label{608}
  \|\chi_{ab}\|_{L^2_t H^{s_0-1}_{x'}(\Sigma)} \lesssim \epsilon_2.
\end{equation}
Furthermore, for any $t \in [-2,2]$,
\begin{equation}\label{609}
\| \chi_{ab} \|_{C^{\delta}_{x'}(\Sigma^t)} \lesssim \epsilon_2+ \|d \mathbf{g} \|_{C^{\delta}_{x}(\mathbb{R}^3)}.
\end{equation}
\end{Lemma}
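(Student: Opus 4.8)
The strategy follows the Smith--Tataru scheme \cite{ST}: I would derive the null structure (transport) equation satisfied by $\chi_{ab}$ along the generator $l$ of $\Sigma$, renormalise away the rough curvature term using Corollary \ref{Rfenjie}, and then close a characteristic energy estimate on $\Sigma$ as in Lemma \ref{te3}. Since $l$ is pregeodesic and $\{e_1,e_2\}$ is the Gram--Schmidt frame on the slices $\Sigma^t$, a standard computation gives, schematically,
\begin{equation*}
  l(\chi_{ab})= -\left< R(e_a,l)l,e_b\right>_{\mathbf{g}} + \mathcal{Q}_{ab},
\end{equation*}
where $\mathcal{Q}_{ab}$ is a sum of products of the connection coefficients $\chi$, $\mu_{0}$, $l(\ln\sigma)$ and of coefficients of $l$ with $d\mathbf{g}$; by \eqref{606}, \eqref{607}, Proposition \ref{r1} and \eqref{403} every factor appearing in $\mathcal{Q}_{ab}$ is controlled in $L^2_tH^{s_0-1}_{x'}(\Sigma)\cap L^2_tL^\infty_x$ by $\epsilon_1$, so all of $\mathcal{Q}_{ab}$ is bilinear with small coefficients.

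Applying Corollary \ref{Rfenjie} with $e_0=l$, the curvature component $\left< R(e_a,l)l,e_b\right>_{\mathbf{g}}|_{\Sigma}$ equals $l(f_2)+f_1$ with $\|f_2\|_{L^2_tH^{s_0-1}_{x'}(\Sigma)}+\|f_1\|_{L^1_tH^{s_0-1}_{x'}(\Sigma)}\lesssim\epsilon_2$ and $\|f_2(t,\cdot)\|_{C^\delta_{x'}(\Sigma^t)}\lesssim\|d\mathbf{g}(t,\cdot)\|_{C^\delta_x(\mathbb{R}^3)}$. Setting $\widetilde\chi_{ab}:=\chi_{ab}+f_2$, the quantity $\widetilde\chi_{ab}$ satisfies a transport equation along $l$ whose right-hand side is $f_1$ plus the bilinear terms $\mathcal{Q}_{ab}$ rewritten through $\widetilde\chi_{ab}$ and $f_2$; the rough $l(f_2)$ has been eliminated.

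I would then run the energy estimate on $\Sigma$ exactly as in the proof of Lemma \ref{te3}: after the change of variables $x_3\mapsto x_3-\phi(t,x')$ the restriction of $l$ to $\Sigma$ becomes $\partial_t+l^1\partial_{x'}$ with $l^0=1$ and $\|l^1\|_{s_0,2,\Sigma}\lesssim\epsilon_1$; reducing to $L^2$ and $\dot H^{s_0-2}_{x'}(\Sigma)$ estimates for the first tangential derivatives of $\widetilde\chi_{ab}$, pairing with the conjugate quantity, integrating over $[-2,2]\times\mathbb{R}^3$, and absorbing the commutators $[\Lambda^{s_0-2}_{x'},l^1\partial_{x'}]$, the $\partial_{x_3}\phi$-corrections and the bilinear terms via Lemma \ref{ce}, Lemma \ref{LPE}, \eqref{504} and a Gronwall argument using the $\epsilon_1$-smallness, I obtain
\begin{equation*}
  \sup_{-2\le t\le 2}\|\widetilde\chi_{ab}(t,\cdot)\|_{H^{s_0-1}_{x'}(\Sigma^t)}\lesssim\|\widetilde\chi_{ab}(-2,\cdot)\|_{H^{s_0-1}_{x'}}+\|f_1\|_{L^1_tH^{s_0-1}_{x'}(\Sigma)}\lesssim\epsilon_2,
\end{equation*}
the initial term vanishing because $\mathbf{g}\equiv\mathbf{m}$ for $t\le-\tfrac32$, so $\chi_{ab}=f_2=0$ at $t=-2$. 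Finiteness of the time interval then yields the $L^2_t$ bound, whence $\|\chi_{ab}\|_{L^2_tH^{s_0-1}_{x'}(\Sigma)}\le\|\widetilde\chi_{ab}\|_{L^2_tH^{s_0-1}_{x'}(\Sigma)}+\|f_2\|_{L^2_tH^{s_0-1}_{x'}(\Sigma)}\lesssim\epsilon_2$, which is \eqref{608}. For \eqref{609}, the Sobolev embedding $H^{s_0-1}(\mathbb{R}^2)\hookrightarrow C^{\delta}(\mathbb{R}^2)$ (valid since $\delta<\delta_0=s_0-2$) together with the uniform-in-$t$ bound gives $\|\widetilde\chi_{ab}(t,\cdot)\|_{C^\delta_{x'}(\Sigma^t)}\lesssim\epsilon_2$, and then $\chi_{ab}=\widetilde\chi_{ab}-f_2$ with $\|f_2(t,\cdot)\|_{C^\delta_{x'}(\Sigma^t)}\lesssim\|d\mathbf{g}(t,\cdot)\|_{C^\delta_x(\mathbb{R}^3)}$ gives \eqref{609}.

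The main obstacle is two-fold. First, the derivation and bookkeeping of the structure equation in the non-normalised Gram--Schmidt frame, so that the single genuinely rough term is precisely the curvature component $\langle R(e_a,l)l,e_b\rangle_{\mathbf{g}}$ handled by Corollary \ref{Rfenjie}, while the acceleration term $l(\ln\sigma)$ and the coefficients $\mu_{0ab}$ enter only bilinearly with $\epsilon_1$-small factors. Second, executing the $H^{s_0-1}_{x'}(\Sigma)$ transport estimate with a merely $H^{s_0}$-regular coefficient $l^1$ and the $\phi$-dependent change of coordinates, where the commutator and $\partial_{x_3}\phi$ contributions must be absorbed exactly as in Lemma \ref{te3} and Lemma \ref{te20}; this is the step at which the threshold $s_0>2$ is felt, since it is what makes $H^{s_0-1}(\mathbb{R}^2)$ an algebra embedding into $C^\delta$.
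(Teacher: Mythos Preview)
Your approach is essentially the same as the paper's: derive the null transport equation for $\chi_{ab}$, renormalise away the curvature via Corollary \ref{Rfenjie} by passing to $\chi_{ab}-f_2$, and close a transport estimate along $l$ using the $\epsilon_1$-smallness of the bilinear right-hand side together with the algebra property of $H^{s_0-1}(\mathbb{R}^2)$. One minor simplification worth noting: since $\chi_{ab}$ lives only on $\Sigma$ (parametrised by $(t,x')$) and $l=\partial_t+l^1\partial_{x'}$ is tangent to $\Sigma$, the paper runs the transport estimate \emph{directly on $\Sigma$} without invoking the change of variables $x_3\mapsto x_3-\phi$ or any $\partial_{x_3}\phi$-corrections from Lemma \ref{te3}; it simply controls $\|\Lambda^{s_0-1}_{x'}(\chi_{ab}-f_2)(t,\cdot)\|_{L^2_{x'}(\Sigma^t)}$ via the Kato--Ponce commutator $[\Lambda^{s_0-1}_{x'},l]$ and the $L^1_tH^{s_0-1}_{x'}(\Sigma)$ bound on $G=f_1-\chi\chi-l(\ln\sigma)\chi+\mu_0\chi$.
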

\begin{proof}
The well-known transport equation for $\chi$ along null hypersurfaces (see references \cite{KR2} and \cite{ST}) can be described as
\begin{equation*}
  l(\chi_{ab})=\left< R(l,e_a)l, e_b \right>_{\mathbf{g}}-\chi_{ac}\chi_{cb}-l(\ln \sigma)\chi_{ab}+\mu_{0ab} \chi_{cb}+ \mu_{0bc}\chi_{ac}.
\end{equation*}
Due to Corollary \ref{Rfenjie}, we can write the above equation in the following
\begin{equation}\label{610}
  l(\chi_{ab}-f_2)=f_1-\chi_{ac}\chi_{cb}-l(\ln \sigma)\chi_{ab}+\mu_{0ab} \chi_{cb}+ \mu_{0bc}\chi_{ac},
\end{equation}
where
\begin{equation}\label{611}
  \|f_2\|_{L^2_t H^{s_0-1}_{x'}(\Sigma)}+\|f_1\|_{L^1_t H^{s_0-1}_{x'}(\Sigma)} \lesssim \epsilon_2,
\end{equation}
and for any $t \in [0,T]$,
\begin{equation}\label{612}
  \|f_2(t,\cdot)\|_{C^\delta_{x'}(\Sigma^t)} \lesssim \|d \mathbf{g}\|_{C^\delta_x(\mathbb{R}^3)}.
\end{equation}
Let $\Lambda_{x'}^{s_0-1}$ be the fractional derivative operator in the $x'$ variables. Set
\begin{equation*}
  G=f_1-\chi_{ac}\chi_{cb}-l(\ln \sigma)\chi_{ab}+\mu_{0ab} \chi_{cb}+ \mu_{0bc}\chi_{ac}.
\end{equation*}
We have
\begin{equation}\label{613}
  \begin{split}
 & \|\Lambda^{s_0-1}(\chi_{ab}-f_2)(t,\cdot) \|_{L^2_{x'}(\Sigma^t)}
  \\
  \lesssim \ &\| [\Lambda^{s_0-1},l](\chi_{ab}-f_2) \|_{L^1_tL^2_{x'}(\Sigma^t)}+ \| \Lambda^{s_0-1}G \|_{L^1_tL^2_{x'}(\Sigma^t)}.
  \end{split}
\end{equation}
A direct calculation shows that
\begin{equation}\label{614}
  \begin{split}
 \| \Lambda^{s-1}G \|_{L^1_tL^2_{x'}(\Sigma^t)} &\lesssim \|f_1\|_{L^1_tH^{s-1}_{x'}(\Sigma^t)}+ \|\chi\|^2_{L^2_tH^{s-1}_{x'}(\Sigma^t)}
  \\
  & \quad + \|\chi\|_{L^2_tH^{s-1}_{x'}(\Sigma^t)}\cdot\|l(\ln \sigma)\|_{L^2_tH^{s-1}_{x'}(\Sigma^t)}
  \\
  & \quad + \|\mu\|_{L^2_tH^{s-1}_{x'}(\Sigma^t)}\cdot\|\chi\|_{L^2_tH^{s-1}_{x'}(\Sigma^t)},
  \end{split}
\end{equation}
where we use the fact that $H^{s-1}_{x'}(\Sigma^t)$ is an algebra for $s>2$.

We next bound
\begin{align*}
  \| [\Lambda^{s_0-1},l](\chi_{ab}-f_2) \|_{L^2_{x'}(\Sigma^t)} &\leq \| /\kern-0.55em \partial_{\alpha} l^{\alpha} (\chi_{ab}-f_2)(t,\cdot) \|_{H^{s_0-1}_{x'}(\Sigma^t)}
  \\
  & \quad \ + \|[\Lambda^{s_0-1} /\kern-0.55em \partial_{\alpha}, l^{\alpha}](\chi-f_2)(t,\cdot) \|_{L^{2}_{x'}(\Sigma^t)}.
\end{align*}
By Kato-Ponce commutator estimate and Sobolev embeddings, the above is bounded by
\begin{equation}\label{615}
  \|l^1(t,\cdot)\|_{H^{s_0-1}_{x'}(\Sigma^t)} \| \Lambda^{s_0-1}(\chi_{ab}-f_2)(t,\cdot) \|_{L^{2}_{x'}(\Sigma^t)} .
\end{equation}
Gathering \eqref{606}, \eqref{607}, \eqref{611} and \eqref{613}-\eqref{615} together, we thus prove that
\begin{equation*}
  \sup_t \|(\chi_{ab}-f_2)(t,\cdot)\|_{H^{s_0-1}_{x'}(\Sigma^t)}  \lesssim \epsilon_2.
\end{equation*}
%The conclusion now follows by Corollary \ref{Rfenjie} and Sobolev imbedding.
From \eqref{610}, we see that
\begin{equation}\label{616}
\begin{split}
  \| \chi_{ab}-f_2\|_{C^{\delta}_{x'}} & \lesssim \| f_1 \|_{L^1_tC^{\delta}_{x'}}+ \|\chi_{ac}\chi_{cb}\|_{L^1_tC^{\delta}_{x'} }+\|l(\ln \sigma)\chi_{ab}\|_{L^1_tC^{\delta}_{x'}}
  \\
  & \quad + \|\mu_{0ab} \chi_{cb}\|_{L^1_tC^{\delta}_{x'} }+\|\mu_{0bc}\chi_{ac}\|_{L^1_tC^{\delta}_{x'} }.
  \end{split}
\end{equation}
By Sobolev imbedding $H^{s_0-1}(\mathbb{R}^2)\hookrightarrow C^{\delta}(\mathbb{R}), \delta \in (0, s_0-2)$ and Gronwall's inequality, we derive that
\begin{equation*}
\| \chi_{ab} \|_{C^{\delta}_{x'}(\Sigma^t)} \lesssim \epsilon_2+ \|d \mathbf{g}\|_{C^{\delta}_{x}(\mathbb{R}^2)}.
\end{equation*}
\end{proof}
\subsection{The proof of Proposition \ref{r2}}
Note
\begin{equation*}
  G(v, \boldsymbol \rho)= \vert\kern-0.25ex\vert\kern-0.25ex\vert d\phi(t,x')-dt\vert\kern-0.25ex\vert\kern-0.25ex\vert_{s,2, \Sigma}.
\end{equation*}
%so it suffices for us to control the latter quantity by $\epsilon_2$.
Using \eqref{602} and the estimate of $\vert\kern-0.25ex\vert\kern-0.25ex\vert \mathbf{g}-\mathbf{m} \vert\kern-0.25ex\vert\kern-0.25ex\vert_{s_0,2,\Sigma}$ in Proposition \ref{r1}, then the estimate \eqref{G} follows from the bound
\begin{equation*}
  \vert\kern-0.25ex\vert\kern-0.25ex\vert l-(\partial_t-\partial_{x_3})\vert\kern-0.25ex\vert\kern-0.25ex\vert_{s_0,2,\Sigma} \lesssim \epsilon_2,
\end{equation*}
where it is understood that one takes the norm of the coefficients of $l-(\partial_t-\partial_{x_3})$ in the standard frame on $\mathbb{R}^{3+1}$. The geodesic equation, together with the bound for Christoffel symbols $\|\Gamma^\alpha_{\beta \gamma}\|_{L^2_t L^\infty_x} \lesssim \|d {\mathbf{g}} \|_{L^2_t L^\infty_x}\lesssim \epsilon_2$, imply that
\begin{equation*}
  \|l-(\partial_t-\partial_{x_3})\|_{L^\infty_{t,x}} \lesssim \epsilon_2,
\end{equation*}
so it suffices to bound the tangential derivatives of the coefficients of $l-(\partial_t-\partial_{x_3})$ in the norm $L^2_t H^{s_0-1}_{x'}(\Sigma)$. By using Proposition \ref{r1}, we can estimate the Christoffel symbols
\begin{equation*}
  \|\Gamma^\alpha_{\beta \gamma} \|_{L^2_t H^{s_0-1}_{x'}(\Sigma^t)} \lesssim \epsilon_2.
\end{equation*}
Note that $H^{s_0-1}_{x'}(\Sigma^t)$ is a algebra if $s_0>2$. We then deduce that
\begin{equation*}
  \|\Gamma^\alpha_{\beta \gamma} e_1^\beta l^\gamma\|_{L^2_t H^{s_0-1}_{x'}(\Sigma^t)} \lesssim \epsilon_2.
\end{equation*}
We are now in a position to establish the following bound,
\begin{equation*}
  \| \left< D_{e_a}l, e_b \right>\|_{L^2_t H^{s_0-1}_{x'}(\Sigma^t)}+ \| \left< D_{e_a}l, \underline{l} \right>\|_{L^2_t H^{s_0-1}_{x'}(\Sigma^t)}+\|\left< D_{l}l, \underline{l} \right>\|_{L^2_t H^{s_0-1}_{x'}(\Sigma^t)} \lesssim \epsilon_2.
\end{equation*}
The first term is $\chi_{ab}$ which is bounded by Lemma \ref{chi}. For the second term, noting
\begin{equation*}
  \left< D_{e_a}l, \underline{l} \right>=\left< D_{e_a}l, 2\partial_t \right>=-2\left< D_{e_a}\partial_t,l \right>,
\end{equation*}
then it can be bounded via Proposition \ref{r1}. Similarly, we can control the last term through Proposition \ref{r1}. At this stage, it remains for us to show that
\begin{equation*}
  \| d \phi(t,x')-dt \|_{C^{1,\delta}_{x'}(\mathbb{R}^2)}  \lesssim \epsilon_2+ \| d\mathbf{g}(t,\cdot)\|_{C^{\delta}_x(\mathbb{R}^3)}.
\end{equation*}
To do that, it suffices for us to show that
\begin{equation*}
  \|l(t,\cdot)-(\partial_t-\partial_{x_3})\|_{C^{1,\delta}_{x'}(\mathbb{R}^2)} \lesssim \epsilon_2+ \| d \mathbf{g} (t,\cdot)\|_{C^{\delta}_x(\mathbb{R}^3)}.
\end{equation*}
The coefficients of $e_1$ are small in $C^{\delta}_{x'}(\Sigma^t)$ perturbations of their constant coefficient analogs, so it suffices to show that
\begin{equation*}
 \|\left< D_{e_1}l, e_1 \right>(t,\cdot)\|_{C^{\delta}_{x'}(\Sigma^t)}
 +\|\left< D_{e_1}l, \underline{l} \right>(t,\cdot)\|_{C^{\delta}_{x'}(\Sigma^t)}  \lesssim \epsilon_2+ \| d\mathbf{g}(t,\cdot)\|_{C^{\delta}_x(\mathbb{R}^3)}.
\end{equation*}
Above, the first term is bounded by Lemma \ref{chi}, and the second by using
\begin{equation*}
  \|\left< D_{e_1}\partial_t, l \right>(t,\cdot)\|_{C^{\delta}_{x'}(\Sigma^t)} \lesssim  \| d\mathbf{g}(t,\cdot)\|_{C^{\delta}_x(\mathbb{R}^3)}.
\end{equation*}
Consequently, we complete the proof of Proposition \ref{r2}.
\section{Strichartz estimates for solutions}
In this part, let us introce the Strichartz estimates of linear wave equations.
\begin{proposition}\label{r3}
Suppose that $(v,\boldsymbol{\rho}, \varpi) \in \mathcal{{H}}$ and $G(v,\boldsymbol{\rho})\leq 2 \epsilon_1$.
For any $1 \leq r \leq s_0+1$, and for each $t_0 \in [0,T]$, the linear, non-homogenous equation
	\begin{equation*}
	\begin{cases}
	& \square_g f=\mathbf{T}G+B, \qquad (t,x) \in [0,T]\times \mathbb{R}^3,
	\\
	&f(t_0,\cdot)=f_0 \in H^r(\mathbb{R}^3), \quad \partial_t f(t_0,\cdot)=f_1 \in H^{r-1}(\mathbb{R}^3),
	\end{cases}
	\end{equation*}
admites a solution $f \in C([0,T],H^r) \times C^1([0,T],H^{r-1})$ and the following estimates holds:
\begin{equation}\label{lw0}
\| f\|_{L_t^\infty H^r}+ \|\partial_t f\|_{L_t^\infty H^{r-1}} \lesssim \|f_0\|_{H^r}+ \|f_1\|_{H^{r-1}}+\| G\|_{L^\infty_tH^{r-1}\cap L^1_tH^r}+\|B\|_{L^1_tH^{r-1}}.
\end{equation}
Additionally, the following estimates hold, provided $k<r-1$,
\begin{equation}\label{lw1}
\| \left<\partial \right>^k f\|_{L^2_{t}L^\infty_x} \lesssim \|f_0\|_{H^r}+ \|f_1\|_{H^{r-1}}+\| G\|_{L^\infty_tH^{r-1}\cap L^1_tH^r}+\|B\|_{L^1_tH^{r-1}}.
\end{equation}
\end{proposition}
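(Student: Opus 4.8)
The plan is to deduce Proposition \ref{r3} from the corresponding estimate for the \emph{homogeneous} equation $\square_{\mathbf{g}} f = 0$ (where $\mathbf{g}$ is the truncated acoustical metric introduced in Section 5, agreeing with $g$ on the relevant region), which is the object treated in the Appendix, together with a Duhamel argument tailored to the special source $\mathbf{T}G + B$. For the piece $B \in L^1_t H^{r-1}$ the usual Duhamel formula reduces matters to a family of homogeneous Cauchy problems with data in $\{0\}\times H^{r-1}$, integrated in the parameter $\tau$, which costs nothing. For the piece $\mathbf{T}G$ I would invoke Lemma \ref{LD}: writing $f_\tau$ for the solution of $\square_{\mathbf{g}} f_\tau = 0$ with data $(-G(\tau,\cdot),-B(\tau,\cdot))$ at $t=\tau$, the function $V(t,x)=\int_0^t f_\tau(t,x)\,d\tau$ solves $\square_{\mathbf{g}} V = \mathbf{T}G+B$ with data $(0,-G(0,\cdot))$. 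Hence both \eqref{lw0} and \eqref{lw1} for $V$ follow from the homogeneous estimates applied at each slice $\tau$ and then integrated, using only $\|G(\tau)\|_{H^{r-1}}\le\|G\|_{L^\infty_tH^{r-1}}$ and $\|G(\tau)\|_{H^r}\in L^1_\tau$; the derivative hitting $G$ is never lost because Lemma \ref{LD} absorbs it into the Cauchy data. Thus it remains to establish \eqref{lw0} and \eqref{lw1} for $\square_{\mathbf{g}} f = 0$ with $(f_0,f_1)\in H^r\times H^{r-1}$.

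For the energy estimate \eqref{lw0} I would run the standard $\square_{\mathbf{g}}$ energy identity after applying $\langle\partial\rangle^{r-1}\Delta_j$: the commutator $[\square_{\mathbf{g}},\Delta_j]$ is controlled by Lemma \ref{yx} (valid since $\mathbf{g}^{00}=-1$), and the metric enters only through $\|d\mathbf{g}\|_{L^1_tL^\infty_x}\lesssim\|dv,d\boldsymbol{\rho}\|_{L^1_tL^\infty_x}\lesssim\epsilon_2$ by \eqref{403}, so a Gronwall argument closes the bound with an implied constant independent of the solution. For the Strichartz bound \eqref{lw1}, after a Littlewood--Paley decomposition it suffices to prove, for each dyadic $\lambda$ and each $k<r-1$, the fixed-frequency estimate
\[
\lambda^{k}\|\Delta_\lambda f\|_{L^2_tL^\infty_x}\lesssim\lambda^{r-1}\big(\|\Delta_\lambda f_0\|_{L^2}+\lambda^{-1}\|\Delta_\lambda f_1\|_{L^2}\big)+\mathcal{E}_\lambda,
\]
where $\mathcal{E}_\lambda$ collects the contributions of $[\square_{\mathbf{g}},\Delta_\lambda]f$ and of the frequency-localized source, controlled via Lemma \ref{yx} and the energy bound; one then sums in $\lambda$, the loss being affordable because $k<r-1$.

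The fixed-frequency estimate is obtained by the Smith--Tataru parametrix construction: one rescales $(t,x)\mapsto(\lambda t,\lambda x)$, truncates $\mathbf{g}$ at spatial frequency $\lesssim\lambda^{1/2}$, builds a WKB/wave-packet representation of $\Delta_\lambda f$ adapted to the characteristic hypersurfaces $\Sigma_{\theta,r}$, and proves the dispersive $L^1\to L^\infty$ decay for each packet. The inputs are exactly the foliation and null-frame regularity established earlier: $G(v,\boldsymbol{\rho})\lesssim\epsilon_2$ with the pointwise bound \eqref{502} from Proposition \ref{r2}, the control of $\mathbf{g}^{\alpha\beta}$ in Proposition \ref{r1}, the connection-coefficient estimates of Lemma \ref{chi}, and the curvature decomposition of Corollary \ref{Rfenjie}, which itself rests on the characteristic estimate $\vert\kern-0.25ex\vert\kern-0.25ex\vert\mathrm{curl}\varpi\vert\kern-0.25ex\vert\kern-0.25ex\vert_{s_0-1,2,\Sigma}\lesssim\epsilon_2$ of Lemma \ref{te20}. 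I would carry this step out in the Appendix, as announced, following \cite{ST} and only indicating the modifications forced by the structure of $\square_{\mathbf{g}}$ and by the fact that the source $\mathrm{curl}\varpi$ in the wave equation for $v$ is merely of regularity $H^{s_0-1}$.

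The main obstacle is this fixed-frequency Strichartz estimate: the metric $\mathbf{g}$ has only $\sim H^{s_0}$, i.e.\ barely above $H^2$, regularity, so the parametrix must be built with a frequency-dependent regularization and all resulting error terms must be shown to be square-summable over the dyadic scales. Verifying that summability is precisely where the characteristic energy estimates of Section 6 (Lemma \ref{te20}, Corollary \ref{Rfenjie}) are indispensable, and is the reason the strict splitting $2<s_0<s$ cannot be dispensed with; in particular, an attempt to take $s=s_0$ would break the gain $k<r-1$ needed for the final dyadic summation in \eqref{lw1}.
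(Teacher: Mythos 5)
Your proposal follows essentially the same route as the paper: Proposition \ref{r3} is deduced from the homogeneous estimate (Proposition \ref{r5}) by splitting $f=V+Q$ and invoking Lemma \ref{LD} to absorb the source $\mathbf{T}G+B$ into sliced Cauchy data, exactly as in the paper, and the homogeneous case is then handled by the Smith--Tataru wave-packet parametrix in the Appendix. The only cosmetic difference is in the passage from $r=1$ to $1<r\leq s_0+1$, where the paper conjugates by $\left<\partial\right>^{r-1}$ and controls the commutator by analytic interpolation rather than by your dyadic summation with the $k<r-1$ gain; both live inside the same framework.
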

\begin{proposition}\label{r5}
Suppose that $(v,\boldsymbol{\rho}, \varpi) \in \mathcal{{H}}$ and $G(v,\boldsymbol{\rho})\leq 2 \epsilon_1$. For each $1 \leq r \leq s_0+1$, then the linear, homogenous equation
\begin{equation*}
	\begin{cases}
	& \square_g F=0,
	\\
	&F(t_0,\cdot)=F_0, \quad \partial_t F(t_0,\cdot)=F_1,
	\end{cases}
	\end{equation*}
is well-posed for the initial data $(F_0, F_1)$ in $H^r \times H^{r-1}$. Moreover, the solution satisfies, respectively, the energy estimates
\begin{equation*}
  \| F\|_{H^r}+ \| \partial_t F\|_{H^{r-1}} \leq C \big( \|F_0\|_{H^r}+ \|F_1\|_{H^{r-1}} \big),
\end{equation*}
and the Strichartz estimates
\begin{equation}\label{SL}
  \| \left<\partial \right>^k F\|_{L^2_{t}L^\infty_x} \leq C \big( \|F_0\|_{H^r}+ \|F_1\|_{H^{r-1}} \big).
\end{equation}
\end{proposition}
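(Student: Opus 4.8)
The plan is to obtain Proposition~\ref{r5} as a direct specialization of Proposition~\ref{r3}, followed by the standard linear wrap-up (uniqueness, continuous dependence, and the passage to the stated solution class). First I would set $G\equiv 0$ and $B\equiv 0$ in the linear problem \eqref{linear}, take $(f_0,f_1)=(F_0,F_1)\in H^r\times H^{r-1}$ with the given base time $t_0$, and invoke Proposition~\ref{r3}. This produces a solution $F\in C([0,T],H^r)\times C^1([0,T],H^{r-1})$ of $\square_g F=0$ with $F(t_0,\cdot)=F_0$, $\partial_t F(t_0,\cdot)=F_1$, and the estimate \eqref{lw0} collapses to
\[
\|F\|_{L_t^\infty H^r}+\|\partial_t F\|_{L_t^\infty H^{r-1}}\lesssim \|F_0\|_{H^r}+\|F_1\|_{H^{r-1}},
\]
while \eqref{lw1} collapses exactly to the Strichartz bound \eqref{SL}. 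Thus existence and both estimates are immediate from Proposition~\ref{r3}.

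Next I would record uniqueness and continuous dependence, which are not contained verbatim in Proposition~\ref{r3} but follow from the same energy inequality. For uniqueness, apply the displayed energy estimate to the difference $F-\widetilde{F}$ of two solutions sharing the same Cauchy data at $t=t_0$: since $\square_g(F-\widetilde{F})=0$ and $(F-\widetilde{F})(t_0,\cdot)=\partial_t(F-\widetilde{F})(t_0,\cdot)=0$, the right-hand side vanishes and $F=\widetilde{F}$ on $[0,T]$. Continuous dependence is the same computation applied to a difference of data: the solution map $(F_0,F_1)\mapsto F$ is linear, so if $(F_0^{(k)},F_1^{(k)})\to(F_0,F_1)$ in $H^r\times H^{r-1}$, the energy estimate forces convergence of the solutions in $C([0,T],H^r)\cap C^1([0,T],H^{r-1})$, and the Strichartz estimate \eqref{SL} lets one pass to the limit in $L^2_tL^\infty_x$ as well.

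The one technical point I would make explicit is the construction of the solution in the asserted class. Following \cite{ST}, I would first treat smooth, compactly supported $(F_0,F_1)$, for which classical linear hyperbolic theory applies since the coefficients of $\mathbf g$ (equivalently $g$ in the relevant region) are smooth and agree with the Minkowski metric outside a compact set; for these solutions \eqref{lw0}--\eqref{lw1} hold by Proposition~\ref{r3}. One then extends to arbitrary $(F_0,F_1)\in H^r\times H^{r-1}$ by density, using the energy estimate to control the limit in $C([0,T],H^r)\times C^1([0,T],H^{r-1})$ and \eqref{SL} to control it in $L^2_tL^\infty_x$. I do not expect a genuine obstacle here: all of the substantive analysis — the regularity of the characteristic hypersurfaces (Propositions~\ref{r1}--\ref{r2}), the characteristic energy estimates for $\varpi$ (Lemma~\ref{te20}), and ultimately the Strichartz estimate for the linear wave equation endowed with the acoustical metric proved in the Appendix — is already built into Proposition~\ref{r3}, so Proposition~\ref{r5} is simply its homogeneous specialization packaged as a well-posedness statement.
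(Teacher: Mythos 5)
Your argument is circular. In this paper the logical order is the reverse of what you assume: Proposition \ref{r3} (the inhomogeneous estimate) is \emph{derived from} Proposition \ref{r5}, not the other way around. The paper states explicitly, just before the proof of Proposition \ref{r3}, that it will ``use Proposition \ref{r5} to prove Proposition \ref{r3} by Lemma \ref{LD}'': the inhomogeneous problem $\square_{\mathbf g}f=\mathbf{T}G+B$ is split into a homogeneous piece $V$ and a Duhamel piece $Q=\int_0^t f(t,x;\tau)\,d\tau$, and both pieces are estimated by applying Proposition \ref{r5} to homogeneous Cauchy problems. So specializing Proposition \ref{r3} to $G=B=0$ to obtain Proposition \ref{r5} assumes exactly the statement you are trying to prove. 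Your closing remark that ``all of the substantive analysis is already built into Proposition \ref{r3}'' has the dependency backwards.

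The actual content of Proposition \ref{r5} is the homogeneous Strichartz estimate \eqref{SL} for the rough acoustical metric, and its proof is the entire Appendix: the reduction to the case $r=1$ and then to frequency-localized approximate solutions for the truncated metric $\mathbf{g}_\lambda=S_{<\lambda}\mathbf{g}$ (Proposition \ref{A1}), the construction of normalized wave packets along the null hypersurfaces $\Sigma_{\theta,r}$ (Proposition \ref{np}), the superposition and almost-orthogonality estimates (Proposition \ref{szy}), the matching of arbitrary $H^1\times L^2$ data by a wave-packet superposition (Proposition \ref{szi}), and the overlap estimate giving the $L^2_tL^\infty_x$ bound (Proposition \ref{szt}); the case $1<r\le s_0+1$ then follows by commuting $\langle\partial\rangle^{r-1}$ through $\square_{\mathbf g}$ and an interpolation argument for the commutator. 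All of this in turn rests on the characteristic-hypersurface regularity of Section 6 (Propositions \ref{r1}--\ref{r2}). None of that work can be bypassed by citing Proposition \ref{r3}. Your remarks on uniqueness and continuous dependence via the energy inequality are fine as far as they go, but they address only the easy part of the statement.
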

We will use Proposition \ref{r5} to prove Proposition \ref{r3} by Lemma \ref{LD}.
\begin{proof}[proof of Proposition \ref{r3} by Proposition \ref{r5}]
Let $\mathbf{g}$ be defined in Proposition \ref{r3}. Let $V$ satisfy the the linear, homogenous equation
\begin{equation}\label{Vf}
	\begin{cases}
	& \square_{\mathbf{g}} V=0,
	\\
	&V(t_0,\cdot)=f_0, \quad \mathbf{T} V(t_0,\cdot)=f_1-G(t_0),
	\end{cases}
	\end{equation}
where $(f_0, f_1-G(t_0)) \in H^r \times H^{r-1}$.
Let $Q$ satisfy the the linear, nonhomogenous equation
\begin{equation}\label{Qf}
	\begin{cases}
	& \square_{\mathbf{g}} Q=\mathbf{T}G+B,
	\\
	&Q(t_0,\cdot)=0, \quad \mathbf{T} Q(t_0,\cdot)=G(t_0),
	\end{cases}
	\end{equation}
where $G(0) \in H^{r-1}$. To write $V$ and $Q$ adapting to Proposition \ref{r5}, we can rewrite \eqref{Vf} and \eqref{Qf} by using $\mathbf{T}=\partial_t+v \cdot \nabla$, respectively,
 \begin{equation*}
	\begin{cases}
	& \square_{\mathbf{g}} V=0,
	\\
	&V(t_0,\cdot)=f_0, \quad \partial_t V(t_0,\cdot)=f_1-G(t_0)-v\cdot \nabla f_0(t_0,\cdot),
	\end{cases}
	\end{equation*}
and
\begin{equation*}
	\begin{cases}
	& \square_{\mathbf{g}} Q=\mathbf{T}G+B,
	\\
	&Q(t_0,\cdot)=0, \quad \partial_t Q(t_0,\cdot)=G(t_0),
	\end{cases}
	\end{equation*}
 By superposition principle for linear wave equation, then
$f= V+Q$ satisfying
\begin{equation*}
	\begin{cases}
	& \square_{\mathbf{g}} f=\mathbf{T}G+B, \qquad (t,x) \in [0,T]\times \mathbb{R}^3,
	\\
	&f(t_0,\cdot)=f_0 \in H^r(\mathbb{R}^3), \quad \mathbf{T} f(t_0,\cdot)=f_1 \in H^{r-1}(\mathbb{R}^3).
	\end{cases}
\end{equation*}
To prove \eqref{lw0} and \eqref{lw1}, we transfer the goal to bound $V$ and $Q$. By Proposition \ref{r5}, we can see
\begin{equation*}
  \| V\|_{H^r}+ \| \partial_t V\|_{H^{r-1}} \leq C \big( \|f_0\|_{H^r}+ \|f_1-G(t_0)-v\cdot \nabla V(t_0,\cdot)\|_{H^{r-1}} \big),
\end{equation*}
and
\begin{equation*}
  \| \left<\partial \right>^k V\|_{L^2_{t}L^\infty_x} \leq C \big( \|f_0\|_{H^r}+ \|f_1-G(t_0)-v\cdot \nabla V(t_0,\cdot)\|_{H^{r-1}} \big), \quad k<r-1.
\end{equation*}
Using $s_0> 2$ and $r<s_0+1$, then we can obtain
we can see
\begin{equation}\label{VE}
\begin{split}
  \| V\|_{H^r}+ \| \partial_t V\|_{H^{r-1}} \leq &C \big( \|f_0\|_{H^r}+ \|f_1\|_{H^{r-1}}+\|G(t_0)\|_{H^{r-1}}+\| v\|_{H^{s_0}} \| \nabla f_0(t_0,\cdot)\|_{H^{r-1}} \big)
  \\
  \leq & C \big( \|f_0\|_{H^r}+ \|f_1\|_{H^{r-1}}+\|G\|_{L^\infty_tH^{r-1}}\big),
\end{split}
\end{equation}
and
\begin{equation}\label{SV}
  \| \left<\partial \right>^k V\|_{L^2_{t}L^\infty_x} \leq C \big( \|f_0\|_{H^r}+ \|f_1\|_{H^{r-1}}+\|G\|_{L^\infty_tH^{r-1}} \big), \quad k<r-1.
\end{equation}
By Lemma \ref{LD} and using Proposition \ref{r5} again, we can prove
\begin{equation}\label{QE}
  \| Q\|_{H^r}+ \| \partial_t Q\|_{H^{r-1}} \leq C \big( \|G(t_0)\|_{H^{r-1}}+ \|G\|_{L^1_tH^{r}}+ \|B\|_{L^1_tH^{r-1}}\big),
\end{equation}
and
\begin{equation}\label{SQ}
  \| \left<\partial \right>^k V\|_{L^2_{t}L^\infty_x} \leq C \big( \|G(t_0)\|_{H^{r-1}}+ \|G\|_{L^1_tH^{r}}+ \|B\|_{L^1_tH^{r-1}}\big), \quad k<r-1.
\end{equation}
Adding \eqref{VE} and \eqref{QE}, we get \eqref{lw0}. Adding \eqref{SV} and \eqref{SQ}, we get \eqref{lw1}. Therefore, we finish the proof of Proposition \ref{r3}.
\end{proof}
Based on Proposition \ref{r3}, we can derive the following result.
\begin{proposition}\label{r4}
Suppose $(v, \rho, \varpi) \in \mathcal{H}$ and $G(v, \rho)\leq 2 \epsilon_1$. Let $v_{+}$ be defined in \eqref{dvc}. We have
\begin{equation}\label{strr}
\|d v, d \boldsymbol{\rho}, \partial v_{+}\|_{L^2_t C^\delta_x}+\|\partial v_{+}, d \boldsymbol{\rho}, dv\|_{L^2_t \dot{B}^{s_0-2}_{\infty,2}} \leq \epsilon_2,
\end{equation}
and
\begin{equation}\label{eef}
\| v\|_{L^\infty_tH^s} +\|\boldsymbol{\rho}\|_{L^\infty_tH^s}+\| \varpi\|_{L^\infty_tH^{s_0}}\leq \epsilon_2.
\end{equation}
\end{proposition}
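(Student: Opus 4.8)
The plan is to close the bootstrap loop by combining the total energy estimates of Section 2, the Strichartz estimates for the linear wave equation in Proposition \ref{r3}, and the equations \eqref{fc1}, \eqref{fc}, which were set up precisely so that the source terms carry one extra derivative of regularity. Concretely, I first record that for $(v,\boldsymbol{\rho},\varpi)\in\mathcal{H}$ with $G(v,\boldsymbol{\rho})\le 2\epsilon_1$, Proposition \ref{r2} gives $G(v,\boldsymbol{\rho})\lesssim\epsilon_2$, so that the geometry of the null hypersurfaces is under control, and Proposition \ref{r1} and Lemma \ref{fre} provide the needed regularity of $\mathbf{g}$. The bound \eqref{eef} is then essentially Theorem \ref{be}: from \eqref{E7} and the bootstrap assumption \eqref{403} we get
\[
E(t)\lesssim E(0)\exp\Big(\int_0^t(\|dv,d\boldsymbol{\rho}\|_{L^\infty_x}+\|\partial v\|_{\dot B^{s_0-2}_{\infty,2}})\,d\tau\Big)\lesssim \epsilon_3\,e^{C\epsilon_2}\ll\epsilon_2,
\]
using $\epsilon_3\ll\epsilon_2$ and \eqref{401}; this closes the $L^\infty_tH^s\times H^{s_0}$ part of \eqref{strr}–\eqref{eef}.

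Next I treat the Strichartz part. For $dv$ and $d\boldsymbol{\rho}$, I apply Proposition \ref{r3} to the wave equations $\square_{\mathbf g}v^i=-\mathrm e^{\boldsymbol{\rho}}c_s^2\,\mathrm{curl}\,\varpi^i+Q^i$ and $\square_{\mathbf g}\boldsymbol{\rho}=\mathcal D$ from \eqref{CS}, with $r=s$ (so $1\le r\le s_0+1$ and $k=\delta<r-1$): writing the source as $\mathbf TG+B$ with $B$ absorbing $Q^i,\mathcal D$ and the curl term, Lemma \ref{yux} bounds $\|Q,\mathcal D\|_{H^{s-1}}$ and $\|\mathrm e^{\boldsymbol{\rho}}c_s^2\mathrm{curl}\varpi\|_{H^{s-1}}$ by $\|d\boldsymbol{\rho},dv\|_{L^\infty_x}(\|\boldsymbol{\rho},v\|_{H^s}+\|\varpi\|_{H^{s_0}})$-type quantities, which are $O(\epsilon_2^2)$ after integrating in $t$; this yields $\|dv,d\boldsymbol{\rho}\|_{L^2_tC^\delta_x}\lesssim\epsilon_2$. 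For the Besov piece $\|d v,d\boldsymbol{\rho}\|_{L^2_t\dot B^{s_0-2}_{\infty,2}}$ the source $\mathrm{curl}\,\varpi$ only has regularity $s_0-1$, so I instead pass to $v_+=v-\eta$ and use \eqref{fc}, $\square_{\mathbf g}v_+^i=\mathbf T\mathbf T\eta^i+Q^i$, together with $\square_{\mathbf g}\boldsymbol{\rho}=\mathcal D$. Applying $\Delta_j$, commuting with $\square_{\mathbf g}$ and $\mathbf T$ via Lemma \ref{yx} and Lemma \ref{YR}, and invoking the Strichartz estimate \eqref{lw1} with $r=s-s_0+1$, $k=0$, I obtain $\|\Delta_j\boldsymbol{\rho},\Delta_jv_+\|_{L^2_tL^\infty_x}$ bounded by a right-hand side that, thanks to the $s-1$ regularity of $\mathcal D,Q$ and $\mathbf T\mathbf T\eta$ (Lemma \ref{yux}, in particular \eqref{eta} and \eqref{eee}), sums in $2^{(s_0-1)j}$ to $\|\boldsymbol{\rho}_0\|_{H^s}+\|v_0\|_{H^s}+\|\varpi_0\|_{H^{s_0}}\lesssim\epsilon_3$; this gives $\|\partial\boldsymbol{\rho},\partial v_+\|_{L^2_t\dot B^{s_0-2}_{\infty,2}}\lesssim\epsilon_2$. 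The bound for $\|dv\|_{L^2_t\dot B^{s_0-2}_{\infty,2}}$ and $\|\partial v_+\|_{L^2_tC^\delta_x}$ then follows from $v=v_+ +\eta$ and the elliptic bound $\|\partial\eta\|_{\dot B^{s_0-2}_{\infty,2}}\lesssim(1+\|\boldsymbol{\rho}\|_{H^2})\|\varpi\|_{H^{s_0-1/2}}$ used exactly as in the proof of Theorem \ref{dingli}, which is $O(\epsilon_2)$ by \eqref{W2e}.

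Finally, the well-posedness and estimates \eqref{304} for the linear equation \eqref{linear} with metric $\mathbf g$ are precisely the content of Proposition \ref{r3} (equivalently Proposition \ref{r5} via Lemma \ref{LD}), so part (3) of Proposition \ref{p4} is immediate once \eqref{strr}–\eqref{eef} hold, since those are exactly the hypotheses $G(v,\boldsymbol{\rho})\le 2\epsilon_1$ and membership in $\mathcal H$ require. The main obstacle is the Besov estimate for $\partial v_+$: one must carefully track that every source term produced by the decomposition $v=v_++\eta$ and by the commutators $[\square_{\mathbf g},\Delta_j]$, $[\Delta_j,\mathbf T]$ lands in $L^1_tH^{s-1}$ (not merely $L^1_tH^{s_0-1}$), which is where the gap $s>s_0$ is essential and where Lemmas \ref{yx}, \ref{YR}, \ref{yux} do the real work; the rest is bookkeeping with Theorem \ref{be} and Proposition \ref{r3} plus the smallness ordering $\epsilon_3\ll\epsilon_2\ll\epsilon_1\ll\epsilon_0\ll1$.
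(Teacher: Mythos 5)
Your main line of argument --- passing to $v_{+}=v-\eta$, localizing with $\Delta_j$, controlling the commutators $[\square_{\mathbf{g}},\Delta_j]$ and $[\Delta_j,\mathbf{T}]$ via Lemmas \ref{yx} and \ref{YR}, invoking the Strichartz estimate with $r=s-s_0+1$, $k=0$, summing against $2^{(s_0-1)j}$, and recovering $dv$ from $\partial v_{+}$ and the elliptic bound for $\partial\eta$ --- is exactly the paper's proof of the $\dot{B}^{s_0-2}_{\infty,2}$ part of \eqref{strr}, and your treatment of \eqref{eef} via Theorem \ref{be} and the smallness ordering also matches. You have correctly identified where the gap $s>s_0$ enters.

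However, your treatment of the $L^2_tC^\delta_x$ estimate for $dv,d\boldsymbol{\rho}$ contains a step that fails as written. You apply Proposition \ref{r3} to $\square_{\mathbf{g}}v^i=-\mathrm{e}^{\boldsymbol{\rho}}c_s^2\,\mathrm{curl}\,\varpi^i+Q^i$ with $r=s$ and claim that $\|\mathrm{e}^{\boldsymbol{\rho}}c_s^2\,\mathrm{curl}\,\varpi\|_{H^{s-1}}$ is controlled by Lemma \ref{yux}. It is not: Lemma \ref{yux} only bounds $\mathcal{D}$ and $Q$ in $H^{s-1}$, while $\varpi$ lives only in $H^{s_0}$ with $s_0<s$, so $\mathrm{curl}\,\varpi$ has regularity $s_0-1<s-1$ and is inadmissible as a source at the level $r=s$ --- this is precisely the obstruction you yourself invoke one sentence later to justify switching to $v_{+}$ for the Besov piece, so the two halves of your argument are inconsistent. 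There are two repairs. Either take $r=s_0$ (the source then lies in $L^1_tH^{s_0-1}$ and $k=1+\delta<s_0-1$ is still admissible since $\delta<s_0-2$; note also that to control $dv$ in $C^\delta$ you need $k=1+\delta$, not $k=\delta$), or --- as the paper does --- dispense with this extra application of Proposition \ref{r3} entirely: once $\|\partial\boldsymbol{\rho},\partial v,\partial v_{+}\|_{L^2_t\dot{B}^{s_0-2}_{\infty,2}}\lesssim\epsilon_2$ is established and the transport equations $\mathbf{T}\boldsymbol{\rho}=-\mathrm{div}\,v$, $\mathbf{T}v=-c_s^2\partial\boldsymbol{\rho}$ convert spatial bounds into bounds on $\mathbf{T}\boldsymbol{\rho},\mathbf{T}v$, the $L^2_tL^\infty_x$ and $L^2_tC^\delta_x$ estimates follow directly from the embeddings $\|h\|_{L^\infty_x}\lesssim\|h\|_{\dot{B}^{0}_{\infty,2}}\lesssim\|h\|_{\dot{B}^{s_0-2}_{\infty,2}}$ and $\|h\|_{C^\delta_x}\lesssim\|h\|_{\dot{B}^{\delta}_{\infty,2}}+\|h\|_{\dot{B}^{0}_{\infty,2}}$ for $\delta\in(0,s_0-2)$, with no further use of the wave equation for $v$.
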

\begin{proof}
Recall $\boldsymbol{\rho}$ and $v_{+}$ satisfying
\begin{align}\label{fcr}
\begin{split}
 &\square_{\mathbf{g}} \boldsymbol{\rho}=\mathcal{D},
\\
&\square_{\mathbf{g}} v_{+}=\mathbf{T}\mathbf{T} \eta+ \mathcal{R}.
\end{split}
\end{align}
Using Lemma \ref{yux}, we get
%\begin{equation*}
%  \|-\mathrm{e}^{\boldsymbol{\rho}}c_s^2 \mathrm{curl} W^i\|_{L^1_tH^{s_0-1}} \lesssim  \|\boldsymbol{\rho}\|_{L^2_tH^{s_0}}  \|\mathrm{curl} W^i\|_{L^2_tH^{s_0-1}}.
%\end{equation*}
\begin{equation*}
  \|\mathcal{D}, \mathcal{R}\|_{L^1_tH^{s-1}}  \lesssim  \|d v, d\boldsymbol{\rho}\|_{L^1_t L_x^\infty} \|d v, d{\boldsymbol{\rho}}\|_{L^\infty_tH^{s-1}}.
\end{equation*}
Operating $\Delta_j$ on \eqref{fcr}, we can derive that $\Delta_j \boldsymbol{\rho}$ and $\Delta_j v^i_{+}$ satisfy
\begin{equation*}
\begin{cases}
 &\square_g \Delta_j \boldsymbol{\rho}= \Delta_j \mathcal{D}+ [\square_g, \Delta_j]\boldsymbol{\rho},
 \\
  & \Delta_j \boldsymbol{\rho}|_{t=0}= \Delta_j \boldsymbol{\rho}_0,
\end{cases}
\end{equation*}
and
\begin{equation*}
\begin{cases}
& \square_g \Delta_j v^i_{+}=
\mathbf{T}\Delta_j \mathbf{T} \eta^i+ \Delta_j Q^i+ [\square_g, \Delta_j]v^i_{+}+[\Delta_j, \mathbf{T}]\mathbf{T} \eta^i,
\\
& \Delta_j v^i_{+}|_{t=0}=\Delta_j (v_0-\eta_0), \quad \Delta_j \mathbf{T}v^i_{+}|_{t=0}=\Delta_j \mathbf{T}(v_0-\eta_0).
\end{cases}
\end{equation*}
By using the Strichartz estimate in Proposition \ref{r5} (taking $r=s-s_0+1, k=0$) %and the modified Duhamel's principle(see Lemma \ref{LD})
, we obtain
\begin{equation}\label{ise}
\begin{split}
  \| (\Delta_j \boldsymbol{\rho}, \Delta_j v_{+})\|_{L^2_t L^\infty_x} \lesssim \ & \|\Delta_j \boldsymbol{\rho}(0, \cdot)\|_{H^{s-s_0+1}}  + \| \Delta_j (v_0-\eta_0)\|_{H^{s-s_0+1}}+ \|\Delta_j \mathbf{T} \eta(0, \cdot)\|_{H^{s-s_0+1}}
  \\
  & + \| \Delta_j Q \|_{L^1_t H^{s-s_0}}+ \| [\square_g, \Delta_j]\boldsymbol{\rho}\|_{L^1_t H^{s-s_0}}+ \|[\square_g, \Delta_j]v_{+}\|_{L^1_t H^{s-s_0}}
  \\
  & + \| [\Delta_j, \mathbf{T}]\mathbf{T} \eta \|_{L^1_t H^{s-s_0}}+  \|\Delta_j \mathbf{T} \eta \|_{H_x^{s-s_0+1}}+ \| \Delta_j \mathcal{D} \|_{L^1_t H^{s-s_0}}
%  \\
%  & +\int^t_0  \lambda^{2(r-1)} || [\square_g, \mathrm{P}_\lambda]v^i_{+}||^2_{L^2_x} d\tau.
\end{split}
\end{equation}
Multiplying $2^{(s_0-1)j}$ on \eqref{ise}, and taking square of it and summing it over $j\geq 1$, we get
\begin{equation}\label{fgh}
\begin{split}
  \| \boldsymbol{\rho}, v_{+}\|^2_{L^2_t \dot{B}^{s_0-1}_{\infty, 2}} & \lesssim \| \boldsymbol{\rho}_0\|^2_{H^s}+\| v_0\|^2_{H^s}+  \|\eta_0\|^2_{H^s}+\| \mathbf{T}\eta_0\|^2_{H^s}+ {\| \varpi_0\|^2_{H^{s_0}}}.
  %\\
%  & \lesssim \| \boldsymbol{\rho}_0\|^2_{H^s}+\| v_0\|^2_{H^s}+ \| W_0\|^2_{H^{s_0}}.
\end{split}
\end{equation}
By Lemma \ref{yux}, we also update \eqref{fgh} by
\begin{equation*}
  \| \boldsymbol{\rho}, v_{+}\|^2_{L^2_t \dot{B}^{s_0-1}_{\infty, 2}}  \lesssim \| \boldsymbol{\rho}_0\|^2_{H^s}+\| v_0\|^2_{H^s}+ {\| \varpi_0\|^2_{H^{s_0}}}.
\end{equation*}
In a result, we have
\begin{equation*}
  \| \partial \boldsymbol{\rho}, \partial v_{+}\|_{L^2_t \dot{B}^{s_0-2}_{\infty, 2}} \lesssim \| \boldsymbol{\rho}_0\|_{H^s}+\|v_0\|_{H^s}+ {\| \varpi_0\|_{H^{s_0}}} \lesssim \epsilon_2.
\end{equation*}
To get the Strichartz estimates for $v$, let us recall it's relation with $v_{+}$ and $\boldsymbol{\rho}$. That is,
\begin{equation*}
  v=\eta+v_{+}, \quad \eta=(-\Delta)^{-1} (\mathrm{e}^{\boldsymbol{\rho}}\mathrm{curl} \varpi).
\end{equation*}
We then get
\begin{equation}\label{800}
\begin{split}
  \| \partial v\|_{\dot{B}^{s_0-2}_{\infty, 2}} \leq \| \partial v_{+}\|_{\dot{B}^{s_0-2}_{\infty, 2}}+\| \partial \eta\|_{\dot{B}^{s_0-2}_{\infty, 2}}.
  \end{split}
\end{equation}
To bound $\| \partial v\|_{\dot{B}^{s_0-2}_{\infty, 2}}$, we need first bound $\| \partial \eta\|_{\dot{B}^{s_0-2}_{\infty, 2}}$. Note
\begin{equation*}
  \partial \eta=\partial(-\Delta)^{-1} (\mathrm{e}^{\boldsymbol{\rho}}\mathrm{curl} \varpi).
\end{equation*}
By Sobolev inequality and elliptic estimate, we have
\begin{equation}\label{900}
\begin{split}
  \| \partial \eta\|_{\dot{B}^{s_0-2}_{\infty, 2}} & \lesssim \| \partial(-\Delta)^{-1} (\mathrm{e}^{\boldsymbol{\rho}}\mathrm{curl} \varpi)\|_{\dot{B}^{s_0-\frac{1}{2}}_{2,2}}
   \\
   & \lesssim \|\mathrm{e}^{\boldsymbol{\rho}} \mathrm{curl}\varpi\|_{\dot{H}^{s_0-\frac{3}{2}}}
   \\
 & \lesssim (1+\|\boldsymbol{\rho}_0\|_{H^{s_0}})\|\varpi_0\|_{H^{s_0}} \lesssim \epsilon_2.
 \end{split}
\end{equation}
Combining \eqref{800} and \eqref{900}, we can see
\begin{equation*}
  \|\partial v\|_{L^2_t \dot{B}^{s_0-2}_{\infty,2}}\lesssim \| \partial v_{+}\|_{\dot{B}^{s_0-2}_{\infty, 2}}+\| \partial \eta\|_{\dot{B}^{s_0-2}_{\infty, 2}} \lesssim \epsilon_2.
\end{equation*}
We hence prove that
\begin{equation}\label{Sone}
  \| \partial \boldsymbol{\rho}, \partial v, \partial v_{+}\|_{L^2_t \dot{B}^{s_0-2}_{\infty, 2}} \lesssim \epsilon_2.
\end{equation}
Using the original transport equation \eqref{fc0}:
\begin{equation*}
  \mathbf{T} \boldsymbol{\rho}=-\mathrm{div}v, \quad \mathbf{T} v=-c^2_s \partial \boldsymbol{\rho},
\end{equation*}
we then obtain
\begin{equation}\label{Stwo}
  \| \mathbf{T} \boldsymbol{\rho},\mathbf{T} v\|_{L^2_t \dot{B}^{s_0-2}_{\infty, 2}} \lesssim \| \partial \boldsymbol{\rho}, \partial v\|_{L^2_t \dot{B}^{s_0-2}_{\infty, 2}} \lesssim \epsilon_2.
\end{equation}
Combining \eqref{Sone} and \eqref{Stwo} together, we have
\begin{equation}\label{Sf}
  \| d \boldsymbol{\rho},d v\|_{L^2_t \dot{B}^{s_0-2}_{\infty, 2}} \lesssim  \epsilon_2.
\end{equation}
It also remains for us to bound $\| d \boldsymbol{\rho}, dv\|_{L^2_t L^\infty_x}$ and $\|d v, d \boldsymbol{\rho}, \partial v_{+}\|_{L^2_t C^\delta_x}$. By Littlewood-Palay decomposition, we also have
\begin{equation*}
\begin{split}
  \| d \boldsymbol{\rho}, dv\|_{L^2_t L^\infty_x} \lesssim & \| d \boldsymbol{\rho}, dv\|_{L^2_t \dot{B}^0_{\infty,2}}
  \\
  \lesssim & \left\{2^{j(s_0-2)} (\Delta_j d\boldsymbol{\rho}, \Delta_j dv) \right\}_{l^2_j} \left\{2^{-j(s_0-2)} \right\}_{l^2_j}
  \\
  \lesssim  & \| d \boldsymbol{\rho},d v\|_{L^2_t \dot{B}^{s_0-1}_{\infty, 2}}\lesssim \epsilon_2.
\end{split}
\end{equation*}
Similarly, for $\delta \in (0, s_0-2)$, we get
\begin{equation*}
  \|d v, d \boldsymbol{\rho}, \partial v_{+}\|_{C^\delta_x} \lesssim  \ \|d v, d \boldsymbol{\rho}, \partial v_{+}\|_{\dot{B}^{\delta}_{\infty,2}} + \|d v, d \boldsymbol{\rho}, \partial v_{+}\|_{\dot{B}^{0}_{\infty,2}}.
\end{equation*}
Therefore, we have
\begin{equation*}
  \|d v, d \boldsymbol{\rho}, \partial v_{+}\|_{L^2_t  C^\delta_x} \lesssim \| d \boldsymbol{\rho},d v\|_{L^2_t \dot{B}^{s_0-1}_{\infty, 2}}\lesssim \epsilon_2.
\end{equation*}
Using Theorem \ref{be}, we can obtain \eqref{eef}. At this stage, we have finished the proof of Proposition \ref{r4}. The only task is Proposition \ref{r5}, which will be stated in the next section.
\end{proof}
\section{Appendix: proof of Proposition \ref{r5}}
Following Smith-Tataru's paper \cite{ST}, we will give a proof of Proposition \ref{r5} in this part. We divide the proofs into several steps. The first step is to reduce the problem in the phase space.
\subsection{A reduction to the paradifferential decomposition}
Given a frequency scale $\lambda \geq 1$, we consider the smooth coefficients
\begin{equation*}
  \mathbf{g}_{\lambda}= S_{<\lambda} \mathbf{g}.
\end{equation*}
We can reduce the Proposition \ref{r5} to the following:
\begin{proposition}\label{A1}
Suppose that $(v,\boldsymbol{\rho}, \varpi) \in \mathcal{{H}}$ and $G(v,\boldsymbol{\rho})\leq 2 \epsilon_1$. Suppose $f$ satisfy
\begin{equation}\label{linearA}
  \begin{cases}
  \square_{\mathbf{g}} f=0,\\
  f|_{t=t_0}=f_0, \quad \partial_t f|_{t=t_0}=f_1.
  \end{cases}
\end{equation}
Then for each $(f_0,f_1) \in H^1 \times L^2$ there exists a function $f_{\lambda} \in C^\infty([-2,2]\times \mathbb{R}^3)$, with
\begin{equation*}
  \mathrm{supp} \widehat{f_\lambda(t,\cdot)} \subseteq \{ \xi: \frac{\lambda}{8} \leq |\xi| \leq 8\lambda \},
\end{equation*}
such that
\begin{equation}\label{Yee}
  \begin{cases}
  & \| \square_{\mathbf{g}_\lambda} f_{\lambda} \|_{L^2_t L^2_x} \lesssim \epsilon_0 (\| f_0\|_{H^1}+\| f_1 \|_{L^2} ),
  \\
  & f_\lambda(-2)=P_\lambda f_0, \quad \partial_t f_{\lambda} (-2)=P_{\lambda} f_1.
  \end{cases}
\end{equation}
Additionally, if $r>1$, then the following Strichartz estimates holds:
\begin{equation}\label{Ase}
  \| f_{\lambda} \|_{L^2_t L^\infty_x} \lesssim \epsilon_0^{-\frac{1}{2}} \lambda^{r-1} ( \| f_0 \|_{H^1} + \| f_1 \|_{L^2} ).
\end{equation}
\end{proposition}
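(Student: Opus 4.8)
The plan is to follow the standard Smith--Tataru reduction scheme, localizing the linear wave equation $\square_{\mathbf{g}}f=0$ in frequency and then constructing the wave packet parametrix for the smoothed operator $\square_{\mathbf{g}_\lambda}$. First I would dyadically decompose the initial data: writing $f_0=\sum_\lambda P_\lambda f_0$, $f_1=\sum_\lambda P_\lambda f_1$ over dyadic $\lambda\geq 1$, it suffices by Duhamel's principle, energy estimates of Proposition \ref{r5}'s first part, and square-summation over $\lambda$ to produce, for each $\lambda$, an approximate solution $f_\lambda$ with frequency support in the shell $\{\lambda/8\leq|\xi|\leq 8\lambda\}$, matching the data $P_\lambda f_0,P_\lambda f_1$ at $t=-2$, with small $L^2_tL^2_x$-error when hit by $\square_{\mathbf{g}_\lambda}$, and satisfying the fixed-frequency Strichartz bound \eqref{Ase}. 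Summing \eqref{Ase} against $2^{j(r-1)}$-weights (using $r>1$ so the geometric series converges) then recovers \eqref{SL}; the error term is absorbed because $\epsilon_0$ is small (see \eqref{a0}) and one iterates the Duhamel formula, exactly as in \cite{ST}.

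Next I would carry out the construction of $f_\lambda$ itself. Passing to the coefficients $\mathbf{g}_\lambda=S_{<\lambda}\mathbf{g}$, I would rescale to the unit time scale at frequency $\lambda$ and build a superposition of wave packets: $f_\lambda(t,x)=\int_{|\omega|=1} \int f_{\lambda,\omega,v}(t,x)\,dv\,d\omega$, where each packet $f_{\lambda,\omega,v}$ is transported along the null geodesic flow of $\mathbf{g}_\lambda$ in the direction $\omega$, initially concentrated in a tube of dimensions $\lambda^{-1/2}\times\cdots\times\lambda^{-1/2}\times\lambda^{-1}$ around the geodesic through $v$, with Gaussian profile and carrying the frequency-$\lambda$ piece of the data. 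The characteristic hypersurfaces $\Sigma_{\theta,r}$ and the null frame $\{l,\underline l,e_1,e_2\}$ together with the connection-coefficient bounds of Section 6 (Proposition \ref{r1}, Proposition \ref{r2}, Lemma \ref{chi}, Corollary \ref{Rfenjie}) provide exactly the regularity of the geometry needed to define the packets and control $\square_{\mathbf{g}_\lambda}f_\lambda$. The key computation is that applying $\square_{\mathbf{g}_\lambda}$ to a single transported packet produces terms involving $d\mathbf{g}_\lambda$, the second fundamental form $\chi$, and the curvature contracted against the frame, all of which are small in the relevant norms by \eqref{606}, \eqref{608}, \eqref{604}; the $L^2_tL^2_x$ bound \eqref{Yee} then follows from an almost-orthogonality (TT$^*$) argument over the packet parameters $(\omega,v)$.

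For the Strichartz bound \eqref{Ase} on $f_\lambda$ I would use the standard dispersive estimate for the packet sum: each packet contributes an $L^\infty_x$ mass of size $\lambda^{(n-1)/2}\cdot\lambda^{-(n-1)/2}$ after the appropriate normalization, and the overlap of the tubes at a fixed time is controlled by the curvature bounds, so that $\|f_\lambda\|_{L^2_tL^\infty_x}\lesssim \epsilon_0^{-1/2}\lambda^{(n-1)/2}\|P_\lambda(f_0,f_1)\|_{L^2}$ with $n=3$; tracking that $\|P_\lambda f_0\|_{L^2}\sim\lambda^{-1}\|P_\lambda f_0\|_{\dot H^1}$ and similarly for $f_1$ gives the stated power $\lambda^{r-1}$ once one also uses $r\le s_0+1$ to stay within the range where the geometry's Sobolev regularity $H^{s_0}$ suffices. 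The factor $\epsilon_0^{-1/2}$ is where the smallness of the perturbation is spent, and it is harmless after summation because the gain from iterating Duhamel (each step picks up $\epsilon_0$) beats it.

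The main obstacle I expect is the construction and error analysis of the wave packet parametrix for $\square_{\mathbf{g}_\lambda}$ with coefficients of only $H^{s_0}$, $s_0>2$, regularity: one must verify that after the $\lambda$-rescaling the smoothed metric has uniformly bounded $C^2$ (and fractionally better) norms on the unit scale, that the null geodesic flow and the hypersurfaces $\Sigma_{\theta,r}$ remain $C^{1,\delta}$-close to the flat ones (which is precisely \eqref{502} and \eqref{609}), and that the curvature decomposition $\langle R(e_a,e_b)e_c,e_d\rangle=l(f_2)+f_1$ of Corollary \ref{Rfenjie} is strong enough to close the $L^2_tL^2_x$ error estimate for $\square_{\mathbf{g}_\lambda}f_\lambda$. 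This is the technical heart of \cite{ST}, and since the acoustic metric here enjoys the same structure (the extra source $\mathrm{curl}\varpi$ having been moved into $f_1$ and bounded in $L^2_tH^{s_0-1}_{x'}(\Sigma)$ via Lemma \ref{te20}), I would import the argument essentially verbatim, indicating the places where the $\varpi$-dependent terms enter and citing Section 6 for the geometric inputs rather than reproving them.
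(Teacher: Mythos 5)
Your route is the same as the paper's: both follow the Smith--Tataru wave-packet parametrix, constructing $f_\lambda$ as a superposition of packets transported along the null geodesic flow of $\mathbf{g}_\lambda$, matching the data at $t=-2$, controlling $\square_{\mathbf{g}_\lambda}f_\lambda$ in $L^2_tL^2_x$ by a single-packet computation plus square-summation, and getting \eqref{Ase} from the overlap/dispersive estimate. In fact the paper, like you, imports the superposition, data-matching and $L^2_tL^\infty_x$ steps verbatim from \cite{ST} (Propositions \ref{szy}, \ref{szi}, \ref{szt}); the only piece it actually proves is the single-packet error estimate, Proposition \ref{np}, which feeds the geometric bounds of Section 6 (regularity of $\phi_{\theta,r}$, Proposition \ref{r1}, Corollary \ref{vte}) into the coefficients $\psi_m$. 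Your first paragraph (dyadic decomposition of the data and summation over $\lambda$ via Duhamel) belongs to the deduction of Proposition \ref{r5} from Proposition \ref{A1}, not to the proof of Proposition \ref{A1} itself, which is a fixed-$\lambda$ statement.

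One point in your sketch would fail as written and is load-bearing: you describe packets concentrated in tubes of dimensions $\lambda^{-1/2}\times\cdots\times\lambda^{-1/2}\times\lambda^{-1}$. With that (standard) scale the transverse profile costs a full factor $\lambda^{1/2}$ per tangential derivative, and the error $\|\square_{\mathbf{g}_\lambda}f_\lambda\|_{L^2_tL^2_x}$ comes out $O(1)$, not $O(\epsilon_0)$ --- which destroys the absorption/iteration that \eqref{Yee} is designed for. The paper's normalized packet is $f=\epsilon_0^{1/2}\lambda^{-1}T_\lambda(uh)$ with transverse profile $h=h_0\bigl((\epsilon_0\lambda)^{1/2}(x'_\theta-\gamma'_\theta(t))\bigr)$, i.e.\ tubes of cross-section $(\epsilon_0\lambda)^{-1/2}$: each tangential derivative of $h$ then costs only $(\epsilon_0\lambda)^{1/2}$, which is exactly the source of the $\epsilon_0$ gain in \eqref{Yee}, while the corresponding loss reappears as the $\epsilon_0^{-1/2}$ in \eqref{Ase} (Proposition \ref{szt}). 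You correctly observe that $\epsilon_0^{-1/2}$ is ``where the smallness is spent,'' but the mechanism is the enlarged tube scale, not the perturbative size of the metric; if you import \cite{ST} you must import these scales with it.
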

\begin{proof}[proof of Proposition \ref{r5} by Proposition \ref{A1}]
We divide the proof into several cases.

(i) $r=1$. Using basic energy estimates for \eqref{linear}, we have
\begin{equation*}
\begin{split}
  \| \partial_t f \|_{L^2_x} + \|\nabla f \|_{L^2_x}  \lesssim & \ (\| f_0\|_{H^1}+ \| f_1\|_{L^2}) \exp(\int^t_0 \| d \mathbf{g} \|_{L^\infty_x} d\tau)
  \\
  \lesssim & \ \| f_0\|_{H^1}+ \| f_1\|_{L^2}.
\end{split}
\end{equation*}
Then the Cauchy problem \eqref{linear} holds a unique solution $f \in C([0,T],H^1)$ and $\partial_t f \in C([0,T],L^2)$. It remains to show that the solution $f$ also satisfies the Strichartz estimate \eqref{SL}.

Without loss of generality, we take $t_0=0$. For any given initial data $(f_0,f_1) \in H^1 \times L^2$, and $t_0 \in [-2,2]$, we take a Littlewood-Paley decomposition
\begin{equation*}
  f_0=\sum_{\lambda}P_{\lambda}f_0, \qquad  f_1=\sum_{\lambda}P_{\lambda}f_1,
\end{equation*}
and for each $\lambda$ we take the corresponding $f_{\lambda}$ as in \eqref{Yee}.  If we set
\begin{equation*}
  f=\sum_{\lambda}f_{\lambda},
\end{equation*}
then $f$ matches the initial data $(f_0,f_1)$ at the time $t=t_0$, and also satisfies the Strichartz estimates \eqref{Ase}. In fact, $f$ is also an approximate solution for $\square_{\mathbf{g}}$ in the sense that
\begin{equation*}
 \| \square_{\mathbf{g}} f \|_{L^2_t L^2_x} \lesssim \epsilon_0(\| f_0 \|_{H^1}+\| f_1 \|_{L^2} ).
\end{equation*}
We can derive the above bound by using the decomposition
\begin{equation*}
  \square_{\mathbf{g}} f=\textstyle{\sum_{\lambda}} \square_{\mathbf{g}_\lambda} f_\lambda+ \textstyle{\sum_{\lambda}} \square_{\mathbf{g}-\mathbf{g}_\lambda} f_\lambda.
\end{equation*}
The first one can be controlled by Proposition \ref{A1}. As for the second one, noting $\mathbf{g}^{00}=-1$, we then have
\begin{equation*}
  \begin{split}
  \textstyle{\sum_{\lambda}} \square_{\mathbf{g}-\mathbf{g}_\lambda} f_\lambda= \textstyle{\sum_{\lambda}} ({\mathbf{g}-\mathbf{g}_\lambda}) \partial  df_\lambda.
  \end{split}
\end{equation*}
It follows from H\"older inequality that
\begin{equation}\label{y1}
  \begin{split}
  \| \textstyle{\sum_{\lambda}} \square_{\mathbf{g}-\mathbf{g}_\lambda} f_\lambda \|_{L^2_x} \lesssim \mathrm{sup}_{\lambda} \left(\lambda \| {\mathbf{g}-\mathbf{g}_\lambda} \|_{L^\infty_x} \right) \left( \sum_{\lambda}  \| df_\lambda \|^2_{L^2_x} \right)^{\frac12}.
  \end{split}
\end{equation}
On the other hand, we known
\begin{equation}\label{y2}
  \begin{split}
   \mathrm{sup}_{\lambda} \left(\lambda \| {\mathbf{g}-\mathbf{g}_\lambda} \|_{L^\infty_x} \right) \lesssim & \ \mathrm{sup}_{\lambda} \big(\lambda \sum_{\mu > \lambda}\| \mathbf{g}_\mu \|_{L^\infty_x} \big)
   \\
   \lesssim & \ \mathrm{sup}_{\lambda} \big(\lambda \sum_{\mu > \lambda}\mu^{-(1+\delta)}\|d \mathbf{g}_\mu \|_{C^\delta_x} \big)
   \\
   \lesssim & \ \|d \mathbf{g} \|_{C^\delta_x}(\textstyle{\sum_{\mu}} \mu^{-\delta}) \lesssim \ \|d \mathbf{g} \|_{C^\delta_x}.
  \end{split}
\end{equation}
Gathering \eqref{y1} and \eqref{y2}, we can prove that
\begin{equation*}
  \begin{split}
  \textstyle{\sum_{\lambda}} \square_{\mathbf{g}-\mathbf{g}_\lambda} f_\lambda \lesssim \epsilon_0(\| f_0 \|_{H^1}+\| f_1 \|_{L^2} ).
  \end{split}
\end{equation*}

For given $F\in L^1_t L^2_x$, we now form the function
\begin{equation*}
  \mathbf{M} F=\int^t_0 f^{\tau}(t,x)d\tau,
\end{equation*}
where $f^{\tau}(t,x)$ is the approximate solution formed above with the Cauchy data
\begin{equation*}
  f^{\tau}(\tau,x)=0, \quad \partial_t f^{\tau}(\tau,x)=F(\tau,\cdot).
\end{equation*}
By calculating
\begin{equation*}
  \square_{\mathbf{g}} \mathbf{M}F=\int^t_0 \square_{\mathbf{g}} f^{\tau}(t,x) d\tau+F,
\end{equation*}
it follows that
\begin{equation*}
  \| \square_{\mathbf{g}} \mathbf{M}F-F\|_{L^2_t L^2_x} \lesssim \| \square_{\mathbf{g}}f^{\tau}\|_{L^1_t L^2_x} \lesssim \epsilon_0 \| F \|_{L^2_t L^2_x}.
\end{equation*}
Using the contraction principle, we can write the solution $f$ in the form
\begin{equation*}
  f= \tilde{f}+ \mathbf{M}F,
\end{equation*}
where $\tilde{f}$ is the approximation solution formed above for initial data $(f_0,f_1)$ specified at time $t=0$, and
\begin{equation*}
  \| F\|_{L^2_t L^2_x} \lesssim \epsilon_0 ( \| f_0 \|_{H^1}+\| f_1\|_{L^2}).
\end{equation*}
The Strichartz estimates now follow since they holds for each $f^\tau$, $\tau \in [0,t]$. By Duhamel's principle, we can also obtain the Strichartz estimates for the linear, nonhomogeneous wave equation
\begin{equation*}
\begin{cases}
  \square_{\mathbf{g}} f= G,
  \\
  f|_{t=0}=f_0, \quad \partial_tf|_{t=0}=f_1.
\end{cases}
\end{equation*}
That is,
\begin{equation*}
 \| \left< \partial \right>^\theta f \|_{L^2_t L^\infty_x} \lesssim  \|f_0\|_{H^1}+ \| f_1 \|_{L^2}+ \|G\|_{L^1_t L^2_x} , \quad   \theta<0.
\end{equation*}
(ii) $1 < r \leq s_0+1$. Based on the above result, we transform the initial data in $H^1 \times L^2$. Operating $\left< \partial \right>^{r-1}$ on \eqref{linearA}, we have
\begin{equation*}
  \square_{\mathbf{g}} \left< \partial \right>^{r-1}f=-[\square_{\mathbf{g}}, \left< \partial \right>^{r-1}]f.
\end{equation*}
Let $\left< \partial \right>^{r-1}f=h$. Then $h$ is a solution to
\begin{equation}\label{qd}
  \begin{cases}
  \square_{\mathbf{g}}h=-[\square_{\mathbf{g}}, \left< \partial \right>^{r-1}]\left< \partial \right>^{1-r}h,
  \\
  (h(t_0), \partial_t h(t_0)) \in H^1 \times L^2.
  \end{cases}
\end{equation}
To handle this case, we then need to estimate the right term as
\begin{equation*}
 \| [\square_{\mathbf{g}}, \left< \partial \right>^{r-1}]\left< \partial \right>^{1-r}h\|_{L^2_t L^2_x} \lesssim \epsilon_0( \| dh\|_{L^\infty_t L^2_x}+  \| \left<\partial \right>^m dh\|_{L^2_t L^\infty_x}), \quad \mathrm{for} \  m>1-s_0.
\end{equation*}
To see this. we apply analytic interpolation to the family
\begin{equation*}
  h \rightarrow   [\square_{\mathbf{g}}, \left< \partial \right>^{r-1}]\left< \partial \right>^{1-r}h.
\end{equation*}
For $\mathrm{Re}z=0$, noting $\mathbf{g}^{00}=-1$, we use the commutator estimate (c.f. (3.6.35) of \cite{KP}) to get
\begin{equation*}
  \| [\mathbf{g}^{\alpha \beta}, \left< \partial \right>^z] \partial^2_{\alpha \beta} h \|_{L^2_x} \lesssim \| d \mathbf{g} \|_{L^\infty_x} \| d h \|_{L^2_x}.
\end{equation*}
For $\mathrm{Re}z=s_0$, we use the Kato-Ponce commutator estimate
\begin{equation}\label{qdy}
  \| [\mathbf{g}^{\alpha \beta}, \left< \partial \right>^z] \left< \partial \right>^{-z} \partial^2_{\alpha \beta} h \|_{L^2_x} \lesssim \| d \mathbf{g} \|_{L^\infty_x} \| d h \|_{L^2_x}+  \| \mathbf{g}^{\alpha \beta}- \eta^{\alpha \beta}\|_{H^{s_0}_x}   \|\left< \partial \right>^{-z} \partial^2_{\alpha \beta} h \|_{L^\infty_x},
\end{equation}
where $\alpha\neq 0$ because $\mathbf{g}^{00}=-1$. Considering
\begin{equation*}
  \| d \mathbf{g} \|_{L^\infty_x} +  \| \mathbf{g}^{\alpha \beta}- \eta^{\alpha \beta}\|_{H^{s_0}_x} \lesssim \epsilon_0,
\end{equation*}
then \eqref{qdy} becomes
\begin{equation*}
  \| [\mathbf{g}^{\alpha \beta}, \left< \partial \right>^z] \left< \partial \right>^{-z} \partial^2_{\alpha \beta} h \|_{L^2_x} \lesssim \epsilon_0( \| d h \|_{L^2_x}+  \|\left< \partial \right>^{-z} \partial^2_{\alpha \beta} h \|_{L^\infty_x}).
\end{equation*}
Let us go back \eqref{qd}. %We divide the solution $h$ to $h=h^1+h^2$, where
%\begin{equation*}
%  \begin{cases}
%  \square_{\mathbf{g}}h^1=-[\square_{\mathbf{g}}, \left< \partial \right>^{r-1}]\left< \partial \right>^{1-r}h,
%  \\
%  (h^1(t_0), \partial_t h^1(t_0)) \in H^1 \times L^2.
%  \end{cases}
%\end{equation*}
Using the discussion in case $r=1$, we then get the Strichartz estimates for $h$:
\begin{equation}\label{eh}
\begin{split}
 \| \left< \partial \right>^\theta h \|_{L^2_t L^\infty_x} \lesssim  & \  \epsilon_0( \|\left< \partial \right>^{r-1}f_0\|_{H^1}+ \| \left< \partial \right>^{r-1} f_1 \|_{L^2}+ \| d h \|_{L^2_x}+  \|\left< \partial \right>^{-r} \partial d h \|_{L^2_t L^\infty_x} ) ,
 \\
 \lesssim & \ \epsilon_0( \|f_0\|_{H^r}+ \| f_1 \|_{H^{r-1}}) , \quad   \theta<0.
\end{split}
\end{equation}
Substituting $\left< \partial \right>^{r-1}f=h$ to \eqref{eh}, and using the energy estimate $$\| f \|_{L^\infty_t H^r} \lesssim  \|f_0\|_{H^r}+ \| f_1 \|_{H^{r-1}},$$
 we can see derive that
\begin{equation*}
\begin{split}
 \| \left< \partial \right>^k f \|_{L^2_t L^\infty_x} \lesssim  & \  \epsilon_0( \|\left< \partial \right>^{r-1}f_0\|_{H^1}+ \| \left< \partial \right>^{r-1} f_1 \|_{L^2} ) ,
 \\
  \lesssim  & \  \epsilon_0( \|f_0\|_{H^r}+ \| f_1 \|_{H^{r-1}} ) , \quad   k<r-1.
\end{split}
\end{equation*}
\end{proof}
\subsection{The construction of wave packets}
\begin{definition}[\cite{ST}]
Let the hypersurface $\Sigma_{\theta,r}$ and the geodesic $\gamma$ be defined in Section 5. A normalized wave packet around $\gamma$ is a function $f$ of the form
\begin{equation*}
  f=\epsilon_0^{\frac12} \lambda^{-1} T_\lambda(uh),
\end{equation*}
where
\begin{equation*}
  u(t,x)=\delta(x_{\theta}-\phi_{\theta,r}(t,x'_\theta)), \quad h=h_0( (\epsilon_0 \lambda)^{\frac12}(x'_\theta-\gamma'_\theta(t))  ).
\end{equation*}
Here, $h_0$ is a smooth function supported in the set $|x'| \leq 1$, with uniform bounds on its derivatives $|\partial^\alpha_{x'} h_0(x')| \leq c_\alpha$.
\end{definition}
We give two notations here. We denote $L(u,h)$ to denote a translation invariant bilinear operator of the form
\begin{equation*}
  L(u,h)(x)=\int K(y,z)u(x+y)h(x+z)dydz,
\end{equation*}
where $K(y,z)$ is a finite measure. If $X$ is a Sobolev spaces, we then denote $X_a$ the same space but with the norm obtained by dimensionless rescaling by $a$,
\begin{equation*}
  \| u \|_{X_a}=\| u(a \cdot)\|_{X}.
\end{equation*}
Since $2(s_0-1)>2$, then for $a<1$ we have $\| u \|_{H^{s_0-1}_a(\mathbb{R}^{2})} \lesssim \| u \|_{H^{s_0-1}(\mathbb{R}^{2})}$.
\subsubsection{A normalized wave packet}
\begin{proposition}\label{np}
Let $u$ be a normalized packet. Then there is another normalized wave packet $\tilde{u}$, and functions $\phi_m(t,x'_\theta), m=0,1,2$, so that
\begin{equation}\label{np1}
  \square_{\mathbf{g}_\lambda} P_\lambda f= L(d \mathbf{g}, d \tilde{P}_\lambda \tilde{f})+ \epsilon_0^{\frac12}\lambda^{-1}P_{\lambda}T_{\lambda}\sum_{m=0,1,2}
\psi_m\delta^{(m)}(x'_\theta-\phi_{\theta,r}),
\end{equation}
where the functions $\psi_m=\psi_m(t,x'_\theta)$ satisfy the scaled Sobolev estimates
\begin{equation}\label{np2}
 \| \psi_m\|_{L^2_t H^{s_0-1}_{a,x'_\theta}} \lesssim \epsilon_0 \lambda^{1-m}, \quad m=0,1,2, \quad a=(\epsilon_0 \lambda)^{-\frac12}.
\end{equation}
\end{proposition}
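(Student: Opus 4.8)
\textbf{Proof proposal for Proposition \ref{np}.}
The plan is to compute $\square_{\mathbf{g}_\lambda}P_\lambda f$ directly using the explicit form of the wave packet $f=\epsilon_0^{\frac12}\lambda^{-1}T_\lambda(uh)$, where $u=\delta(x_\theta-\phi_{\theta,r}(t,x'_\theta))$ and $h=h_0((\epsilon_0\lambda)^{\frac12}(x'_\theta-\gamma'_\theta(t)))$. First I would commute $P_\lambda$ with $\square_{\mathbf{g}_\lambda}$ and with $T_\lambda$, so that the principal term becomes $\epsilon_0^{\frac12}\lambda^{-1}P_\lambda T_\lambda(\square_{\mathbf{g}_\lambda}(uh))$ up to errors; those errors involve $d\mathbf{g}_\lambda$ paired against one derivative of the packet, which after a Littlewood--Paley reduction have the form $L(d\mathbf{g},d\tilde P_\lambda\tilde f)$ for a rescaled normalized packet $\tilde f$ — this is where the first term on the right of \eqref{np1} comes from. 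The key point in the commutation step is that $T_\lambda$ is a frequency-localized pseudodifferential operator and $P_\lambda$ localizes near $|\xi|\simeq\lambda$, so all the commutator terms carry a gain of $\lambda^{-1}$ that compensates the extra derivative.

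Next, the heart of the argument is to expand $\square_{\mathbf{g}_\lambda}(uh)$ where $u$ is a delta function concentrated on the characteristic hypersurface $\Sigma_{\theta,r}$. Since $\Sigma_{\theta,r}$ is a null hypersurface for $\mathbf{g}$ (hence, up to $O(\epsilon_0)$ errors, for $\mathbf{g}_\lambda$), the leading-order action of $\square_{\mathbf{g}_\lambda}$ on $\delta(x_\theta-\phi_{\theta,r})$ vanishes: the conormal direction $dx_\theta-d\phi_{\theta,r}$ is null, so the coefficient of $\delta''$ is $\mathbf{g}_\lambda^{\alpha\beta}(\partial_\alpha(x_\theta-\phi_{\theta,r}))(\partial_\beta(x_\theta-\phi_{\theta,r}))$, which is $O(\epsilon_0)$ times the relevant norms by Proposition \ref{r1}. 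I would organize the remaining terms by the order of the derivative hitting $u$: writing $\square_{\mathbf{g}_\lambda}(uh)=(\square_{\mathbf{g}_\lambda}u)h+2\mathbf{g}_\lambda^{\alpha\beta}\partial_\alpha u\,\partial_\beta h+u\,\square_{\mathbf{g}_\lambda}h$, and then further expanding $\square_{\mathbf{g}_\lambda}u$ into pieces proportional to $\delta$, $\delta'$, $\delta''$, one collects everything into the stated form $\sum_{m=0,1,2}\psi_m\delta^{(m)}(x'_\theta-\phi_{\theta,r})$. The transport term $2\mathbf{g}_\lambda^{\alpha\beta}\partial_\alpha u\,\partial_\beta h$ contributes to $\psi_1$; the crucial structural fact is that $\partial_\beta h$ along the null direction $l$ only sees the slowly varying profile $h_0$, which was constructed precisely so that $l(h)$ is small — indeed $h$ is transported along the geodesic $\gamma$, so the $l$-derivative of $h$ produces a factor $(\epsilon_0\lambda)^{\frac12}$ rather than $\lambda$, a gain of $(\epsilon_0/\lambda)^{\frac12}$.

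To obtain the scaled Sobolev bounds \eqref{np2}, I would identify the $\psi_m$ explicitly in terms of $d\mathbf{g}_\lambda$, $d\phi_{\theta,r}$, the second fundamental form/connection coefficients of $\Sigma_{\theta,r}$ (i.e.\ $\chi_{ab}$, $l(\ln\sigma)$, $\mu_{0ab}$), and the profile $h_0$ together with the geodesic data $\gamma'_\theta$. The bound $\|\psi_m\|_{L^2_tH^{s_0-1}_{a,x'_\theta}}\lesssim\epsilon_0\lambda^{1-m}$ with $a=(\epsilon_0\lambda)^{-\frac12}$ then follows by feeding in: Proposition \ref{r1} for $\|\mathbf{g}_\lambda-\mathbf{m}\|$ and its derivatives on $\Sigma_{\theta,r}$; Proposition \ref{r2} together with the hypothesis $G(v,\boldsymbol\rho)\le2\epsilon_1$ for $\|d\phi_{\theta,r}-dt\|_{s_0,2,\Sigma}$; Lemma \ref{chi} for $\|\chi_{ab}\|_{L^2_tH^{s_0-1}_{x'}(\Sigma)}\lesssim\epsilon_2$ and the analogous bounds for $l(\ln\sigma)$ and $\mu$; and the dimensionless rescaling $\|u\|_{H^{s_0-1}_a}\lesssim\|u\|_{H^{s_0-1}}$ valid for $a<1$ since $2(s_0-1)>n=2$. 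The $\lambda^{1-m}$ scaling is bookkeeping: each factor of $\delta$ differentiated away costs one power of $\lambda$, and the $\epsilon_0$ in front is the smallness harvested either from the nullness of $\Sigma_{\theta,r}$ (the $\delta''$ piece), from the smallness of $d\mathbf{g}_\lambda$ and of $d\phi-dt$, or from the construction of $h_0$ as a packet transported along $\gamma$ (the $\delta'$ and $\delta$ pieces).

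The main obstacle I anticipate is the careful treatment of the $\delta'$ and $\delta^{(2)}$ terms — making sure that when $\square_{\mathbf{g}_\lambda}$ differentiates $u$ twice, the coefficient of $\delta''$ is genuinely $O(\epsilon_0)$ in the right norm and not merely pointwise small, which requires invoking the characteristic energy estimates (Proposition \ref{r1}, Lemma \ref{fre}) rather than just the pointwise bound $\|l-(\partial_t-\partial_{x_3})\|_{L^\infty}\lesssim\epsilon_1$. A secondary subtlety is that $\square_{\mathbf{g}_\lambda}$ rather than $\square_{\mathbf{g}}$ appears, so $\Sigma_{\theta,r}$ is only \emph{approximately} null; the discrepancy $\mathbf{g}_\lambda^{\alpha\beta}-\mathbf{g}^{\alpha\beta}=-\sum_{\mu>\lambda}\mathbf{g}_\mu^{\alpha\beta}$ must be shown to contribute an acceptable error, which uses $\lambda\|\mathbf{g}-\mathbf{g}_\lambda\|_{L^\infty}\lesssim\|d\mathbf{g}\|_{C^\delta_x}$ exactly as in \eqref{y2}, absorbed into the $L(d\mathbf{g},d\tilde P_\lambda\tilde f)$ term.
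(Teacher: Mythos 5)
Your proposal follows essentially the same route as the paper's proof: the splitting $\square_{\mathbf{g}_\lambda}P_\lambda f=\lambda^{-1}([\square_{\mathbf{g}_\lambda},P_\lambda T_\lambda]+P_\lambda T_\lambda\square_{\mathbf{g}_\lambda})(uh)$ with the commutator absorbed into $L(d\mathbf{g},d\tilde P_\lambda\tilde f)$, the Leibniz expansion of $\square_{\mathbf{g}_\lambda}(uh)$ into $\delta^{(m)}$ layers, the use of the nullness of $\Sigma_{\theta,r}$ (via $\psi_2\sim h(\mathbf{g}_\lambda^{\alpha\beta}-\mathbf{g}^{\alpha\beta})\nu_\alpha\nu_\beta$) to kill the leading $\delta''$ coefficient, and the characteristic energy estimates of Proposition \ref{r1} and the bound on $G(v,\boldsymbol\rho)$ to verify \eqref{np2}. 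The only differences are cosmetic bookkeeping (you propose absorbing the $\mathbf{g}-\mathbf{g}_\lambda$ discrepancy into the $L$ term rather than into $\psi_2$, and you cite Lemma \ref{chi} where the paper uses Corollary \ref{vte} and \eqref{G}), so the argument is correct as it stands.
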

As a immediate consequence, we can obtain
\begin{corollary}
Let $f$ be a normalized wave packet. Then
\begin{equation}\label{np3}
  \|d P_\lambda f \|_{L^\infty_t L^2_x} \lesssim 1, \quad \| \square_{\mathbf{g}} P_\lambda f \|_{L^2_t L^2_x} \lesssim \epsilon_0.
\end{equation}
\end{corollary}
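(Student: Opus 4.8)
The plan is to read off both estimates directly from Proposition \ref{np} together with the two basic facts already assembled in the excerpt: the characteristic Sobolev control of the metric (Proposition \ref{r1}, which gives $\vert\kern-0.25ex\vert\kern-0.25ex\vert d\mathbf{g}\vert\kern-0.25ex\vert\kern-0.25ex\vert_{s_0-1,2,\Sigma}\lesssim\epsilon_2$ and hence $\|d\mathbf{g}\|_{L^2_tL^\infty_x}\lesssim\epsilon_2$ after Sobolev embedding in the $(t,x')$ variables), and the elementary $L^2$ bounds for a single wave packet. First I would prove $\|dP_\lambda f\|_{L^\infty_tL^2_x}\lesssim 1$: this is a direct computation from the definition of the normalized wave packet $f=\epsilon_0^{1/2}\lambda^{-1}T_\lambda(uh)$. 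The factor $T_\lambda$ only costs $\lambda$ in each $\partial$; the function $u=\delta(x_\theta-\phi_{\theta,r})$ contributes, after the (bounded, by Lemma \ref{te0}) change of variables flattening $\Sigma$, an $L^2_{x_\theta}$-normalized bump of width $\lambda^{-1}$ once convolved with $P_\lambda$, giving a factor $\lambda^{1/2}$; and $h=h_0((\epsilon_0\lambda)^{1/2}(x'_\theta-\gamma'_\theta(t)))$ is an $L^2_{x'_\theta}$ function of size $(\epsilon_0\lambda)^{-1/2}$. Multiplying out $\epsilon_0^{1/2}\lambda^{-1}\cdot\lambda\cdot\lambda^{1/2}\cdot(\epsilon_0\lambda)^{-1/2}=1$, uniformly in $t$, which is exactly the claim. (The time derivative $\mathbf{T}$ hitting the moving center $\gamma'_\theta(t)$ or $\phi_{\theta,r}$ produces only bounded extra factors, using $\|d\phi-dt\|_{L^\infty}\lesssim\epsilon_1$ and $|\dot\gamma'_\theta|\lesssim 1$.)

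Next I would establish $\|\square_{\mathbf{g}}P_\lambda f\|_{L^2_tL^2_x}\lesssim\epsilon_0$. The starting point is the splitting
\begin{equation*}
\square_{\mathbf{g}}P_\lambda f=\square_{\mathbf{g}_\lambda}P_\lambda f+(\mathbf{g}-\mathbf{g}_\lambda)^{\alpha\beta}\partial^2_{\alpha\beta}P_\lambda f,
\end{equation*}
where the second term uses $\mathbf{g}^{00}=-1$ so no uncontrolled $\partial_t^2$ appears. For the low-high error term, $\|\mathbf{g}-\mathbf{g}_\lambda\|_{L^\infty_x}\lesssim\lambda^{-1}\|d\mathbf{g}\|_{C^\delta_x}$ as in the paradifferential reduction (the computation already displayed in \eqref{y2}), while $\|\partial^2 P_\lambda f\|_{L^2_x}\lesssim\lambda\|dP_\lambda f\|_{L^2_x}\lesssim\lambda$ by the first estimate; integrating in $t$ and using $\|d\mathbf{g}\|_{L^2_tC^\delta_x}\lesssim\epsilon_2\ll\epsilon_0$ gives the bound $\epsilon_0$. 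For the main term $\square_{\mathbf{g}_\lambda}P_\lambda f$ I invoke Proposition \ref{np}: it is a sum of $L(d\mathbf{g},d\tilde P_\lambda\tilde f)$ and $\epsilon_0^{1/2}\lambda^{-1}P_\lambda T_\lambda\sum_{m}\psi_m\delta^{(m)}(x'_\theta-\phi_{\theta,r})$. The first piece is bounded in $L^2_tL^2_x$ by $\|d\mathbf{g}\|_{L^2_tL^\infty_x}\|d\tilde P_\lambda\tilde f\|_{L^\infty_tL^2_x}\lesssim\epsilon_2\cdot 1\lesssim\epsilon_0$, again by Proposition \ref{r1} and the normalized-packet bound applied to $\tilde f$. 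For the second piece, after the flattening change of variables each $\psi_m\delta^{(m)}$ is, in the $x_\theta$ variable, an $L^2$-normalized object of size $\lambda^{m+1/2}$ once acted on by $P_\lambda T_\lambda$ (the $\delta^{(m)}$ costs $\lambda^m$ over $\delta$, and $T_\lambda$ is bounded on $L^2$), and in the $x'_\theta$ variables it is controlled in $L^2_t L^2_{x'}$ by the scaled Sobolev estimate \eqref{np2}: $\|\psi_m\|_{L^2_tL^2_{x'_\theta}}\lesssim a\|\psi_m\|_{L^2_tH^{s_0-1}_{a,x'_\theta}}\lesssim(\epsilon_0\lambda)^{-1/2}\cdot\epsilon_0\lambda^{1-m}=\epsilon_0^{1/2}\lambda^{1/2-m}$ (here I use $s_0-1>1$ so the $a$-scaled $H^{s_0-1}$ norm dominates the $L^2$ norm with the stated gain, plus Sobolev embedding to pass from the $(t,x')$-restriction to a genuine $L^2_x$ bound via Lemma \ref{te0}/trace). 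Multiplying the prefactor $\epsilon_0^{1/2}\lambda^{-1}$ by $\lambda^{m+1/2}$ and by $\epsilon_0^{1/2}\lambda^{1/2-m}$ gives $\epsilon_0$, uniformly in $m\in\{0,1,2\}$ and in the packet. Summing the three contributions yields $\|\square_{\mathbf{g}}P_\lambda f\|_{L^2_tL^2_x}\lesssim\epsilon_0$.

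The main obstacle, and the step I would write most carefully, is the bookkeeping of the $\delta^{(m)}$ singular factors: one must verify that after the change of variables $x_3\mapsto x_3-\phi_{\theta,r}(t,x')$ the normalized packet and the source terms $\psi_m\delta^{(m)}$ really do produce the claimed powers $\lambda^{1/2}$, $\lambda^{m+1/2}$ after application of $P_\lambda$ (resp. $P_\lambda T_\lambda$), i.e. that the frequency localization interacts correctly with the codimension-one concentration along $\Sigma$ and with the $(\epsilon_0\lambda)^{1/2}$-scale tangential profile $h$. This is where the precise normalization constants $\epsilon_0^{1/2}\lambda^{-1}$ in the definition of a wave packet and the scaling $a=(\epsilon_0\lambda)^{-1/2}$ in \eqref{np2} are designed to conspire, and it is essentially the content of the Smith--Tataru construction; everything else is routine application of Proposition \ref{r1}, Lemma \ref{te0}, and H\"older's inequality. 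I would also note in passing that the uniformity in $\lambda$ (no loss) is what makes this corollary usable when summing over dyadic $\lambda$ in the proof of Proposition \ref{A1}, and that the smallness $\epsilon_2\ll\epsilon_0$ from \eqref{a0} is exactly what absorbs the metric error terms into the target $\epsilon_0$.
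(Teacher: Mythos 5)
Your proposal is correct and follows exactly the route the paper intends: the paper states the corollary as an immediate consequence of Proposition \ref{np}, and your argument simply makes explicit the scaling bookkeeping (the $\lambda^{1/2}$ from $P_\lambda\delta$, the $(\epsilon_0\lambda)^{-1/2}$ from $h$, the $\lambda^{m+1/2}$ from $P_\lambda T_\lambda\delta^{(m)}$, and the $L^2$-versus-scaled-$H^{s_0-1}$ comparison in \eqref{np2}) together with the $L(d\mathbf{g},\cdot)$ and $\mathbf{g}-\mathbf{g}_\lambda$ error bounds, all of which check out and cancel to give $1$ and $\epsilon_0$ respectively.
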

\begin{proof}[proof of Proposition \ref{np}]
For brevity, we consider the case $\theta=(0,0,1)$. Then $x_\theta=x_3$, and $x'_\theta=x'$. We write
\begin{equation}\label{js0}
  \square_{\mathbf{g}_\lambda} P_\lambda f
  = \lambda^{-1} ( [\square_{\mathbf{g}_\lambda}, P_\lambda T_\lambda]+P_\lambda T_\lambda \square_{\mathbf{g}_\lambda} )(uh).
\end{equation}
For the first term in \eqref{js0}, noting $\mathbf{g}\lambda$ supported at frequency $\leq \frac \lambda8$, then we can write
\begin{equation*}
  [\square_{\mathbf{g}_\lambda}, P_\lambda T_\lambda]=[\square_{\mathbf{g}_\lambda}, P_\lambda T_\lambda]\tilde{P}_\lambda \tilde{T}_\lambda
\end{equation*}
for some multipliers $\tilde{P}_\lambda, \tilde{T}_\lambda$ which have the same properties as $P_\lambda, T_\lambda$. Therefore, by using the kernal bounds for $P_\lambda T_\lambda$, we conclude that
\begin{equation*}
  [\square_{\mathbf{g}_\lambda}, P_\lambda T_\lambda]f=L(d\mathbf{g}, df).
\end{equation*}
For the second term in \eqref{js0}, we use the Leibniz rule
\begin{equation}\label{js1}
  \square_{\mathbf{g}_\lambda}(uh)=h \square_{\mathbf{g}_\lambda} u+(\mathbf{g}^{\alpha \beta}_\lambda+\mathbf{g}^{\beta \alpha}_\lambda)\partial_\alpha u \partial_\beta h + u \square_{\mathbf{g}_\lambda} h.
\end{equation}
We let $\nu$ denote the conormal vector field along $\Sigma$, $\nu=dx_3-d\phi(t,x')$. In the following, we take the greek indices $0 \leq \alpha, \beta \leq 2$.

For the first term in \eqref{js1}, by calculation, we have
\begin{equation*}
  \begin{split}
  \mathbf{g}_{\lambda}^{\alpha \beta} \partial_{\alpha \beta} u =& \mathbf{g}_{\lambda}^{\alpha \beta}(t,x',\phi)\nu_\alpha \nu_\beta \delta^{(2)}_{x_3-\phi}
  -2 (\partial_3 \mathbf{g}_{\lambda}^{\alpha \beta})(t,x',\phi) \nu_\alpha \nu_\beta \delta^{(1)}_{x_3-\phi}
  \\
  &+(\partial^2_3 \mathbf{g}_{\lambda}^{\alpha \beta})(t,x',\phi) \nu_\alpha \nu_\beta \delta^{(0)}_{x_3-\phi}- \mathbf{g}_{\lambda}^{\alpha \beta})(t,x',\phi) \partial_{\alpha \beta}\phi \delta^{(1)}_{x_3-\phi}
  \\
  &+ \partial_3\mathbf{g}_{\lambda}^{\alpha \beta})(t,x',\phi) \partial_{\alpha \beta}\phi \delta^{(0)}_{x_3-\phi}.
  \end{split}
\end{equation*}
Here, $\delta^{(m)}_{x_3-\phi}=(\partial^m \delta)(x_3-\phi) $. By Leibniz rule, we can take
\begin{equation*}
\begin{split}
  \psi_0&=h \big\{ (\partial^2_3 \mathbf{g}_{\lambda}^{\alpha \beta})(t,x',\phi)\nu_\alpha \nu_\beta+ (\partial_3 \mathbf{g}_{\lambda}^{\alpha \beta})(t,x',\phi)\partial_{\alpha \beta}\phi  \big\},
  \\
  \psi_1&=h \big\{ 2(\partial_3 \mathbf{g}_{\lambda}^{\alpha \beta})(t,x',\phi)\nu_\alpha \nu_\beta-  \mathbf{g}_{\lambda}^{\alpha \beta}(t,x',\phi)\partial_{\alpha \beta}\phi \big\},
  \\
  \psi_2&=h( \mathbf{g}_{\lambda}^{\alpha \beta}-\mathbf{g}^{\alpha \beta})\nu_\alpha \nu_\beta,
  \end{split}
\end{equation*}
By using \eqref{G}, Proposition \ref{r1}, and Corollary \ref{vte}, we can conclude that this settings of $\psi_0, \psi_1,$ and $\psi_2$ satisfy the estimates \eqref{np2}. For the second term in \eqref{js0}, we have
\begin{equation*}
  \begin{split}
  (\mathbf{g}^{\alpha \beta}_\lambda+\mathbf{g}^{\beta \alpha}_\lambda)\partial_\alpha u \partial_\beta h=& \frac12\nu_\alpha (\mathbf{g}^{\alpha \beta}_\lambda+\mathbf{g}^{\beta \alpha}_\lambda)(t,x',\phi)\partial_\beta h \delta^{(1)}_{x_3-\phi}
  \\
  &- \frac12\nu_\alpha \partial_3 (\mathbf{g}^{\alpha \beta}_\lambda+\mathbf{g}^{\beta \alpha}_\lambda)(t,x',\phi)\partial_\beta h \delta^{(0)}_{x_3-\phi}.
  \end{split}
\end{equation*}
We then take
\begin{equation*}
  \psi_0= \frac12\nu_\alpha \partial_3 (\mathbf{g}^{\alpha \beta}_\lambda+\mathbf{g}^{\beta \alpha}_\lambda)(t,x',\phi)\partial_\beta h, \quad \psi_1= \frac12\nu_\alpha (\mathbf{g}^{\alpha \beta}_\lambda+\mathbf{g}^{\beta \alpha}_\lambda)(t,x',\phi)\partial_\beta h.
\end{equation*}
By using \eqref{G}, Proposition \ref{r1}, and Corollary \ref{vte}, we can conclude that this settings of $\psi_0$ and $\psi_1$ satisfy the estimates \eqref{np2}.
For the third term in \eqref{js0}, we can take
\begin{equation*}
  \psi_0=\mathbf{g}^{\alpha \beta}_\lambda(t,x',\phi)\partial_{\alpha \beta}h.
\end{equation*}
By using \eqref{G}, Proposition \ref{r1}, and Corollary \ref{vte}, we can conclude that this settings of $\psi_0$ satisfies the estimates \eqref{np2}.
\end{proof}
\subsubsection{Superpositions of wave packets}
\begin{proposition}[\cite{ST}]\label{szy}
Let $f=\sum_{\theta,j}a_{\theta,j}f^{\theta,j}$, where $f^{\theta,j}$ are normalized wave packets supported in $T_{\theta,j}$. Then
\begin{equation}\label{ese}
\begin{split}
  \| d P_\lambda f\|_{L^\infty_t L^2_x} &\lesssim (\sum_{\theta,j} a^2_{\theta,j})^{\frac12},
  \\
\| \square_{\mathbf{g}_{\lambda}} P_\lambda f \|_{L^1_t L^2_x} &\lesssim \epsilon_0 (\sum_{\theta,j} a^2_{\theta,j})^{\frac12}.
\end{split}
\end{equation}
\end{proposition}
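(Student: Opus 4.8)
\textbf{Proof proposal for Proposition \ref{szy}.}

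The plan is to reduce both estimates in \eqref{ese} to the single-packet bounds in \eqref{np3}, exploiting the almost-orthogonality of the tubes $T_{\theta,j}$. First I would recall that for a fixed direction $\theta$, the tubes $\{T_{\theta,j}\}_j$ are, after the change of coordinates $x_3 \rightarrow x_3-\phi_{\theta,r}(t,x'_\theta)$, essentially disjoint in the $x'_\theta$ variable at each fixed time, since each normalized wave packet $f^{\theta,j}$ has $x'_\theta$-support of size $(\epsilon_0\lambda)^{-\frac12}$ and the $\gamma'_\theta(t)$ are spaced accordingly. Hence for each fixed $\theta$ we get $\| dP_\lambda (\sum_j a_{\theta,j}f^{\theta,j})\|_{L^\infty_t L^2_x}^2 \lesssim \sum_j a_{\theta,j}^2$ from Cauchy--Schwarz together with $\|dP_\lambda f^{\theta,j}\|_{L^\infty_t L^2_x}\lesssim 1$ in \eqref{np3}. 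Summing over the $O(\epsilon_0^{-1})$-many angular sectors $\theta$ uses frequency-space orthogonality: the packets $f^{\theta,j}$ associated with different $\theta$ have Fourier support in finitely-overlapping angular caps of the annulus $|\xi|\simeq \lambda$ (this is where $P_\lambda$, and the angular localization built into the wave-packet construction, are used). Combining the spatial orthogonality in $j$ with the frequency orthogonality in $\theta$ yields the first line of \eqref{ese}.

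For the second estimate, I would apply the decomposition \eqref{np1} from Proposition \ref{np} to each $P_\lambda f^{\theta,j}$, writing
\begin{equation*}
\square_{\mathbf{g}_\lambda} P_\lambda f^{\theta,j} = L(d\mathbf{g}, d\tilde P_\lambda \tilde f^{\theta,j}) + \epsilon_0^{\frac12}\lambda^{-1}P_\lambda T_\lambda \textstyle\sum_{m=0,1,2}\psi_m^{\theta,j}\delta^{(m)}(x'_\theta-\phi_{\theta,r}).
\end{equation*}
For the first piece, $\|L(d\mathbf{g}, d\tilde P_\lambda \tilde f^{\theta,j})\|_{L^1_tL^2_x}\lesssim \|d\mathbf{g}\|_{L^2_tL^\infty_x}\|d\tilde P_\lambda\tilde f^{\theta,j}\|_{L^2_tL^2_x}\lesssim \epsilon_2$ per packet by the boundedness of $L$ and \eqref{5021}; after taking the superposition and using the orthogonality of the $\tilde f^{\theta,j}$ exactly as in the first part, this contributes $\lesssim \epsilon_2(\sum_{\theta,j}a_{\theta,j}^2)^{\frac12}\lesssim \epsilon_0(\sum a_{\theta,j}^2)^{\frac12}$. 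For the second, singular piece, I would undo the coordinate change, so that $\psi_m^{\theta,j}\delta^{(m)}$ becomes a distribution supported on $\Sigma_{\theta,r}$; bounding its $L^2_x$ norm at fixed time reduces by the trace/restriction structure to the scaled Sobolev norms $\|\psi_m^{\theta,j}\|_{L^2_tH^{s_0-1}_{a,x'_\theta}}$, and \eqref{np2} gives $\lesssim \epsilon_0\lambda^{1-m}$; the factor $\epsilon_0^{\frac12}\lambda^{-1}$ in front, together with the $\lambda^{m-1}$ gain absorbed by the kernel of $P_\lambda T_\lambda$ acting on $\delta^{(m)}$, balances this to produce an $O(\epsilon_0)$ bound per packet. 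Then summing over $(\theta,j)$ with the same orthogonality argument closes the estimate.

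The main obstacle I expect is making the orthogonality argument quantitatively rigorous when summing over the angular parameter $\theta$: the tubes $T_{\theta,j}$ are curved (they follow the null geodesics of the variable metric $\mathbf{g}$, not straight lines), so the clean spatial disjointness only holds for packets sharing a direction, and across directions one must rely on the frequency-space separation of the angular caps while controlling the error from the curvature of $\Sigma_{\theta,r}$. This is where the hypothesis $G(v,\boldsymbol\rho)\le 2\epsilon_1$ and the regularity estimates of Proposition \ref{r1}, Proposition \ref{r2}, and Corollary \ref{vte} enter: they guarantee that $\phi_{\theta,r}$ is a small $C^{1,\delta}$ perturbation of $t$ uniformly in $\theta,r$, so the curved tube geometry is a controlled perturbation of the flat one and the standard Smith--Tataru counting of overlapping tubes applies. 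A secondary technical point is the bookkeeping in the singular term: one must verify that applying $P_\lambda T_\lambda$ to $\delta^{(m)}(x'_\theta-\phi_{\theta,r})$ indeed yields the $\lambda^{m-1}$ smoothing, which follows from the kernel bounds for $P_\lambda T_\lambda$ stated in the construction, but needs to be tracked carefully against the $\epsilon_0^{\frac12}\lambda^{-1}$ normalization so that the final constant is $O(\epsilon_0)$ and not merely $O(1)$.
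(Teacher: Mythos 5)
The paper itself offers no proof of Proposition \ref{szy}: it is quoted verbatim from Smith--Tataru \cite{ST} (as are Propositions \ref{szi} and \ref{szt}), so there is no in-text argument to compare yours against. That said, your overall strategy --- reduce to the single-packet bounds following Proposition \ref{np} and then sum using almost-orthogonality of the packets --- is the strategy of \cite{ST}, and your bookkeeping for the singular term (the $\epsilon_0^{\frac12}\lambda^{-1}$ normalization against \eqref{np2}, balanced by the cost of $\delta^{(m)}$ at frequency $\lambda$) does come out to $O(\epsilon_0)$ per packet, consistently with \eqref{np3}.

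The genuine gap is the orthogonality step, which is the entire content of the proposition beyond the per-packet corollary, and neither of the two mechanisms you invoke holds in the exact form in which you use it. Packets sharing a direction $\theta$ but attached to nearby surfaces $\Sigma_{\theta,r}$ are not spatially disjoint (the tubes have transverse thickness $(\epsilon_0\lambda)^{-\frac12}$ across the foliation as well as along $x'_\theta$), and packets with distinct $\theta$ are not frequency-disjoint: $T_\lambda$ applied to $\delta(x_\theta-\phi_{\theta,r})h$ yields only approximate angular localization with tails, and adjacent directions in the $(\epsilon_0\lambda)^{-\frac12}$-net have overlapping caps. What is actually required --- and what \cite{ST} proves --- is a quantitative almost-orthogonality bound of the form $|\langle dP_\lambda f^{\theta,j}, dP_\lambda f^{\theta',j'}\rangle| \lesssim (1+d(T_{\theta,j},T_{\theta',j'}))^{-N}$ for a suitable distance between tubes, obtained from kernel estimates and integration by parts exploiting the separation of the characteristic surfaces (this is where the regularity furnished by Propositions \ref{r1} and \ref{r2} enters), followed by Schur's test or Cotlar--Stein. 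You correctly identify this as ``the main obstacle,'' but identifying it is not supplying it; as written, the first line of \eqref{ese} is not established, and the second line inherits the same gap through your treatment of the superposed $L(d\mathbf{g},\cdot)$ term.
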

\subsubsection{Matching the initial data}
\begin{proposition}[\cite{ST}]\label{szi}
Given any initial data $(f_0,f_1) \in H^1 \times L^2$, there exists a function of the form
\begin{equation*}
  f=\sum_{\theta,j}a_{\theta,j}f^{\theta,j},
\end{equation*}
where the function $f^{\theta,j}$ are normalized wave packets, such that
\begin{equation*}
  P_\lambda f(-2)=P_\lambda f_0, \quad \partial_t P_\lambda f(-2)=P_\lambda f_1.
\end{equation*}
Furthermore,
\begin{equation*}
\sum_{\theta,j}a_{\theta,j}^2 \lesssim \| f_0 \|^2_{H^1}+ \| f_1 \|^2_{L^2}.
\end{equation*}
\end{proposition}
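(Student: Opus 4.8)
The plan is to exploit that, by construction, the metric $\mathbf{g}$ coincides with the Minkowski metric $\mathbf{m}$ for $t\in[-2,-\tfrac32]$, so that near $t=-2$ every normalized wave packet is an honest flat wave packet and each hypersurface $\Sigma_{\theta,r}$ is the hyperplane $\{x\cdot\theta=r-2\}$. In particular the Cauchy data $\bigl(f^{\theta,j}(-2,\cdot),\partial_t f^{\theta,j}(-2,\cdot)\bigr)$ of a normalized packet is a standard frequency-$\lambda$ packet profile essentially supported in a tube whose fixed-time cross-section has dimensions $\lambda^{-1}\times(\epsilon_0\lambda)^{-1/2}\times(\epsilon_0\lambda)^{-1/2}$, the two admissible time derivatives ($\pm i\lambda$ up to lower order) corresponding to forward and backward packets. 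I would first reduce the statement to a purely fixed-time frame decomposition: setting $g_{\pm}=\tfrac12\bigl(P_\lambda f_0\pm(i|D|)^{-1}P_\lambda f_1\bigr)$, it suffices to write each $g_{\pm}\in P_\lambda L^2$ as a superposition $\sum_{\theta,j}a^{\pm}_{\theta,j}e^{\pm}_{\theta,j}$ of half-wave normalized packet profiles with $\sum_{\theta,j}(a^{\pm}_{\theta,j})^2\lesssim\|g_{\pm}\|_{L^2}^2$; then $f=\sum_{\pm}\sum_{\theta,j}a^{\pm}_{\theta,j}f^{\theta,j}_{\pm}$ matches $(P_\lambda f_0,P_\lambda f_1)$ at $t=-2$, converges in the energy norm by Proposition \ref{szy}, and satisfies $\sum a^2\lesssim \lambda^2\|P_\lambda f_0\|_{L^2}^2+\|P_\lambda f_1\|_{L^2}^2\lesssim\|f_0\|_{H^1}^2+\|f_1\|_{L^2}^2$.

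For the frame decomposition I would cover the dyadic annulus $\{|\xi|\sim\lambda\}$ by finitely overlapping angular caps $\kappa_\theta$ of aperture $(\epsilon_0/\lambda)^{1/2}$ with a smooth partition of unity $\sum_\theta\chi_\theta(\xi)^2\equiv1$ there, so that $\chi_\theta(D)g_{\pm}$ has Fourier support in $\kappa_\theta$ and hence, by the uncertainty principle, lives modulo Schwartz tails at the anisotropic spatial scale $\lambda^{-1}$ along $\theta$ and $(\epsilon_0\lambda)^{-1/2}$ transversally. Tiling $\mathbb{R}^3$ by boxes $Q_{\theta,j}$ of exactly those dimensions (the time-$(-2)$ sections of the tubes $T_{\theta,j}$) with an adapted partition of unity $\sum_j\eta_{\theta,j}\equiv1$, I set $a^{\pm}_{\theta,j}\sim\lambda\,\|\eta_{\theta,j}\,\chi_\theta(D)g_{\pm}\|_{L^2}$ and take $e^{\pm}_{\theta,j}$ to be the corresponding normalized multiple of $P_\lambda\bigl(\eta_{\theta,j}\,\chi_\theta(D)g_{\pm}\bigr)$, which after a mild Fourier re-localization is of the required wave-packet form. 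Bounded overlap of the caps in frequency, together with $\sum_j\eta_{\theta,j}^2\le1$ and Plancherel, gives $\sum_{\theta,j}\|\eta_{\theta,j}\chi_\theta(D)g_{\pm}\|_{L^2}^2\lesssim\sum_\theta\|\chi_\theta(D)g_{\pm}\|_{L^2}^2\lesssim\|g_{\pm}\|_{L^2}^2$, which is the claimed square-summability.

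The main obstacle is not this orthogonality bookkeeping but the fact that wave packets are only \emph{essentially}, not compactly, supported in their tubes: applying $P_\lambda$ and the cut-offs $\eta_{\theta,j}$ produces Schwartz tails that leak into neighbouring tiles, so the naive reconstruction reproduces $g_{\pm}$ only up to an error that is again frequency-$\lambda$ localized but of small $L^2$ norm. I would close this by iterating the same decomposition on the remainder and summing the resulting geometric series, which converges because the off-diagonal interactions between tiles decay rapidly in their separation; keeping track of these tails while still matching the data \emph{exactly} at $t=-2$ is the delicate point, and is precisely where the smallness of $\epsilon_0$ (hence of the anisotropy ratio) is used. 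All of this is carried out in detail in Smith--Tataru \cite{ST}, and the construction transfers here with no change since at $t=-2$ our metric and characteristic surfaces are flat.
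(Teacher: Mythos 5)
The paper does not actually prove this proposition; it is quoted verbatim from Smith--Tataru \cite{ST}, so there is no in-paper argument to compare against. Your sketch follows the same route as \cite{ST}: use that $\mathbf{g}=\mathbf{m}$ for $t\in[-2,-\tfrac32]$ so the surfaces $\Sigma_{\theta,r}$ are flat hyperplanes at $t=-2$, split the data into half-waves $g_{\pm}$, decompose the annulus $|\xi|\sim\lambda$ into angular caps of aperture $(\epsilon_0/\lambda)^{1/2}$, tile physical space by the dual boxes, and read off the square-summability from almost-orthogonality; the normalization bookkeeping $\sum a^2\lesssim\lambda^2\|P_\lambda f_0\|_{L^2}^2+\|P_\lambda f_1\|_{L^2}^2$ is correct given that a normalized packet carries unit energy.

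The one genuine soft spot is the sentence claiming that a normalized multiple of $P_\lambda\bigl(\eta_{\theta,j}\,\chi_\theta(D)g_{\pm}\bigr)$ ``after a mild Fourier re-localization is of the required wave-packet form.'' A normalized wave packet in this paper is the very rigid object $\epsilon_0^{1/2}\lambda^{-1}T_\lambda\bigl(\delta(x_\theta-\phi_{\theta,r})\,h_0((\epsilon_0\lambda)^{1/2}(x_\theta'-\gamma_\theta'))\bigr)$ with $h_0$ smooth, supported in the unit ball and with uniformly bounded derivatives; an arbitrary tile-localized piece of $L^2$ data is not of this tensor form, and normalizing it in $L^2$ does not give a profile with uniform $C^\infty$ bounds. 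This is precisely where \cite{ST} does real work: the cap-localized piece is expanded on each box into a rapidly convergent series of modulated translates of a \emph{fixed} profile (a periodization/Fourier-series argument on the dual box), and the resulting exponential tails are absorbed into the coefficients; your ``iterate on the remainder'' device addresses the leakage between tiles but not the shape of the profile itself. Also note that the proposition only requires $P_\lambda f(-2)=P_\lambda f_0$ and $\partial_tP_\lambda f(-2)=P_\lambda f_1$, i.e.\ the data need only be matched after the outer projection $P_\lambda$, which is what makes the frequency re-localization harmless; as written your construction aims at matching $f(-2)=P_\lambda f_0$ exactly, which is stronger than needed and harder to arrange. With the profile-expansion step supplied from \cite{ST}, the argument closes.
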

\subsection{Strichartz estimate}
The estimate \eqref{Ase} can be directly derived by
\begin{proposition}[\cite{ST}]\label{szt}
Let
\begin{equation*}
  f=\sum_{K \in \mathcal{T}}a_{K}\chi_{K},
\end{equation*}
where $\sum_{K \in \mathcal{T}}a_{K}^2 \leq 1$. Then
\begin{equation*}
\|f \|_{L^2_t L^\infty_x} \lesssim \epsilon_0^{-\frac12}(\ln \lambda)^3.
\end{equation*}
\end{proposition}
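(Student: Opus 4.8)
The plan is to follow the combinatorial argument of Smith and Tataru \cite{ST}: this is a purely geometric estimate on the family of tubes $\mathcal{T}$, whose only analytic ingredient is the transversality between wave packets attached to different directions $\theta$, which in turn follows from the regularity of the null hypersurfaces $\Sigma_{\theta,r}$ and of the null-geodesic flow already obtained in Proposition \ref{r1}, Proposition \ref{r2}, Corollary \ref{Rfenjie} and Lemma \ref{chi}. I would first normalize the coefficients $a_K$ by pigeonholing, then bound the overlap function of the tubes via a bush argument based on the transversality, and finally sum the dyadic pieces; the three factors of $\ln\lambda$ come from three such dyadic pigeonholings, which I indicate below.

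For the coefficient normalization, write $\mathcal{T}=\bigcup_{m\ge 0}\mathcal{T}_m$ with $|a_K|\sim 2^{-m}$ for $K\in\mathcal{T}_m$. Since there are only $\lesssim\lambda^{O(1)}$ tubes and $\|\chi_K\|_{L^\infty}\lesssim 1$, the subfamilies with $2^{-m}\le\lambda^{-C}$ contribute $O(\lambda^{-C/2})$ to the left-hand side, so at most $\lesssim\ln\lambda$ values of $m$ are relevant, which is the first logarithm. Using $\sum_K a_K^2\le 1$, hence $\#\mathcal{T}_m\lesssim 2^{2m}$, it then suffices to prove
\begin{equation*}
  \Big\| \sum_{K\in\mathcal{T}_0}\chi_K \Big\|_{L^2_tL^\infty_x}\lesssim M^{1/2}\,\epsilon_0^{-1/2}\,(\ln\lambda)^2
\end{equation*}
for every subfamily $\mathcal{T}_0\subseteq\mathcal{T}$ of cardinality $M$; indeed the $\mathcal{T}_m$-contribution to $f$ is then $\lesssim 2^{-m}(\#\mathcal{T}_m)^{1/2}\epsilon_0^{-1/2}(\ln\lambda)^2\lesssim \epsilon_0^{-1/2}(\ln\lambda)^2$, and summing over the $\lesssim\ln\lambda$ relevant $m$ gives the claimed $(\ln\lambda)^3$.

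Next I would bound the overlap. For fixed $t$ one has, pointwise in $x$, $\sum_{K\in\mathcal{T}_0}\chi_K(t,x)\lesssim\mathcal{N}(t,x)$, where $\mathcal{N}(t,x)$ is the number of directions $\theta$ such that some tube of $\mathcal{T}_0$ in direction $\theta$ meets $(t,x)$, since the tubes sharing a direction are a finitely overlapping cover in the transverse variables. At a point $x(t)$ nearly realizing $\|\sum_K\chi_K(t,\cdot)\|_{L^\infty_x}$ I would run a bush argument: split the directions contributing at $(t,x(t))$ into dyadic angular scales $2^{-k}$, with $(\epsilon_0\lambda)^{-1/2}\lesssim 2^{-k}\lesssim 1$ (there are $\lesssim\ln\lambda$ of them, the second logarithm), and use the transversality estimate --- two tubes whose directions are separated by $\sim 2^{-k}$ remain within the tube radius $\rho=(\epsilon_0\lambda)^{-1/2}$ of one another in space only on a $t$-interval of length $\lesssim 2^{-k}\rho$ --- together with the crude bound $\#\mathcal{T}_0=M$. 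Squaring, integrating in $t$ against these per-pair space-time overlaps, summing the geometric series in $k$, and performing one final dyadic pigeonholing over the size of $\mathcal{N}$ (the third logarithm) is then a routine computation yielding the factor $M^{1/2}\epsilon_0^{-1/2}(\ln\lambda)^2$.

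The step I expect to be the main obstacle is the transversality estimate itself in the variable, curved geometry: in the Minkowski case null geodesics of distinct directions diverge at the Euclidean rate automatically, whereas here the tubes are neighbourhoods of the hypersurfaces $\Sigma_{\theta,r}$ for a metric $\mathbf{g}$ controlled only through the norms of Proposition \ref{r1}, and one must show that this limited regularity still forces such divergence, uniformly in $\theta$ and with the sharp dependence on $\epsilon_0$. This is precisely where the pointwise $C^{1,\delta}$ bound for $d\phi_{\theta,r}$ of Proposition \ref{r2} and the connection-coefficient and curvature bounds of Lemma \ref{chi} and Corollary \ref{Rfenjie} are used. Once the transversality is available, combining it with the matching of the initial data (Proposition \ref{szi}) and the superposition bounds (Proposition \ref{szy}) completes the proof of Proposition \ref{szt}, and hence, via \eqref{Ase}, of Proposition \ref{A1}.
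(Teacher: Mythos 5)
The paper offers no proof of this proposition at all: it is stated with the citation \cite{ST} in its header and imported verbatim from Smith--Tataru, so there is no in-paper argument to compare yours against. Your outline is a faithful reconstruction of the Smith--Tataru combinatorial scheme: dyadic pigeonholing in the coefficient size (one logarithm), reduction to an overlap count for a subfamily of $M$ tubes, a bush argument over dyadic angular separations (a second logarithm), and a final pigeonholing in the size of the counting function (a third logarithm), with transversality of tubes as the geometric input. Your bookkeeping is also consistent: with $\#\mathcal{T}_m\lesssim 2^{2m}$, the intermediate bound $M^{1/2}\epsilon_0^{-1/2}(\ln\lambda)^2$ does resum over the $\lesssim\ln\lambda$ relevant values of $m$ to the claimed $\epsilon_0^{-1/2}(\ln\lambda)^3$.

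Two caveats. First, the transversality estimate --- that two tubes whose directions are separated by an angle $\sim 2^{-k}$ remain within distance $(\epsilon_0\lambda)^{-1/2}$ of one another only on a time interval of length $\lesssim 2^{-k}(\epsilon_0\lambda)^{-1/2}$ --- is not a routine consequence of the hypersurface regularity in Propositions \ref{r1} and \ref{r2}; it is the entire content of the proposition in the variable-coefficient setting, and in \cite{ST} it occupies its own section of tube-separation lemmas resting on a quantitative comparison of the null-geodesic flow of $\mathbf{g}_\lambda$ with the flat flow at scale $(\epsilon_0\lambda)^{-1/2}$, not merely on the pointwise $C^{1,\delta}$ bound for $d\phi_{\theta,r}$. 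You correctly flag this as the main obstacle, but since you do not supply it, what you have is an outline rather than a proof. Second, your closing sentence inverts the logical order: Propositions \ref{szi} and \ref{szy} are not ingredients in the proof of Proposition \ref{szt}; the three results are parallel inputs (matching the data, controlling $\square_{\mathbf{g}_\lambda}f_\lambda$, and the $L^2_tL^\infty_x$ bound, respectively) that are combined afterwards to prove Proposition \ref{A1}.
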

\section*{Acknowledgments} Part of this work was done while the authors were in residence at Institut Mittag-Leffler in Djursholm, Sweden during the fall of 2019, supported by the Swedish Research Council under grant no. 2016-06596. The author Huali Zhang is also supported by Natural Science Foundation of Hunan Province, China(Grant No. 2021JJ40561).

\end{document}